\DeclareMathOperator{\Ker}{Ker}
\DeclareMathOperator{\id}{id}
\DeclareMathOperator{\Hom}{Hom}
\DeclareMathOperator{\Mor}{Mor}
\DeclareMathOperator{\Aut}{Aut}
\DeclareMathOperator{\End}{End}
\DeclareMathOperator{\RMod}{\text{\bf R-Mod}}
\DeclareMathOperator{\MF}{\mathcal{M}_\mathcal{F}}
\DeclareMathOperator{\HeckeF}{\mathcal{H}_\mathcal{F}}
\DeclareMathOperator{\OF}{\mathcal{O}_\mathcal{F}}
\DeclareMathOperator{\C}{\mathfrak{C}}
\DeclareMathOperator{\HeckeFG}{\mathcal{H}_\mathcal{F}G}
\DeclareMathOperator{\OFG}{\mathcal{O}_\mathcal{F}G}
\DeclareMathOperator{\HeckeP}{\mathcal{H}_\mathcal{P}}
\DeclareMathOperator{\OP}{\mathcal{O}_\mathcal{P}}
\DeclareMathOperator{\OX}{\mathcal{O}_\mathcal{X}}
\DeclareMathOperator{\OXFP}{\mathcal{O}_\mathcal{X}FP}
\DeclareMathOperator{\OXcd}{\mathcal{O}_\mathcal{X}cd}
\DeclareFontFamily{OT1}{pzc}{}
\DeclareFontShape{OT1}{pzc}{m}{it}{<-> s * pzcmi7t}{}
\DeclareMathAlphabet{\mathpzc}{OT1}{pzc}{m}{it}
\DeclareMathOperator{\Fin}{\mathpzc{Fin}}
\DeclareMathOperator{\Triv}{\mathpzc{Triv}}
\newcommand{\uR}{\underline{R}}
\newcommand{\uE}{\underline{\text{E}}}
\DeclareMathOperator{\vcd}{vcd}
\DeclareMathOperator{\cd}{cd}
\DeclareMathOperator{\Fcd}{\mathcal{F}cd}
\DeclareMathOperator{\MFcd}{\mathcal{M}_\mathcal{F}cd}
\DeclareMathOperator{\HFcd}{\mathcal{H}_\mathcal{F}cd}
\DeclareMathOperator{\FP}{FP}
\DeclareMathOperator{\OFFP}{\mathcal{O}_\mathcal{F}FP}
\DeclareMathOperator{\MFFP}{\mathcal{M}_\mathcal{F}FP}
\DeclareMathOperator{\HFFP}{\mathcal{H}_\mathcal{F}FP}
\DeclareMathOperator{\FFP}{\mathcal{F}FP}
\DeclareMathOperator{\CFP}{\mathfrak{C}FP}
\DeclareMathOperator{\Ext}{Ext}
\DeclareMathOperator{\Tor}{Tor}
\DeclareMathOperator{\gdFin}{gd_{\Fin}}
\newcommand*{\KK}{\ensuremath{\mathbb{K}}}
\newcommand*{\FF}{\ensuremath{\mathbb{F}}}
\newcommand*{\ZZ}{\ensuremath{\mathbb{Z}}}
\newcommand*{\NN}{\ensuremath{\mathbb{N}}}
\newcommand*{\longtwoheadrightarrow}{\ensuremath{\relbar\joinrel\twoheadrightarrow}}
\DeclareMathOperator{\Ind}{Ind}
\DeclareMathOperator{\CoInd}{CoInd}
\DeclareMathOperator{\Res}{Res}
\DeclareMathOperator{\Iso}{Iso}
\theoremstyle{plain}
\newtheorem{Lemma}{Lemma}[section]
\newtheorem{Theorem}[Lemma]{Theorem}
\newtheorem*{Theorem*}{Theorem}
\newtheorem{Cor}[Lemma]{Corollary}
\newtheorem{Prop}[Lemma]{Proposition}
\theoremstyle{definition}
\newtheorem{Example}[Lemma]{Example}
\newtheorem{Question}[Lemma]{Question}
\newtheorem*{Acknowledgements}{Acknowledgements}
\theoremstyle{remark}
\newtheorem{Remark}[Lemma]{Remark}
\title[Finiteness Conditions for Cohomological Mackey Functors]{Cohomological Finiteness Conditions for Mackey and Cohomological Mackey Functors}
\author{Simon St. John-Green}
\email{Simon.StJG@gmail.com}
\address{Department of Mathematics, University of Southampton, SO17 1BJ, UK}
\date{\today}
\keywords{Finiteness Conditions, Bredon Cohomology, Mackey Functors, Cohomological Mackey Functors}
\subjclass{20J05, 18G}
\newcommand\MFMor[1]{\left(\vcenter{\xymatrix@-20pt{#1}}\right)}
\begin{document}

\numberwithin{equation}{section}

{\abstract{We study cohomological finiteness conditions for groups associated to Mackey and cohomological Mackey functors, proving that the  cohomological dimension associated to cohomological Mackey functors is always equal to the $\mathcal{F}$-cohomological dimension, and characterising the conditions Mackey-$\FP_n$ and cohomological Mackey-$\FP_n$.  We show that all finiteness conditions for cohomological Mackey functors are unchanged when considering only the family of $p$-subgroups, and we characterise cohomological Mackey-$\FP_n$ conditions over the field $\FF_p$.}}

\maketitle

\section{Introduction}\label{section:introduction}

For $G$ a discrete group, we denote by $\gdFin G$ the \emph{proper geometric dimension} of $G$, defined as the minimal dimension of a model for $\uE G$ (the classifying space for proper actions of $G$).  It has long been of interest which groups have finite proper geometric dimension.  The cohomology theory most suited to the study of this problem is Bredon cohomology, introduced for finite groups by Bredon in \cite{Bredon-EquivariantCohomologyTheories} to study equivariant obstructions and extended to the study of infinite groups by L\"uck \cite{Lueck}.  

Fixing $G$ we consider the \emph{orbit category} ${\mathcal{O}_{\Fin}}$, the small category whose objects are the transitive $G$-sets with finite stabilisers and whose morphisms are the $G$-maps between them, the ${\Fin}$ here denotes the family of all finite subgroups of $G$.  
\emph{Bredon modules} are contravariant functors from ${\mathcal{O}_{\Fin}}$ to the category of left $R$-modules, where $R$ is some commutative ring.  The category of all Bredon modules is an abelian category with frees and projectives, so one can do homological algebra there.  Via projective resolutions of the constant Bredon module $\uR$ (every object is sent to $R$ and every morphism to the identity) one builds the Bredon cohomology of $G$.  We denote by $\operatorname{{\mathcal{O}_{\Fin}}cd} G$ the Bredon cohomological dimension of $G$ and denote by $\mathcal{O}_{\Fin}\negthinspace\FP_n$ the Bredon cohomological analog of the $\FP_n$ conditions of ordinary cohomology.

That Bredon cohomology is the correct algebraic invariant to study $\gdFin G$ is exemplified by the following theorem, an analog of the classical results of Eilenberg--Ganea and Stallings--Swan \cite{EilenbergGanea-Lusternik-SchnirelmannCategory}\cite{Stallings-OnTorsionFreeGroupsWithInfinitelyManyEnds}\cite{Swan-GroupsOfCohomologicalDimensionOne}.  This theorem is due to L\"uck--Meintrup in the higher dimensional case and Dunwoody in the dimension $1$ case.
\begin{Theorem*}\cite[Theorem 0.1]{LuckMeintrup-UniversalSpaceGrpActionsCompactIsotropy}\cite{Dunwoody-AccessabilityAndGroupsOfCohomologicalDimensionOne}
 Except for the possibility that $\mathcal{O}_{\Fin}\negthinspace\cd G = 2$ and $\gdFin G = 3$, $\operatorname{\mathcal{O}_{\Fin}cd} G = \gdFin G$.
\end{Theorem*}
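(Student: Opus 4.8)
The plan is to prove the two inequalities $\mathcal{O}_{\Fin}\cd G \le \gdFin G$ and $\gdFin G \le \max\{3,\mathcal{O}_{\Fin}\cd G\}$; together these give the stated dichotomy, since then $\mathcal{O}_{\Fin}\cd G$ and $\gdFin G$ can differ only when $\gdFin G = 3$ and $\mathcal{O}_{\Fin}\cd G = 2$. The first inequality is the routine direction: if $X$ is a $d$-dimensional $G$-CW model for $\uE G$, its cellular chain complex of Bredon modules is a length-$d$ free resolution of the constant module $\uR$, so $\mathcal{O}_{\Fin}\cd G \le d$; minimising over models gives the inequality, once one recalls that $\uE G$ exists for every $G$ (for instance via an infinite join construction), so that $\gdFin G$ is a well-defined element of $\{0,1,2,\dots,\infty\}$.

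For the other inequality write $n = \mathcal{O}_{\Fin}\cd G$ and dispose of small $n$ first. If $n = 0$ then $\uR$ is a projective Bredon module, which forces $G$ to be finite and hence $\gdFin G = 0 = n$. If $n = 1$ this is Dunwoody's theorem: $\mathcal{O}_{\Fin}\cd G \le 1$ implies $G$ acts on a tree $T$ with finite stabilisers, and such a $T$ is a $1$-dimensional model for $\uE G$ — for finite $H$ the fixed subtree $T^H$ is non-empty and connected, hence contractible, while for infinite $H$ we have $T^H = \varnothing$ because every vertex stabiliser is finite — so $\gdFin G \le 1$.

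The content is the case $n \ge 2$, where the aim is an equivariant Eilenberg--Ganea theorem producing a model for $\uE G$ of dimension $\max\{3,n\}$. Starting from the existence of $\uE G$, one builds, by the usual device of attaching equivariant cells to kill equivariant homotopy, a nested sequence of $G$-CW complexes all of whose cells have orbit type $G/K$ with $K$ finite: if $Y_k$ has been produced with $Y_k^H$ being $(k-1)$-connected for every finite $H$ and empty for every infinite $H$, then the equivariant (Bredon) Hurewicz theorem identifies the first non-trivial equivariant homotopy with the Bredon module $\underline{H}_k(Y_k;\uZZ)$, and one attaches free $(k+1)$-cells along a generating set of the latter. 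Running this to $k = n-1$ produces a complex $Y$ whose cellular chain complex sits in an exact sequence $0 \to \underline M \to \underline C_{n-1} \to \cdots \to \underline C_0 \to \uZZ \to 0$ with all $\underline C_i$ free and $\underline M = \underline H_{n-1}(Y;\uZZ) \cong \underline\pi_{n-1}(Y)$; since $\mathcal{O}_{\Fin}\cd G = n$, Schanuel's lemma forces $\underline M$ to be a \emph{projective} Bredon module. The obstruction to finishing is that killing $\underline M$ with free $n$-cells leaves a projective summand in degree $n$ that is only removable by $(n+1)$-cells, unless $\underline M$ happens to be free.

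I expect this last point to be the main obstacle, and it is dealt with exactly as in the classical Eilenberg--Ganea argument. One uses that $\underline M$ is a direct summand of a free Bredon module and that, because $n - 1 \ge 2$, equivariant homotopy classes of maps into $Y$ from wedges of $(n-1)$-spheres carrying the allowed orbit types are controlled solely by the functor $\underline\pi_{n-1}$, with no interference from fundamental groups; an idempotent splitting $\underline M$ off a free module can therefore be realised geometrically and used to modify $Y$ and its $n$-skeleton into an $n$-dimensional $G$-CW complex with trivial equivariant homotopy in degree $n$. Together with the vanishing of the relevant degree-$(n+1)$ Bredon obstruction groups, which is just $\mathcal{O}_{\Fin}\cd G \le n$ restated, this gives a model for $\uE G$ of dimension exactly $n$ when $n \ge 3$ and of dimension $3$ when $n = 2$. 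Alternatively one may reduce the whole construction, via a filtration of $\uE G$ by orbit type, to the non-equivariant Eilenberg--Ganea and Stallings--Swan theorems applied to the Weyl groups $N_G(K)/K$ for $K$ finite. Either way the argument breaks down for $n = 2$ precisely because the spheres being attached are then circles and $\pi_1$ re-enters; this is why the exceptional possibility $\mathcal{O}_{\Fin}\cd G = 2$, $\gdFin G = 3$ cannot be ruled out, its elimination being the equivariant analogue of the Eilenberg--Ganea conjecture and not attempted here.
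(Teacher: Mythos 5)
This theorem is not proved in the paper at all: it is quoted as background, with the higher-dimensional case attributed to L\"uck--Meintrup and the dimension-one case to Dunwoody, so there is no in-paper argument to compare yours against. What you have written is, in outline, the strategy of those original sources: the inequality $\operatorname{\mathcal{O}_{\Fin}cd} G \le \gdFin G$ via the cellular Bredon chain complex of a model for $\uE G$; separate treatment of dimensions $0$ and $1$ (the latter being exactly Dunwoody's contribution); and, for $n \ge 3$, an equivariant Eilenberg--Ganea construction --- build skeleta whose fixed sets are highly connected for finite subgroups and empty for infinite ones, invoke the equivariant Hurewicz theorem, conclude from Schanuel that the $(n-1)$-st kernel $\underline{M}$ is a projective Bredon module, make it free and kill it with $n$-cells, settling for a $3$-dimensional model when $n=2$ --- which is how \cite{LuckMeintrup-UniversalSpaceGrpActionsCompactIsotropy} proceeds, using the machinery of \cite{Lueck}. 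One logical nitpick: your opening claim that the two inequalities $\operatorname{\mathcal{O}_{\Fin}cd} G \le \gdFin G$ and $\gdFin G \le \max\{3,\operatorname{\mathcal{O}_{\Fin}cd} G\}$ by themselves give the dichotomy is too quick (they would still permit, say, $\operatorname{\mathcal{O}_{\Fin}cd} G = 0$ with $\gdFin G = 3$), but your explicit case analysis in dimensions $0$ and $1$ closes exactly those gaps, so the overall logic is sound.

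Two points deserve repair. First, the step ``an idempotent splitting $\underline{M}$ off a free module can be realised geometrically'' is not the classical move, and it is unclear what geometric construction would realise an idempotent; the standard argument stabilises instead: choose $\underline{F}$ free with $\underline{M}\oplus\underline{F}$ free (an Eilenberg swindle if necessary), realise $\underline{F}$ by wedging on trivially attached $(n-1)$-spheres of the permitted orbit types --- legitimate precisely because $n-1\ge 2$, so the equivariant Hurewicz theorem identifies the new $\underline{\pi}_{n-1}$ with $\underline{M}\oplus\underline{F}$ --- and then attach $n$-cells along a free basis; once this is done the fixed point sets of finite subgroups are contractible and no degree-$(n+1)$ obstruction groups are needed. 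Second, your proposed alternative of reducing everything, via a filtration by orbit type, to the non-equivariant Eilenberg--Ganea and Stallings--Swan theorems for the Weyl groups $WK = N_GK/K$ does not work as stated: $\gdFin G$ is not determined by the cohomological dimensions of the Weyl groups alone --- how the fixed-point data fit together over the orbit category is the whole content of the Bredon theory --- so that remark should be dropped, and you should regard the equivariant Hurewicz and realisation machinery of L\"uck--Meintrup as the genuine substance that your sketch quotes rather than proves.
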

Brady, Leary and Nucinkis construct groups with $\mathcal{O}_{\Fin}\negthinspace\cd G = 2$ and $\gdFin G = 3$ \cite{BradyLearyNucinkis-AlgAndGeoDimGroupsWithTorsion}.

A related geometric invariant is $n_G$, the minimal dimension of a contractible proper $G$-CW complex.  Nucinkis suggested the $\mathcal{F}$-cohomology in \cite{Nucinkis-CohomologyRelativeGSet} as an algebraic analog of $n_G$, it is a special case of the relative homology of Mac Lane \cite{MacLane-Homology} and defined as follows.  Fix a subfamily $\mathcal{F}$ of the family of finite subgroups, closed under conjugation and taking subgroups.  Let $\Delta$ be the $G$-set $\coprod_{H \in {\mathcal{F}}} G/H$ and say that a module is \emph{$\mathcal{F}$-projective} if it is a direct summand of a module of the form $N \otimes R \Delta$ where $N$ is any $R G$-module.  Short exact sequences are replaced with $\mathcal{F}$-split short exact sequences---short exact sequences which split when restricted to any finite subgroup of $G$, or equivalently which split when tensored with $\ZZ \Delta$.  There is a comparison theorem between resolutions, thus one can define a $\mathcal{F}$-cohomology theory denoted $\mathcal{F}
H^n(G, -)$:  For any $R G$-module $M$ we define
\[ \mathcal{F}H^n(G, M) = H^* \Hom_{RG}(P_*, M) \]
where $P_*$ is a $\mathcal{F}$-split resolution of $R$ by $\mathcal{F}$-projective modules, there is a similar notion of $\mathcal{F}$-homology.  The  $\mathcal{F}$-cohomological dimension, denoted $\operatorname{\mathcal{F}cd} G$, and $\mathcal{F}\negthinspace\FP_n$ conditions are defined in the obvious way.  If $\mathcal{F} = \Fin$, a result of Bouc and Kropholler--Wall implies $\operatorname{\Fin cd} G \le n_G$ \cite{Bouc-LeComplexeDeChainesDunGComplexe}\cite{KrophollerWall-GroupActionsOnAlgebraicCellComplexes}, but it is unknown if $\operatorname{\Fin cd} G < \infty$ implies $n_G < \infty$.  Unfortunately the $\mathcal{F}$-cohomology can be very difficult to deal with, in particular it lacks some useful features such as free modules.

The Kropholler--Mislin conjecture states that $n_G$ is finite if and only if $\gdFin G < \infty$ \cite[Conjecture 43.1]{GuidoBookOfConjectures}.  Kropholler and Mislin verified the conjecture for groups of type $\FP_\infty$ \cite{KrophollerMislin-GroupsOnFinDimSpacesWithFinStab} and later L\"uck verified the conjecture for groups with a bound on the lengths of chains finite subgroups \cite{Luck-TypeOfTheClassifyingSpace}.  Nucinkis posed an algebraic version of the conjecture, asking whether the finiteness of $\operatorname{\Fin cd} G$ and $\mathcal{O}_{\Fin}\negthinspace\cd G$ are equivalent \cite{Nucinkis-EasyAlgebraicCharacterisationOfUniversalProperGSpaces}.

In \cite[Example 3.6]{MartinezPerez-EulerClassesAndBredonForRestrictedFamilies} Martinez-Per\'ez modifies the Leary--Nucinkis construction \cite{LearyNucinkis-SomeGroupsOfTypeVF} to produce a group $G$, an extension of a torsion-free group by a cyclic group of order $p$, with $\operatorname{\mathcal{F}cd} G = 3$ but $\operatorname{\mathcal{O}_{\Fin}cd} G = 4$.  Taking direct products of these groups and using \cite[Theorem C]{DegrijsePetrosyan-GeometricDimensionForVCYC} gives a family of virtually torsion-free groups $G_n$ with $\operatorname{\mathcal{O}_{\Fin}cd} G = \operatorname{\Fin cd} G + n$ for all natural numbers $n$ \cite[Remark 3.6]{Degrijse-ProperActionsAndMackeyFunctors}.  However one should note that in these examples $\operatorname{\Fin cd} G_n$ is growing linearly with $n$.  Interestingly, it is still unknown if $\operatorname{\mathcal{O}_{\Fin}cd} G = \operatorname{\Fin cd} G$ when $G$ is of type $\mathcal{O}_{\Fin}\negthinspace\FP_\infty$.    

In \cite{MartinezPerezNucinkis-MackeyFunctorsForInfiniteGroups}, Martinez-Per\'ez and Nucinkis studied cohomological finiteness conditions arising from taking the Bredon cohomology of a group $G$ but restricting to Mackey functor coefficents.  They showed that the associated Mackey cohomological dimension $\operatorname{\mathcal{M}_{\Fin}cd} G$ is always equal to both the virtual cohomological dimension $\vcd G$ and the $\Fin $-cohomological dimension $\operatorname{\Fin cd} G$ when $G$ is virtually torsion-free.  
One can view Mackey functors as contravariant functors from a small category $\mathcal{M}_{\Fin}$ into $R$-modules, and a crucial result in the paper of   Martinez-Per\'ez and Nucinkis is that the Bredon cohomology with coefficients in a Mackey functor may be calculated using a projective resolution of Mackey functors.  Specifically they prove that you can induce a projective resolution of $\uR$ by Bredon modules to a projective resolution of the Burnside functor $B^G$ by Mackey functors.  This will be explained in more detail in Section \ref{section:prelim mackey}.

Degrijse showed that for groups with a bound on the orders of their finite subgroups the Mackey cohomological dimension is equal to the $\Fin $-cohomological dimension \cite{Degrijse-ProperActionsAndMackeyFunctors}.  He proves this via the study of Bredon cohomology with cohomological Mackey functor coefficents and the associated notion of cohomological dimension $\operatorname{\mathcal{H}_{\Fin}cd} G$.  

The main ingredient of this paper is a similar result to that of Martinez-Per\'ez and Nucinkis for Bredon cohomology with cohomological Mackey functors.  Yoshida observed that a cohomological Mackey functor may be described as a contravariant functor from a small category $\mathcal{H}_{\Fin}$ to the category of $R$-modules \cite{Yoshida-GFunctors2}.  We prove in Section \ref{section:Homology and Cohomology of Cohomological Mackey Functors} that the Bredon cohomology with coefficients in a cohomological Mackey functor may be calculated with a projective resolution of cohomological Mackey functors by showing a projective resolution of Bredon modules can be induced to a projective resolution of cohomological Mackey functors.  

Degrijse also proves in \cite{Degrijse-ProperActionsAndMackeyFunctors} that $\operatorname{\mathcal{H}_{\Fin}cd} G = \operatorname{\Fin cd} G$ when $\operatorname{\mathcal{H}_{\Fin}cd} G < \infty$, and asks if they are always equal, we can verify this:

\theoremstyle{plain}\newtheorem*{CustomThmA}{Theorem \ref{theorem:Fcd=HFcd}}
\begin{CustomThmA}
 $\operatorname{\mathcal{H}_{\Fin}cd} G = \operatorname{\Fin cd} G$ for all groups $G$.
\end{CustomThmA}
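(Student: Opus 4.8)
The plan is to reduce to Degrijse's partial result and then supply the one missing implication using the resolution machinery of Section~\ref{section:Homology and Cohomology of Cohomological Mackey Functors}. Degrijse proves $\operatorname{\mathcal{H}_{\Fin}cd}G=\operatorname{\Fin cd}G$ whenever $\operatorname{\mathcal{H}_{\Fin}cd}G<\infty$; in particular $\operatorname{\mathcal{H}_{\Fin}cd}G<\infty$ already forces $\operatorname{\Fin cd}G<\infty$. So it suffices to prove the converse implication, namely that $\operatorname{\Fin cd}G=n<\infty$ forces $\operatorname{\mathcal{H}_{\Fin}cd}G\le n$, whereupon Degrijse upgrades "$\le$" to "$=$", and the case $\operatorname{\Fin cd}G=\infty$ becomes automatic. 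So assume $\operatorname{\Fin cd}G=n<\infty$.

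Next I would record what the machinery of that section gives, together with the basic properties of the fixed-point functor $\mathbf{Fix}\colon RG\text{-Mod}\to\mathcal{H}_{\Fin}\text{-Mod}$, $\mathbf{Fix}(M)=(G/H\mapsto M^H)$. The canonical functor $\iota\colon\mathcal{O}_{\Fin}\to\mathcal{H}_{\Fin}$ sends the free Bredon module on $G/H$ to the free cohomological Mackey functor on $G/H$, which is $\mathbf{Fix}(R[G/H])$; hence $\Ind_\iota$ of a projective Bredon resolution $P_\ast\to\uR$ is canonically $\mathbf{Fix}$ applied to the complex $C_\ast\to R$ obtained by evaluating $P_\ast\to\uR$ at $G/1$, where $C_\ast$ is an exact complex of finite-stabiliser permutation $RG$-modules. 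The content of the section is precisely that $\mathbf{Fix}(C_\ast)$ stays exact, i.e.\ it is a projective resolution of $\mathbf{Fix}(R)=\uR$; and since the constant Bredon module is the representable at $G/G$, $\Ind_\iota\uR$ is the representable cohomological Mackey functor at $G/G$, namely $\mathbf{Fix}(R)=\uR$. Therefore $\operatorname{\mathcal{H}_{\Fin}cd}G=\operatorname{pd}_{\mathcal{H}_{\Fin}}\uR$. As $\mathbf{Fix}$ is fully faithful and commutes with kernels and direct sums, it restricts to an equivalence between the direct summands of direct sums of finite-stabiliser permutation $RG$-modules and the projective cohomological Mackey functors, and the $n$-th syzygy of $\mathbf{Fix}(C_\ast)$ is $\mathbf{Fix}(Z_n)$ with $Z_n=\ker(C_{n-1}\to C_{n-2})$. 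So $\operatorname{\mathcal{H}_{\Fin}cd}G$ is the least $n$ for which $Z_n$ is a summand of a sum of finite-stabiliser permutation modules. Finally, since a finite subgroup splits every $\mathcal{F}$-split short exact sequence, $\mathbf{Fix}$ carries $\mathcal{F}$-split short exact sequences to short exact sequences.

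The goal is thus: if $\operatorname{\Fin cd}G=n$, then $Z_n$ is a summand of a sum of finite-stabiliser permutation modules. I would first replace $C_\ast$ by an honestly $\mathcal{F}$-split resolution of the same shape: starting from the $\mathcal{F}$-split surjection $R\Delta\twoheadrightarrow R$ (a sum of finite-stabiliser permutation modules), repeatedly take $\mathcal{F}$-split surjections from such permutation modules, obtaining $\cdots\to Q_1\to Q_0\to R\to 0$, and compare it with $C_\ast$ term by term via Schanuel's lemma, which is available because $\Hom_{RG}(-,V)$ is exact along $\mathcal{F}$-split sequences of modules of this type. Since $\operatorname{\Fin cd}G=n$, the $n$-th syzygy $Z'_n$ of the $\mathcal{F}$-split resolution is $\mathcal{F}$-projective, and Schanuel identifies $Z_n$ with $Z'_n$ up to permutation summands; so it remains to see that an $\mathcal{F}$-projective module occurring as a syzygy of an $\mathcal{F}$-split permutation resolution is itself a permutation summand. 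For this one compares $Z'_n$, which embeds $\mathcal{F}$-split-ly in the permutation module $Q_{n-1}$, with the tautological permutation cover of $Z'_{n-1}$ and absorbs the resulting $\mathcal{F}$-projective difference (a summand of some $N\otimes R\Delta$) by an Eilenberg swindle against a free permutation module. Applying $\mathbf{Fix}$ to the resulting length-$n$ $\mathcal{F}$-split resolution of $R$ by permutation summands then yields a length-$n$ projective resolution of $\uR$ in $\mathcal{H}_{\Fin}\text{-Mod}$, i.e.\ $\operatorname{\mathcal{H}_{\Fin}cd}G\le n$.

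The hard part is exactly this last claim — equivalently, that the relative projective dimension of $R$ with respect to permutation summands and $\mathcal{F}$-split sequences equals $\operatorname{\Fin cd}G$. The danger is that general $\mathcal{F}$-projectives (summands of $N\otimes R\Delta$ for arbitrary $N$, such as $\Ind$ of a non-permutation module) are genuinely not permutation summands, and $\mathbf{Fix}$ does not carry them to projective cohomological Mackey functors, so the argument must use that the syzygy in question sits inside a permutation module and, over each finite subgroup, is a summand of a permutation module (being an $\mathcal{F}$-split submodule of one). If the swindle comparison proves too delicate, the fallback is to argue entirely inside $\mathcal{H}_{\Fin}\text{-Mod}$: show directly that $\mathbf{Fix}(Z_n)=\Omega^n\uR$ is $\mathcal{H}_{\Fin}$-projective by building an explicit splitting from the transfer maps of the cohomological Mackey structure, using $\operatorname{\Fin cd}G=n$ together with Degrijse's already-settled finite-dimensional case for the bookkeeping.
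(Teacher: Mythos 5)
Your overall architecture matches the paper's second half: produce a length-$n$ $\mathcal{F}$-split resolution of $R$ by (summands of) permutation modules with stabilisers in $\mathcal{F}$ and apply fixed points, using that $\mathcal{F}$-splitness makes the fixed-point complex exact (Lemma \ref{lemma:HF Fsplit is HF exact}); replacing the inequality $\Fcd G \le \HFcd G$ by an appeal to Degrijse's finite-dimensional result is a legitimate variant of the paper's direct argument via Remark \ref{remark:fp res of R is Fgood} and Lemma \ref{lemma:HF Fgood is Fsplit}. However, there is a genuine gap at exactly the step you call the hard part: the existence, when $\Fcd G \le n$, of a length-$n$ $\mathcal{F}$-split resolution of $R$ whose terms are permutation modules (or summands of permutation modules) with stabilisers in $\mathcal{F}$. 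This is precisely what the paper imports from Gandini \cite[Lemma 3.4]{Gandini-CohomologicalInvariants}, and your proposed proof of it does not go through as described. Relative Schanuel only identifies the $n$-th syzygy $Z_n$ up to adding permutation summands with an arbitrary $\mathcal{F}$-projective module, i.e.\ a summand of some $N \otimes R\Delta$ with $N$ arbitrary; such a module is in general \emph{not} a summand of a permutation module, and an Eilenberg swindle against a free permutation module cannot absorb it (the swindle turns ``summand of free plus free'' into free, but here the error term is not a summand of anything of the right kind to begin with). So the proposal as written does not establish $\HFcd G \le \Fcd G$, and the fallback you mention (an explicit splitting from transfers) is only a gesture.

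The observation you flag parenthetically is in fact the correct repair, and it should be carried out rather than replaced by a swindle: since $\Fcd G \le n$, the syzygy $Z_n$ of an $\mathcal{F}$-split permutation resolution is $\mathcal{F}$-projective, so the canonical $\mathcal{F}$-split surjection $Z_n \otimes R\Delta \twoheadrightarrow Z_n$ splits and $Z_n$ is a summand of $Z_n \otimes R\Delta \cong \bigoplus_{H} \Ind_{RH}^{RG} \Res_{RH}^{RG} Z_n$; moreover $\Res_{RH}^{RG} Z_n$ is an $RH$-direct summand of the permutation module $\Res_{RH}^{RG} Q_{n-1}$ because the resolution splits over each $H \in \mathcal{F}$, so each $\Ind_{RH}^{RG}\Res_{RH}^{RG} Z_n$ is a summand of a permutation $RG$-module with stabilisers in $\mathcal{F}$. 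Hence $Z_n$ itself is such a summand, its fixed-point functor is projective over $\HeckeF$ by Lemma \ref{lemma:HF frees are fp}, and taking fixed points of the truncated resolution gives $\HFcd G \le n$. Either supply this argument explicitly or cite Gandini's lemma as the paper does; without one of these your proof is incomplete at its central step.
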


Thus for an arbitrary group $G$ we have a chain of inequalities:
\[ \operatorname{\Fin cd} G = \operatorname{\mathcal{H}_{\Fin}cd} G \le \operatorname{\mathcal{M}_{\Fin}cd} G \le \operatorname{\mathcal{O}_{\Fin}cd} G \]

\begin{Question}
For an arbitrary group $G$, does the finiteness of either $ \operatorname{\mathcal{H}_{\Fin}cd} G$ or $\operatorname{\mathcal{M}_{\Fin}cd} G$ imply the finiteness of $\operatorname{\mathcal{O}_{\Fin}cd} G$?
\end{Question}

We know that $\operatorname{\mathcal{H}_{\Fin}cd} G \le n_G$ (see Proposition \ref{prop:HF proper action on contr complex dim n then HFcd G less n}), but we know of no examples where the two invariants differ.

\begin{Question}
\begin{enumerate}
 \item Does $\operatorname{\mathcal{H}_{\Fin}cd} G < \infty$ imply $n_G < \infty$?
 \item Is there any relation between $\operatorname{\mathcal{M}_{\Fin}cd} G$ and $n_G$?
\end{enumerate}
\end{Question}
These questions are discussed in more detail in Section \ref{section:HF cohomological dimension}.  Since the new invariants $\operatorname{\mathcal{M}_{\Fin}cd}$ and $\operatorname{\mathcal{H}_{\Fin}cd}$ interpolate between $\operatorname{\mathcal{O}_{\Fin}cd} G$ and $\operatorname{\Fin cd} $, one might hope to use them to gain information about how the Kropholler--Mislin conjecture might fail.

The $\mathcal{O}_{\Fin}\negthinspace\FP_n$ conditions are well understood \cite[3.1,3.2]{KMPN-CohomologicalFinitenessConditionsInBredon}, we study the $\mathcal{M}_{\Fin}\negthinspace\FP_n$ conditions corresponding to Mackey functors, the $\mathcal{H}_{\Fin}\negthinspace\FP_n$ conditions corresponding to cohomological Mackey functors,  and the $\Fin \negthinspace\FP_n$ conditions corresponding to $\Fin $-cohomology. 

\theoremstyle{plain}\newtheorem*{CustomThmB}{Corollary \ref{cor:OFFPn iff MFFPn}}
\begin{CustomThmB}
Over any ring $R$, a group is $\mathcal{M}_{\Fin}\negthinspace\FP_n$ if and only if it is $\mathcal{O}_{\Fin}\negthinspace\FP_n$.
\end{CustomThmB}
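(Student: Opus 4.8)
The plan is to play the two finiteness conditions off against the induction and restriction functors between Bredon modules and Mackey functors. Write $\iota^{*}$ for the restriction functor from $\mathcal{M}_{\Fin}$-modules to $\mathcal{O}_{\Fin}$-modules along the inclusion $\mathcal{O}_{\Fin}\hookrightarrow\mathcal{M}_{\Fin}$; it is exact, it admits a left adjoint $\iota_{!}$, and $\iota_{!}$ carries a projective resolution of $\uR$ by Bredon modules to one of $B^{G}$ by Mackey functors by the result of \cite{MartinezPerezNucinkis-MackeyFunctorsForInfiniteGroups} recalled in Section~\ref{section:prelim mackey} (in particular $\iota_{!}\uR=B^{G}$). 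Being left adjoint to an exact functor, $\iota_{!}$ sends projectives to projectives, and being a left adjoint it sends finitely generated modules to finitely generated ones. So for the implication $\mathcal{O}_{\Fin}\negthinspace\FP_{n}\Rightarrow\mathcal{M}_{\Fin}\negthinspace\FP_{n}$ I would take a projective resolution of $\uR$ over $\mathcal{O}_{\Fin}$ that is finitely generated in all degrees $\le n$ and apply $\iota_{!}$: the image is a projective resolution of $B^{G}$, finitely generated in degrees $\le n$, so $G$ is $\mathcal{M}_{\Fin}\negthinspace\FP_{n}$.

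For the converse the starting point is that $\uR$ is a direct summand of $\iota^{*}B^{G}=\iota^{*}\iota_{!}\uR$ as a Bredon module: the unit $\uR\to\iota^{*}\iota_{!}\uR$ is split by $\iota^{*}$ of the cardinality morphism of Mackey functors $B^{G}\to\uR$, which on $G/H$ sends a finite $G$-set over $G/H$ to its number of points, the only point to verify being that the cardinality maps do respect the Mackey double-coset formula. Since a direct summand of a module of type $\FP_{n}$ is again of type $\FP_{n}$, it then suffices to show that $\iota^{*}B^{G}$ is of type $\FP_{n}$ over $\mathcal{O}_{\Fin}$ whenever $G$ is $\mathcal{M}_{\Fin}\negthinspace\FP_{n}$. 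To this end I would apply the exact functor $\iota^{*}$ to a projective resolution of $B^{G}$ finitely generated in degrees $\le n$, obtaining an exact complex of Bredon modules resolving $\iota^{*}B^{G}$ whose terms in degrees $\le n$ are restrictions of finitely generated projective Mackey functors; a standard dimension-shifting argument then shows $\iota^{*}B^{G}$ is of type $\FP_{n}$ as soon as each such restriction is itself of type $\FP_{\infty}$ over $\mathcal{O}_{\Fin}$.

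The crux, and the step I expect to be the main obstacle, is therefore to show that the restriction to $\mathcal{O}_{\Fin}$ of a finitely generated projective (equivalently, free) Mackey functor is a Bredon module of type $\FP_{\infty}$. This is false in general---$\iota^{*}\mathcal{M}_{\Fin}(-,G/H)$ fails even to be finitely generated over $\mathcal{O}_{\Fin}$ when $G$ has infinitely many conjugacy classes of finite subgroups---so one first disposes of that obstruction: the case $n=0$ is the elementary statement that $B^{G}$ is finitely generated precisely when $\uR$ is, and for $n\ge1$ the condition $\mathcal{M}_{\Fin}\negthinspace\FP_{1}$ forces $G$ to have only finitely many conjugacy classes of finite subgroups, exactly as in the Bredon case. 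Under that hypothesis one has to analyse how the transfers adjoined in passing from $\mathcal{O}_{\Fin}$ to $\mathcal{M}_{\Fin}$ enter $\iota^{*}\mathcal{M}_{\Fin}(-,G/H)$: they contribute only finitely generated data at each of the finitely many orbit types, which should allow one to build a finitely generated projective $\mathcal{O}_{\Fin}$-resolution by the Bredon $\FP_{n}$ machinery of \cite[3.1,3.2]{KMPN-CohomologicalFinitenessConditionsInBredon}. A cleaner packaging of the same content is to prove directly that $\mathcal{M}_{\Fin}\negthinspace\FP_{n}$ is equivalent to $G$ having finitely many conjugacy classes of finite subgroups together with a type-$\FP_{n}$ condition on the Weyl groups $W_{G}(H)$, and to observe that this is exactly the characterisation of $\mathcal{O}_{\Fin}\negthinspace\FP_{n}$ in \cite[3.1,3.2]{KMPN-CohomologicalFinitenessConditionsInBredon}; the corollary is then immediate, with all the work sitting in checking that the extra transfers do not enlarge the syzygies beyond finite generation.
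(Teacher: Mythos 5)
Your first direction is fine: inducing along $\sigma$ a projective resolution of $\uR$ that is finitely generated up to degree $n$ and invoking Proposition \ref{prop:mackey indOF to MF take pr of R to pr of BG}, together with the fact that induction preserves projectives and finite generation, does give $\MFFP_n$; the paper instead runs the Bieri--Eckmann criterion through Corollary \ref{cor:mackey H^n_MF is H^n_OF}, but your route is legitimate. Your splitting of the unit by the cardinality morphism $B^G \to R^-$ is also correct, so $\uR$ is indeed a direct summand of $\Res_\sigma B^G$ as an $\OF$-module.

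The converse, however, has a genuine gap at exactly the step you identify as the crux. Your key lemma --- that once $\mathcal{F}/G$ is finite, the restriction to $\OF$ of a finitely generated free Mackey functor is of type $\FP_\infty$ (or even finitely generated) over $\OF$ --- is false. Any finitely generated $\OF$-module has, at each object $G/S$, a value that is finitely generated as an $R[W_GS]$-module (by Lemma \ref{lemma:Z[G/H,G/K] as sum of WH submodules}, the value of a free module $R[-,G/K]_{\OF}$ at $G/S$ is a finite direct sum of cyclic $R[W_GS]$-modules), whereas $\Res_\sigma R[-,G/K]_{\MF}(G/S)=R[G/S,G/K]_{\MF}$ need not be: the remark following Lemma \ref{lemma:restriction of free mackey at 1 is FPinfty} notes that $R[G/S,G/1]_{\MF}\cong R[S\backslash G]$, which is not finitely generated over $R[W_GS]$ when $S\backslash G$ has infinitely many $W_GS$-orbits --- for instance $G=D_\infty$ with $S$ generated by a reflection, a group with only three conjugacy classes of finite subgroups and of type $\mathcal{O}_{\Fin}\FP_\infty$. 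So the assertion that the adjoined transfers ``contribute only finitely generated data at each of the finitely many orbit types'' is precisely what fails, and your dimension-shifting argument collapses because the restricted resolution has non-finitely-generated terms already in degree $0$. Your proposed repackaging --- prove directly that $\MFFP_n$ is equivalent to $\lvert\mathcal{F}/G\rvert<\infty$ together with the Weyl groups being $\FP_n$, then compare with Lemma \ref{lemma:OFFPn equiv conditions} --- is a restatement of the theorem, not a proof, since no argument is offered for it. The paper avoids this obstacle by a different mechanism: it applies the Bieri--Eckmann criterion (Theorem \ref{theorem:C bieri-eckmann criterion}) to the coinduction modules $D_HM=\CoInd_{R[WH]}^{\OFG}M(G/H)$ of \cite{HambletonPamukYalcin-EquivariantCWComplexesAndTheOrbitCategory}, which extend to Mackey functors, embedding an arbitrary directed system of $\OF$-modules with vanishing colimit into Mackey-extendable ones and dimension-shifting through the cokernels $C^iM_\lambda$; no finiteness property of restricted free Mackey functors is ever required. (A smaller point: the equivalence $\MFFP_0\Leftrightarrow\lvert\mathcal{F}/G\rvert<\infty$ is not ``elementary''; the forward implication needs the composition analysis of basic morphisms carried out in Lemma \ref{lemma:mackey MFFP0}.)
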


\theoremstyle{plain}\newtheorem*{CustomThmC}{Theorem \ref{theorem:HFFPn iff FFPn}}
\begin{CustomThmC}
If $R$ is a commutative Noetherian ring, a group is $\mathcal{H}_{\Fin}\negthinspace\FP_n$ if and only if it is $\Fin \negthinspace\FP_n$. 
\end{CustomThmC}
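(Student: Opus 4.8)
The plan is to compare the two conditions through the fixed-point functor. Let $F$ be the functor from $RG$-modules to cohomological Mackey functors sending $N$ to the functor $G/H\mapsto N^H$ (the \emph{fixed-point} cohomological Mackey functor), and let $E$ be evaluation at the free orbit $G/1$. Then $E$ is left adjoint to $F$, $E$ is exact, and $F$ is fully faithful and left exact: a map of $RG$-modules is zero as soon as it vanishes on $1$-fixed points, and naturality with respect to the morphisms $G/1\to G/H$ recovers fullness. The essential point is that $F$ realises the classical embedding of $RG$-modules into cohomological Mackey functors as fixed-point functors, and $E$ is its adjoint retraction ($EF\cong\id$).

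First I would record the two reductions. On one side, $G$ is $\mathcal{H}_{\Fin}\negthinspace\FP_n$ exactly when the cohomological Mackey functor $\uR$ --- whose $\mathcal{H}_{\Fin}$-cohomology is the Bredon cohomology of $G$ with cohomological Mackey coefficients, by Section~\ref{section:Homology and Cohomology of Cohomological Mackey Functors} --- is of type $\FP_n$ in the category of cohomological Mackey functors, i.e.\ finitely generated and admitting a partial projective resolution by finitely generated projectives through degree $n$. On the other side, $G$ is $\Fin\negthinspace\FP_n$ exactly when the trivial module $R$ admits an $\mathcal{F}$-split resolution by $\mathcal{F}$-projective $RG$-modules that are finitely generated in degrees $\le n$ (with $\mathcal{F}=\Fin$). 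The Noetherian hypothesis on $R$ is what legitimises these descriptions: it forces finitely generated cohomological Mackey functors to be closed under kernels, and the analogous statement on the $\mathcal{F}$-relative side, so that one may pass freely between ``$\FP_n$'' and ``has a finitely generated partial resolution through degree $n$''. One may alternatively phrase everything through the Bieri--Eckmann criterion, observing that $F$ and the fixed-point operations $(-)^H$ commute with filtered colimits because the subgroups $H$ involved are finite.

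Next I would establish the key properties of $F$ and $E$. The functor $F$ carries $R$ to $\uR$; it carries the permutation module $R[G/H]$, for $H\in\mathcal{F}$, to the representable cohomological Mackey functor on $G/H$ --- this is exactly Yoshida's description of cohomological Mackey functors as modules over the category with $\Hom(G/K,G/H)=\Hom_{RG}(R[G/K],R[G/H])$ --- and hence carries finitely generated $\mathcal{F}$-projectives to finitely generated projective cohomological Mackey functors; and $F$ sends $\mathcal{F}$-split short exact sequences to exact sequences, because an $\mathcal{F}$-split sequence restricts to a split sequence of $RK$-modules for each $K\in\mathcal{F}$, and a split sequence stays exact after the additive functor $(-)^K$. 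Dually $E$ is exact with $E(\uR)=R$, and carries projective (resp.\ finitely generated projective) cohomological Mackey functors to $\mathcal{F}$-projective (resp.\ finitely generated $\mathcal{F}$-projective) $RG$-modules, since the representable on $G/H$ goes to $R[G/H]$. Granting this, the implication $\Fin\negthinspace\FP_n\Rightarrow\mathcal{H}_{\Fin}\negthinspace\FP_n$ is obtained by taking an $\mathcal{F}$-split resolution $Q_*\to R$ by finitely generated $\mathcal{F}$-projectives, finitely generated through degree $n$, splitting it into $\mathcal{F}$-split short exact sequences, applying $F$, and splicing the resulting short exact sequences of cohomological Mackey functors into a projective resolution $F(Q_*)\to\uR$ of the required type. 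For the converse one applies $E$ to a projective resolution of $\uR$ that is finitely generated through degree $n$, obtaining an ordinary-exact complex $E(P_*)\to R$ of finitely generated $\mathcal{F}$-projectives, and one must see that this complex computes $\mathcal{F}$-cohomology.

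The hard part will be exactly that last step: an ordinary-exact complex of $\mathcal{F}$-projectives need not be $\mathcal{F}$-split, so $E(P_*)$ must be compared with a genuine $\mathcal{F}$-split $\mathcal{F}$-projective resolution of $R$ by means of comparison chain maps in both directions (available since each $E(P_i)$ is relatively projective) together with a chain-homotopy argument identifying the two cohomologies, all the while keeping finite generation through degree $n$ under control. This is precisely the $\FP_n$-level refinement of the equality $\operatorname{\mathcal{H}_{\Fin}cd} G=\operatorname{\Fin cd} G$ of Theorem~\ref{theorem:Fcd=HFcd}, and it is where the Noetherian hypothesis on $R$ is genuinely needed. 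A secondary technical point is to check that $F$ sends \emph{every} finitely generated $\mathcal{F}$-projective --- not merely permutation modules, but arbitrary $\Ind_K^G V$ with $K$ finite --- to a finitely generated projective cohomological Mackey functor, which I would handle by reducing to permutation modules or by exploiting that such an induced module is a relative summand controlled by the finite subgroup $K$.
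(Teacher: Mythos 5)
Your outline contains the right functors (fixed points $F=(-)^-$ and evaluation $E$ at $G/1$, adjoint as in Lemma \ref{lemma:HF FP and FQ adjunction}), but both halves are missing the step that actually carries the proof, and the Noetherian hypothesis is attached to the wrong half. For $\FFP_n\Rightarrow\HFFP_n$, the point you defer as a ``secondary technical point'' is fatal as stated: by Lemma \ref{lemma:HF frees are fp} the projective $\HeckeF$-modules are \emph{exactly} the fixed-point functors of direct summands of permutation modules with stabilisers in $\mathcal{F}$, whereas an $\mathcal{F}$-projective $RG$-module is a summand of $N\otimes R\Delta\cong\bigoplus_{H}\Ind_H^G\Res_H N$ for an \emph{arbitrary} $N$, and $\big(\Ind_H^G V\big)^-$ for an arbitrary $RH$-module $V$ is in general not projective in $\HeckeF$ (already for finite groups only trivial-source, i.e.\ permutation-summand, modules qualify). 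So $F(Q_*)$ need not be a projective resolution of $R^-$. Repairing this by insisting that $Q_*$ consist of permutation modules is not available at the $\FP_n$ level: $\FFP_n$ does not obviously provide an $\mathcal{F}$-split resolution by \emph{finitely generated} permutation modules (the analogue of Gandini's lemma used for the dimension statement has no finite-generation version here). The paper therefore does not construct a resolution at all in this direction: it shows $H_*^{\HeckeF}\big(G,\prod R[G/H,-]_{\HeckeF}\big)\cong\mathcal{F}H_*\big(G,\prod R[G/H]\big)$ (Lemma \ref{lemma:FHn with products of perm module coeff}), proves continuity of $\mathcal{F}$-Tor for groups of type $\FFP_n$ (Lemma \ref{lemma:FTor commutes if FFPn}) --- this is the only place the Noetherian hypothesis is used, via Nucinkis's result that finitely generated $\mathcal{F}$-projectives are $\FP_\infty$ over Noetherian $R$ --- and concludes with the homological form of the Bieri--Eckmann criterion (Theorem \ref{theorem:C bieri-eckmann criterion}).

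For $\HFFP_n\Rightarrow\FFP_n$ your plan stops exactly where the content lies, and the repair you sketch would not work: a comparison map from an $\mathcal{F}$-split resolution into the merely exact complex $E(P_*)$ cannot be produced by relative projectivity (lifting is only against $\mathcal{F}$-split epimorphisms), so the proposed ``comparison plus chain homotopy'' argument has no footing. The paper's decisive observation --- and the reason this implication needs no Noetherian hypothesis, contrary to where you locate it --- is Lemma \ref{lemma:HF Fgood is Fsplit}: since the free resolution $P_*$ of $R^-$ is exact as a complex of $\HeckeF$-modules, the complex $P_*(G/1)\to R$ is a resolution of $R$ by permutation modules with stabilisers in $\mathcal{F}$ which is $\mathcal{F}$-good (every fixed-point complex is exact, Remark \ref{remark:fp res of R is Fgood}), and an $\mathcal{F}$-good resolution of $R$ by such permutation modules is \emph{automatically} $\mathcal{F}$-split; this is proved by introducing the property $(P_H)$, verifying it for permutation modules (Lemma \ref{lemma:HF perm module has PH}) and propagating it along kernels by induction. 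Granting that lemma, $P_*(G/1)$ is itself the required $\mathcal{F}$-split resolution by finitely generated $\mathcal{F}$-projectives (Proposition \ref{prop:HF HFFPn implies FFPn}), with no comparison step. (A small further slip: the relevant $\HeckeF$-module is the fixed-point functor $R^-$, not the constant functor, and no Noetherian hypothesis is needed to phrase $\FP_n$ as the existence of a resolution finitely generated up to degree $n$.)
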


In Section \ref{section:family of p subgroups} we prove a result similar to that shown for $\Fin $-cohomology in \cite{LearyNucinkis-GroupsActingPrimePowerOrder}, showing that depending on the coefficient ring, $\operatorname{\mathcal{H}_{\Fin}cd}$ may be calculated using a subfamily of the family of finite subgroups.  For example when working over $\ZZ$ we need consider only the family $\mathcal{P}$ of finite subgroups of prime power order, and over either the finite field $\FF_p$ or over $\ZZ_{(p)}$ (the integers localised at $p$), we need consider only the family $\mathcal{P}$ of subgroups of order a power of $p$. 

\theoremstyle{plain}\newtheorem*{CustomThmD}{Theorem \ref{theorem:HF HeckeFcd = HeckePcd and HeckeFFPn = HeckePFPn}}
\begin{CustomThmD}
  For $n \in \NN \cup \{ \infty \}$, the conditions $\operatorname{\mathcal{H}_{\Fin}cd} G = n$ and $\operatorname{\mathcal{H}_{\mathcal{P}}cd} G = n$ are equivalent, as are the conditions $\mathcal{H}_{\Fin}\negthinspace\FP_n$ and $\mathcal{H}_{\mathcal{P}}\negthinspace\FP_n$. 
\end{CustomThmD}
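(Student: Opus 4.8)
The plan is to upgrade restriction of cohomological Mackey functors from the family $\Fin$ to the subfamily $\mathcal{P}$ to an \emph{equivalence} of abelian categories, for the coefficient rings in question, after which the theorem is immediate, since an exact equivalence preserves projective dimension and carries finitely generated projectives to finitely generated projectives. Recall from Yoshida \cite{Yoshida-GFunctors2} that a cohomological Mackey functor over $R$ for $\mathcal{F}$ is the same as a contravariant additive functor to $\RMod$ out of $\operatorname{perm}_{\mathcal{F}}(RG)$, the full additive subcategory of the category of $RG$-modules on the permutation modules $R[G/H]$, $H \in \mathcal{F}$ (this is Yoshida's Hecke category $\HeckeF$ up to closing under finite direct sums, which does not change the module category). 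Under this identification $\operatorname{\mathcal{H}_{\mathcal{F}}cd} G$ and the property $\mathcal{H}_{\mathcal{F}}\negthinspace\FP_n$ are the projective dimension and the $\FP_n$-property of the structure functor $\uR$ whose resolutions compute Bredon cohomology with cohomological Mackey coefficients---concretely the cohomologification of the Burnside functor $B^G$, which here is the fixed-point functor $\uR = \Hom_{RG}(-,R)$ sending $R[G/H]$ to $R^H = R$. The inclusion $\operatorname{perm}_{\mathcal{P}}(RG) \hookrightarrow \operatorname{perm}_{\Fin}(RG)$ is fully faithful and additive, the induced restriction functor on module categories is exact, and it carries $\uR$ for $\Fin$ to $\uR$ for $\mathcal{P}$; it remains only to see that this restriction functor is an equivalence.

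The key lemma is that, for $R = \ZZ$ with $\mathcal{P}$ the prime-power order subgroups, and for $R = \ZZ_{(p)}$ or $R = \FF_p$ with $\mathcal{P}$ the $p$-subgroups, every object of $\operatorname{perm}_{\Fin}(RG)$ is a direct summand of an object of $\operatorname{perm}_{\mathcal{P}}(RG)$; by additivity it suffices to treat a single $R[G/H]$ with $H$ finite. For $Q \le H$ let $\pi_Q \colon R[G/Q] \to R[G/H]$ be induced by the $G$-map $G/Q \twoheadrightarrow G/H$, and let $\iota_Q \colon R[G/H] \to R[G/Q]$ be the $RG$-map determined by $eH \mapsto \sum_h hQ$, the sum running over a transversal of $Q$ in $H$ (this element of $R[G/Q]$ is $H$-fixed, so the map is well defined); then $\pi_Q \iota_Q$ is multiplication by $[H : Q]$. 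Over $\ZZ$, choosing a Sylow $q$-subgroup $Q_q \le H$ for each prime $q \mid |H|$ gives $\gcd_q [H : Q_q] = 1$, so a suitable $\ZZ$-linear combination of the $\pi_{Q_q}, \iota_{Q_q}$ splits $R[G/H]$ off $\bigoplus_q R[G/Q_q] \in \operatorname{perm}_{\mathcal{P}}(\ZZ G)$ (each $Q_q$ has prime-power order). Over $\ZZ_{(p)}$ or $\FF_p$, taking $Q$ a Sylow $p$-subgroup of $H$ makes $[H : Q]$ a unit, so $R[G/H]$ is already a summand of $R[G/Q]$ with $Q$ a $p$-group.

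Now apply the general fact that if $\mathcal{C} \hookrightarrow \mathcal{D}$ is a fully faithful additive functor such that every object of $\mathcal{D}$ is a retract of an object of $\mathcal{C}$, then restriction along it is an equivalence $\mathcal{D}\text{-Mod} \xrightarrow{\ \sim\ } \mathcal{C}\text{-Mod}$ of functor categories, with quasi-inverse the left Kan extension $L$: the functor $L$ is fully faithful because $\mathcal{C} \hookrightarrow \mathcal{D}$ is, and essentially surjective because its essential image is closed under colimits and contains every representable $\mathcal{D}(-,d)$ (such a $d$ being a retract of some $\mathcal{C}$-object, $\mathcal{D}(-,d)$ is the image of an idempotent on $L$ of a representable), and both functors preserve finitely generated projectives since these are precisely the retracts of finite direct sums of representables. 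Applying this with $\mathcal{C} = \operatorname{perm}_{\mathcal{P}}(RG)$, $\mathcal{D} = \operatorname{perm}_{\Fin}(RG)$ and using that the equivalence identifies the two copies of $\uR$, we conclude $\operatorname{\mathcal{H}_{\Fin}cd} G = \operatorname{\mathcal{H}_{\mathcal{P}}cd} G$, and that for every $n \in \NN \cup \{\infty\}$ the group $G$ is $\mathcal{H}_{\Fin}\negthinspace\FP_n$ precisely when it is $\mathcal{H}_{\mathcal{P}}\negthinspace\FP_n$.

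The obstacles here are organisational rather than deep. The one point needing care is the interface with Yoshida's equivalence: one must check that restricting a cohomological Mackey functor along $\mathcal{O}_{\mathcal{P}} \hookrightarrow \mathcal{O}_{\Fin}$ corresponds to restricting the associated $\operatorname{perm}(RG)$-module, and that the cohomological Mackey functor whose resolutions compute $\operatorname{\mathcal{H}_{\mathcal{F}}cd} G$ is natural in the family---which it is, being the cohomologification of $B^G$. One should also fix once and for all the convention that $\operatorname{perm}_{\mathcal{F}}(RG)$ is closed under finite direct sums (or else pass to additive completions, with the same module categories), so that the retract statement of the key lemma propagates from the generators $R[G/H]$ to every object; granted that, the equivalence, and hence the theorem in all its parts, follows.
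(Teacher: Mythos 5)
Your proposal is correct, and it reaches Theorem \ref{theorem:HF HeckeFcd = HeckePcd and HeckeFFPn = HeckePFPn} by a genuinely different route from the paper, although the arithmetic core is the same: your splitting of $R[G/H]$ off $R[G/Q]$ ($Q$ a Sylow $q$-subgroup, the index being invertible over $\FF_q$ or $\ZZ_{(q)}$), respectively off $\bigoplus_q R[G/Q_q]$ via B\'ezout over $\ZZ$, is exactly the module-level (Yoneda) incarnation of the paper's averaged maps $\iota_H$, $\rho_H$ and of Lemma \ref{lemma:iota circ rho is id}. The paper exploits this at the level of resolutions: it restricts a projective $\HeckeF$-resolution of $R^-$ using the fact that restricted representables are finitely generated projective (Lemma \ref{lemma:restriction of HeckeF proj is HeckePproj}), and conversely extends $\HeckeP$-projectives to $\HeckeF$-projectives (Lemma \ref{lemma:HF proj HeckeP module extension}), twists the differentials at objects $G/H$ with $H \notin \mathcal{P}$ by $P_{i-1}(\iota_H) \circ d_i \circ P_i(\rho_H)$, and detects exactness of $\HeckeF$-complexes on the objects $G/P$ with $P \in \mathcal{P}$ (Lemma \ref{lemma:HF ch complex HFexact iff HPexact}). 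You instead promote the retract statement to a Morita-type equivalence: since $\HeckeP$ is a \emph{full} subcategory of $\HeckeF$ (both have all $RG$-homomorphisms as morphisms) and, after closing under finite direct sums, every object of $\operatorname{perm}_{\Fin}(RG)$ is a retract of one from $\operatorname{perm}_{\mathcal{P}}(RG)$, restriction is an exact equivalence of module categories matching $R^-$ with $R^-$, so the equality of cohomological dimensions and of the $\FP_n$ conditions drop out simultaneously. This buys more than the paper proves (every homological invariant of cohomological Mackey functors is unchanged on passing to $\mathcal{P}$, and the extension-of-projectives and exactness-detection lemmas become instances of the equivalence), at the price of the Kan-extension formalism and the two bookkeeping points you flag yourself: the additive completion is genuinely needed over $\ZZ$, since $\bigoplus_q R[G/Q_q]$ is not an object of $\HeckeP$ as defined, and fullness of the inclusion is what makes the unit/counit argument work on retracts; both are handled correctly, and the identification of the structure functor is immediate since $R^-$ restricted along $\HeckeP \hookrightarrow \HeckeF$ is again $R^-$. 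I see no gap.
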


This enables us to give a complete description of the condition $\mathcal{H}_{\Fin}\negthinspace\FP_n$ over $\FF_p$.

\theoremstyle{plain}\newtheorem*{CustomThmE}{Corollary \ref{cor:equiv conditions for HFFPn} and Lemma \ref{lemma:OFFPn equiv conditions}}
\begin{CustomThmE}
Over $\FF_p$, a group $G$ is $\mathcal{H}_{\Fin}\negthinspace\FP_n$ if and only if $\mathcal{P}$ has finitely many conjugacy classes and $WH = N_GH / H$ is $\FP_n$ for all $H \in \mathcal{P}$.
\end{CustomThmE}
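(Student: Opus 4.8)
The plan is to prove the statement as a chain of equivalences, in which the first and last links are appeals to results already available and the middle link carries the real content; this matches the split between Corollary~\ref{cor:equiv conditions for HFFPn} and Lemma~\ref{lemma:OFFPn equiv conditions}. Take the coefficient ring to be $\FF_p$, so that $\mathcal{P}$ is the family of finite $p$-subgroups of $G$. The first link is Theorem~\ref{theorem:HF HeckeFcd = HeckePcd and HeckeFFPn = HeckePFPn}: $G$ is $\mathcal{H}_{\Fin}\negthinspace\FP_n$ over $\FF_p$ if and only if it is $\mathcal{H}_{\mathcal{P}}\negthinspace\FP_n$ over $\FF_p$. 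This reduces the whole problem to the subfamily $\mathcal{P}$, and it is here that the hypothesis that the ring is $\FF_p$ is used.

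The middle link is the claim that, over $\FF_p$, the group $G$ is $\mathcal{H}_{\mathcal{P}}\negthinspace\FP_n$ if and only if it is $\mathcal{O}_{\mathcal{P}}\negthinspace\FP_n$. One implication is immediate: a finitely generated projective resolution over the orbit category $\mathcal{O}_{\mathcal{P}}$ induces, via the construction of Section~\ref{section:Homology and Cohomology of Cohomological Mackey Functors} applied to the family $\mathcal{P}$, a finitely generated projective resolution over the Hecke category $\mathcal{H}_{\mathcal{P}}$, so $\mathcal{O}_{\mathcal{P}}\negthinspace\FP_n$ implies $\mathcal{H}_{\mathcal{P}}\negthinspace\FP_n$. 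For the converse I would rerun the argument establishing Theorem~\ref{theorem:HFFPn iff FFPn} with $\Fin$ replaced everywhere by $\mathcal{P}$: since $\FF_p$ is commutative Noetherian, a finitely generated projective resolution of cohomological Mackey functors can be transported back to a finitely generated projective resolution over $\mathcal{O}_{\mathcal{P}}$. The point at which the prime enters is the vanishing over $\FF_p$ of the index factors $[H:K]$ attached to proper inclusions of finite $p$-groups (the composite of restriction and transfer along such an inclusion is multiplication by $[H:K]$, hence zero), which forces the Hecke category of $\mathcal{P}$ over $\FF_p$ to sit close enough to the orbit category of $\mathcal{P}$ that the two $\FP_n$ conditions coincide, even though the corresponding cohomological dimensions $\operatorname{\mathcal{H}_{\mathcal{P}}cd}$ and $\operatorname{\mathcal{O}_{\mathcal{P}}cd}$ genuinely can differ.

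The last link is Lemma~\ref{lemma:OFFPn equiv conditions}, the version for the family $\mathcal{P}$ of the description of the $\mathcal{O}_{\Fin}\negthinspace\FP_n$ condition in \cite[3.1, 3.2]{KMPN-CohomologicalFinitenessConditionsInBredon}: $G$ is $\mathcal{O}_{\mathcal{P}}\negthinspace\FP_n$ if and only if $\mathcal{P}$ has finitely many conjugacy classes of subgroups and $WH = N_GH/H$ is $\FP_n$ for every $H \in \mathcal{P}$. Its proof is a routine transcription of the cited one: finiteness of the set of conjugacy classes is precisely finite generation of the constant Bredon module (the case $n=0$), and the inductive step rests on the standard identification of $\Ext$ over the orbit category with the ordinary cohomology of the Weyl groups $WH$. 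Concatenating the three links yields the theorem. The only genuine obstacle is the converse half of the middle link: in general $\mathcal{H}_{\Fin}\negthinspace\FP_n$ and $\mathcal{O}_{\Fin}\negthinspace\FP_n$ are not equivalent, so this step cannot be purely formal and must use both that the characteristic is $p$ and that $\mathcal{P}$ consists of $p$-groups; once the transport of resolutions between $\mathcal{H}_{\mathcal{P}}$ and $\mathcal{O}_{\mathcal{P}}$ over $\FF_p$ is set up carefully, the remainder is bookkeeping.
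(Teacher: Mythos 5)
Your outer links are fine: the reduction $\HFFP_n \Leftrightarrow \HeckeP\negthinspace\FP_n$ over $\FF_p$ is Theorem \ref{theorem:HF HeckeFcd = HeckePcd and HeckeFFPn = HeckePFPn}, and the final translation is Lemma \ref{lemma:OFFPn equiv conditions} applied to the family $\mathcal{P}$; the easy half of your middle link ($\OP\negthinspace\FP_n \Rightarrow \HeckeP\negthinspace\FP_n$, by inducing a finitely generated projective $\OP$-resolution via Proposition \ref{prop:HF sigma ind preserves proj res of R}) is also correct. The gap is in the converse of the middle link, which is exactly where the new content lies. You propose to ``rerun the argument establishing Theorem \ref{theorem:HFFPn iff FFPn} with $\Fin$ replaced by $\mathcal{P}$,'' but that theorem relates $\HFFP_n$ to the \emph{relative} condition $\FFP_n$ of Nucinkis's $\mathcal{F}$-cohomology, not to the Bredon condition $\OFFP_n$; rerun over $\mathcal{P}$ it gives $\HeckeP\negthinspace\FP_n \Leftrightarrow \mathcal{P}\negthinspace\FP_n$, which is strictly weaker than $\OP\negthinspace\FP_n$ (recall $\HFFP_\infty \not\Rightarrow \OFFP_0$, so no formal transport of this kind can exist in general). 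Nor is there a ``transport of resolutions'' available: restricting a projective $\HeckeP$-module along $\pi\circ\sigma$ does not give a projective $\OP$-module (fixed point functors of permutation modules are not Bredon projectives), and the Noetherian hypothesis you invoke plays no role in producing $\OP$-projectivity. The observation that $\vert H:K\vert \equiv 0$ in $\FF_p$ for proper inclusions of $p$-groups is the right moral reason the prime matters, but it is not by itself an argument.

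What is actually needed at this point is the mechanism of Proposition \ref{prop:MFFPn implies OFFPn} rerun for $\mathcal{H}_{\mathcal{P}}$: given a directed system of $\OP$-modules $M_\lambda$ with colimit zero, one uses the Hambleton--Pamuk--Yal\c{c}\i n result (their Lemma 5.3, quoted in Section \ref{subsection:FPn over FFp}) that for $H \in \mathcal{P}$ and coefficients $\FF_p$ the coinduced module $D_HM$ extends to a \emph{cohomological} Mackey functor; this is where the hypothesis that $\mathcal{P}$ consists of $p$-groups and the ring is $\FF_p$ enters. Then the Bieri--Eckmann criterion applied to $\HeckeP\negthinspace\FP_n$ kills $\varinjlim H^m_{\OP}(G, DC^iM_\lambda)$ for $m \le n$, dimension shifting along the sequences $0 \to C^iM_\lambda \to DC^iM_\lambda \to C^{i+1}M_\lambda \to 0$ reduces to degree zero, and $\OP\negthinspace\FP_0$ is supplied by \cite[Proposition 4.2]{LearyNucinkis-GroupsActingPrimePowerOrder} together with Theorem \ref{theorem:HFFPn iff FFPn}. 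Without this extension-to-cohomological-Mackey-functors step your chain of equivalences does not close.
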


\begin{Remark}
In this introduction we have mostly spoken about the family ${\Fin}$ of finite subgroups of a given group $G$.  Throughout the rest of the article we will work over an arbitrary subfamily $\mathcal{F}$ of this family, closed under conjugation and taking subgroups.  One could also work over larger classes of subgroups such as that of virtually cyclic subgroups, however this necessitates a change in the construction of Mackey and cohomological Mackey functors and we shall not consider it.
\end{Remark}

\begin{Acknowledgements}
The author would like to thank his supervisor Brita Nucinkis for suggesting the study of cohomological Mackey functors and for all her guidance and encouragement.
\end{Acknowledgements}

\section{Preliminaries}

\subsection{Modules over a category}

Let $R$ be a commutative ring with unit and $\mathfrak{C}$ a small {\bf Ab} category (sometimes called a preadditive category) with the condition below.
\medskip 
\begin{enumerate}
 \item[$(A)$] For any two objects $x$ and $y$ in $\mathfrak{C}$, the set of morphisms, denoted $[x,y]_{\mathfrak{C}}$, between $x$ and $y$ is a free abelian group.
\end{enumerate}

\begin{Remark}\label{remark:C not assumed EI}
 In \cite[9.2]{Lueck}, categories $\mathfrak{X}$ are considered with the property that every endomorphism in $\mathfrak{X}$ is an isomorphism, then in constructions where we would use the set $[x,y]_{\C}$, L\"uck instead uses the free abelian group with basis the morphisms between $x$ and $y$ in $\mathfrak{X}$ (see for example, \cite[9.8]{Lueck}).  Thus the correct analog of L\"uck's property with our definitions is the following:
\medskip 
 \begin{enumerate}
  \item[(EI)] For every $x \in \C$, the basis elements of $[x,x]_{\C}$ are isomorphisms.
 \end{enumerate}
\end{Remark}

Throughout, the letters $\mathfrak{C}$, $\mathfrak{D}$, $\mathfrak{E}$ etc. will always denote small {\bf Ab} categories with (A).  Define the category of covariant (respectively contravariant) $\mathfrak{C}$-modules over $R$ to be the category of additive covariant (resp. contravariant) functors $\mathfrak{C} \to \RMod$, the category of left $R$-modules.  If neither ``covariant'' or ``contravariant'' is specified in a statement about $\C$-modules, the statement holds for both covariant and contravariant modules.

A $\C$-$\mathfrak{D}$ bi-module (can be covariant or contravariant in either variable, although most of the bi-modules we shall use will be covariant in one variable in contravariant in the other), is an additive functor $\C \times \mathfrak{D} \to \RMod$.  For example, the functor 
\[R[-,?]_{\C} : (x,y) \mapsto R[x,y]_{\C} \]
is a $\C$-$\C$ bi-module, contravariant in the first variable and covariant in the second.

Since $\C$-modules form a functor category and $\RMod$ is an abelian category, the category of $\C$-modules is an abelian category and inherits all of Grothendieck's axioms for an abelian category which are satisfied by $\RMod$, namely:
\begin{itemize}
\item Small colimits exist and products of exact sequences are exact.
\item Small limits exist and coproducts of exact sequences are exact.
\item Filtered colimits of exact sequences are exact.
\end{itemize}
Again because we are working in a functor category, a sequence of $\C$-modules is exact if and only if it is exact when evaluated at every $x \in \mathfrak{C}$.  

Since $[x,y]_{\C}$ is abelian for all $x$ and $y$ in $\mathfrak{C}$, for any $y \in \mathfrak{C}$ we can form a contravariant module $R[-, y]_\mathfrak{C}$ by
\[R[-,y]_\mathfrak{C} (x) = R \otimes_\ZZ [x,y]_\mathfrak{C} \]
The analogous construction for covariant modules gives us a module $R[y, -]_\mathfrak{C}$
\[R[y,-]_\mathfrak{C} (x) = R \otimes_\ZZ [y,x]_\mathfrak{C} \]
The free modules in the category of $\C$ modules are direct sums of modules of the form $R[-,x]_{\C}$, and projective modules may be defined as direct summands of free modules.

The following lemma is crucial.
\begin{Lemma}[The Yoneda-type Lemma]\label{lemma:C yoneda-type}
For any contravariant functor $M$ and $x \in \mathfrak{C}$, there is an isomorphism, natural in $M$:
 \begin{align*}
\Mor_{\mathfrak{C}} \left( R[-,x]_\mathfrak{C} , M\right) &\cong M(x) \\
 f &\mapsto f(x)(\id_x)
 \end{align*}
There is a similar isomorphism for covariant modules.
\end{Lemma}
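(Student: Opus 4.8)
The plan is to run the classical Yoneda argument in this additive setting: exhibit an explicit inverse to the evaluation map $\Phi\colon f\mapsto f(x)(\id_x)$ and verify the two composites are the respective identities, paying attention only to the variances and to the role of hypothesis $(A)$. Note first that $\Phi$ is well defined and $R$-linear in $f$, since $f(x)$ is an $R$-linear map $R[x,x]_{\mathfrak{C}}\to M(x)$ and $\id_x$ determines an element of $R[x,x]_{\mathfrak{C}}$.

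For the inverse, define $\Psi\colon M(x)\to \Mor_{\mathfrak{C}}\!\left(R[-,x]_{\mathfrak{C}},M\right)$ as follows. Given $m\in M(x)$ and an object $y$ of $\mathfrak{C}$, the group $[y,x]_{\mathfrak{C}}$ is free abelian by $(A)$, so there is a unique $\ZZ$-linear map $[y,x]_{\mathfrak{C}}\to M(y)$ sending each basis morphism $\alpha$ to $M(\alpha)(m)$; extending scalars along $R\otimes_{\ZZ}-$ gives an $R$-linear map $\Psi(m)_y\colon R[y,x]_{\mathfrak{C}}\to M(y)$. Because $M$ is an additive functor, in fact $\Psi(m)_y(\gamma)=M(\gamma)(m)$ for every $\gamma\in R[y,x]_{\mathfrak{C}}$, which makes naturality in $y$ immediate: for $\beta\colon z\to y$ the contravariant module $R[-,x]_{\mathfrak{C}}$ sends $\beta$ to precomposition by $\beta$, and $\Psi(m)_z(\alpha\beta)=M(\alpha\beta)(m)=M(\beta)M(\alpha)(m)=M(\beta)\big(\Psi(m)_y(\alpha)\big)$ by contravariant functoriality of $M$. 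Thus $\Psi(m)$ is a morphism of contravariant $\mathfrak{C}$-modules, and $\Psi$ is clearly $R$-linear.

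It remains to check $\Phi\Psi=\id$ and $\Psi\Phi=\id$. The first is $\Phi(\Psi(m))=\Psi(m)_x(\id_x)=M(\id_x)(m)=m$. For the second, let $f\in\Mor_{\mathfrak{C}}\!\left(R[-,x]_{\mathfrak{C}},M\right)$ and put $m=f(x)(\id_x)$; for any object $y$ and basis morphism $\alpha\colon y\to x$, naturality of $f$ at $\alpha$ gives $f(y)(\alpha)=f(y)(\id_x\circ\alpha)=M(\alpha)\big(f(x)(\id_x)\big)=M(\alpha)(m)=\Psi(m)_y(\alpha)$, so $f(y)=\Psi(m)_y$ by $R$-linearity and $f=\Psi(m)$. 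Naturality of the isomorphism in $M$ follows at once from the formula for $\Phi$, since a morphism $M\to M'$ of $\mathfrak{C}$-modules is applied after evaluating at $x$. Finally, the statement for covariant modules follows by the identical argument with $R[x,-]_{\mathfrak{C}}$ in place of $R[-,x]_{\mathfrak{C}}$ and all morphisms reversed, or formally by applying the contravariant case to $\mathfrak{C}^{\mathrm{op}}$, which again satisfies $(A)$.

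There is no genuine obstacle here: this is the usual Yoneda lemma. The only points needing a little care are keeping the direction of composition straight in the naturality computations for the contravariant functor $R[-,x]_{\mathfrak{C}}$, and observing that hypothesis $(A)$ together with additivity of $M$ is exactly what makes the formula $\gamma\mapsto M(\gamma)(m)$ a well-defined $R$-linear map on $R[y,x]_{\mathfrak{C}}$.
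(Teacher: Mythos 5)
Your proof is correct and is exactly the standard Yoneda argument that the paper itself invokes (the paper omits the details, remarking that the proof is a direct translation of the usual proof for the orbit category, citing Mislin--Valette); your construction of the inverse on basis morphisms via hypothesis $(A)$ and additivity of $M$, and the two composite checks, are precisely that translation.
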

The proof is a direct translation of the standard proof for the orbit category into the setting of $\C$-modules, see for example \cite[p.9]{MislinValette-BaumConnes}.  

\subsubsection{Tensor Products}

The categorical tensor product of the covariant $\C$-module $A$ and the contravariant $\C$-module $M$ is the $R$-module defined as
\[M \otimes_{\mathfrak{C}} A = \left. \bigoplus_{x \in \mathfrak{C}} M(x) \otimes_R A(x) \right/ \sim\]
Where $\alpha^*(m) \otimes a \sim m \otimes \alpha_*(a)$ for all morphisms $\alpha \in [x, y]$ in $\mathfrak{C}$, elements $m \in M(y)$ and $n \in A(x)$, and objects $x,y \in \mathfrak{C}$.

The tensor product is associative and there is an adjoint isomorphism, reminiscent of the adjoint isomorphism for left and right modules over a ring:
\[ \Mor_{\mathfrak{D}}(M({?}) \otimes_{\mathfrak{C}} Q({?},{-}), N({-})) \cong \Mor_{\mathfrak{D}}(M({?}), \Mor_{\mathfrak{C}}(Q({?},{-}), N({-}))) \]
Here $Q(?,-)$ is an $\mathfrak{D}$-$\mathfrak{C}$-bi-module---a contravariant $\mathfrak{D}$-module in ``$-$'' and a covariant $\C$-module in ``$?$''.
 
\begin{Lemma}\label{lemma:C yoneda type isos in tensor product}\cite[p.14]{MislinValette-BaumConnes}
There are natural isomorphisms for any contravariant module $M$ and covariant module $A$:
\[M \otimes_{\mathfrak{C}} R[x, {-}]_{\C} \cong M(x)\]
\[R[{-},x]_{\C} \otimes_{\mathfrak{C}} A \cong A(x)\]
\end{Lemma}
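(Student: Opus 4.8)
The plan is to exhibit explicit mutually inverse $R$-module homomorphisms; this is the ``co-Yoneda'' identity for modules over a category, and it follows directly from the definition of $\otimes_{\mathfrak{C}}$.

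I will treat the first isomorphism. Recall $M \otimes_{\mathfrak{C}} R[x,{-}]_{\C}$ is the quotient of $\bigoplus_{y \in \mathfrak{C}} M(y) \otimes_R R[x,y]_{\C}$ by the relations $\beta^*(m) \otimes a \sim m \otimes \beta_*(a)$. Define $\Phi \colon M \otimes_{\mathfrak{C}} R[x,{-}]_{\C} \to M(x)$ on generators by $m \otimes r\alpha \mapsto r\,\alpha^*(m)$ for $m \in M(y)$, $r \in R$, $\alpha \in [x,y]_{\C}$, and define $\Psi \colon M(x) \to M \otimes_{\mathfrak{C}} R[x,{-}]_{\C}$ by $m \mapsto m \otimes \id_x$. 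First I would check that $\Phi$ respects the defining relations: for $\beta \in [y,z]_{\C}$, $m \in M(z)$, $a \in [x,y]_{\C}$, one has $\Phi(\beta^*(m) \otimes a) = a^*\beta^*(m)$ and $\Phi(m \otimes \beta_* a) = (\beta a)^*(m) = a^*\beta^*(m)$, so $\Phi$ descends to the quotient (it is visibly $R$-linear on the direct sum beforehand). Then $\Phi\Psi(m) = (\id_x)^*(m) = m$, and $\Psi\Phi(m \otimes \alpha) = \alpha^*(m) \otimes \id_x \sim m \otimes \alpha_*(\id_x) = m \otimes \alpha$, using the relation for the morphism $\alpha \in [x,y]_{\C}$. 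Naturality in $M$ is immediate from the formula for $\Phi$. The second isomorphism is proved symmetrically, using $\Phi'(a \otimes n) = a_*(n)$ and $\Psi'(n) = \id_x \otimes n$. Alternatively one can deduce both isomorphisms formally from the adjoint isomorphism displayed above together with the Yoneda-type Lemma~\ref{lemma:C yoneda-type}, but the direct argument is shorter.

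The only thing requiring care---and thus the ``hard'' part, though it is purely bookkeeping---is keeping the variance and composition conventions consistent: in $R[x,{-}]_{\C}$ the map induced by $\beta$ is post-composition $a \mapsto \beta \circ a$, so that the relation $\beta^*(m) \otimes a \sim m \otimes (\beta \circ a)$ is compatible with the contravariance of $M$. Once that convention is pinned down the two displayed isomorphisms drop out immediately.
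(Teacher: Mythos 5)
Your proof is correct: the paper gives no argument of its own for this lemma (it simply cites Mislin--Valette), and your explicit mutually inverse maps $\Phi$ and $\Psi$ are exactly the standard co-Yoneda verification that the cited source uses. The variance and composition bookkeeping you flag is handled consistently, and since the functors are additive the check on basic morphisms extends $R$-linearly to all of $R[x,y]_{\mathfrak{C}}$, so nothing further is needed.
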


\subsubsection{Tor and Ext}

Using the free modules $R[-,x]_{\C}$, one can show the category of $\C$-modules has enough projectives and thus we can do homological algebra.  
If $M$ is a contravariant module, $Q_*$ a projective resolution of $M$, $A$ a covariant module and $N$ a contravariant module we define
\[ \Ext_{\C}^k (M, N ) = H^k \Mor_{\C} \big( Q_* , N \big) \]
\[ \Tor^{\C}_k (M, A ) = H_k \big( Q_* \otimes_{\C} A \big) \]
Equivalently we can define
\[ \Tor^{\C}_k (M, A ) = H_k \big( M \otimes_{\C} P_* \big) \]
where $P_*$ is a projective covariant resolution of $A$.  A direct translation of \cite[Theorem 2.7.2, p.58]{Weibel} into the setting of $\C$-modules shows that these two definitions are equivalent.

\subsubsection{Induction, Coinduction and Restriction}\label{subsubsection:ind coind res}

Given a functor $\iota : \C \to \mathfrak{D}$, we define three functors, called restriction, induction and coinduction.    For contravariant functors:
\begin{align*}
 \Res_\iota : \{\text{Covariant $\mathfrak{D}$-modules} \} &\longrightarrow \{\text{Covariant $\C$-modules} \} \\
 \Res_\iota : A &\longmapsto A \circ \iota
\end{align*}
\begin{align*}
 \Ind_\iota : \{\text{Contravariant $\C$-modules} \} &\longrightarrow \{\text{Contravariant $\mathfrak{D}$-modules} \} \\
 \Ind_\iota : M &\longmapsto M({?}) \otimes_{\C} R[-, \iota({?})]_{\mathfrak{D}} 
\end{align*}
Where the notation $R[-, \iota({?})]_{\mathfrak{D}}$ means that in the variable ``?'', this functor should be regarded as a $\C$-module using $\iota$.
\begin{align*}
 \CoInd_\iota : \{\text{Contravariant $\C$-modules} \} &\longrightarrow \{\text{Contravariant $\mathfrak{D}$-modules} \} \\
 \CoInd_\iota : M &\longmapsto \Mor_{\C} ( R[\iota(?), -]_{\mathfrak{D}}, M(?) )
\end{align*}
There are analogous constructions for covariant modules.  Usually the functor $\iota$ will be implicit, and we will use the notation $\Res_{\C}^\mathfrak{D}$ for $\Res_{\iota}$, and similarly for induction and coinduction. 

These functors have many useful properties: 

\begin{itemize}
\item Induction is left adjoint to restriction and coinduction is right adjoint to restriction.
\item Restriction is exact.
\item Induction is right exact and preserves projectives and finitely generation.
\item Induction and restriction preserve colimits and coinduction and restriction preserve limits.
\end{itemize}

\begin{Lemma}\label{lemma:C nat iso ind res and tensor}
 There are natural isomorphisms for any contravariant $\C$-module $M$ and covariant $\C$-module $A$:
\[M \otimes_{\mathfrak{D}}  \Ind_{\C}^\mathfrak{D} A \cong \Res_{\C}^\mathfrak{D} M \otimes_{\C} A \]
\[\Ind_{\C}^\mathfrak{D} M \otimes_{\mathfrak{D}}   A \cong  M \otimes_{\C} \Res_{\C}^\mathfrak{D} A \]
\end{Lemma}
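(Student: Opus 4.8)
The plan is to deduce both isomorphisms purely formally, from associativity of the categorical tensor product (stated above) together with the Yoneda-type identities of Lemma \ref{lemma:C yoneda type isos in tensor product}, so that no genuinely new computation is needed beyond keeping track of variances.

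For the first isomorphism I would regard $M$ as a contravariant $\mathfrak{D}$-module and $A$ as a covariant $\C$-module, and unravel the definition of induction for covariant modules: $\Ind_{\C}^{\mathfrak{D}} A$ is the covariant $\mathfrak{D}$-module $R[\iota(?),{-}]_{\mathfrak{D}} \otimes_{\C} A(?)$, where in the variable ``$?$'' the bi-module $R[\iota(?),{-}]_{\mathfrak{D}}$ is regarded as a $\C$-module via $\iota$. Then I would write
\[
M \otimes_{\mathfrak{D}} \Ind_{\C}^{\mathfrak{D}} A = M({-}) \otimes_{\mathfrak{D}} \big( R[\iota(?),{-}]_{\mathfrak{D}} \otimes_{\C} A(?) \big) \cong \big( M({-}) \otimes_{\mathfrak{D}} R[\iota(?),{-}]_{\mathfrak{D}} \big) \otimes_{\C} A(?),
\]
the isomorphism being associativity of the tensor product applied with $R[\iota(?),{-}]_{\mathfrak{D}}$ viewed as a $\mathfrak{D}$-$\C$ bi-module. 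The inner factor $M({-}) \otimes_{\mathfrak{D}} R[\iota(?),{-}]_{\mathfrak{D}}$ is, for each fixed ``$?$'', the first identity of Lemma \ref{lemma:C yoneda type isos in tensor product} with $x = \iota(?)$, hence naturally isomorphic (in ``$?$'', as contravariant $\C$-modules) to $M(\iota(?)) = (\Res_{\C}^{\mathfrak{D}} M)(?)$; substituting back gives $(\Res_{\C}^{\mathfrak{D}} M)(?) \otimes_{\C} A(?) = \Res_{\C}^{\mathfrak{D}} M \otimes_{\C} A$. The second isomorphism is symmetric: writing $\Ind_{\C}^{\mathfrak{D}} M = M(?) \otimes_{\C} R[{-},\iota(?)]_{\mathfrak{D}}$ and regrouping via associativity,
\[
\Ind_{\C}^{\mathfrak{D}} M \otimes_{\mathfrak{D}} A \cong M(?) \otimes_{\C} \big( R[{-},\iota(?)]_{\mathfrak{D}} \otimes_{\mathfrak{D}} A({-}) \big) \cong M(?) \otimes_{\C} A(\iota(?)) = M \otimes_{\C} \Res_{\C}^{\mathfrak{D}} A,
\]
the middle step being the other half of Lemma \ref{lemma:C yoneda type isos in tensor product}.

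The only point that really needs care — and where I expect to spend most of the effort — is the bookkeeping of variances: one must check that $R[\iota(?),{-}]_{\mathfrak{D}}$ genuinely is a $\mathfrak{D}$-$\C$ bi-module (covariant in the $\mathfrak{D}$-variable ``$-$'', contravariant in the $\C$-variable ``$?$'' via $\iota$), so that associativity of the tensor product applies in the form used, and that the Yoneda-type isomorphisms may legitimately be applied ``with a parameter'' in the remaining variable. Naturality of the resulting isomorphisms in $M$ and $A$ is then inherited from the naturality of associativity and of the isomorphisms of Lemma \ref{lemma:C yoneda type isos in tensor product}, so it requires no separate argument. One could alternatively derive these identities from the induction--restriction adjunction combined with the tensor--hom adjunction, but the direct computation above is shorter.
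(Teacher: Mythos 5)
Your proof is correct: the paper states Lemma \ref{lemma:C nat iso ind res and tensor} without proof, and your derivation --- unravelling the definition of induction as a tensor product with the bi-module $R[-,\iota(?)]_{\mathfrak{D}}$ (resp. $R[\iota(?),-]_{\mathfrak{D}}$), regrouping by associativity, and collapsing the inner factor with Lemma \ref{lemma:C yoneda type isos in tensor product} naturally in the parameter variable --- is exactly the standard argument the author is implicitly relying on. Your reading of the hypotheses (in the first isomorphism $M$ is a contravariant $\mathfrak{D}$-module, in the second $A$ is a covariant $\mathfrak{D}$-module) is the intended correction of the statement's slight sloppiness, and your variance bookkeeping for the bi-modules is right, so nothing is missing.
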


\begin{Remark}\label{remark:C res gives Aut module structure}
 A common functor used with induction and restriction is the inclusion $\iota: \Aut(x) \to \C$, for $x$ some object of $\C$.  If $M$ is a contravariant functor then $\Res_\iota M = M(x)$ is a contravariant $\Aut(x)$-module, where we view $\Aut(x)$ as a category with one object.  Equivalently one can check this gives $M(x)$ the structure of a right $R[\Aut(x)]$-module.
\end{Remark}

\subsubsection{Finiteness Conditions}

 A free module $\oplus_{i \in I} R[-, x_i]$ is said to be finitely generated if the indexing set $I$ is finite.  For any $\C$-module $A$ we can build a free resolution of $A$.
 \[ \cdots \longrightarrow F_1 \longrightarrow F_0 \longrightarrow A \longrightarrow 0\]
 Following ordinary module theory, $A$ is said to be \emph{finitely generated} if $F_0$ can be taken finitely generated, and \emph{finitely presented} if both $F_0$ and $F_1$ can be taken finitely generated.
 
 The projective dimension of a $\C$-module $A$ is the minimal length of a projective resolution of $A$.  This can be characterised as the vanishing of the $\Ext^*_{\C}$ groups as in ordinary homological algebra.  We say a $\C$-module $A$ is $\CFP_n$ if there is a projective resolution of $A$ which is finitely generated up to degree $n$.
 
 There is an analog of the Bieri--Eckmann criterion of \cite{BieriEckmann-FinitenessPropertiesOfDualityGroups}, see also \cite[Theorem 1.3]{Bieri-HomDimOfDiscreteGroups}.  A proof in the case that $\C = \OF$ appears in \cite[Theorem 5.3,5.4]{MartinezNucinkis-GeneralizedThompsonGroups} and requires no substantial changes to prove for $\C$-modules.

 \begin{Theorem}[Bieri--Eckmann Criterion]\label{theorem:C bieri-eckmann criterion}
 The following conditions on any contravariant $\C$-module $M$ are equivalent:
 \begin{enumerate}
  \item $M$ is $\CFP_n$.
  \item If $B_\lambda$, for $\lambda \in \Lambda$, is an filtered system of $\C$-modules then the natural map 
  \[ \varinjlim_\Lambda \Ext_{\C}^k ( M, B_{\lambda} ) \longrightarrow \Ext_{\C}^k(M, \varinjlim_\Lambda B_\lambda)  \]
  is an isomorphism for $k \le n-1$ and a monomorphism for $k = n$.
  \item For any filtered system $B_\lambda$, for $\lambda \in \Lambda$, such that $\varinjlim_\Lambda B_{\lambda} = 0$, 
  \[ \varinjlim_\Lambda \Ext_{\C}^k ( M, B_{\lambda} ) = 0 \]
  for all $k \le n$.
  \item For any collection of indexing sets $\Lambda_x$, for $x \in \C$, the natural map
\[\Tor_k^{\C} \left(  M , \prod_{x \in \operatorname{Ob}\C} \prod_{\Lambda_x} R[x, -]_{\C} \right) \longrightarrow \prod_{x \in \operatorname{Ob}\C} \prod_{\Lambda_x} \Tor_k^{\C} \left( M, R[x, -]_{\C} \right) \]
 is an isomorphism for $k < n$ and an epimorphism for $k = n$.
 \end{enumerate}
There is a similar statement for covariant modules.
 \end{Theorem}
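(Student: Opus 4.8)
The plan is to adapt the classical Bieri--Eckmann argument (\cite[Theorem~1.3]{Bieri-HomDimOfDiscreteGroups}, \cite{BieriEckmann-FinitenessPropertiesOfDualityGroups}; for $\C=\OF$ see \cite[Theorem~5.3,5.4]{MartinezNucinkis-GeneralizedThompsonGroups}), proving $(1)\Rightarrow(2)$, then $(2)\Leftrightarrow(3)$, then $(2)\Rightarrow(1)$, and separately the parallel pair $(1)\Leftrightarrow(4)$. The mechanism behind both halves is that a finitely generated free module $F=\bigoplus_{i=1}^{r}R[-,x_i]_{\C}$ is ``small'' in two complementary senses: by Lemma~\ref{lemma:C yoneda-type} one has $\Mor_{\C}(F,-)\cong\bigoplus_{i=1}^{r}(-)(x_i)$, which commutes with filtered colimits; by Lemma~\ref{lemma:C yoneda type isos in tensor product} one has $F\otimes_{\C}-\cong\bigoplus_{i=1}^{r}(-)(x_i)$, which commutes with arbitrary products. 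I would use these alongside the exactness of filtered colimits and of products in $\C$-modules recorded above.

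For $(1)\Rightarrow(2)$ I would fix a projective resolution $P_\ast\to M$ with $P_0,\dots,P_n$ finitely generated. For a filtered system $\{B_\lambda\}$ the canonical map of cochain complexes $\varinjlim_\Lambda\Mor_{\C}(P_\ast,B_\lambda)\to\Mor_{\C}(P_\ast,\varinjlim_\Lambda B_\lambda)$ is an isomorphism in degrees $\le n$, so on cohomology it gives $\varinjlim\Ext_{\C}^{k}(M,B_\lambda)\cong\Ext_{\C}^{k}(M,\varinjlim B_\lambda)$ for $k\le n-1$, and in degree $n$ the left-hand cocycles inject into the right-hand ones modulo identical coboundaries, giving the monomorphism. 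Running the same computation with $-\otimes_{\C}\bigl(\prod_{x}\prod_{\Lambda_x}R[x,-]_{\C}\bigr)$ and exactness of products gives $(1)\Rightarrow(4)$, the epimorphism in degree $n$ appearing because the tensor product need not commute with the product over the possibly-infinitely-generated $P_{n+1}$. Then $(2)\Rightarrow(3)$ is immediate since $\Ext_{\C}^{k}(M,0)=0$. For $(3)\Rightarrow(2)$ I would set $B=\varinjlim B_\lambda$, factor each structure map $B_\lambda\to B$ as $B_\lambda\twoheadrightarrow B_\lambda'\hookrightarrow B$, feed the short exact sequences $0\to K_\lambda\to B_\lambda\to B_\lambda'\to0$ and $0\to B_\lambda'\to B\to Q_\lambda\to0$ into the long exact $\Ext_{\C}^{\ast}(M,-)$ sequences, and use $\varinjlim K_\lambda=0=\varinjlim Q_\lambda$ together with exactness of filtered colimits; hypothesis $(3)$ then collapses these to the asserted isomorphisms and monomorphism, and a diagram check confirms the comparison map obtained is the canonical one.

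The substantive direction is $(2)\Rightarrow(1)$, which I would prove by induction on $n$. The base case $n=0$ asks that $M$ be finitely generated: I would write $M=\varinjlim_\lambda M_\lambda$ as the filtered colimit of its finitely generated submodules, put $B_\lambda=M/M_\lambda$ so $\varinjlim B_\lambda=0$, observe the compatible quotient maps $M\to B_\lambda$ determine a class in $\varinjlim\Mor_{\C}(M,B_\lambda)$ mapping to $0\in\Mor_{\C}(M,0)$, and conclude from the injectivity in $(2)$ that $M=M_\mu$ for some $\mu$. For the inductive step, $M$ being finitely generated by the base case, I would pick a finitely generated free $F_0\twoheadrightarrow M$ with kernel $M_1$; from the long exact $\Ext$-sequence of $0\to M_1\to F_0\to M\to0$, the dimension shift $\Ext_{\C}^{k}(M_1,-)\cong\Ext_{\C}^{k+1}(M,-)$ for $k\ge1$, and a four/five-lemma chase in degrees $0$ and $1$ (using that the comparison map for the finitely generated free $F_0$ is an isomorphism), I would deduce that $M_1$ satisfies $(2)$ at level $n-1$; by induction $M_1$ is $\CFP_{n-1}$, and splicing a projective resolution of $M_1$ finitely generated up to degree $n-1$ onto $F_0\to M$ exhibits $M$ as $\CFP_n$. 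Finally $(4)\Rightarrow(1)$ follows the same induction, with the ``filtered submodule'' base case replaced by the standard fact that $M$ is finitely generated exactly when $M\otimes_{\C}\bigl(\prod_{x}\prod_{\Lambda_x}R[x,-]_{\C}\bigr)\to\prod_{x}\prod_{\Lambda_x}M(x)$ is surjective for all families $\{\Lambda_x\}$, and the $\Ext$-sequences replaced by the (balanced) $\Tor_{\ast}^{\C}$-sequences. I expect the main obstacle to be the bookkeeping in the inductive step of $(2)\Rightarrow(1)$: keeping straight which comparison maps are isomorphisms and which are only monomorphisms at the boundary degree $n$, and verifying at each stage that the diagram chases return precisely the canonical comparison maps rather than merely some isomorphism. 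The covariant statement would be handled symmetrically.
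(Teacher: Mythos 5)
Your proposal is correct and is essentially the argument the paper intends: the paper gives no proof of its own but cites the classical Bieri--Eckmann argument (\cite[Theorem 1.3]{Bieri-HomDimOfDiscreteGroups}, and \cite[Theorems 5.3, 5.4]{MartinezNucinkis-GeneralizedThompsonGroups} for $\C = \OF$), and your sketch is precisely that argument transported to $\C$-modules via Lemmas \ref{lemma:C yoneda-type} and \ref{lemma:C yoneda type isos in tensor product} together with exactness of filtered colimits and products. The implications you outline ($(1)\Rightarrow(2)$, $(2)\Leftrightarrow(3)$, $(2)\Rightarrow(1)$ by induction with the filtered-colimit-of-finitely-generated-submodules base case, and $(1)\Leftrightarrow(4)$ via the tensor/product comparison) are exactly the standard route, so no further changes are needed.
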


\subsection{Bredon Cohomology}

The \emph{orbit category} is the prototypical example of a category with property (A).  It was introduced for finite groups by Bredon \cite{Bredon-EquivariantCohomologyTheories} and later generalised to arbitrary groups by L\"uck \cite{Lueck}. 

Fix a family $\mathcal{X}$ of subgroups of $G$, closed under subgroups and conjugation.  Commonly studied families are those of all finite subgroups and of all virtually cyclic subgroups.  The objects of the orbit category $\OX$ are all transitive $G$-sets with stabilisers in $\mathcal{X}$, i.e.~the $G$-sets $G/H$ where $H$ is a subgroup in $\mathcal{X}$.  The morphism set $[G/H, G/K]_{\OX}$ is the free abelian group on the set of $G$-maps $G/H \to G/K$.  A $G$-map
\begin{align*}
\alpha: G/H &\longrightarrow G/K  \\
H &\longmapsto gK
\end{align*}
is completely determined by the element $\alpha(H) = gK$, and such an element $gK \in G/K$ determines a $G$-map if and only if $HgK = gK$, usually written as $gK \in (G/K)^H$.  Equivalently $gK$ determines a $G$-map if and only if $g^{-1}Hg \le K$.  

Notice that the isomorphism classes of elements in $\OX$, denoted $\Iso \OX$, are exactly the conjugacy classes of subgroups in $\mathcal{X}$.  The automorphisms of $G/H$ in $\OX$ is $WH^{\text{op}}$, where $WH$ is the Weyl group $N_GH/ H$.  Recalling Remark \ref{remark:C res gives Aut module structure}, if $M$ is a contravariant $\OX$-module then $M(G/H)$ is a right $R[WH^{\text{op}}]$-module, equivalently a left $R[WH]$-module.  

For $M$ a contravariant $\OX$-module and $A$ a covariant $\OX$-module we define the Bredon cohomology and Bredon homology of a group $G$ to be 
\[H^*_{\OX}(G, M) = \Ext^*_{\OX} (\uR, M) \]
\[H_*^{\OX}(G, A) = \Tor^*_{\OX} (\uR, A) \]
Where $\uR$ is the constant functor on $R$.  The Bredon cohomological dimension of $G$, denoted $\OXcd G$ is the length of the shortest projective resolution of $\uR$, and $G$ is said to be $\OXFP_n$ if $\uR$ admits a projective resolution finitely generated up to dimension $n$.

\subsection{Mackey Functors}\label{section:prelim mackey}

Throughout this section $\mathcal{F}$ will denote a subfamily of the family of finite subgroups.  Some changes are needed for larger families such as that of virtually cyclic subgroups.

There are many constructions of Mackey functors, we use the construction coming from modules over a category.  Another construction is mentioned in Remark \ref{remark:mackey green description}.  We begin by building a small category $\MF$ then Mackey functors will be contravariant $\MF$-modules.  As in $\OF$, the objects of $\MF$ are the transitive $G$-sets with stabilisers in $\mathcal{F}$, the morphism set however is much larger.  A \emph{basic morphism} from $G/H$ to $G/K$, where $H$ and $K$ are in $\mathcal{F}$, is an equivalence class of diagrams of the form 
\[ G/H \stackrel{\alpha}{\longleftarrow} G/L \stackrel{\beta}{\longrightarrow} G/K \]
Where the maps are $G$-maps, and $L \in \mathcal{F}$.  This basic morphism is equivalent to 
\[ G/H \stackrel{\alpha^\prime}{\longleftarrow} G/L^\prime \stackrel{\beta^\prime}{\longrightarrow} G/K \]
if there is a bijective $G$-map $\sigma:G/L \to G/L^\prime $, fitting into the commutative diagram below:
\[\xymatrix@-15pt{
& G/L \ar_{\alpha}[ld] \ar^{\beta}[rd] \ar_\cong^\sigma[dd]  & \\ 
G/S & & G/K \\
& G/L^\prime \ar^{\alpha^\prime}[ul] \ar_{\beta^\prime}[ur] & 
}\]
Form the free abelian monoid on these basic morphisms, and complete this free abelian monoid to a group, denoted $[G/H, G/K]_{\MF}$.  This is the set of morphisms in $\MF$ from $G/H$ to $G/K$.  

\begin{Remark}
When building the Mackey category, we could instead have started with equivalence classes of diagrams 
\[ G/H \leftarrow \Delta \rightarrow G/K \]
Where $\Delta$ is any finitely generated $G$-set with stabilisers in $\mathcal{F}$ and the maps are $G$-maps.  This can be shown to be the free abelian monoid on the basic morphisms \cite[Proposition 2.2]{ThevenazWebb-StructureOfMackeyFunctors}.  Because of this alternative construction, we will pass freely between writing 
\[ \left(  G/H \leftarrow G/L \rightarrow G/K \right) + \left(  G/H \leftarrow G/L^\prime \rightarrow G/K \right) \]
and 
\[ \left(  G/H \leftarrow G/L \coprod G/L^\prime \rightarrow G/K \right ) \]

\end{Remark}

To complete the description of $\MF$, we must describe composition of morphisms.  It's sufficient to describe composition of basic morphisms, and then use distributivity to extend this to all morphisms.  If
\[ G/H \leftarrow G/L \rightarrow G/K \]
and
\[ G/K \leftarrow G/S \rightarrow G/Q \]
are two basic morphisms then their composition is the pullback of the diagram below in the category of $G$-sets 
\[
\xymatrix@-15pt{
& G/L\ar[ld] \ar[rd] & & G/S \ar[ld] \ar[rd] & \\
G/H & & G/K & & G/Q 
}
\]

\begin{Lemma}[Composition of morphisms in $\MF$]\label{lemma:mackey morphism pullback}\cite[\S 3]{MartinezPerezNucinkis-MackeyFunctorsForInfiniteGroups}

The diagram below is a pullback in the category of $G$-sets.
\begin{equation*} \sum_{x \in L^g \backslash K / S^{g^\prime} }\left( \vcenter{\xymatrix@-15pt{
 & G/ \big( L^g \cap S^{g^\prime x^{-1}} \big) \ar^{\alpha_{g^{-1}}}[ld] \ar_{\alpha_{x (g^\prime)^{-1}}}[rd] & \\
G/L \ar_{\alpha_g}[rd] & & G/S \ar^{\alpha_{g^\prime}}[ld] \\
& G/K & 
}} \right) \end{equation*}
Notice that the subgroup $ L^g \cap S^{g^\prime x^{-1}}$ is both a subgroup of $K$ via the maps on the left and subconjugated to $K$ via the map $\alpha_x$, which is the composition of the maps on the right.  
\end{Lemma}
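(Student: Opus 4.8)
The plan is to compute the pullback directly as a fibre product of $G$-sets and then to decompose it into $G$-orbits by a double-coset count; this is the equivariant form of the Mackey double coset formula, and essentially the only work is keeping the conjugation conventions straight.

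First I would recall that in the category of $G$-sets the pullback of $G/L \xrightarrow{\alpha_g} G/K \xleftarrow{\alpha_{g'}} G/S$ is the fibre product $P = \{ (aL, bS) \in G/L \times G/S : agK = bg'K \}$ with the diagonal $G$-action and the two coordinate projections, so it is enough to produce a $G$-isomorphism from the displayed disjoint union onto $P$ compatible with the structure maps. Since $\alpha_g$ and $\alpha_{g'}$ are genuine $G$-maps we have $L^g \le K$ and $S^{g'} \le K$; from this and the identities $(L^g)^{g^{-1}} = L$, $(S^{g'x^{-1}})^{x(g')^{-1}} = S$ the maps $\alpha_{g^{-1}} \colon G/(L^g \cap S^{g'x^{-1}}) \to G/L$ and $\alpha_{x(g')^{-1}} \colon G/(L^g \cap S^{g'x^{-1}}) \to G/S$ are well-defined (this is the subconjugacy remark in the statement), and the composite of either leg of a summand into $G/K$ sends the base point to $xK$, which lies in $G/K$ because each double-coset representative $x$ lies in $K$; hence each summand does map to $P$.

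Next I would analyse $P$ orbit by orbit. Translating the first coordinate to the base point shows every $G$-orbit of $P$ contains a point $(L, bS)$; the constraint then reads $g^{-1}bg' \in K$, i.e. $b \in gKg'^{-1}$, the stabiliser of $(L, bS)$ is $L \cap bSb^{-1}$, and $(L, bS)$, $(L, b'S)$ lie in one orbit precisely when $b' \in LbS$. Using $L \le gKg^{-1}$ and $S \le g'K(g')^{-1}$, the subset $gKg'^{-1}$ is an $(L,S)$-biset, so the orbits of $P$ biject with the double cosets $L \backslash gKg'^{-1} / S$; writing $b = gxg'^{-1}$ with $x \in K$ turns this into a bijection with $L^g \backslash K / S^{g'}$ (left translation of $b$ by $L$ becomes left translation of $x$ by $L^g$, right translation by $S$ becomes right translation by $S^{g'}$), which is exactly the indexing set of the sum in the statement.

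Finally I would identify each orbit together with its two structure maps. For $b = gxg'^{-1}$ the stabiliser is $L \cap bSb^{-1} = L \cap g\,S^{g'x^{-1}}\,g^{-1}$, which conjugated by $g^{-1}$ is $L^g \cap S^{g'x^{-1}}$; thus the orbit is $G$-isomorphic to $G/(L^g \cap S^{g'x^{-1}})$ via $a(L^g \cap S^{g'x^{-1}}) \mapsto (ag^{-1}L,\, ax(g')^{-1}S)$, and reading the two projections through this isomorphism shows they send the base point to $g^{-1}L$ and to $x(g')^{-1}S$, i.e. they are $\alpha_{g^{-1}}$ and $\alpha_{x(g')^{-1}}$ respectively. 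Assembling these over all double cosets recovers the displayed sum. I do not expect any genuine obstacle here beyond the bookkeeping; if one prefers, the twisted case reduces to the untwisted double-coset formula by factoring $\alpha_g$ as $G/L \xrightarrow{\cong} G/L^g \hookrightarrow G/K$ and $\alpha_{g'}$ similarly, since pullbacks are unaffected by isomorphisms along the legs.
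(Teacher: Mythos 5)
Your argument is correct: computing the fibre product $\{(aL,bS): agK=bg'K\}$, parametrising its $G$-orbits by the double cosets $L\backslash gK(g')^{-1}/S \cong L^g\backslash K/S^{g'}$, and identifying each orbit with $G/(L^g\cap S^{g'x^{-1}})$ together with the structure maps $\alpha_{g^{-1}}$ and $\alpha_{x(g')^{-1}}$ is exactly the standard double-coset computation underlying this lemma, which the paper does not reprove but cites from Mart\'inez-P\'erez--Nucinkis. The only cosmetic slip is in the commutativity check: the left composite sends the base point to $K$ and the right one to $xK$, and they agree because $x\in K$ -- which is the point you are implicitly using.
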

If $H$ is a subgroup of $G$ the notation $H^g$ means the conjugate $g^{-1}Hg$.

\begin{Lemma}[Standard form for morphisms in $\MF$]\label{lemma:mackey morphism standard form}\cite[Lemma 2.1]{ThevenazWebb-StructureOfMackeyFunctors}
Any basic morphism is equivalent to one in the standard form:
\[\xymatrix@-15pt{
& G/L \ar^{\alpha_g}[rd] \ar_{\id}[ld] & \\ 
G/K & & G/S
}\]
\end{Lemma}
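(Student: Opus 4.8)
The plan is to exploit the one parameter that describes a $G$-map out of a transitive $G$-set---the image of the base coset---and to absorb it by passing to a conjugate of the middle object of the span. Concretely, I would write the given basic morphism as $G/H \xleftarrow{\alpha} G/L \xrightarrow{\beta} G/K$ with $L \in \mathcal{F}$, choose $a,b \in G$ with $\alpha(eL) = aH$ and $\beta(eL) = bK$, and record what it means for $\alpha$ and $\beta$ to be $G$-maps: since $H^g$ denotes $g^{-1}Hg$, we get $a^{-1}La \le H$ and $b^{-1}Lb \le K$. Then I set $L' := a^{-1}La$; this lies in $\mathcal{F}$ because $\mathcal{F}$ is closed under conjugation, and it satisfies $L' \le H$.

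Next I would define the replacement span together with the comparison isomorphism. Let $\sigma : G/L \to G/L'$ be the $G$-map $eL \mapsto aL'$; it is well-defined and bijective since $a^{-1}La = L'$ exactly, so it is an isomorphism of middle objects of the type allowed in the equivalence relation. Let $\alpha' : G/L' \to G/H$ be $eL' \mapsto eH$, which is legitimate because $L' \le H$ and which is precisely the canonical projection written $\id$ in the statement (the map $\alpha_e$ in the notation of Lemma~\ref{lemma:mackey morphism pullback}). Let $\beta' : G/L' \to G/K$ be $eL' \mapsto a^{-1}bK$; this is legitimate because $(a^{-1}b)^{-1} L' (a^{-1}b) = (b^{-1}a)(a^{-1}La)(a^{-1}b) = b^{-1}Lb \le K$, so $\beta' = \alpha_{a^{-1}b}$. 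Finally I would verify that $\sigma$ intertwines the two spans: since a $G$-map out of a transitive $G$-set is determined by the image of its base coset, it suffices to evaluate at $eL$, and $(\alpha'\circ\sigma)(eL) = \alpha'(aL') = a(eH) = aH = \alpha(eL)$, while $(\beta'\circ\sigma)(eL) = \beta'(aL') = a(a^{-1}bK) = bK = \beta(eL)$. Hence the original span is equivalent to $G/H \xleftarrow{\id} G/L' \xrightarrow{\alpha_{a^{-1}b}} G/K$, which is the asserted standard form after renaming the objects appropriately (so that $H$, $L'$, $K$, $a^{-1}b$ play the roles of $K$, $L$, $S$, $g$).

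The argument is routine; the only point requiring genuine care will be keeping the conjugation conventions consistent throughout, so that the well-definedness inequalities $L' \le H$ and $b^{-1}Lb \le K$ land on the correct side, and observing that conjugation-closedness of $\mathcal{F}$ is exactly what guarantees the new middle object $G/L'$ is still an object of $\MF$. This reproduces the argument of \cite[Lemma~2.1]{ThevenazWebb-StructureOfMackeyFunctors} in the present notation.
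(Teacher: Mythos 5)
Your argument is correct: the paper itself gives no proof of this lemma, only the citation to Th\'evenaz--Webb, and your computation (conjugating the middle object by the element $a$ determining the left-hand map, so that the left leg becomes the canonical projection, and checking $b^{-1}a\,L'\,a^{-1}b = b^{-1}Lb \le K$ for the right leg) is exactly the standard argument behind that citation. The conventions are handled consistently, including the bijectivity of $\sigma$ from $a^{-1}La = L'$ and the use of conjugation-closedness of $\mathcal{F}$, so nothing is missing.
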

Recall that two such basic morphisms are equivalent if there is a commutative diagram of the form:

\[\xymatrix@-15pt{
& G/L \ar^{\alpha_g}[rd] \ar_{\id}[ld] \ar^{\alpha_x}_{\cong}[dd] & \\ 
G/K & & G/S \\
& G/L^x \ar_{\alpha_{g^\prime}}[ru] \ar^{\id}[lu] & 
}\]
The commutativity of the left hand triangle ensures that $x \in K$, and that of the right hand diagram gives $\alpha_g = \alpha_{g^\prime} \circ \alpha_x$, or more concisely $gS = xg^\prime S$. This means $KgS = Kg^\prime S$ and $x = gS(g^{\prime})^{-1} \cap K = gSg^{-1} \cap K$.  Thus a basic morphism is determined by both an element of $K \backslash G / S$ and a subgroup $L$, subconjugate to $K$, unique up to conjugation by an element $x \in gSg^{-1} \cap K$.  In summary,
\begin{equation}\label{eq:mackey morphisms}
[ G/K, G/S ]_{\MF} = \bigoplus_{g \in K \backslash G/S } \bigoplus_{\substack{L \le gSg^{-1} \cap K \\ \text{Up to $gSg^{-1} \cap K$-conjugacy}}} \ZZ_{L, g}  
\end{equation}

\begin{Example}\label{example:mackey morphisms for S=1}
 If $S = 1$ then \eqref{eq:mackey morphisms} becomes 
\[[ G/K, G/1 ]_{\MF} = \bigoplus_{g \in K \backslash G } \ZZ_{g} \cong \ZZ[K\backslash G] \]
\end{Example}

\begin{Remark}\label{remark:mackey MF has A not EI}
 The category $\MF$ has property (A) by construction, but it does not have property (EI).  For example, given any non-trivial $H \in \mathcal{F}$, the endomorphism 
\[e = \left( G/H \stackrel{\alpha_1}{\longleftarrow} G/1 \stackrel{\alpha_1}{\longrightarrow} G/H \right)\]
is not an isomorphism.  If 
\[ m = \left( G/H \stackrel{\alpha_1}{\longleftarrow} G/K \stackrel{\alpha_g}{\longrightarrow} G/H \right) \]
is some other basic morphism then their composition is
\[ m \circ e = \sum_{x \in H/K} \left( G/H \stackrel{\alpha_1}{\longleftarrow} G/1 \stackrel{\alpha_{xg}}{\longrightarrow} G/H\right) \]
So it's clear that composing $e$ with any element of $[G/H, G/H]_{\HeckeF}$ can never produce the identity morphism on $G/H$.  The structure of the endomorphisms and automorphisms of objects in $\HeckeF$ is explained in Remarks \ref{remark:mackey structure of Aut} and \ref{remark:mackey structure of End}.
\end{Remark}

Following \cite{MartinezPerezNucinkis-MackeyFunctorsForInfiniteGroups}, we will mostly consider contravariant Mackey functors.  From here on, whenever we write $\MF$-module, we mean contravariant $\MF$-module.  

\begin{Remark}[Green's alternative description of Mackey functors]\label{remark:mackey green description}
There is an alternative description of Mackey functors, due to Green \cite{Green-AxiomaticRepresentationTheoryForFiniteGroups}, which we include here in full because when we later study cohomological Mackey functors we will need some of the language.

Green defined a Mackey functor $M$ as a mapping,
\[ M : \{ G/H \: : \: \text{$H \in \mathcal{F}$}\}  \to \RMod \]
with morphisms for any finite subgroups $K \le H$ in $\mathcal{F}$,
\begin{align*}
 M(I_K^H) &: M(G/K) \to M(G/H) \\
 M(R_K^H) &: M(G/H) \to M(G/K) \\
 M(c_g) &: M(G/H) \to M(G/H^{g^{-1}}) 
\end{align*}
 called \emph{induction}, \emph{restriction} and \emph{conjugation} respectively.  Induction is sometimes also called transfer.  In the literature, $M(I_K^H)$, $M(R_K^H)$ and $M(c_g)$ are often written as just $I_K^H $, $R_K^H$ and $c_g$, omitting the $M$ entirely.  We choose to use different notation so that we can identify $I_K^H $, $R_K^H$ and $c_g$ with specific morphisms in $\MF$ (see the end of this remark).

This mapping $M$ must satisfy the following axioms,
\begin{enumerate}
 \item[(0)] $M(I_H^H)$, $M(R^H_H)$ and $M(c_h)$ are the identity morphism for all $h \in H$.
 \item[(1)] $M(R_J^K) \circ M(R^H_K) = M(R^H_J)$, where $J \le K \le H$ and $J,K,H \in \mathcal{F}$.
 \item[(2)] $M(I^H_K) \circ M(I^K_J) = M(I^H_J)$, where $J \le K \le H$ and $J,K,H \in \mathcal{F}$.
 \item[(3)] $M(c_g) \circ M(c_h) = M(c_{gh})$ for all $g, h \in G$.
 \item[(4)] $M(R^{H^{g^{-1}}}_{K^{g^{-1}}}) \circ M(c_g )= M(c_g) \circ M(R^H_K)$, where $K \le H$ and $K,H \in \mathcal{F}$ and $g \in G$.
 \item[(5)] $M(I^{H^{g^{-1}}}_{K^{g^{-1}}}) \circ M(c_g )= M(c_g) \circ M(I^H_K)$, where $K \le H$ and $K,H \in \mathcal{F}$ and $g \in G$.
 \item[(6)] $M(R^H_J) \circ M(I^H_K) = \sum_{x \in J \backslash H / K} M(I^J_{J \cap K^{x^{-1}}}) \circ M(c_x) \circ M( R^K_{J^x \cap K}) $, where $J, K \le H$ and $J,K,H \in \mathcal{F}$.
\end{enumerate}
Axiom (6) is often called the Mackey axiom.  Converting between this description and our previous description is done by rewriting induction, restriction and conjugation in terms of morphisms of $\MF$.
\begin{align*} 
M(I_K^H) &\longleftrightarrow M\big( G/H \stackrel{\alpha_1}{\longleftarrow} G/K \stackrel{\alpha_1}{\longrightarrow} G/K \big) \\
M(R_K^H) &\longleftrightarrow M\big( G/H \stackrel{\alpha_1}{\longleftarrow} G/H \stackrel{\alpha_1}{\longrightarrow} G/K \big) \\ 
M(c_g) &\longleftrightarrow   M\big( G/H^{g^{-1}} \stackrel{\alpha_1}\longleftarrow G/H^{g^{-1}} \stackrel{\alpha_g}{\longrightarrow} G/H \big) 
\end{align*}
Because of the above, we make the following definitions
\begin{align*}
 I_K^H &= \big( G/H \stackrel{\alpha_1}{\longleftarrow} G/K \stackrel{\alpha_1}{\longrightarrow} G/K \big) \\
R_K^H &= \big( G/K \stackrel{\alpha_1}{\longleftarrow} G/K \stackrel{\alpha_1}{\longrightarrow} G/H \big) \\ 
c_g &=   \big( G/H^{g^{-1}} \stackrel{\alpha_1}\longleftarrow G/H^{g^{-1}} \stackrel{\alpha_g}{\longrightarrow} G/H \big) 
\end{align*}
It is possible to write any morphism in $\MF$ as a composition of the three morphisms above.

One can check that Green's axioms all follow from the description of the composition of morphisms in $\MF$ as pullbacks (Lemma \ref{lemma:mackey morphism pullback}), and vice versa.  Complete proofs of the equivalence of this definition with our previous one can be found in \cite[\S 2]{ThevenazWebb-StructureOfMackeyFunctors}.
\end{Remark}

\subsubsection{Free Modules}
In this section we describe the structure of $\Aut(G/H)$ and $\End(G/H)$.

\begin{Remark}[Structure of $\Aut(G/H)$]\label{remark:mackey structure of Aut}

As mentioned in Remark \ref{remark:mackey MF has A not EI}, $\MF$ doesn't have property (EI)---$\End(G/H)$ is not equal to $\Aut(G/H)$.  Using the standard form of Lemma \ref{lemma:mackey morphism standard form}, the automorphisms of an object are the diagrams of the form
\[ a_g = \big( G/H \stackrel{\alpha_1}{\longleftarrow} G/H \stackrel{\alpha_g}{\longrightarrow} G/H \big)  \]
Where $g$ is unique up to multiplication by an element of $H$.  Every $g \in WH$ uniquely determines a $G$-map $\alpha_g :G/H \to G/H$ and every $G$-map comes from such a $g$.  Finally, since $a_g \circ a_h = a_{hg}$, we determine that $\Aut(G/H) \cong \ZZ[WH^\text{op}]$.  This is identical to the situation over the orbit category, where $\Aut_{\OF}(G/H) \cong \ZZ[WH^\text{op}]$ also.  Thus, as with $\OF$-modules, if $M$ is a Mackey functor, then $M(G/H)$ is a right $R[WH^{\text{op}}]$ module, equivalently a left $R[WH]$-module.
\end{Remark}

\begin{Lemma}\label{lemma:restriction of free mackey at 1 is FPinfty}
 As a left $R[W_GS]$ module, $R[ G/S, G/K ]_{\MF}$ is an $R[W_GS]$ permutation module with finite stabilisers.  In addition, $R[ G/1, G/K ]_{\MF}$ is $\FP_\infty$ over $RG$.
\end{Lemma}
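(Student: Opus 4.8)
The plan is to read a $\ZZ$-basis of $R[G/S,G/K]_{\MF}$ off the description \eqref{eq:mackey morphisms}, make the $W_GS$-action on that basis explicit, and check that the point stabilisers are finite; the $\FP_\infty$ statement will then drop out of the case $S=1$.

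By construction $[G/S,G/K]_{\MF}$ is the free abelian group on the set $\mathcal B$ of basic morphisms $G/S\to G/K$, so $R[G/S,G/K]_{\MF}=R[\mathcal B]$. For $h\in W_GS$ the automorphism $a_h=\big(G/S\stackrel{\alpha_1}{\longleftarrow}G/S\stackrel{\alpha_h}{\longrightarrow}G/S\big)$ satisfies $a_h\circ a_{h^{-1}}=a_{h^{-1}}\circ a_h=\id_{G/S}$ (Remark \ref{remark:mackey structure of Aut}), and precomposition with $a_h$ sends a basic morphism to a basic morphism, since the pullback defining $m\circ a_h$ is formed along the bijection of $G$-sets $\alpha_h$ and hence remains transitive. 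Thus $W_GS$ permutes $\mathcal B$; and since the left $R[W_GS]$-module structure on $R[G/S,G/K]_{\MF}$ (Remarks \ref{remark:C res gives Aut module structure} and \ref{remark:mackey structure of Aut}) is exactly the one induced by this precomposition action, $R[G/S,G/K]_{\MF}$ is a permutation $R[W_GS]$-module.

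It remains to pin down the orbits and show the stabilisers are finite. Swapping the roles of $S$ and $K$ in \eqref{eq:mackey morphisms} identifies $\mathcal B$ with the set of pairs $(gK,L)$ where $gK$ runs over $S\backslash G/K$ and $L$ runs over the subgroups of $gKg^{-1}\cap S$, taken up to conjugacy in $gKg^{-1}\cap S$; such a pair is the standard-form basic morphism $\big(G/S\stackrel{\id}{\longleftarrow}G/L\stackrel{\alpha_g}{\longrightarrow}G/K\big)$. Composing this with $a_h$ and returning to standard form via Lemma \ref{lemma:mackey morphism standard form} --- using the $G$-isomorphism $G/L\to G/(hLh^{-1})$ induced by $h$ --- yields $\big(G/S\stackrel{\id}{\longleftarrow}G/(hLh^{-1})\stackrel{\alpha_{hg}}{\longrightarrow}G/K\big)$; this makes sense because $h\in N_GS$, and it exhibits the action as $h\cdot(gK,L)=(hgK,hLh^{-1})$. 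The bulk of the work is this bookkeeping, which I expect to be the main (if modest) obstacle. Now if $hS\in W_GS$ fixes $(gK,L)$ then in particular $hgK=gK$ in $S\backslash G/K$, forcing $h\in SgKg^{-1}$; together with $h\in N_GS$ this puts the coset $hS$ in the image of the finite group $N_GS\cap gKg^{-1}$ under $N_GS\to W_GS$. Hence every point stabiliser is finite, which proves the first assertion.

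For the second assertion, specialise to $S=1$: then $W_G1=G$ and $L$ is forced to be trivial, so the action above is just left translation of $G$ on $\mathcal B=G/K$, and $R[G/1,G/K]_{\MF}\cong R[G/K]\cong\Ind_K^G R$ as $RG$-modules. Since $K$ is finite, the trivial $RK$-module $R$ is $\FP_\infty$ over $RK$ (for instance the normalised bar resolution of $R$ over $RK$ has all terms finitely generated free), and $\Ind_K^G=RG\otimes_{RK}(-)$ is exact --- $RG$ being free as a right $RK$-module --- and carries finitely generated free $RK$-modules to finitely generated free $RG$-modules. Therefore $R[G/1,G/K]_{\MF}$ is $\FP_\infty$ over $RG$.
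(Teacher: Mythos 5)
Your proof is correct and follows essentially the same route as the paper: read off the basis of $R[G/S,G/K]_{\MF}$ from \eqref{eq:mackey morphisms}, observe that $W_GS$ permutes it by precomposition with the automorphisms $a_w$, bound each point stabiliser by the (finite) stabiliser of a double coset in $S\backslash G/K$, and for $S=1$ identify the module with $R[G/K]\cong\Ind_K^GR$ to get $\FP_\infty$. Your explicit action formula $h\cdot(gK,L)=(hgK,hLh^{-1})$ is in fact slightly more precise than the paper's statement that $w$ sends $R_{L,g}$ to $R_{L,wg}$, which omits the conjugation of $L$ but does not affect the finiteness of the stabilisers.
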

\begin{proof}
The left action of $w \in W_GS$ on $[ G/S, G/K ]_{\MF} $ is the action given by pre-composing any basic morphism $G/S \stackrel{\id}{\leftarrow} G/L \stackrel{\alpha_g}{\rightarrow} G/K$ with the morphism $G/S \stackrel{\id}{\leftarrow} G/S \stackrel{\alpha_w}{\rightarrow} G/S$ to yield the morphism 
\[ G/S \stackrel{\alpha_1}{\leftarrow} G/L \stackrel{\alpha_{wg}}{\rightarrow} G/K \]
To show this we calculate the pullback
\[ \xymatrix@-15pt{
& & G/L \ar_{\id}[dl] \ar^{\alpha_w}[rd] & & \\
& G/S \ar_{\alpha_w}[dr] \ar_{\id}[dl]& & G/L \ar^{\id}[dl] \ar^{\alpha_g}[dr] &  \\
G/S & & G/S & & G/K
} \]
Under the identification \eqref{eq:mackey morphisms}, $w$ maps $R_{L, g}$ onto $R_{L, wg}$, so the stabiliser of this action is the stabiliser of the action of $R[W_GS
]$ on $R[S \backslash G / K]$, which is finite.  In particular $R[ G/S, G/K ]_{\MF} $ is an $R[W_GS]$-permutation module with finite stabilisers.  If $S = 1$ then, using \eqref{eq:mackey morphisms}, $R[G/1, G/K]_{\MF} \cong R[G/K]$ with $RG$ acting by multiplication on the left, thus $R[G/1, G/K]_{\MF}$ is $\FP_\infty$ as a left $RG$-module.
\end{proof}

\begin{Remark}
 $R[ G/S, G/K ]_{\MF}$ is not in general finitely generated as a left $R[W_GS]$-module.  For an example of this let $\mathcal{F}$ be all finite subgroups and choose a group $G$ with a finite subgroup $S$ such that $S \backslash G $ has infinitely many $W_GS $-orbits.  Then, by Example \ref{example:mackey morphisms for S=1},
 \[ R[G/S, G/1]_{\MF} \cong R[S \backslash G] \]
 which is not finitely generated as a left $R[W_GS]$ module.
\end{Remark}
 
\begin{Remark}[Structure of $\End(G/H)$]\label{remark:mackey structure of End}
The structure of $\End(G/H)$ is more complex than that of $\Aut(G/H)$, a basic morphism in $\End(G/H)$ is determined by a morphism in standard form
\[ e_{L, g} = \big( G/H \stackrel{\alpha_1}{\longleftarrow} G/L \stackrel{\alpha_g}{\longrightarrow} G/H \big)  \]
where $L$ is some subgroup of $G$.  As such we can filter $\End(G/H)$ via the poset $\mathcal{F}/G$ of conjugacy classes of subgroups in $\mathcal{F}$.  If $L$ is a finite subgroup of $G$ then we write $ \End(G/H)_L $ for the basic morphisms $e_{L, g}$ for all $g \in G$.  Note that in particular, $\End(G/H)_H = \Aut(G/H)$.  Addition gives $\End(G/H)_L $ an abelian group structure.  Composing two elements of $\End(G/H)_L $ doesn't necessarily give an element of $\End(G/H)_L $, but pre-composing an element of $\End(G/H)_L $ by an element of $\Aut(G/H)$ does, since 
\[ e_{L,g} \circ a_{w}  \cong e_{L, wg} \]
where $a_w = e_{H, w}$ as described in Remark \ref{remark:mackey structure of Aut}.  
Thus $ R\End(G/H)_L $ is a right $\Aut(G/H)$ module, equivalently a left $R[WH]$ module.  In summary, there is an $R[WH]$-module isomorphism
\[ R\End(G/H) \cong \bigoplus_{L \in \mathcal{F}/G} R\End(G/H)_L \]
where $\End(G/H)_H \cong \Aut(G/H)$.
\end{Remark}

\begin{Remark}
 Using \eqref{eq:mackey morphisms}, we see that
\[ R\End(G/H)_1 \cong \bigoplus_{H \backslash G / H} R_{1, g} \]
With left action of $w \in W_G H$ taking $g \mapsto wg$.  In other words, $R\End(G/H)_1 \cong R[H \backslash G / H]$ with the canonical action of $W_G H$.  This is not in general finitely generated---take for example $G = D_\infty$, the infinite dihedral group generated by the involutions $a$ and $b$, and $H = \langle a \rangle$.  Then $W_G H$ is the trivial group but $H \backslash G / H$ is an infinite set so $R[H \backslash G / H]$ is not a finitely generated $R$-module.
\end{Remark}

\subsubsection{Induction}

Let $\sigma: \OF \to \MF$ be the covariant functor sending
\[ \sigma(G/H) = G/H \]
\[ \sigma(G/H \stackrel{\alpha}{\rightarrow} G/K) = (G/H \stackrel{\id}{\leftarrow} G/H \stackrel{\alpha}{\rightarrow} G/K  ) \]
Thus $\sigma$ induces restriction, induction, and coinduction between contravariant $\OF$-modules and (contravariant) $\MF$-modules

\begin{Lemma}\label{lemma:mackey struc of cov free as OF}\cite[Proposition 3.6]{MartinezPerezNucinkis-MackeyFunctorsForInfiniteGroups}
There is an $\OF$-module isomorphism:
\[ \Res_\sigma R[G/H, -]_{\MF} \cong \bigoplus_{L \le H} R \otimes_{W_HL} R[G/L, -]_{\OF} \]
\end{Lemma}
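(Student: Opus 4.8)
The plan is to write down an explicit morphism of covariant $\OF$-modules from the right-hand side to $\Res_\sigma R[G/H,-]_{\MF}$ and to verify object-by-object that it is an isomorphism, the verification reducing to the description of morphisms in $\MF$ recorded in Lemma~\ref{lemma:mackey morphism standard form} and equation~\eqref{eq:mackey morphisms}.

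First I would construct, for each subgroup $L \le H$, a map $\Phi_L \colon R[G/L,-]_{\OF} \to \Res_\sigma R[G/H,-]_{\MF}$ of covariant $\OF$-modules. By the covariant version of the Yoneda-type Lemma~\ref{lemma:C yoneda-type}, such a map corresponds to an element of $\left(\Res_\sigma R[G/H,-]_{\MF}\right)(G/L) = R[G/H, G/L]_{\MF}$, and I choose the induction morphism $I_L^H = \big( G/H \stackrel{\id}{\longleftarrow} G/L \stackrel{\id}{\longrightarrow} G/L \big)$. Unwinding the correspondence, $\Phi_L$ sends an $\OF$-morphism $\alpha\colon G/L \to G/K$ to $\sigma(\alpha) \circ I_L^H$, which a one-line pullback computation (or Lemma~\ref{lemma:mackey morphism standard form}) identifies with the standard-form basic morphism $\big( G/H \stackrel{\id}{\longleftarrow} G/L \stackrel{\alpha}{\longrightarrow} G/K \big)$. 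Using the description of the equivalence relation on standard forms following Lemma~\ref{lemma:mackey morphism standard form}, precomposing $I_L^H$ with $\sigma(\alpha_w)$ for $w \in W_GL$ returns $I_L^H$ exactly when $w$ has a representative in $N_H L$, i.e.\ when $w \in W_H L$; hence $\Phi_L$ factors through the coinvariants to give $\overline\Phi_L \colon R \otimes_{W_H L} R[G/L,-]_{\OF} \to \Res_\sigma R[G/H,-]_{\MF}$. Summing the $\overline\Phi_L$ over the subgroups $L \le H$ (one per $H$-conjugacy class) gives $\overline\Phi$; that $\overline\Phi$ is a morphism of $\OF$-modules is immediate, since $\sigma$ is a functor and the $\OF$-actions on both sides are post-composition, so $\overline\Phi_L(\beta \circ \alpha) = \sigma(\beta) \circ \overline\Phi_L(\alpha)$.

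Next I would show $\overline\Phi$ is an isomorphism by evaluating at an arbitrary $G/K$ and comparing $R$-bases. Surjectivity is exactly Lemma~\ref{lemma:mackey morphism standard form}: every basic morphism $G/H \to G/K$ is equivalent to one of the form $\big( G/H \stackrel{\id}{\longleftarrow} G/L \stackrel{\alpha_g}{\longrightarrow} G/K \big)$ with $L \le H$, and this is $\overline\Phi_L(1 \otimes \alpha_g)$, so the images span. For injectivity I would match free $R$-bases: on the one hand, the discussion leading to~\eqref{eq:mackey morphisms} shows the basic morphisms $G/H \to G/K$ are in bijection with the pairs consisting of an $H$-conjugacy class of subgroups $L \le H$ together with a $W_H L$-orbit on the $G$-set $[G/L, G/K]_{\OF} = (G/K)^L$, with $W_H L$ acting by left translation (coming from precomposition); on the other hand, since $[G/L, G/K]_{\OF}$ is a $W_H L$-permutation set — the argument being the one used in Lemma~\ref{lemma:restriction of free mackey at 1 is FPinfty} — we have $R \otimes_{W_H L} R[G/L,G/K]_{\OF} \cong R\big[\, W_H L \backslash [G/L,G/K]_{\OF} \,\big]$, so $\bigoplus_{L} R \otimes_{W_H L} R[G/L,G/K]_{\OF}$ is the free $R$-module on precisely the same indexing set, and $\overline\Phi$ carries one basis onto the other.

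The hard part is this last comparison of bases: one must check that the equivalence relation on basic morphisms of $\MF$ which identifies their standard forms (Lemma~\ref{lemma:mackey morphism standard form} and the ensuing discussion) collapses exactly the two identifications built into the right-hand side — first that $L$ is seen only up to $H$-conjugacy, and then that $[G/L,G/K]_{\OF}$ is quotiented by the residual $W_H L$-action — so that $\overline\Phi$ is at once injective and surjective with no over- or under-counting. The remaining ingredients (the Yoneda identification, functoriality of $\overline\Phi$, and the identity $R \otimes_{W_H L} R[\text{set}] \cong R[\text{set}/W_H L]$) are routine.
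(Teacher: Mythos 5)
Your proof is correct, and there is nothing to compare it against in the text: the paper does not prove this lemma but quotes it from \cite[Proposition 3.6]{MartinezPerezNucinkis-MackeyFunctorsForInfiniteGroups}, and your route — applying the Yoneda-type Lemma \ref{lemma:C yoneda-type} to $I_L^H$, checking via the equivalence of standard forms that the map kills exactly the $W_HL$-action so it factors through the coinvariants, and then matching bases of basic morphisms as parametrised by Lemma \ref{lemma:mackey morphism standard form} and \eqref{eq:mackey morphisms} — is exactly the counting argument underlying the cited result. The only point worth making explicit is the one you already handle: the direct sum must be read as running over one representative $L$ of each $H$-conjugacy class of subgroups of $H$ (summing over all subgroups would overcount), which is consistent with the paper's later identification of $\Ind_\sigma \uR$ with the Burnside functor $B^G$.
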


\begin{Example}
 If $\uR$ is the constant contravariant $\OF$-module then using Lemma \ref{lemma:mackey struc of cov free as OF}
\begin{align*} 
\Ind_\sigma \uR(G/H) &\cong R[G/H, \sigma(-)]_{\MF}  \otimes_{\OF} \uR  \\
 &\cong \bigoplus_{L \le H} R \otimes_{W_HL} R[G/L, -]_{\OF} \otimes_{\OF} \uR \\
&\cong \bigoplus_{L \le H} R
\end{align*}
Checking the morphisms as well, one sees that
\[ \Ind_\sigma \uR \cong B^G \]
Where $B^G$ is the Burnside functor defined at the beginning of the next section.
\end{Example}

We have the following crucial result.
\begin{Prop}\label{prop:mackey indOF to MF take pr of R to pr of BG}\cite[Theorem 3.8]{MartinezPerezNucinkis-MackeyFunctorsForInfiniteGroups}
 Although induction with $\sigma$ is not exact in general, induction with $\sigma$ takes contravariant resolutions of $\uR$ by projective $\OF$-modules to resolutions of $B^G$ by projective $\MF$-modules.
\end{Prop}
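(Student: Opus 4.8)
The plan is to deduce the statement from the formal properties of $\Ind_\sigma$ (right exactness, preservation of projectives), the structure of restricted free Mackey functors in Lemma~\ref{lemma:mackey struc of cov free as OF}, and one topological input about classifying spaces. First I would dispose of the easy half: since $\Ind_\sigma$ is right exact and preserves projectives, applying it to a projective $\OF$-resolution $P_\ast \to \uR \to 0$ yields a chain complex $\Ind_\sigma P_\ast$ of projective $\MF$-modules with $H_0(\Ind_\sigma P_\ast) \cong \Ind_\sigma(\uR) \cong B^G$, the last isomorphism being the computation of the Example preceding the statement. Because projective modules are $\Ind_\sigma$-acyclic (they kill $\Tor^{\OF}_{\geq 1}$), the homology $H_i(\Ind_\sigma P_\ast)$ computes the left derived functor $L_i\Ind_\sigma(\uR)$ and so is independent of the chosen resolution; hence it suffices to prove $L_i\Ind_\sigma(\uR) = 0$ for $i \geq 1$.

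Next I would make this objectwise. A sequence of $\MF$-modules is exact precisely when it is exact at each object $G/H$, and since evaluation at $G/H$ is exact,
\[ L_i\Ind_\sigma(M)(G/H) \;\cong\; \Tor^{\OF}_i\big(M,\ \Res_\sigma R[G/H,-]_{\MF}\big). \]
By Lemma~\ref{lemma:mackey struc of cov free as OF}, $\Res_\sigma R[G/H,-]_{\MF} \cong \bigoplus_{L \leq H} R \otimes_{W_H L} R[G/L,-]_{\OF}$, the sum running over subgroups of $H$ up to conjugacy. Fixing a projective $\OF$-resolution $P_\ast \to \uR$ and using that $P_j \otimes_{\OF}(-)$ commutes with colimits (in particular with the direct sum and with the coequalizer $R \otimes_{W_H L}(-)$), together with the Yoneda-type isomorphism $P_j \otimes_{\OF} R[G/L,-]_{\OF} \cong P_j(G/L)$ of Lemma~\ref{lemma:C yoneda type isos in tensor product}, I would identify $\Ind_\sigma P_\ast(G/H)$, as an augmented complex, with $\bigoplus_{L \leq H} R \otimes_{W_H L} P_\ast(G/L)$, where $W_H L$ acts on $P_\ast(G/L)$ through $W_H L \hookrightarrow \Aut_{\OF}(G/L) = W_G L^{\mathrm{op}}$. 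So everything reduces to showing $H_i\big(R \otimes_{W_H L} P_\ast(G/L)\big) = 0$ for all $i \geq 1$, all $H \in \mathcal{F}$ and all $L \leq H$.

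The heart of the argument is a topological computation of this last homology, which by Step~1 may be carried out with any projective resolution. I would take $P_\ast = \underline{C}_\ast(E_{\mathcal{F}}G)$, the augmented cellular Bredon chain complex of a model $E_{\mathcal{F}}G$ for the classifying space of the family $\mathcal{F}$; this is a free $\OF$-resolution of $\uR$ because $(E_{\mathcal{F}}G)^K \simeq \pt$ for every $K \in \mathcal{F}$, and it satisfies $P_\ast(G/L) = C^{\mathrm{cell}}_\ast\big((E_{\mathcal{F}}G)^L\big)$. Now $(E_{\mathcal{F}}G)^L$ is a $W_H L$-CW complex, and $W_H L = N_H L / L$ is finite (a subquotient of $H$). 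The key point is that $(E_{\mathcal{F}}G)^L$ is $W_H L$-equivariantly contractible: by the equivariant Whitehead theorem it is enough that $\big((E_{\mathcal{F}}G)^L\big)^{W'}$ be contractible for every subgroup $W' \leq W_H L$, and any such $W'$ equals $N'/L$ for some $L \trianglelefteq N' \leq N_H L$, whence $N' \in \mathcal{F}$ (as $N' \leq H \in \mathcal{F}$ and $\mathcal{F}$ is subgroup-closed) and $\big((E_{\mathcal{F}}G)^L\big)^{W'} = (E_{\mathcal{F}}G)^{N'} \simeq \pt$. Therefore $C^{\mathrm{cell}}_\ast\big((E_{\mathcal{F}}G)^L\big) \to R$ is an $R[W_H L]$-chain homotopy equivalence onto the trivial module $R$ in degree $0$, and the additive functor $R \otimes_{W_H L}(-)$ preserves it; hence $H_i\big(R \otimes_{W_H L} P_\ast(G/L)\big) = 0$ for $i \geq 1$ (and is $R$ for $i = 0$, consistently with $B^G(G/H) \cong \bigoplus_{L \leq H} R$). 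Thus $L_i\Ind_\sigma(\uR) = 0$ for $i \geq 1$, and for every projective $\OF$-resolution $P_\ast$ of $\uR$ the complex $\Ind_\sigma P_\ast$ is a projective $\MF$-resolution of $B^G$.

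I expect the main obstacle to lie not in any single deep fact but in the bookkeeping of the second step: because $\Ind_\sigma$ is genuinely inexact one cannot argue by pure functoriality, and one must check carefully that the decomposition of Lemma~\ref{lemma:mackey struc of cov free as OF} is compatible with the differentials of $\Ind_\sigma P_\ast$ and with the various $W_H L$-module structures (coming from $\Aut_{\OF}(G/L)$), so that the identification $\Ind_\sigma P_\ast(G/H) \cong \bigoplus_{L \leq H} R \otimes_{W_H L} P_\ast(G/L)$ is genuinely one of augmented chain complexes of $R$-modules rather than merely a degreewise isomorphism. The conceptual input of the third step — equivariant contractibility of $(E_{\mathcal{F}}G)^L$ — is by contrast clean once the equivariant Whitehead theorem is invoked, and is the standard way such statements are verified in this circle of ideas.
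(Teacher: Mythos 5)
Your argument is correct, but it follows a genuinely different route from the one the paper relies on: the paper gives no internal proof of this Proposition, citing \cite[Theorem 3.8]{MartinezPerezNucinkis-MackeyFunctorsForInfiniteGroups}, and that proof (mirrored inside this paper by the analogous Proposition \ref{prop:HF sigma ind preserves proj res of R} for $\HeckeF$-modules) is purely algebraic. There one takes an \emph{arbitrary} projective resolution $P_*$ of $\uR$, reduces as you do (right exactness plus preservation of projectives) to exactness in degrees $\ge 1$, identifies $\Ind_\sigma P_*(G/K)$ via Lemma \ref{lemma:mackey struc of cov free as OF} with $\bigoplus_{L \le K} R \otimes_{W_KL} P_*(G/L)$, and then quotes an algebraic splitting theorem in the circle of Bouc, Kropholler--Wall and \cite[Theorem 3.2]{Nucinkis-EasyAlgebraicCharacterisationOfUniversalProperGSpaces}: the evaluated complex $P_*(G/L) \to R$ splits over every finite subgroup of $W_GL$, so the additive functor $R \otimes_{W_KL}(-)$ preserves its exactness. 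You replace that splitting theorem by two standard inputs: resolution-independence of the left derived functors $L_i\Ind_\sigma$, and the choice of the cellular Bredon resolution of a model for the classifying space of the family $\mathcal{F}$, whose evaluation at $G/L$ is $W_HL$-equivariantly contractible by the equivariant Whitehead theorem and hence $R[W_HL]$-chain contractible. The trade-off is clear: the algebraic route never leaves the module category and needs no existence theorem for classifying spaces, while your route makes the required splitting transparent, but only for one chosen resolution (which is exactly why the derived-functor comparison of your first step is indispensable), and it quietly uses that models of $E_{\mathcal{F}}G$ exist for an arbitrary subgroup-closed subfamily $\mathcal{F}$, that the $W_GL$-CW structure on the $L$-fixed points restricts to a $W_HL$-CW structure, and equivariant cellular approximation to upgrade the $W_HL$-homotopy equivalence to an $R[W_HL]$-chain homotopy equivalence --- all standard facts, but they should be cited. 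Your flagged step-2 bookkeeping (naturality of the tensor--colimit and Yoneda-type isomorphisms, so that the degreewise identification is compatible with the differentials and the $W_HL$-actions) is indeed the only delicate point, and it goes through as you describe.
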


\subsubsection{Homology and Cohomology}
We define the Mackey cohomology and Mackey homology for any contravariant $\MF$-module $M$ and covariant $\MF$-module $A$ as 
\[  H^*_{\MF} ( G, M) = \Ext_{\MF}^*(B^G, M)  \]
\[ H_*^{\MF} ( G, A) = \Tor_{\MF}^*(B^G, A)  \]
Where $B^G$ is the Burnside functor $B^G$ which, by an abuse of notation since $G/G$ is not an object of $\MF$, can be defined as 
\[ B^G = R[-,G/G]_{\MF} \]
Upon evaluation at $G/K$ for some $K \in \mathcal{F}$,
\[ B^G(G/K) = \bigoplus_{\substack{L \le K \\ \text{Up to $K$-conjugacy}}} R_L \]
This is not so dissimilar from the case of the orbit category $\OF$ where, using a similar abuse of notation, one could view $\uR$ as $R[-,G/G]_{\OF}$.  $G$ is said to be $\MFFP_n$ if there is a projective resolution of $B^G$, finitely generated up to degree $n$, and $G$ has $\MFcd G \le n$ if there is a length $n$ projective resolution of $B^G$ by $\MF$-modules. 

A corollary of Proposition \ref{prop:mackey indOF to MF take pr of R to pr of BG} is the following.
\begin{Cor}\label{cor:mackey H^n_MF is H^n_OF}\cite[Theorem 3.8]{MartinezPerezNucinkis-MackeyFunctorsForInfiniteGroups}
 \[ H^n_{\MF}(G, M) \cong H^n_{\OF}(G, \Res_{\sigma}M) \]
\end{Cor}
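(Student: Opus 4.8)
The plan is to lift a projective resolution from the orbit category to the Mackey category and then invoke the induction--restriction adjunction. Concretely, let $P_* \to \uR$ be a projective resolution of the constant contravariant $\OF$-module by projective $\OF$-modules. By Proposition \ref{prop:mackey indOF to MF take pr of R to pr of BG}, applying $\Ind_\sigma$ term by term yields a projective resolution $\Ind_\sigma P_* \to B^G$ of the Burnside functor by projective $\MF$-modules. Since any projective resolution may be used to compute $\Ext$, this gives
\[ H^n_{\MF}(G, M) = \Ext^n_{\MF}(B^G, M) = H^n\big( \Mor_{\MF}(\Ind_\sigma P_*, M) \big). \]

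Next I would use that induction along $\sigma$ is left adjoint to restriction (Section \ref{subsubsection:ind coind res}): for each $k$ there is an isomorphism
\[ \Mor_{\MF}(\Ind_\sigma P_k, M) \cong \Mor_{\OF}(P_k, \Res_\sigma M), \]
obtained by combining the tensor--hom adjunction with the Yoneda-type Lemma \ref{lemma:C yoneda-type} applied to $R[-,\sigma(?)]_{\MF}$. Because this adjunction isomorphism is natural in the module argument, it commutes with the maps induced by the differentials of $P_*$, hence assembles into an isomorphism of cochain complexes $\Mor_{\MF}(\Ind_\sigma P_*, M) \cong \Mor_{\OF}(P_*, \Res_\sigma M)$. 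Note that $\Res_\sigma M$ is genuinely a contravariant $\OF$-module, since restriction carries contravariant modules to contravariant modules and is exact. Taking cohomology and using that $P_* \to \uR$ is a projective $\OF$-resolution,
\[ H^n\big( \Mor_{\OF}(P_*, \Res_\sigma M) \big) = \Ext^n_{\OF}(\uR, \Res_\sigma M) = H^n_{\OF}(G, \Res_\sigma M), \]
which is the claimed identification.

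The only load-bearing input is Proposition \ref{prop:mackey indOF to MF take pr of R to pr of BG}; everything after it is formal. The one point to verify with a little care is that the per-module adjunction isomorphisms are natural enough to commute with the cochain differentials $\Mor_{\MF}(\Ind_\sigma P_{k}, M) \to \Mor_{\MF}(\Ind_\sigma P_{k+1}, M)$, but this is immediate from naturality of the unit and counit of the adjunction, so I do not anticipate any real obstacle: the substance of the statement is entirely contained in the already-quoted lifting result, and this corollary is just the observation that an induced resolution computes the same $\Ext$ groups after restricting the coefficients.
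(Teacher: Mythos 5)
Your proof is correct and is essentially the argument the paper has in mind: the corollary is quoted from Martinez-P\'erez--Nucinkis and rests on Proposition \ref{prop:mackey indOF to MF take pr of R to pr of BG} (induce a projective $\OF$-resolution of $\uR$ to a projective $\MF$-resolution of $B^G$) followed by the induction--restriction adjunction, exactly as you do. Indeed the paper's own proof of the parallel statement for cohomological Mackey functors, Proposition \ref{prop:HF HF cohomology is OF cohomology}, is word for word this two-step argument, so nothing further is needed.
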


\subsection{Cohomological Mackey Functors}

A Mackey functor $M$ is called cohomological if, using the language of Remark \ref{remark:mackey green description}, it satisfies
\begin{equation}\label{eq:cohom mackey axiom}
M(I^H_K) \circ M(R^H_K) = \left( m \mapsto \vert H : K \vert m \right)
 \end{equation}
for all subgroups $K \le H$ in $\mathcal{F}$.  Recall from Remark \ref{remark:mackey green description} that to describe a Mackey functor $M$ it is sufficient to describe it on objects and on the induction, restriction and conjugation morphisms in $\MF$ ($I^H_K$, $R^H_K$ and $c_g$), we use this in the examples below.
\begin{Example}[Group cohomology]
The group cohomology functor is cohomological Mackey, more precisely the functor
\begin{align*}
H^n(-, R) : G/H &\longmapsto H^n(H, R)
\end{align*}
Where $H^n(-, R) (c_g)$ is induced by conjugation, $H^n(-, R) (R_K^H)$ is the usual restriction map and $H^n(-, R) (I_K^H)$ is the transfer (see for example  \cite[\S III.9]{Brown}).  That the group cohomology functor satisfies \eqref{eq:cohom mackey axiom} is \cite[III.9.5(ii)]{Brown}. 
\end{Example}

\begin{Example}[Fixed point and fixed quotient functors]\label{example:mackey FP and FQ}
If $M$ is a $R G$-module then we write $M^-$ for the fixed point functor
\[  M^- : G/H \longmapsto M^H \] 
where $M^H = \Hom_{RH}(R, M)$.  For any $K \le H$ in $\mathcal{F}$, $M^- (R_K^H)$ is the inclusion, $M^-(I_K^H)$ is the trace $m \mapsto \sum_{h \in H/K} hm$, and $M^-(c_g)$ is the map $m \mapsto gm$.

We write $M_-$ for the fixed quotient functor 
\[ M_- : G/H \longmapsto M_H \]
where $M_H = R \otimes_{RH} M$.  For any $K \le H$ in $\mathcal{F}$, $M_- (R_K^H)$ is the trace $1 \otimes m \mapsto 1 \otimes \sum_{h \in H/K} hm$, $M_-(I_K^H)$ is the inclusion, and $M_-(c_g)$ is the map $m \mapsto gm$.
\end{Example}

\begin{Lemma}\label{lemma:HF FP and FQ adjunction}\cite[Lemma 4.2]{MartinezPerezNucinkis-MackeyFunctorsForInfiniteGroups}\cite[6.1]{ThevenazWebb-SimpleMackeyFunctors} 
There are Mackey functor isomorphisms for any $RG$-module $M$, 
\[ \CoInd_{RG}^{\MF} M \cong M^- \]
\[ \Ind_{RG}^{\MF} M \cong M_- \]
Where induction and coinduction are with the functor $RG \to \MF$ given by composition of the usual inclusion functor $RG \to \OF$ and the functor $\sigma : \OF \to \MF$.  Thus there are also adjoint isomorphisms, for any Mackey functor $N$.
\[ \Hom_{RG}(N(G/1), M) \cong \Hom_{\MF}(N, M^-) \]
\[ \Hom_{RG}(M, N(G/1)) \cong \Hom_{\MF}(M^-, N) \]
\end{Lemma}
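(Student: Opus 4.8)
The plan is to compute $\CoInd_{RG}^{\MF}M$ and $\Ind_{RG}^{\MF}M$ objectwise, check that the resulting isomorphisms are natural in the sense of Remark~\ref{remark:mackey green description}, and then read the two adjoint isomorphisms off the adjunctions $\Ind_{RG}^{\MF}\dashv\Res_{RG}^{\MF}\dashv\CoInd_{RG}^{\MF}$. Write $\iota\colon RG\to\MF$ for the functor in the statement; on objects it picks out $G/1$, so for every Mackey functor $N$ the restriction $\Res_{RG}^{\MF}N=N\circ\iota$ is simply the $RG$-module $N(G/1)$. Keeping this identification in mind from the start is what makes the final step automatic.

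First I would unravel the definition of coinduction. Evaluated at $G/K$, the Mackey functor $\CoInd_{RG}^{\MF}M$ is $\Hom_{RG}\big(R[G/1,G/K]_{\MF},\,M\big)$, where $R[G/1,G/K]_{\MF}$ carries the $RG$-module structure induced by $\iota$, namely precomposition with the maps $\alpha_{g}\colon G/1\to G/1$. By the $S=1$ case of Lemma~\ref{lemma:restriction of free mackey at 1 is FPinfty} this $RG$-module is the permutation module $R[G/K]$, and so
\[ \CoInd_{RG}^{\MF}M(G/K)\;\cong\;\Hom_{RG}\big(R[G/K],\,M\big)\;\cong\;\Hom_{RK}(R,M)\;=\;M^{K}\;=\;M^{-}(G/K). \]
Dually, $\Ind_{RG}^{\MF}M$ evaluated at $G/K$ is $M\otimes_{RG}R[G/K,G/1]_{\MF}$; by Example~\ref{example:mackey morphisms for S=1} the $RG$-module $R[G/K,G/1]_{\MF}$ is $R[K\backslash G]$, so this tensor product collapses to the coinvariants $M_{K}=M_{-}(G/K)$.

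The remaining task is to see that these objectwise isomorphisms are natural, i.e.\ that they intertwine the functorial structure maps; by Remark~\ref{remark:mackey green description} it is enough to check this on the generating morphisms $R^{H}_{K}$, $I^{H}_{K}$ and $c_{g}$ of $\MF$. For instance, precomposition with $R^{H}_{K}$ induces the map $R[G/1,G/K]_{\MF}\to R[G/1,G/H]_{\MF}$ which, under the identifications above, is the $RG$-homomorphism $R[G/K]\to R[G/H]$ induced by the canonical $G$-surjection $G/K\to G/H$; applying $\Hom_{RG}(-,M)$ turns it into the inclusion $M^{H}\hookrightarrow M^{K}$, which is exactly the value $M^{-}(R^{H}_{K})$ recorded in Example~\ref{example:mackey FP and FQ}. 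The morphisms $I^{H}_{K}$ (which must become the trace) and $c_{g}$ (conjugation) are handled the same way, with the double-coset axiom (Axiom~(6) of Remark~\ref{remark:mackey green description}) on the functor side being matched by the pullback description of composition in Lemma~\ref{lemma:mackey morphism pullback}; the standard form of Lemma~\ref{lemma:mackey morphism standard form} keeps these computations bounded. The case $\Ind_{RG}^{\MF}M\cong M_{-}$ is entirely dual. I expect this bookkeeping to be the main obstacle: no single identification is difficult, but there are several generators and two functors to handle, and one has to stay scrupulously consistent about left-versus-right $RG$-module conventions.

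Finally, the adjoint isomorphisms are purely formal. Since $\Res_{RG}^{\MF}N=N(G/1)$, the adjunction $\Res_{RG}^{\MF}\dashv\CoInd_{RG}^{\MF}$ (recorded in the list of properties just before Lemma~\ref{lemma:C nat iso ind res and tensor}) together with $\CoInd_{RG}^{\MF}M\cong M^{-}$ gives
\[ \Hom_{\MF}(N,M^{-})\;\cong\;\Hom_{\MF}\big(N,\CoInd_{RG}^{\MF}M\big)\;\cong\;\Hom_{RG}\big(N(G/1),M\big), \]
and $\Ind_{RG}^{\MF}\dashv\Res_{RG}^{\MF}$ together with $\Ind_{RG}^{\MF}M\cong M_{-}$ gives $\Hom_{\MF}(M_{-},N)\cong\Hom_{RG}\big(M,N(G/1)\big)$. (The second displayed adjunction in the statement should presumably read $\Hom_{\MF}(M_{-},N)$ in place of $\Hom_{\MF}(M^{-},N)$.)
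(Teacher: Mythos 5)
Your proposal is correct, and there is nothing to compare it against in the text: the paper proves this lemma purely by citation (to \cite{MartinezPerezNucinkis-MackeyFunctorsForInfiniteGroups} and \cite{ThevenazWebb-SimpleMackeyFunctors}), and your argument --- identifying $R[G/1,G/K]_{\MF}\cong R[G/K]$ and $R[G/K,G/1]_{\MF}\cong R[K\backslash G]$ as $RG$-modules, checking the comparison on the generators $R^H_K$, $I^H_K$, $c_g$, and then invoking $\Ind_{RG}^{\MF}\dashv\Res_{RG}^{\MF}\dashv\CoInd_{RG}^{\MF}$ with $\Res_{RG}^{\MF}N=N(G/1)$ --- is exactly the standard proof given in those sources. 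Your parenthetical remark is also right: since $M_-\cong\Ind_{RG}^{\MF}M$ is the \emph{left} adjoint to evaluation at $G/1$, the second displayed isomorphism should read $\Hom_{RG}(M,N(G/1))\cong\Hom_{\MF}(M_-,N)$; as printed with $M^-$ it is false in general (e.g.\ over $\ZZ$ with $M=\uR(G/1)=\ZZ$ and $N$ a Mackey functor vanishing at $G/1$ but not at $G/C_p$).
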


As observed by Th\'evenaz and Webb in \cite[\S 16]{ThevenazWebb-StructureOfMackeyFunctors}, in \cite{Yoshida-GFunctors2} Yoshida proves that the category of cohomological Mackey modules is isomorphic to the category of modules over the Hecke category $\HeckeF$, which we shall describe below.  Yoshida concentrates mainly on finite groups but observes in \cite[\S 5, Theorem 4.3$^\prime$]{Yoshida-GFunctors2} that this isomorphism will hold for $\MF$-modules, where $\mathcal{F}$ is any subfamily of the family of finite groups.  

The Hecke category $\HeckeF$, or $\HeckeFG$ to emphasize the group, has for objects the transitive $G$-sets with stabilisers in $\mathcal{F}$.  The morphisms between the objects $G/H$ and $G/K$ are exactly the $RG$-module homomorphisms, $\Hom_{RG}(R[G/H], R[G/K])$.  

\begin{Remark}
The usual definition of the Hecke category, for example in \cite{Yoshida-GFunctors2} and \cite{Tambara-HomologicalPropertiesOfTheEndomorphismRings}, takes the objects of $\HeckeF$ to be the permutation modules $R[G/H]$ for $H \in \mathcal{F}$ and the same morphism sets.  This is equivalent to our definition above.  We choose to take the $G$-sets $G/H$ as objects so that our notation for modules over $\HeckeF$ coincides with that for modules over $\OF$ and $\MF$.
\end{Remark}

\begin{Remark}
 In \cite{Degrijse-ProperActionsAndMackeyFunctors} Degrijse considers categories $\text{Mack}_\mathcal{F} G$ and $\text{coMack}_\mathcal{F}G$.  In the notation used here $\text{Mack}_\mathcal{F} G$ is the category of $\MF$-modules and $\text{coMack}_\mathcal{F}G$ is the subcategory of cohomological Mackey functors, he doesn't study modules over $\HeckeF$.
\end{Remark}

\begin{Lemma}[Free and projective $\HeckeF$-modules]\label{lemma:HF frees are fp}\cite[Theorem 16.5(ii)]{ThevenazWebb-StructureOfMackeyFunctors}\par
 The free $\HeckeF$-modules are exactly the fixed point functors of permutation modules with stabilisers in $\mathcal{F}$, and the projective $\HeckeF$-modules are exactly the fixed point functors of direct summands of permutation modules with stabilisers in $\mathcal{F}$.
\end{Lemma}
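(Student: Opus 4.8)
The plan is to identify the representable $\HeckeF$-modules with the fixed point functors $(R[G/H])^-$ of the transitive permutation modules, and then to recover all free and projective modules from this identification. First I would record the basic adjunction: for $H, K \in \mathcal{F}$ we have $R[G/K] \cong RG \otimes_{RK} R$ as left $RG$-modules, so tensor--hom adjunction gives a natural isomorphism $\Hom_{RG}(R[G/K], R[G/H]) \cong \Hom_{RK}(R, R[G/H]) = R[G/H]^K$. The left-hand side is by definition the value at $G/K$ of the representable $\HeckeF$-module $R[-, G/H]_{\HeckeF}$, and the right-hand side is the value at $G/K$ of the fixed point functor $(R[G/H])^-$ of Example~\ref{example:mackey FP and FQ}; chasing an arbitrary morphism $\phi \in \Hom_{RG}(R[G/K'], R[G/K])$ through the isomorphism shows both functors send $\phi$ to precomposition by $\phi$. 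Hence $R[-, G/H]_{\HeckeF} \cong (R[G/H])^-$ as $\HeckeF$-modules. (That $(R[G/H])^-$ really is cohomological, so does live in the $\HeckeF$-module category, is the one-line computation $M^-(I^H_K)\circ M^-(R^H_K)(m) = \sum_{h \in H/K} hm = \vert H:K\vert\, m$ for $m \in M^H$, using the explicit descriptions of $R^H_K$ and $I^H_K$ on fixed point functors; it also follows from Lemma~\ref{lemma:HF FP and FQ adjunction}.)

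Next I would pass from representables to arbitrary free modules. A free $\HeckeF$-module is a direct sum $\bigoplus_{i \in I} R[-, G/H_i]_{\HeckeF} \cong \bigoplus_{i} (R[G/H_i])^-$ with each $H_i \in \mathcal{F}$, and a permutation $RG$-module with stabilisers in $\mathcal{F}$ is exactly such a direct sum $M = \bigoplus_i R[G/H_i]$. So the identification of free $\HeckeF$-modules with fixed point functors of permutation modules comes down to the assertion that $(-)^-$ commutes with arbitrary direct sums, i.e. $\bigl(\bigoplus_i M_i\bigr)^K \cong \bigoplus_i M_i^K$ for every $K \in \mathcal{F}$. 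Since $K$ is finite, the augmentation ideal of $RK$ is finitely generated, so $R$ is finitely presented as an $RK$-module, so $(-)^K = \Hom_{RK}(R, -)$ commutes with filtered colimits and in particular with direct sums. This gives the first assertion of the lemma.

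For the projective statement I would run the usual idempotent-splitting argument. First, for permutation modules $M, N$ with stabilisers in $\mathcal{F}$ there is a natural isomorphism $\Hom_{\HeckeF}(M^-, N^-) \cong \Hom_{RG}(M, N)$: when $M = R[G/H]$ this is the Yoneda-type Lemma~\ref{lemma:C yoneda-type} applied to the representable $(R[G/H])^- = R[-, G/H]_{\HeckeF}$, namely $\Hom_{\HeckeF}(R[-,G/H]_{\HeckeF}, N^-) \cong N^-(G/H) = \Hom_{RG}(R[G/H], N)$; the general case follows by rewriting $\Hom$ out of a direct sum as a product and invoking the direct-sum interchange on the target. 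Thus $(-)^-$ restricts to a fully faithful additive functor from the category of permutation $RG$-modules with stabilisers in $\mathcal{F}$ into $\HeckeF$-modules. Now if $P$ is a projective $\HeckeF$-module it is a direct summand of some free module $F = M^-$ with $M$ a permutation module, hence $P = \Image(e)$ for an idempotent $e \in \End_{\HeckeF}(M^-) \cong \End_{RG}(M)$; writing $e$ also for the corresponding idempotent of $M$, additivity of $(-)^-$ gives $M^- \cong (eM)^- \oplus ((1-e)M)^-$ with $P \cong (eM)^-$, so $P$ is the fixed point functor of the direct summand $eM$ of the permutation module $M$. Conversely, if $N$ is a direct summand of a permutation module $M$ with stabilisers in $\mathcal{F}$, then $N^-$ is a direct summand of the free $\HeckeF$-module $M^-$, hence projective.

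The main obstacle is the handling of infinite coproducts: the two places where I use that $(-)^-$ commutes with arbitrary direct sums---once to recognise free $\HeckeF$-modules, once to extend full faithfulness from a single orbit to an arbitrary permutation module---both rest on $R$ being finitely presented over $RK$ for the finite groups $K \in \mathcal{F}$. Granted that, the remainder is a direct translation of the arguments of Th\'evenaz--Webb and Yoshida for finite groups into the present generality.
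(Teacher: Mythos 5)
Your argument is correct. Note that the paper does not prove this lemma at all: it is quoted from Th\'evenaz--Webb \cite[Theorem 16.5(ii)]{ThevenazWebb-StructureOfMackeyFunctors} (going back to Yoshida), so there is no in-text proof to compare against. Your blind proof is essentially the standard one and all the key steps are in place: the identification $R[-,G/H]_{\HeckeF} \cong (R[G/H])^-$ via $R[G/K]\cong RG\otimes_{RK}R$ and tensor--hom adjunction, the observation that $(-)^{-}=\Hom_{RK}(R,-)$ commutes with arbitrary direct sums because $R$ is finitely presented (finitely generated already suffices) over $RK$ for finite $K$, the full faithfulness $\Hom_{\HeckeF}(M^-,N^-)\cong\Hom_{RG}(M,N)$ for permutation modules $M,N$ with stabilisers in $\mathcal{F}$ obtained from the Yoneda-type Lemma \ref{lemma:C yoneda-type} plus the coproduct decomposition of $M^-$, and finally the idempotent-splitting transfer of direct summands in both directions. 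Two cosmetic remarks: the phrase about the ``direct-sum interchange on the target'' is misplaced --- what you actually use is that $(-)^-$ turns the coproduct decomposition of the \emph{source} $M$ into a coproduct of representables, after which $\Hom$ out of a coproduct is a product; and for the idempotent step it is worth saying explicitly that the isomorphism $\End_{\HeckeF}(M^-)\cong\End_{RG}(M)$ is the map $f\mapsto f^-$, hence multiplicative, so idempotents correspond to idempotents --- both points are implicit in what you wrote and are routine to fill in.
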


Th\'evenaz and Webb describe a map $\pi: \MF \to \HeckeF$ (they call this map $\alpha$), taking objects $G/H$ in $\MF$ to their associated permutation modules $R[G/H]$ and morphisms which they describe as follows, for any $K \le H$,
\begin{itemize}
 \item $\pi (R_K^H) $ is the natural projection map $R[G/K] \to R[G/H]$.
 \item $\pi (I_K^H) $ takes $gH \mapsto \sum_{h \in H/K} ghK $.
 \item $\pi (c_x) $ takes $gH \mapsto gx H^x $.
\end{itemize}
If $M$ is an $\HeckeF$-module then it is straightforward to check that $M \circ \pi $ is a $\MF$-module, see for example \cite[p.809]{Tambara-HomologicalPropertiesOfTheEndomorphismRings} for a proof.  Moreover, every cohomological Mackey functor $M : \MF \to \RMod$ factors through the map $\pi$, this is the main result in \cite{Yoshida-GFunctors2}, see also \cite[\S 7]{Webb-GuideToMackeyFunctors}.  Thus we may pass freely between cohomological Mackey functors and modules over $\HeckeF$.

\begin{Lemma}\label{lemma:mackey yoshida CF struc}\cite[Lemma 3.1$^\prime$]{Yoshida-GFunctors2}  There is an isomorphism, for any finite subgroups $H$ and $K$ of $G$
 \begin{align*}
 R[H \backslash G / K] &\cong R[G/H, G/K]_{\HeckeF} 
 \end{align*}
 Under this identification, morphism composition is given by
 \[ (HxK) \cdot (KyL) = \sum_{z \in H \backslash G / L} \vert (HxK \cap zLy^{-1}K) / K\vert \, ( HzL) \]
\end{Lemma}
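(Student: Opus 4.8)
The plan is to unwind the definition of the morphism module in $\HeckeF$, identify it with the $H$-fixed points of a permutation module, read off the double cosets, and then extract the composition formula from $RG$-linearity together with a short double-coset bookkeeping argument.

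\emph{First} I would record the elementary fact that for any $RG$-module $N$, evaluation at the canonical generator $eH$ of $R[G/H]$ gives a natural isomorphism $\Hom_{RG}(R[G/H], N) \cong N^H$; this is just the permutation-module incarnation of the $\Ind$--$\Res$ adjunction for $R[G/H] \cong RG \otimes_{RH} R$, and is already contained in Lemma~\ref{lemma:HF FP and FQ adjunction}. Taking $N = R[G/K]$ yields
\[ R[G/H, G/K]_{\HeckeF} \;=\; \Hom_{RG}(R[G/H], R[G/K]) \;\cong\; R[G/K]^H . \]
\emph{Next}, the group $H$ permutes the $R$-basis $G/K$ of $R[G/K]$ with orbits in bijection with the double cosets $H\backslash G/K$, so $R[G/K]^H$ is free over $R$ on the orbit sums $\theta_{HxK} = \sum_{gK \subseteq HxK} gK$. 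Composing the two identifications, the double coset $HxK$ corresponds to the unique $RG$-map $\phi_{HxK}$ with $\phi_{HxK}(eH) = \theta_{HxK}$, which gives the stated isomorphism $R[H\backslash G/K] \cong R[G/H, G/K]_{\HeckeF}$. (Alternatively this half is a special case of Lemma~\ref{lemma:HF frees are fp}, which identifies the contravariant representable $R[-,G/K]_{\HeckeF}$ with the fixed-point functor $R[G/K]^-$.)

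\emph{Then} I would compute the composite of $\phi_{HxK}:R[G/H]\to R[G/K]$ with $\phi_{KyL}:R[G/K]\to R[G/L]$ by evaluating at $eH$. Using that $\phi_{KyL}$ is $RG$-linear and that $\phi_{KyL}(eK) = \theta_{KyL}$ is $K$-fixed,
\[ \phi_{KyL}\big(\phi_{HxK}(eH)\big) \;=\; \sum_{aK \subseteq HxK}\ \sum_{bL \subseteq KyL} (ab)L , \]
and the coefficient of $zL$ here is the number of pairs of cosets $(aK, bL)$ with $aK\subseteq HxK$, $bL\subseteq KyL$ and $abL = zL$. For a fixed $aK$ there is at most one such $bL$, namely $a^{-1}zL$, and it occurs precisely when $a^{-1}z \in KyL$; taking inverses and using $(KyL)^{-1} = Ly^{-1}K$ this rearranges to $a \in zLy^{-1}K$. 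Since $HxK$ and $zLy^{-1}K$ are both unions of right $K$-cosets, the coefficient therefore equals $\lvert (HxK \cap zLy^{-1}K)/K\rvert$, giving exactly the claimed formula; a final check confirms independence of the chosen representatives $x,y,z$.

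\emph{The main obstacle} is entirely in the last step: one must keep careful track of which quotients are by left versus right cosets --- to justify that $\phi_{KyL}(eK)$ is $K$-fixed, that the condition $a^{-1}z\in KyL$ depends only on the coset $aK$, and that $HxK\cap zLy^{-1}K$ is a union of right $K$-cosets --- and one must verify that for each $aK$ the candidate coset $bL = a^{-1}zL$ contributes exactly once. By contrast, the identifications in the first two steps are standard and essentially formal, so I would not belabour them.
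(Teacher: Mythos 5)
Your proof is correct. There is no internal argument in the paper to compare against: the lemma is quoted from \cite[Lemma 3.1$^\prime$]{Yoshida-GFunctors2} without proof, and the paper only records the explicit identification in Remark \ref{remark:mackey yoshida CF struc}. Your construction agrees with that identification: the map $\phi_{HxK}$ determined by $eH \mapsto \sum_{gK \subseteq HxK} gK$ is exactly $\psi(HxK)$ there, since the cosets $gK$ contained in $HxK$ are precisely the $uxK$ with $u \in H/(H\cap xKx^{-1})$; and your composite $\phi_{KyL}\circ\phi_{HxK}$ matches the paper's convention $\psi((HxK)\cdot(KyL)) = \psi(KyL)\circ\psi(HxK)$. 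The double-coset count is right: the coefficient of $zL$ is the number of cosets $aK \subseteq HxK$ with $a^{-1}z \in KyL$, i.e.\ $aK \subseteq zLy^{-1}K$, which is $\lvert (HxK\cap zLy^{-1}K)/K\rvert$, and the checks you flag (that $\phi_{KyL}(eK)$ is $K$-fixed, that the condition depends only on $aK$, independence of representatives) all go through. Two cosmetic points only: the sets $HxK$ and $zLy^{-1}K$ are unions of cosets of the form $gK$ (normally called left cosets, despite the quotient notation $\cdot/K$), and you implicitly use that the coefficient of the single basis element $zL$ in an $H$-fixed element of $R[G/L]$ equals the coefficient of the orbit sum indexed by $HzL$ under the isomorphism $R[G/L]^H \cong R[H\backslash G/L]$; both are harmless but worth stating. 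A quick sanity check against the paper's own computation $\pi(I^H_K)\cdot$, $\pi(R^H_K)$, where the formula returns $\lvert H:K\rvert$ times the identity, confirms the convention.
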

\begin{Remark}\label{remark:mackey yoshida CF struc}
The identification in the lemma above relates to the usual definition of $R[G/H, G/K]_{\HeckeF}$ as $\Hom_{RG}(R[G/H], R[G/H])$ with the isomorphism
\begin{align*}
 \psi: R[H \backslash G / K] &\stackrel{\cong}{\longrightarrow} \Hom_{RG} (R[G/H], R[G/K]) \\
 HxK &\longmapsto \left( gH \longmapsto \sum_{u \in H/(H \cap xKx^{-1})} guxK \right)
 \end{align*} 
Notice that $\psi$ satisfies
\[ \psi( (HxK) \cdot (KxL) ) = \psi(KxL) \circ \psi(HxK) \]
\end{Remark}

\subsubsection{Explicit Description of $\pi$.} 
 Using the identification of Lemma \ref{lemma:mackey yoshida CF struc}, for any $K \le H$, we can describe $\pi$ as follows.
\begin{itemize}
 \item $\pi (R_K^H) = KH $, since according to Lemma \ref{lemma:mackey yoshida CF struc}, $KH $ corresponds to the morphism $gK \mapsto gH$, which is exactly Th\'evenaz and Webb's description of $\pi(R_K^H)$.
 \item $\pi (I_K^H) = HK$, as according to Lemma \ref{lemma:mackey yoshida CF struc}, $HK$  corresponds to the morphism $gH \mapsto \sum_{u \in H/K} uK$, which is Th\'evenaz and Webb's description of $\pi(I_K^H)$.
 \item $\pi (c_x) = HxH^x$, similarly to the above because $HxH^x$ corresponds to the morphism $g H \mapsto g x H^x$.
\end{itemize}

It is interesting to write down the effect of $\pi$ on a basic morphism
\[ m = \left( \vcenter{\xymatrix@-15pt{
& G/L \ar_{\alpha_1}[ld] \ar^{\alpha_x}[rd] & \\
G/H & & G/K
}} \right) \]
This morphism may be rewritten as
\begin{align*} \MFMor{
& G/L \ar_{\alpha_1}[ld] \ar^{\alpha_x}[rd] & \\
G/H & & G/L
} \circ \MFMor{
& G/L \ar_{\alpha_1}[ld] \ar^{\alpha_x}[rd] & \\
G/L & & G/L^x
} \circ \MFMor{
& G/L^x \ar_{\alpha_1}[ld] \ar^{\alpha_1}[rd] & \\
G/L^x & & G/K
} 
\end{align*}
So, 
\[ m= R^K_{L^x} \circ c_x \circ I_L^H \]
Using the definition of $\pi$ in \cite[\S 16]{ThevenazWebb-StructureOfMackeyFunctors}, $\pi(m)$ maps 
\begin{align*}
 \pi(m) &= \pi(R^K_{L^x} \circ c_x \circ I_L^H) \\
 &= \left( H \mapsto \sum_{h \in H/L} hx K \right) \\
 &= \left( H \mapsto \sum_{h \in H / H \cap K^{x^{-1}}} \sum_{y \in H \cap K^{x^{-1}} / L} hyx K \right) \\
 &= \left( H \mapsto \sum_{y \in H \cap K^{x^{-1}} / L} \sum_{h \in H / H \cap K^{(yx)^{-1}}} hyxK \right) \\
 &= \sum_{y \in H \cap K^{x^{-1}} / L} (HyxK) \\
 &= \vert H \cap K^{x^{-1}} : L \vert ( HxK )
\end{align*}

In summary, 
\begin{equation*}
 \pi \left( \vcenter{\xymatrix@-15pt{
& G/L \ar_{\alpha_1}[ld] \ar^{\alpha_x}[rd] & \\
G/H & & G/K
}} \right) = \vert H \cap K^{x^{-1}}  : L \vert (HxK) 
\end{equation*}

\subsubsection{Homology and Cohomology}
In Section \ref{section:Homology and Cohomology of Cohomological Mackey Functors} we will prove results similar to Proposition \ref{prop:mackey indOF to MF take pr of R to pr of BG} and Corollary \ref{cor:mackey H^n_MF is H^n_OF}, showing that inducing a projective resolution of $\uR$ by projective $\OF$-modules yields a projective resolution of $R^-$ by projective $\HeckeF$-modules.  For any group $G$, we define the cohomology and homology functors $H^*_{\HeckeF}(G, -)$ and $H_*^{\HeckeF}(G, -)$ as follows:
\[ H^*_{\HeckeF} ( G, M) = \Ext_{\HeckeF}^*(R^-, M)\]
\[H_*^{\HeckeF} ( G, A) = \Tor_{\HeckeF}^*(R^-, A)\]
Where $M$ is any contravariant $\HeckeF$-module and $A$ is any covariant $\HeckeF$-module.  In Proposition \ref{prop:HF HF cohomology is OF cohomology} we show that there is an isomorphism
\[H^n_{\HeckeF}(G, M) \cong H^n_{\OF}(G, \Res_{\OF}^{\HeckeF}M) \]

\section{\texorpdfstring{$\FP_n$}{FPn} Conditions for Mackey Functors}

As far as we are aware, there are no results in the literature on the conditions $\MFFP_n$.  We show in this section that the conditions $\MFFP_n$ and $\OFFP_n$ are equivalent.  First we characterise $\MFFP_0$, then an easy application of the Bieri--Eckmann criterion shows that $\OFFP_n$ implies $\MFFP_n$ and finally a more tricky argument using the Bieri--Eckmann criterion again shows that $\MFFP_n$ implies $\OFFP_n$.

From this point on, unless otherwise stated, all results are valid over any commutative ring $R$ and for $\mathcal{F}$ any subfamily of the family of finite subgroups.

In the lemmas below, $\mathcal{F}/G$ denotes the poset of conjugacy classes in $\mathcal{F}$, ordered by subconjugation, so $H \preceq K$ if $H$ is subconjugate to $K$. 
\begin{Lemma}\label{lemma:mackey MFFP0}
 $G$ is $\MFFP_0$ if and only if $\mathcal{F}/G$ is finite.
\end{Lemma}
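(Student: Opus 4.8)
The plan is to unwind what $\MFFP_0$ means and match it against finiteness of $\mathcal{F}/G$. Recall that $G$ is $\MFFP_0$ precisely when the Burnside functor $B^G = R[-, G/G]_{\MF}$ is finitely generated as an $\MF$-module, i.e.\ there is a surjection of $\MF$-modules from a finitely generated free module $\bigoplus_{i=1}^{m} R[-, G/H_i]_{\MF}$ onto $B^G$. By the Yoneda-type Lemma (Lemma \ref{lemma:C yoneda-type}), such a map is determined by a choice of elements $b_i \in B^G(G/H_i)$, and it is surjective if and only if these elements, together with their images under all morphisms of $\MF$, generate $B^G(G/K)$ as an $R$-module for every $K \in \mathcal{F}$.

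First I would analyse the module $B^G$. Upon evaluation, $B^G(G/K) = \bigoplus_{L \le K, \text{ up to }K\text{-conjugacy}} R_L$, and under this description the class of the identity $\id_{G/G}$ (or rather the generator corresponding to $L = K$) restricts, via the morphism $R^K_L$, to hit each summand $R_L$. More importantly, I would track the ``support'' of an element: each basic morphism in $\MF$ has a well-defined middle term $G/L$, and composing morphisms can only produce middle terms subconjugate to the ones you started with (this is visible in Lemma \ref{lemma:mackey morphism pullback}, where the pullback middle terms $L^g \cap S^{g'x^{-1}}$ are subconjugate to both $L$ and $S$). Consequently, if $K$ is a subgroup not subconjugate to any of $H_1, \dots, H_m$, then the image of the free module in $B^G(G/K)$ cannot contain the generator indexed by $L = K$ itself — roughly because to land a generator ``at level $K$'' in $B^G(G/K)$ one must pass a morphism whose middle term is conjugate to $K$, which forces $K$ to be subconjugate to some $H_i$.

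The two directions then go as follows. If $\mathcal{F}/G$ is finite, pick representatives $H_1, \dots, H_m$ of the conjugacy classes; I claim $\bigoplus_i R[-, G/H_i]_{\MF} \twoheadrightarrow B^G$ via the maps classified by the ``top'' generators of each $B^G(G/H_i)$. To see surjectivity at $G/K$ one writes $K$ as conjugate to some $H_i$ and uses the conjugation isomorphism plus the restriction morphisms $R^{H_i}_L$ to reach every summand $R_L$ of $B^G(G/K)$; Green's axioms (Remark \ref{remark:mackey green description}) make this bookkeeping routine. Conversely, if $G$ is $\MFFP_0$ with generating objects $G/H_1, \dots, G/H_m$, then by the support argument above every $K \in \mathcal{F}$ must be subconjugate to some $H_i$, so $\mathcal{F}/G$ has at most (the total number of subconjugacy classes below the $H_i$'s) elements, hence is finite since each $H_i$ is finite and has finitely many subgroups.

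The main obstacle is making the ``support/level'' argument rigorous: one needs a clean invariant on elements of $B^G(G/K)$ — or on morphisms out of $G/K$ — that is monotone under composition and detects when a generator sits at the top level $L = K$. The natural candidate is the filtration of $\End(G/K)$ and of morphism groups by the poset $\mathcal{F}/G$ of middle terms, as set up in Remark \ref{remark:mackey structure of End}; the key technical point to verify is that this filtration is respected by pre-composition with arbitrary morphisms, which reduces via Lemma \ref{lemma:mackey morphism pullback} to the observation that pullbacks only shrink the middle term up to subconjugacy. Once that is in hand, both implications are short.
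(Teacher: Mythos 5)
Your proposal is correct and follows essentially the same route as the paper: the hard direction is precisely the paper's argument that the top-level generator of $B^G(G/S)$ cannot lie in the image unless $S$ is subconjugate to one of the generating objects, made rigorous exactly as you indicate via Lemma \ref{lemma:mackey morphism pullback} (middle terms only shrink, up to subconjugacy, under composition), and the converse is the paper's augmentation map classified by the top generators of $B^G(G/H_i)$. The only slip is cosmetic: to reach a lower summand $R_L$ of $B^G(G/K)$ you precompose the top generator with the basic morphism $\left( G/K \longleftarrow G/L \longrightarrow G/H_i \right)$, i.e.\ you need the induction/transfer morphisms $I^K_L$ (combined with conjugation and restriction), not conjugation and restriction alone.
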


\begin{proof}
 We prove first that if $G$ is $\MFFP_0$ then $\mathcal{F}/G$ has a finite cofinal subset, since $\mathcal{F}$ is a subfamily of the family of finite subgroups this implies that $\mathcal{F}/G$ is finite.

 Let $f$ be an $\MF$-module morphism 
\[ f: R[-,G/K]_{\MF} \longrightarrow B^G \cong R[-,G/G]_{\MF} \]
 Firstly, we claim that the element $m$ of $R[G/S,G/G]_{\MF}$ given by 
\[ m = \big( G/S \stackrel{\id}{\longleftarrow} G/S \longrightarrow G/G \big) \]
 cannot be in the image of $f(G/S)$ unless $S$ is subconjugate to $K$.  Assume for a contradiction that $S$ is not subconjugate to $K$ and assume $m$ is in the image of $f(G/S)$.  Thus $m = f(G/S)\varphi$ for some $\varphi \in [G/S, G/K]_{\MF}$.   Thinking of $f$ as a natural transformation gives the commutative diagram below
\[
\xymatrix@C+20pt{
R[G/S, G/K]_{\MF} \ar^{f(G/S)}[r] & R[G/S, G/G]_{\MF} \\
R[G/K, G/K]_{\MF} \ar^{f(G/K)}[r] \ar_{\varphi^*}[u] & R [G/K, G/G]_{\MF} \ar_{\varphi^*}[u]
}
\]
Where
\begin{align*}
 m &= f(G/S)\varphi \\
&= f(G/S) \circ \varphi^* \id_{[G/K,G/K]_{\MF}} \\
&\cong \big( \varphi^* \circ f(G/K) \big) ( \id_{[G/K,G/K]_{\MF}} )
\end{align*}
Let $f(G/K) ( \id_{[G/K,G/K]_{\MF}} ) = \sum_i r_i x_i$, where $r_i \in R$ and the $x_i$ are basic morphisms in $R[G/K, G/G]_{\MF}$.  Similarly, let $\varphi = \sum_j s_j y_j$ for $s_j \in R$ and where the $y_j$ are basic morphisms in $R[G/S, G/K]_{\MF}$.  By assumption we have that
\begin{align*}
 m &= \varphi^* \sum_i r_i x_i \\
&=  \sum_i r_i x_i \circ \sum_j s_j y_j \\
&= \sum_{i,j} (r_is_j) x_i \circ y_j 
\end{align*}
There must exist some $i$ and $j$ for which $x_i \circ y_j $ is a morphism which, when written as a sum of basic morphisms, has one component some multiple of $m$.  
We calculate $x_i \circ y_j$ for this $i$ and $j$.  Write $x_i$ and $y_j$ in their standard forms as below,
\[x_i = \Big( G/K \longleftarrow G/L_i \longrightarrow G/G \Big)\]
\[y_j = \Big( G/S \longleftarrow G/J_j \longrightarrow G/K \Big)\]
Their composition is (recall Lemma \ref{lemma:mackey morphism pullback})
\[
x_i \circ y_j = \sum_k \left( \vcenter{\xymatrix@-15pt{
&  &  G/X_k\ar[rd] \ar[ld] & & \\
& G/J_j \ar[rd] \ar[ld] & & G/L_i \ar[rd] \ar[ld]  & \\
G/S & & G/K & & G/G
}}\right)
\]
where $X_k$ is some finite subgroup of $G$ which is subconjugate to both $L_i$ and $J_j$.  

We claim $\lvert J_j \rvert $ is strictly smaller than $\lvert S \rvert$.  Since $J_j$ is subconjugate to $S$ we have $\lvert J_j \rvert \le \lvert S \rvert$.  If the cardinalities were equal then $S$ and $J_j$ would be conjugate, but $J_j$ is subconjugate to $K$ whereas by assumption $S$ is not subconjugate to $K$.

Since $\lvert X_k \rvert \le \lvert J_j \rvert \lneq \lvert S \rvert$, the subgroup $X_k$ cannot be conjugate to $S$.  This contradicts our earlier assertion that $x_i \circ y_j $ when written as a sum of basic morphisms, has one component some multiple of $m$.  Thus, for $m$ to be in the image of $f(G/S)$, $S$ must be subconjugate to $K$.

Now, if $G$ is $\MFFP_0$ then $B^G$ admits an epimorphism from some finitely generated free
\[ \bigoplus_{i \in I} R[ -, G/K_i]_{\MF} \longtwoheadrightarrow B^G \]
As this set $I$ is finite, the argument above implies that all the subgroups in $\mathcal{F}$ are subconjugate to one of a finite collection of subgroups in $\mathcal{F}$.  Thus there is a finite cofinal subset of $\mathcal{F}/G$ and $\mathcal{F}/G$ is finite.

For the converse assume that $\mathcal{F}/G$ is finite and let $M \subseteq \mathcal{F}/G$ denote a finite cofinal subset of $\mathcal{F}/G$ (we could just take $M=\mathcal{F}/G$), we claim the augmentation map 
\[
 \varepsilon : \bigoplus_{K \in M} R[ -, G/K]_{\MF} \longrightarrow B^G 
\]
is an epimorphism.  Every basic morphism in $ B^G(S) \cong R[G/S, G/G]_{\MF}$ can be written as
\[m = \big( G/S \stackrel{\alpha_1}{\longleftarrow} G/L \stackrel{\alpha_1}{\longrightarrow} G/G \big) \]
Let $K \in M$ be a finite subgroup with $L \preceq K$, then $m$ is the image of 
\[\big( G/S \stackrel{\alpha_1}{\longleftarrow} G/L \stackrel{\alpha_1}{\longrightarrow} G/K \big) \in R[G/S,G/K]_{\MF}\]
under the map $R[G/S,G/K]_{\MF} \longrightarrow B^G(G/S)$.
\end{proof}

$G$ is $\OFFP_0$ if and only if $\mathcal{F}$ has finitely many conjugacy classes
\cite[Lemma 3.1]{KMN-CohomologicalFinitenessConditionsForElementaryAmenable}, so the conditions $\OFFP_0$ and $\MFFP_0$ are equivalent.

\begin{Prop}\label{prop:mackey OFFPn implies MFFPn}
 If $G$ is $\OFFP_n$ then $G$ is $\MFFP_n$.
\end{Prop}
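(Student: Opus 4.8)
The plan is to deduce the $\MFFP_n$ property of $G$ from the $\OFFP_n$ property by applying the induction functor $\Ind_\sigma : \OF\text{-mod} \to \MF\text{-mod}$ to a projective $\OF$-resolution of $\uR$, and then checking that the resulting complex is a projective $\MF$-resolution of $B^G$ that is finitely generated up to degree $n$. By Proposition~\ref{prop:mackey indOF to MF take pr of R to pr of BG} we already know that $\Ind_\sigma$ carries a projective resolution of $\uR$ to a projective resolution of $B^G = \Ind_\sigma\uR$; the only genuinely new point needed here is that the finite-generation bound survives the application of $\Ind_\sigma$. This is immediate from the general properties of induction recorded in the preliminaries: induction preserves projectives and preserves finite generation. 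So if $P_* \to \uR$ is a projective $\OF$-resolution with $P_i$ finitely generated for $i \le n$, then $\Ind_\sigma P_* \to B^G$ is a projective $\MF$-resolution with $\Ind_\sigma P_i$ finitely generated for $i \le n$, which is exactly the statement that $G$ is $\MFFP_n$.

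First I would invoke Proposition~\ref{prop:mackey indOF to MF take pr of R to pr of BG} to get that $\Ind_\sigma P_*$ is a resolution of $B^G$ by projective $\MF$-modules. Second, I would note that each $\Ind_\sigma P_i$ is projective because induction preserves projectives (this is listed among the standard properties of $\Ind_\iota$ in Section~\ref{subsubsection:ind coind res}). Third, I would observe that a finitely generated projective $\OF$-module is a summand of a finite direct sum of representables $R[-,G/H]_{\OF}$, and $\Ind_\sigma R[-,G/H]_{\OF} \cong R[-, G/H]_{\MF}$ since induction sends representables to representables along $\sigma$; hence $\Ind_\sigma$ of a finitely generated projective is again finitely generated projective. (Alternatively one can simply cite that induction preserves finite generation, as stated in the preliminaries.) Combining these, $\Ind_\sigma P_i$ is a finitely generated projective $\MF$-module for $i \le n$, completing the argument.

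I do not expect a serious obstacle here; the proposition is essentially a bookkeeping consequence of Proposition~\ref{prop:mackey indOF to MF take pr of R to pr of BG} together with the exactness/finiteness formalism for $\Ind$. The one mild subtlety worth stating carefully is that $\Ind_\sigma$ is only right exact in general, so one cannot argue degreewise; one must use Proposition~\ref{prop:mackey indOF to MF take pr of R to pr of BG} as a black box for the exactness of the induced complex rather than trying to reprove it. Once that is granted, the finite-generation bound is preserved term by term, and there is nothing further to do.
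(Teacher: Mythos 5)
Your argument is correct, but it is not the route the paper takes. You induce a projective resolution of $\uR$ that is finitely generated up to degree $n$ along $\sigma$, use Proposition \ref{prop:mackey indOF to MF take pr of R to pr of BG} as a black box for exactness (rightly noting that $\Ind_\sigma$ is only right exact, so one cannot argue degreewise), and then observe that finite generation is preserved termwise --- either by the statement in Section \ref{subsubsection:ind coind res} or by the computation $\Ind_\sigma R[-,G/H]_{\OF} \cong R[-,G/H]_{\MF}$, which follows from Lemma \ref{lemma:C yoneda type isos in tensor product}. This produces an explicit projective resolution of $B^G$ witnessing $\MFFP_n$ and is arguably the more elementary and constructive argument. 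The paper instead argues functorially: it takes a filtered system of $\MF$-modules $M_\lambda$ with $\varinjlim M_\lambda = 0$, notes that $\varinjlim \Res_\sigma M_\lambda = 0$ since colimits are pointwise, and uses the cohomology comparison $H^n_{\MF}(G,M) \cong H^n_{\OF}(G,\Res_\sigma M)$ of Corollary \ref{cor:mackey H^n_MF is H^n_OF} together with two applications of the Bieri--Eckmann criterion (Theorem \ref{theorem:C bieri-eckmann criterion}). Both proofs ultimately rest on the same input from Martinez-P\'erez--Nucinkis (Proposition \ref{prop:mackey indOF to MF take pr of R to pr of BG}, of which Corollary \ref{cor:mackey H^n_MF is H^n_OF} is a consequence); what the paper's version buys is a template that transfers unchanged to situations where no analogue of the induced-resolution result is available for the relevant functor, e.g.\ the implication $\MFFP_n \Rightarrow \HFFP_n$ of Proposition \ref{prop:MFFPn implies HFFPn}, whereas your version buys directness and an explicit finitely generated resolution of $B^G$.
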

\begin{proof}
We use the Bieri--Eckmann criterion (Theorem \ref{theorem:C bieri-eckmann criterion}).  Let $M_\lambda$ be a directed system of $\MF$-modules such that $\varinjlim M_\lambda  = 0$.  Since colimits are computed pointwise, this implies $\Res_{\sigma}M_\lambda = 0$ also (recall the definition of $\sigma$ from Section \ref{subsubsection:ind coind res}).  Then by Corollary \ref{cor:mackey H^n_MF is H^n_OF}, 
\[ \varinjlim H^n_{\MF} (G, M_\lambda  ) \cong  \varinjlim  H^n_{\OF} (G, \Res_{\sigma} M_\lambda  ) \]
Since $G$ is assumed $\OF\FP_n$ the right hand side is $0$ by the Bieri--Eckmann criterion (Theorem \ref{theorem:C bieri-eckmann criterion}), and by another application of the same theorem $G$ is $\MF\FP_n$. 
\end{proof}

This remainder of this section is devoted to a proof that for any $n$, $\MFFP_n$ implies $\OFFP_n$.  For the remainder, we will assume $G$ is $\MFFP_0$, equivalently $\mathcal{F}$ contains finitely many conjugacy classes.

In \cite[4.9, 4.10]{HambletonPamukYalcin-EquivariantCWComplexesAndTheOrbitCategory}, there are the following definitions, for $M$ an $\OF$-module
\[ D_H M  = \CoInd_{R[WH]}^{\OFG} M(G/H) \]
\[ j_H : M \longrightarrow D_H M \]
Where $\CoInd_{R[WH]}^{\OFG}$ denotes coinduction (see Section \ref{subsubsection:ind coind res} for the definition of coinduction) with the functor $\iota : R[WH] \mapsto \OFG$---We view $R[WH]$ as a category with one object and morphisms elements of $R[WH]$, then $\iota$ maps the one object to $G/H$ and morphisms to the automorphisms of $G/H$ in $\OFG$.  Equivalently, 
\[ \CoInd_{R[WH]}^{\OFG} M(G/H) \cong \Hom_{R[WH]}(R[G/H, -]_{\OF}, M(G/H)) \]
The map $j_H$ is the counit of the adjunction between coinduction and restriction.  Since evaluation, coinduction, and counits are all natural constructions, $D_H$ and $j_H$ are natural.  Crucially the $\OF$-module $D_HM$ extends to a Mackey functor \cite[Example 4.8]{HambletonPamukYalcin-EquivariantCWComplexesAndTheOrbitCategory}.  Also defined are 
\[ D M = \prod_{H \in \mathcal{F} /G} D_H M \]
\[ CM  = \text{CoKer} \left( C  \stackrel{\prod j_H}{\longrightarrow} DM \right) \]
Again all the constructions are natural and $DM$ extends to a Mackey functor.  Naturality means that if $M_\lambda$, for $\lambda \in \Lambda$, is a directed system of $\OF$-modules then $D M_\lambda$ and $CM_\lambda$ form directed systems also.

\begin{Lemma}\label{lemma:colim M is 0 then colim DM is 0}
If $\varinjlim M_\lambda  = 0$ then $\varinjlim DM_\lambda  = 0$. 
\end{Lemma}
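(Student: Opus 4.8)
The plan is to peel off the formal layers one at a time until the statement becomes a routine fact about ordinary $R[WH]$-modules. First, since we are assuming throughout that $G$ is $\MFFP_0$, the poset $\mathcal{F}/G$ is finite, so $DM_\lambda = \prod_{H \in \mathcal{F}/G} D_HM_\lambda$ is a \emph{finite} product, hence a finite direct sum; as direct sums commute with colimits, it suffices to prove $\varinjlim_\lambda D_HM_\lambda = 0$ for each fixed $H \in \mathcal{F}/G$. Colimits of $\OF$-modules are computed objectwise, and by definition
\[ D_HM_\lambda(G/K) \;=\; \CoInd_{R[WH]}^{\OFG}\!M_\lambda(G/H)\,(G/K) \;\cong\; \Hom_{R[WH]}\!\big(R[G/H, G/K]_{\OF},\, M_\lambda(G/H)\big), \]
so the whole statement reduces to showing, for every $H, K \in \mathcal{F}$, that $\varinjlim_\lambda \Hom_{R[WH]}\!\big(R[G/H, G/K]_{\OF},\, M_\lambda(G/H)\big) = 0$. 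Here I also use that $\varinjlim_\lambda M_\lambda = 0$ forces $\varinjlim_\lambda M_\lambda(G/H) = 0$ as a directed system of $R[WH]$-modules, again because colimits are objectwise.

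The key point is that $R[G/H, G/K]_{\OF}$ is finitely generated as a left $R[WH]$-module. Indeed $R[G/H, G/K]_{\OF}$ is the permutation module on the $WH$-set $\operatorname{map}_G(G/H, G/K) = (G/K)^H$, on which $WH = N_GH/H$ acts by left multiplication, and each point stabiliser is finite (it is conjugate into the finite group $K$). This $WH$-set has only finitely many orbits: the assignment $gK \mapsto [\,g^{-1}Hg\,]_K$, sending a point to the $K$-conjugacy class of the subgroup $g^{-1}Hg \le K$, is constant on $N_GH$-orbits, and a short computation shows the induced map on $N_GH\backslash (G/K)^H$ is injective, so the orbit set embeds into the finite set of conjugacy classes of subgroups of $K$. (Alternatively, this is the standard fact that a finitely generated free $\OF$-module restricts to a finitely generated projective $R[WH]$-module, and may simply be cited.) Hence $R[G/H, G/K]_{\OF}$ admits a finite generating set over $R[WH]$.

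Finally I invoke the elementary fact that for a finitely generated module $P$ over any ring, the natural map $\varinjlim_\lambda \Hom(P, N_\lambda) \to \Hom(P, \varinjlim_\lambda N_\lambda)$ is injective for every directed system $N_\lambda$: a homomorphism $P \to N_\mu$ that dies in the colimit sends a fixed finite generating set of $P$ to elements which all become zero at some common later stage, so the homomorphism already vanishes there. Applying this with $P = R[G/H, G/K]_{\OF}$ and $N_\lambda = M_\lambda(G/H)$, the colimit in question injects into $\Hom_{R[WH]}\!\big(P, \varinjlim_\lambda M_\lambda(G/H)\big) = \Hom_{R[WH]}(P, 0) = 0$, and we are done. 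The only step that is not pure bookkeeping with objectwise colimits and finite products is the finite generation of $R[G/H, G/K]_{\OF}$ over $R[WH]$, so that is where the real (though short) work lies; everything else is formal.
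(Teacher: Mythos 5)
Your proof is correct and takes essentially the same route as the paper: reduce to a single $D_H$ via the finite product over $\mathcal{F}/G$, compute colimits objectwise to arrive at $\varinjlim_\lambda \Hom_{R[WH]}(R[G/H,G/K]_{\OF}, M_\lambda(G/H))$, and kill this using the finiteness of the $WH$-orbit decomposition of $R[G/H,G/K]_{\OF}$ (which the paper imports as Lemma \ref{lemma:Z[G/H,G/K] as sum of WH submodules} and you re-prove directly). The only cosmetic difference is the final step: where the paper splits off the finitely many orbit summands $R[WH/WH_i]$, passes to $\Hom_{R[WH_i]}(R,-)$ and quotes the Bieri--Eckmann criterion, you apply the elementary injectivity of $\varinjlim_\lambda\Hom(P,N_\lambda)\to\Hom(P,\varinjlim_\lambda N_\lambda)$ for a finitely generated $P$ directly over $R[WH]$ --- the same $\FP_0$ continuity statement, just invoked one ring earlier.
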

\begin{proof}
Since the colimit of $M_{\lambda}$ is zero, so is the colimit of $M_\lambda (G/H)$, and for any $K \in \mathcal{F}$,
\begin{align*} 
\varinjlim D_H M_{\lambda}(G/K) &= \varinjlim \CoInd_{R[WH]}^{\OFG} M_{\lambda}(G/H) (G/K) \\
&= \varinjlim \Hom_{R[WH]} ( R[G/H, G/K]_{\OF}, M_\lambda(G/H) ) \\
&= \varinjlim \Hom_{R[WH]} \left( \bigoplus_{i \in I} R[WH/WH_i], M_\lambda (G/H) \right) 
\end{align*}
Where the last line is Lemma \ref{lemma:Z[G/H,G/K] as sum of WH submodules} below, the indexing set $I$ is finite and $WH_i$ is a finite subgroup of $WH$.  
\begin{align*}
\varinjlim \Hom_{R[WH]} &\left( \bigoplus_{i \in I} R[WH/WH_i], M_\lambda (G/H) \right) \\
&\cong \bigoplus_{i \in I} \varinjlim \Hom_{R[WH]} \left( R[WH/WH_i] , M_\lambda(G/H)\right) \\
&\cong \bigoplus_{i \in I} \varinjlim \Hom_{R[WH_i]} ( R, M_\lambda (G/H) ) \\
&= 0
\end{align*}
Where the final zero is by the Bieri--Eckmann criterion (Theorem \ref{theorem:C bieri-eckmann criterion}), since $R$ is $R[WH_i]$-finitely generated.  Thus 
\begin{align*}
\varinjlim D M_{\lambda}(G/K)  &= \varinjlim \prod_{H \in \mathcal{F}/G} D_H M_\lambda (G/K) \\
&= \prod_{H \in \mathcal{F}/G} \varinjlim D_HM_\lambda(G/K) \\
&= 0
\end{align*}
Where the commuting of the product and the colimit is because the product is finite ($\mathcal{F}/G$ is assumed finite.).
\end{proof}

The proof of the next lemma is contained in \cite[Lemma 3.1]{Nucinkis-EasyAlgebraicCharacterisationOfUniversalProperGSpaces}.
\begin{Lemma}\label{lemma:Z[G/H,G/K] as sum of WH submodules}
 There is an isomorphism of left $R[WH]$-modules: 
\[ R[-, G/K]_{\OF}(G/H) = R[G/H, G/K]_{\OF} = \bigoplus_{x} R[WH / WH_{xK}] \]
Where $x$ runs over a set of coset representatives of the subset of the set of $N_G H$-$K$ double cosets:
\[ \{ x \in N_G H \char92 G / K \: : \: x^{-1}Hx \le K\} \]
and the stabilisers are given by
\[ WH_{xK} = \left(N_G H \cap xKx^{-1}\right) / H \]
\end{Lemma}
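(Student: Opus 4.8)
The plan is to identify $R[G/H,G/K]_{\OF}$ explicitly as an $R[WH]$-permutation module and then split it into orbits. First I would recall the standard description of morphisms in the orbit category: a $G$-map $\alpha\colon G/H\to G/K$ is determined by $\alpha(H)=xK$, and an element $xK\in G/K$ arises in this way exactly when $x^{-1}Hx\le K$, i.e.\ exactly for $xK$ in the fixed-point set $(G/K)^H$. Hence $[G/H,G/K]_{\OF}$ is the free abelian group on the set $(G/K)^H=\{\,xK\in G/K : x^{-1}Hx\le K\,\}$, and $R[-,G/K]_{\OF}(G/H)=R[G/H,G/K]_{\OF}$ is the free $R$-module on this set.

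Next I would pin down the left $R[WH]$-action. Recalling Remark~\ref{remark:C res gives Aut module structure} and that $\Aut_{\OF}(G/H)\cong\ZZ[WH^{\text{op}}]$, this structure is obtained by precomposing $G$-maps $G/H\to G/K$ with automorphisms of $G/H$. An element $n\in N_GH$ gives the automorphism $gH\mapsto gnH$ of $G/H$ (well defined since $n$ normalises $H$), and precomposing the $G$-map $gH\mapsto gxK$ with it yields $gH\mapsto gnxK$; thus the basis element $xK$ is sent to $nxK$. Since $H$ fixes every point of $(G/K)^H$ this descends to a left $WH=N_GH/H$ action on $(G/K)^H$ --- the restriction to $(G/K)^H$ of the left translation action of $N_GH$ on $G/K$ --- and $R[G/H,G/K]_{\OF}$ is the $R[WH]$-permutation module on this $WH$-set.

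It then remains to enumerate the $WH$-orbits on $(G/K)^H$. Two points $xK,yK$ lie in the same orbit precisely when $N_GHxK=N_GHyK$, and the defining condition $x^{-1}Hx\le K$ depends only on the double coset (if $y=nxk$ then $y^{-1}Hy=k^{-1}(x^{-1}Hx)k\le K$); so the orbits are indexed by $\{\,x\in N_GH\backslash G/K : x^{-1}Hx\le K\,\}$. The $N_GH$-stabiliser of $xK$ is $\{\,n\in N_GH : x^{-1}nx\in K\,\}=N_GH\cap xKx^{-1}$, which contains $H$ exactly because $x^{-1}Hx\le K$, so in $WH$ the stabiliser is $WH_{xK}=(N_GH\cap xKx^{-1})/H$, a finite subgroup of $WH$ since $xKx^{-1}$ is finite. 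Orbit--stabiliser identifies the orbit of $xK$ with $WH/WH_{xK}$, and summing over orbits gives the asserted $R[WH]$-module isomorphism $R[G/H,G/K]_{\OF}\cong\bigoplus_x R[WH/WH_{xK}]$.

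The whole argument is bookkeeping, and I expect the only real pitfall to be the handedness: one must take care that the $\Aut_{\OF}(G/H)$-action yields a \emph{left} $WH$-action (because $\Aut_{\OF}(G/H)\cong\ZZ[WH^{\text{op}}]$, and a right module over it is a left $WH$-module), together with the two small well-definedness checks --- that $x^{-1}Hx\le K$ is a property of the double coset, and that $H\le N_GH\cap xKx^{-1}$ so that the quotient $WH_{xK}$ makes sense.
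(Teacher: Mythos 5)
Your argument is correct: identifying the basis of $R[G/H,G/K]_{\OF}$ with the fixed-point set $(G/K)^H$, checking that precomposition with $\Aut_{\OF}(G/H)\cong\ZZ[WH^{\mathrm{op}}]$ gives the left-translation action of $WH$, and then applying orbit--stabiliser is exactly the standard computation, and your handedness and well-definedness checks are the right ones. The paper gives no proof of its own but defers to \cite[Lemma 3.1]{Nucinkis-EasyAlgebraicCharacterisationOfUniversalProperGSpaces}, and your argument is essentially the one contained there.
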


\begin{Lemma}
 If $\varinjlim M_\lambda = 0$ then $\varinjlim CM_\lambda = 0$.
\end{Lemma}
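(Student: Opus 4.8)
The plan is to deduce the statement directly from Lemma~\ref{lemma:colim M is 0 then colim DM is 0} together with the exactness of filtered colimits; there is very little left to do. By definition $C M_\lambda = \operatorname{CoKer}\bigl( M_\lambda \stackrel{\prod j_H}{\longrightarrow} D M_\lambda \bigr)$, so if we write $I M_\lambda = \Image\bigl( \prod j_H \colon M_\lambda \to D M_\lambda \bigr)$, then for each $\lambda$ there is a short exact sequence of $\OF$-modules
\[ 0 \longrightarrow I M_\lambda \longrightarrow D M_\lambda \longrightarrow C M_\lambda \longrightarrow 0 . \]
Since the constructions $M \mapsto D_H M$, the unit maps $j_H$, the finite product defining $M \mapsto D M$, and hence $M \mapsto C M$ are all natural in the module argument, these short exact sequences assemble into a short exact sequence of directed systems indexed by $\Lambda$; in particular $\{ C M_\lambda \}_{\lambda \in \Lambda}$ is a directed system and $\{ D M_\lambda \} \twoheadrightarrow \{ C M_\lambda \}$ is a morphism of directed systems.

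Next I would apply $\varinjlim_\Lambda$. The category of $\OF$-modules satisfies Grothendieck's axiom that filtered colimits of exact sequences are exact, so we obtain an exact sequence
\[ \varinjlim_\Lambda I M_\lambda \longrightarrow \varinjlim_\Lambda D M_\lambda \longrightarrow \varinjlim_\Lambda C M_\lambda \longrightarrow 0 , \]
which shows that $\varinjlim_\Lambda C M_\lambda$ is a quotient of $\varinjlim_\Lambda D M_\lambda$. Now invoke Lemma~\ref{lemma:colim M is 0 then colim DM is 0}: since $\varinjlim_\Lambda M_\lambda = 0$ we have $\varinjlim_\Lambda D M_\lambda = 0$, and therefore its quotient $\varinjlim_\Lambda C M_\lambda$ also vanishes.

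I do not expect any real obstacle here: all the content sits in the previous lemma, and the remaining argument is the formal fact that a cokernel is a quotient and that filtered colimits are right exact. The only point deserving a careful word is the naturality of $M \mapsto C M$, so that $\{ C M_\lambda \}$ genuinely forms a directed system over the same index category $\Lambda$ and the surjection $\varinjlim_\Lambda D M_\lambda \twoheadrightarrow \varinjlim_\Lambda C M_\lambda$ makes sense — but this was already recorded when $C M$ was introduced.
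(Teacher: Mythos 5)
Your proof is correct and follows essentially the same route as the paper: apply the previous lemma to kill $\varinjlim DM_\lambda$ and use exactness of filtered colimits on the defining (co)kernel sequence to conclude that the quotient $\varinjlim CM_\lambda$ vanishes. The only cosmetic difference is that you work with the image of $\prod j_H$ rather than with the short exact sequence $0 \to M_\lambda \to DM_\lambda \to CM_\lambda \to 0$ used in the paper, which sidesteps (harmlessly) the question of injectivity of $\prod j_H$.
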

\begin{proof}
There is a natural exact sequence for each $\lambda$
\[ 0 \longrightarrow M_{\lambda} \longrightarrow DM_{\lambda} \longrightarrow CM_{\lambda} \longrightarrow 0 \]
Since the colimit of the left hand and centre term are zero (Lemma \ref{lemma:colim M is 0 then colim DM is 0}), and colimits are exact in the category of $\OF$-modules, so $\varinjlim CM_\lambda = 0$ also.
\end{proof}

For any finite group $H$, define $l(H)$ to be the length of the longest chain of strictly ascending chain of subgroups of $H$:
\[ 1 = H_0 \lneq H_1 \lneq H_2 \lneq \cdots \lneq H_n = H \]
and for any group $G$ let 
\[l(G) = \sup\{ l(H) \: : \: H \in \mathcal{F}\} \]
Note that if $G$ is $\OFFP_0$ then $l(G) < \infty$ but the converse doesn't hold in general, for there exist groups $G$ with $l(G) < \infty$ but containing infinitely many conjugacy classes of finite subgroups, hence $G$ is not of type $\OFFP_0$ \cite{LearyNucinkis-SomeGroupsOfTypeVF}.

\begin{Prop}\label{prop:MFFPn implies OFFPn}
 If $G$ is $\MFFP_n$ then $G$ is $\OFFP_n$.
\end{Prop}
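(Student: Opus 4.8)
The plan is to run the Bieri--Eckmann criterion (Theorem \ref{theorem:C bieri-eckmann criterion}) in the reverse direction to Proposition \ref{prop:mackey OFFPn implies MFFPn}, converting a vanishing statement about $H^*_{\OF}$ into one about $H^*_{\MF}$, using the functor $D$ (and its cokernel $C$) to pass from $\OF$-modules back to Mackey functors. Concretely: suppose $G$ is $\MFFP_n$; we already know $G$ is $\MFFP_0$, so $\mathcal{F}/G$ is finite and $l(G) < \infty$. Let $M_\lambda$ be a directed system of $\OF$-modules with $\varinjlim M_\lambda = 0$; we must show $\varinjlim H^k_{\OF}(G, M_\lambda) = 0$ for all $k \le n$. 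The point is that $DM$ extends to a Mackey functor (by \cite[Example 4.8]{HambletonPamukYalcin-EquivariantCWComplexesAndTheOrbitCategory}), and by Corollary \ref{cor:mackey H^n_MF is H^n_OF} we have $H^k_{\OF}(G, \Res_\sigma(DM_\lambda)) \cong H^k_{\MF}(G, DM_\lambda)$; since $\varinjlim DM_\lambda = 0$ by Lemma \ref{lemma:colim M is 0 then colim DM is 0} and $G$ is $\MFFP_n$, the Bieri--Eckmann criterion applied in $\MF$ gives $\varinjlim H^k_{\MF}(G, DM_\lambda) = 0$ for $k \le n$, hence $\varinjlim H^k_{\OF}(G, DM_\lambda) = 0$ for $k \le n$.

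Next I would feed this through the natural short exact sequence $0 \to M_\lambda \to DM_\lambda \to CM_\lambda \to 0$ and its long exact cohomology sequence, taking the (exact) colimit over $\lambda$. This expresses $\varinjlim H^k_{\OF}(G, M_\lambda)$ in terms of $\varinjlim H^{k}_{\OF}(G, DM_\lambda)$ (which vanishes for $k \le n$) and $\varinjlim H^{k-1}_{\OF}(G, CM_\lambda)$. So it suffices to show $\varinjlim H^{k}_{\OF}(G, CM_\lambda) = 0$ for $k \le n-1$. Now $CM$ is built from $M$ by a construction that strictly reduces complexity: because $D_HM$ only ``sees'' the value $M(G/H)$ and $j_H$ kills it appropriately, the cokernel $CM$ is supported on subgroups strictly smaller than the maximal ones in the support of $M$. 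This is where the invariant $l(G)$ enters — one iterates the construction $M \mapsto CM$ at most $l(G)$ times before reaching $0$. Thus I would set up an induction on $l(G)$ (or on the number of conjugacy classes in $\mathcal{F}/G$, filtered by subgroup order): the inductive hypothesis handles $\varinjlim H^k_{\OF}(G, CM_\lambda)$ since $CM$ lives, in an appropriate sense, over a family with smaller $l$-invariant, giving vanishing for $k \le n$ and in particular for $k \le n-1$. Combining with the $DM$-vanishing closes the induction, and a final application of Bieri--Eckmann in $\OF$ yields $G$ is $\OFFP_n$.

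The main obstacle I anticipate is making precise the claim that $CM$ is ``supported on strictly smaller subgroups'' and that this is exactly what drives the induction on $l(G)$ — one needs to check that $CM_\lambda$ is (or restricts to) a directed system of modules over the sub-orbit-category associated to a proper subfamily, with colimit zero, so that the inductive hypothesis genuinely applies. A secondary technical point is verifying that $DM_\lambda$ being a directed system of Mackey functors interacts correctly with $\varinjlim$ and with the identification $H^*_{\MF} \cong H^*_{\OF} \circ \Res_\sigma$: this requires that restriction and the relevant $\Ext$ groups commute with filtered colimits in the way Bieri--Eckmann demands, which should follow from $B^G$ being $\MFFP_n$ together with exactness of filtered colimits in both functor categories. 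Once the bookkeeping of the filtration by subgroup order is in place, the argument is a fairly mechanical descent.
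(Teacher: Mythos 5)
The first half of your argument --- passing to $DM_\lambda$, using that it extends to a Mackey functor, Lemma \ref{lemma:colim M is 0 then colim DM is 0}, Corollary \ref{cor:mackey H^n_MF is H^n_OF} and the Bieri--Eckmann criterion to get $\varinjlim H^k_{\OF}(G,DM_\lambda)=0$ for $k\le n$ --- is exactly the paper's. The gap is in how you dispose of $\varinjlim H^k_{\OF}(G,CM_\lambda)$. Your key claim, that $CM$ is ``supported on strictly smaller subgroups'' so that an induction on $l(G)$ applies, fails for $D$ as defined here: $DM=\prod_{H\in\mathcal{F}/G}D_HM$ runs over \emph{all} conjugacy classes, and at a maximal $K$ the module $DM(G/K)$ contains the factors $D_HM(G/K)=\Hom_{R[WH]}\bigl(R[G/H,G/K]_{\OF},M(G/H)\bigr)$ for every $H$ subconjugate to $K$; the map from $M(G/K)$ into this product is not surjective in general, so $CM(G/K)\neq 0$ even at maximal $K$. (A cokernel concentrated below the top only arises if one coinduces solely from the maximal isotropy of the support, as in \cite{HambletonPamukYalcin-EquivariantCWComplexesAndTheOrbitCategory}; that is not the construction used here.) Moreover, even granting such a support statement, your induction is not well-founded as stated: the hypothesis $\MFFP_n$ concerns the fixed family $\mathcal{F}$, and nothing provides the corresponding Mackey finiteness condition over the smaller family that your ``inductive hypothesis'' would need.

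In fact no induction over the subgroup poset (and no use of $l(G)$) is needed. Iterate the construction: set $C^0M_\lambda=M_\lambda$ and $C^{i+1}M_\lambda=CC^iM_\lambda$. Every $C^iM_\lambda$ is again a directed system with colimit zero, and every $DC^iM_\lambda$ extends to a Mackey functor, so your first-paragraph argument gives $\varinjlim H^m_{\OF}(G,DC^iM_\lambda)=0$ for all $m\le n$ and all $i$. The long exact sequences of $0\to C^iM_\lambda\to DC^iM_\lambda\to C^{i+1}M_\lambda\to 0$, after taking the exact colimit, then yield $\varinjlim H^m_{\OF}(G,C^{i+1}M_\lambda)\cong\varinjlim H^{m+1}_{\OF}(G,C^iM_\lambda)$ in the relevant range, hence $\varinjlim H^m_{\OF}(G,M_\lambda)\cong\varinjlim H^0_{\OF}(G,C^mM_\lambda)$ for $m\le n$. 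The last term vanishes by the Bieri--Eckmann criterion (Theorem \ref{theorem:C bieri-eckmann criterion}) in degree $0$, because $\MFFP_0$ implies $\OFFP_0$ (Lemma \ref{lemma:mackey MFFP0} plus the characterisation of $\OFFP_0$). So the descent is on cohomological degree, with $\OFFP_0$ as base case, rather than on the lattice of finite subgroups; this is what lets the paper avoid any bounded-length hypothesis.
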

\begin{proof}
Let $G$ be of type $\MFFP_n$ and let $M_\lambda $, for $\lambda \in \Lambda$, be a directed system of $\OF$-modules with colimit zero.  Following the notation in \cite{Degrijse-ProperActionsAndMackeyFunctors}, we define 
\[ C^0M_\lambda  = M_\lambda \]
\[ C^iM_\lambda = C C^{i-1}M_\lambda \]
for all natural numbers $i \ge 0$ and all $\lambda \in \Lambda$.  There are short exact sequences of directed systems, 
\[ 0 \longrightarrow C^iM_\lambda \longrightarrow DC^i M_\lambda \longrightarrow C^{i+1}M_\lambda \longrightarrow 0 \]
all the terms of which have colimit zero. 

As $G$ is assumed $\MFFP_n$ and $DC^iM_\lambda$ extends to a Mackey functor for all $i$, the Bieri--Eckmann criterion (Theorem \ref{theorem:C bieri-eckmann criterion}) gives that for all $m \le n$,
\[ \varinjlim H^m_{\OF} (G, DC^iM_\lambda) = 0 \]
and thus using exactness of colimits and the long exact sequence associated to cohomology gives that for all non-negative integers $m$ and $i$,
\[ \varinjlim H^m_{\OF} (G, C^{i+1}M_{\lambda}) = \varinjlim H^{m+1}_{\OF} (G, C^{i}M_{\lambda}) \]
So,
\begin{align*}
 \varinjlim H^{m}_{\OF} (G,M_{\lambda}) &= \varinjlim H^{m-1}_{\OF}(G, C^1 M_\lambda) \\
&\cong \cdots \\
&\cong \varinjlim H^0_{\OF}(G, C^mM_\lambda) \\
&\cong 0
\end{align*}
Where the zero is from the Bieri--Eckmann criterion (Theorem \ref{theorem:C bieri-eckmann criterion}), because $G$ is assumed $\MFFP_0$ hence $\OFFP_0$ by Lemma \ref{lemma:mackey MFFP0}.  Using the Bieri--Eckmann criterion again, $G$ is $\OFFP_n$. 
\end{proof}

\begin{Cor}\label{cor:OFFPn iff MFFPn}
 The conditions $\OFFP_n$ and $\MFFP_n$ are equivalent.
\end{Cor}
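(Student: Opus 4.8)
The plan is to obtain the equivalence as an immediate formal consequence of the two propositions established above: Proposition \ref{prop:mackey OFFPn implies MFFPn} supplies the implication ``$\OFFP_n$ implies $\MFFP_n$'', and Proposition \ref{prop:MFFPn implies OFFPn} supplies the converse ``$\MFFP_n$ implies $\OFFP_n$''. Since each of these has already been proved, the corollary follows simply by combining them, and there is nothing further to do.

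It is worth recalling the shape of the two inputs, since that is where all the content sits. For the forward implication one uses the Bieri--Eckmann criterion (Theorem \ref{theorem:C bieri-eckmann criterion}) in the form ``$\Ext^k$ against a directed system with vanishing colimit vanishes up to degree $n$'': given such a system $M_\lambda$ of $\MF$-modules, $\Res_\sigma M_\lambda$ still has vanishing colimit because colimits in a functor category are computed objectwise, and then Corollary \ref{cor:mackey H^n_MF is H^n_OF} identifies $\varinjlim H^n_{\MF}(G, M_\lambda)$ with $\varinjlim H^n_{\OF}(G, \Res_\sigma M_\lambda)$, which vanishes by hypothesis. For the converse one feeds the Hambleton--Pamuk--Yal\c{c}\i n functors $D$ and $C$ into the same machine: for a directed system $M_\lambda$ of $\OF$-modules with zero colimit, the systems $DC^iM_\lambda$ also have zero colimit (Lemma \ref{lemma:colim M is 0 then colim DM is 0} and its $C$-analogue), and each $DC^iM_\lambda$ extends to a Mackey functor, so $\MFFP_n$ forces $\varinjlim H^m_{\OF}(G, DC^iM_\lambda) = 0$ for $m \le n$; the short exact sequences $0 \to C^iM_\lambda \to DC^iM_\lambda \to C^{i+1}M_\lambda \to 0$ then permit a dimension shift reducing $\varinjlim H^m_{\OF}(G, M_\lambda)$ to $\varinjlim H^0_{\OF}(G, C^mM_\lambda)$, which is zero because $G$ is $\MFFP_0$, hence $\OFFP_0$.

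The only point to watch is the base case: both propositions presuppose $\MFFP_0$ before running the degree-shifting arguments, but for $n = 0$ the equivalence is precisely Lemma \ref{lemma:mackey MFFP0} together with the quoted characterisation of $\OFFP_0$ in terms of the finiteness of $\mathcal{F}/G$, so there is no circularity. I do not anticipate any obstacle beyond assembling these pieces; the real work was already carried out in Propositions \ref{prop:mackey OFFPn implies MFFPn} and \ref{prop:MFFPn implies OFFPn}.
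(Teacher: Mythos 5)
Your proposal is correct and coincides with the paper's own proof, which is literally a one-line combination of Propositions \ref{prop:mackey OFFPn implies MFFPn} and \ref{prop:MFFPn implies OFFPn}. The additional recapitulation of the two inputs and the remark on the $\MFFP_0$ base case are accurate but not needed beyond what those propositions already establish.
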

\begin{proof}
 Combine Propositions \ref{prop:mackey OFFPn implies MFFPn} and \ref{prop:MFFPn implies OFFPn}.
\end{proof}

\section{Homology and Cohomology of Cohomological Mackey Functors}\label{section:Homology and Cohomology of Cohomological Mackey Functors}

The main result of this section is Proposition \ref{prop:HF sigma ind preserves proj res of R}, that we may induce projective  $\OF$-module resolutions of $\uR_{\OF}$ to projective $\HeckeF$-module resolutions of $R^-$.  

\begin{Lemma}\label{lemma:HF struc of Res cov frees}
For any $L \in \mathcal{F}$, there is an isomorphism of covariant $\OF$-modules,
\[ \Res_{\pi \circ \sigma} R[G/L, -]_{\HeckeF} \cong \Hom_{RL} (R, R[G/1, -]_{\OF}) \]
\end{Lemma}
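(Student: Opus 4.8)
The plan is to identify both sides of the claimed isomorphism, evaluated at an arbitrary object $G/K$, with the $R$-module $\Hom_{RG}(R[G/L],R[G/K])$, and then to check that the two resulting covariant $\OF$-module structures in the variable $G/K$ agree. Since $\sigma$ and $\pi$ both fix objects (under the standing identifications), restriction along $\pi\circ\sigma$ simply precomposes with $\pi\circ\sigma$, so
\[ \Res_{\pi\circ\sigma}R[G/L,-]_{\HeckeF}(G/K) = R[G/L,G/K]_{\HeckeF} = \Hom_{RG}(R[G/L],R[G/K]), \]
and a $G$-map $\alpha\colon G/K\to G/K'$ in $\OF$ acts here by postcomposition with $(\pi\circ\sigma)(\alpha)\in\Hom_{RG}(R[G/K],R[G/K'])$.

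For the right-hand side, recall that $R[G/1,G/K]_{\OF}$ is canonically the left $RG$-module $R[G/K]$ (identify the $G$-map $G/1\to G/K$ sending $1\mapsto gK$ with the coset $gK$; precomposition by $\End_{\OF}(G/1)$ becomes left multiplication on cosets), and that its restriction to $RL$ along $L\le G$ is precisely the $RL$-structure used to form $\Hom_{RL}(R,R[G/1,-]_{\OF})$; under this identification the covariant $\OF$-action of a $G$-map $\alpha_g\colon G/K\to G/K'$ becomes postcomposition with $R[\alpha_g]\colon R[G/K]\to R[G/K']$, $hK\mapsto hgK'$. Now use the standard isomorphism $R[G/L]\cong RG\otimes_{RL}R$ of left $RG$-modules together with the tensor--hom adjunction to get
\[ \Hom_{RG}(R[G/L],R[G/K]) \;\cong\; \Hom_{RL}(R,R[G/K]) \;=\; \Hom_{RL}(R,R[G/1,G/K]_{\OF}). \]
This isomorphism is natural in the $R[G/K]$-variable, hence it intertwines postcomposition with any fixed $RG$-map $R[G/K]\to R[G/K']$ on both sides.

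It therefore remains only to verify that, for a $G$-map $\alpha_g\colon G/K\to G/K'$, the operator $(\pi\circ\sigma)(\alpha_g)$ appearing on the left equals $R[\alpha_g]$ on the right. This is a direct computation with the explicit description of $\pi$: the basic morphism $\sigma(\alpha_g)$ is $G/K\xleftarrow{\alpha_1}G/K\xrightarrow{\alpha_g}G/K'$, so the final displayed formula of the ``Explicit Description of $\pi$'' subsection gives $(\pi\circ\sigma)(\alpha_g)=\lvert K\cap (K')^{g^{-1}}:K\rvert\,(KgK')=(KgK')$, using that $g^{-1}Kg\le K'$ forces $K\cap gK'g^{-1}=K$; and by Remark \ref{remark:mackey yoshida CF struc} the element $(KgK')$ corresponds under $\psi$ to the $RG$-map $hK\mapsto hgK'$, which is exactly $R[\alpha_g]$. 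Since every morphism of $\OF$ is an integer combination of such $G$-maps, the two $\OF$-module structures coincide, and the pointwise isomorphisms assemble into the asserted isomorphism of covariant $\OF$-modules.

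The only genuine difficulty is the bookkeeping of left/right and ``op'' conventions across $\OF$, $\MF$ and $\HeckeF$, and confirming that the $RL$-structure implicit in the right-hand side is exactly the restriction of the left-multiplication $RG$-action on $R[G/K]$; the computation of $(\pi\circ\sigma)(\alpha_g)$ is the linchpin and everything else is formal. One could instead proceed pointwise via Lemma \ref{lemma:mackey yoshida CF struc}, identifying both sides with the free $R$-module $R[L\backslash G/K]$ on the $L$-orbits in $G/K$ (equivalently $R[G/K]^{L}$), but then naturality in $G/K$ must be checked by hand using the double-coset composition formula, which is messier than invoking tensor--hom.
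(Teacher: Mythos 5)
Your proposal is correct and follows essentially the same route as the paper: both identify the value at each object with $\Hom_{RG}(R[G/L],R[G/K])$ via the change-of-rings adjunction $\Hom_{RL}(R,-)\cong\Hom_{RG}(RG\otimes_{RL}R,-)$ and then check that both $\OF$-module structures are postcomposition with the linearised $G$-map. The only (immaterial) difference is that you compute $(\pi\circ\sigma)(\alpha_g)$ from the explicit double-coset formula for $\pi$ on a basic morphism, whereas the paper factors $\sigma(\alpha_x)$ as $c_x\circ R_H^{K^{x^{-1}}}$ and uses Th\'evenaz--Webb's description of $\pi$ on those generators.
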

\begin{proof}
If $H \in \mathcal{F}$, evaluating the left hand side at $G/H$ yields $R[G/L, G/H]_{\HeckeF}$ while evaluating the right hand side at $G/H$ yields 
\begin{align*}
\Hom_{RL}(R, R[G/H]) &\cong \Hom_{RG}(RG \otimes_{RL} R, R[G/H]) \\
&\cong \Hom_{RG} (R[G/L], R[G/H]) \\
&\cong R[G/L, G/H]_{\HeckeF} 
\end{align*}
Where the first isomorphism is \cite[p.63 (3.3)]{Brown}.  If $ \alpha_x:G/H \to G/K$ is the $G$-map $H \mapsto xK$ then, looking at the left hand side,
\begin{align*}
 \Res_{\pi \circ \sigma} R[G/L, -]_{\HeckeF} (\alpha_x) &= R[G/H, -]_{\HeckeF}( c_x \circ R_H^{K^{x^{-1}}}  ) \\
 &\cong R[G/H, -]_{\HeckeF}( c_x ) \circ R[G/H, -]_{\HeckeF}( R_H^{K^{x^{-1}}} )
\end{align*}
But $R[G/H, -]_{\HeckeF}( R_H^{K^{x^{-1}}} )$ is post-composition with the $G$-map 
\[ \alpha_1: G/H \to G/K^{x^{-1}} \]
and $R[G/H, -]_{\HeckeF}( c_x )$ is post-composition with the $G$-map 
\[ \alpha_x: G/K^{x^{-1}} \to G/K \]
In summary, $\Res_{\pi \circ \sigma} R[G/L, -]_{\HeckeF} (\alpha_x)$ is the map
\begin{align*}
\Hom_{RG}(R[G/L], R[G/H]) &\longrightarrow \Hom_{RG}(R[G/L], R[G/K]) \\
f &\longmapsto  \alpha_x \circ f 
\end{align*}

Now, the right hand side, recall that 
\[ R[G/L, -]_{\OF} (\alpha_x) : f \longmapsto \alpha_x \circ f \]
So $\Hom_{RL} (R, R[G/1, -]_{\OF})(\alpha_x)$ is the map
\begin{align*} 
 \Hom_{RL} (R, R[G/H]) &\longrightarrow \Hom_{RL} (R, R[G/K]) \\
 f &\longmapsto \alpha_x \circ f
 \end{align*}
Showing the left and right hand sides agree on morphisms.
\end{proof}

Recall that $\Ind_{R G}^{\OF}$ denotes induction with the functor $\iota:\widehat{G} \longrightarrow \OF$, where $\widehat{G}$ is the single object category whose morphisms are elements of $\ZZ G$ and $\iota$ maps the single object to $G/1$.  Equivalently for an $\OF$-module $M$, 
\[ \Ind_{R G}^{\OF} M \cong R[-, G/1]_{\OF} \otimes_{R G} M \]

\begin{Lemma}\label{lemma:contravariant induction is exact}
 The functor $\Ind_{R G}^{\OF}$ is exact.
\end{Lemma}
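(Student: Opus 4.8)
The plan is to unwind the definition and reduce the claim to ordinary module theory over the group ring. Recall that induction is in general only right exact (see the list of properties in Section \ref{subsubsection:ind coind res}), so exactness here must come from something special about the functor $\iota:\widehat{G}\to\OF$. Using the identification $\Ind_{RG}^{\OF} M \cong R[-, G/1]_{\OF} \otimes_{RG} M$ together with the fact that exactness of $\OF$-modules is tested after evaluation at each object, it suffices to prove that for every $H \in \mathcal{F}$ the functor $M \mapsto R[G/H, G/1]_{\OF} \otimes_{RG} M$ is exact on $RG$-modules; equivalently, that $R[G/H, G/1]_{\OF}$ is a flat $RG$-module.

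The first step is to compute $R[G/H, G/1]_{\OF}$ by hand. A $G$-map $G/H \to G/1$ is determined by the image $H \mapsto g$ of the trivial coset, and well-definedness forces $Hg = g$, hence $H = 1$. Therefore $R[G/H, G/1]_{\OF} = 0$ whenever $H \neq 1$, whereas $R[G/1, G/1]_{\OF} \cong R[\Aut_{\OF}(G/1)] \cong RG$, a free module of rank one, with the $RG$-action the one transported along $\iota$.

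The second step is then immediate: in either case $R[G/H, G/1]_{\OF}$ is $RG$-free, hence flat, so $- \otimes_{RG} R[G/H, G/1]_{\OF}$ preserves exact sequences for every object $G/H$. Assembling these pointwise statements yields exactness of $\Ind_{RG}^{\OF}$.

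I do not anticipate a genuine difficulty here; the only points requiring a little care are bookkeeping ones — making sure the side on which $RG$ acts in the tensor product matches the identification $R[G/1, G/1]_{\OF} \cong RG$ coming from $\iota$ — and neither affects flatness. The conceptual content is entirely in the observation that $G/1$ admits no non-zero morphism from $G/H$ unless $H$ is trivial, which collapses the a priori complicated $RG$-module $R[-, G/1]_{\OF}$ onto a single object.
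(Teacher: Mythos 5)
Your proposal is correct and is essentially the paper's own argument: the paper's one-line proof is exactly the observation that $\Ind_{RG}^{\OF}M(G/H)$ equals $M$ when $H=1$ and $0$ otherwise, which is the same computation you make of $R[G/H,G/1]_{\OF}$ being $RG$ or $0$, followed by pointwise checking of exactness. Your flatness phrasing is just a mild repackaging of this, so there is nothing to add.
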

\begin{proof}
 This is because for any $H \in \mathcal{F}$,
\[ \Ind_{R G}^{\OF} M (G/H) = \left\{ \begin{array}{l l} M & \text{ if $H = 1$} \\ 0 & \text{ else.} \end{array} \right. \]
\end{proof}

\begin{Lemma}\label{lemma:mackey ind RH to OFG of R is OFFPinfty}
 For any finite subgroup $H$ of $G$, the module $\Ind_{R G}^{\OFG} \Ind_{R H}^{R G} R$ is of type $\OFFP_\infty$.
\end{Lemma}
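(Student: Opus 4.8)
The plan is to reduce the statement to two elementary facts: that the contravariant induction $\Ind_{R G}^{\OFG}$ is exact and carries finitely generated free $RG$-modules to finitely generated free $\OF$-modules, and that $\Ind_{R H}^{R G} R$ is of type $\FP_\infty$ over $RG$. Granting these, applying $\Ind_{R G}^{\OFG}$ termwise to a finitely generated free $RG$-resolution of $\Ind_{R H}^{R G} R$ produces a finitely generated free $\OF$-resolution of $\Ind_{R G}^{\OFG}\Ind_{R H}^{R G} R$, which is precisely the assertion.

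First I would identify $\Ind_{R H}^{R G} R \cong R[G/H]$ as a left $RG$-module by unwinding $RG \otimes_{R H} R$, and then prove the sublemma that $R[G/H]$ is of type $\FP_\infty$ over $RG$ for every finite subgroup $H$. Over $\ZZ$ this is standard, since $\ZZ H$ is Noetherian and so $\ZZ$ has a resolution $P_* \twoheadrightarrow \ZZ$ by finitely generated free $\ZZ H$-modules; for an arbitrary commutative ring $R$, base-changing along $\ZZ \to R$ yields the resolution $R \otimes_\ZZ P_* \twoheadrightarrow R$ by finitely generated free $RH$-modules, exactness being preserved because each $P_i$ is $\ZZ$-free. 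Since $RG \otimes_{R H}(-)$ is exact ($RG$ is free as a right $RH$-module) and sends $RH$ to $RG$, it carries this resolution to a finitely generated free $RG$-resolution of $R[G/H]$. (Alternatively this is recorded inside the proof of Lemma \ref{lemma:restriction of free mackey at 1 is FPinfty}, which notes $R[G/1,G/K]_{\MF}\cong R[G/K]$ and that the latter is $\FP_\infty$ over $RG$.)

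Next I would invoke Lemma \ref{lemma:contravariant induction is exact} for the exactness of $\Ind_{R G}^{\OFG}$, together with the identification $\Ind_{R G}^{\OFG} M \cong R[-,G/1]_{\OF}\otimes_{R G} M$; in particular $\Ind_{R G}^{\OFG} RG \cong R[-,G/1]_{\OF}$, the free $\OF$-module on the object $G/1$, and induction preserves finite generation. Hence $\Ind_{R G}^{\OFG}$ turns the finitely generated free $RG$-resolution of $R[G/H]$ from the previous step into an exact finitely generated free $\OF$-resolution of $\Ind_{R G}^{\OFG}\Ind_{R H}^{R G} R$, so this module is of type $\OFFP_\infty$.

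The only genuinely delicate ingredient is the sublemma that $R$ is $\FP_\infty$ over $RH$ for $H$ finite and $R$ an arbitrary commutative ring; once that base-change argument is in place, everything else is a formal consequence of exactness and of preservation of finitely generated free modules under the two induction functors.
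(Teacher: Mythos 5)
Your proposal is correct and follows essentially the same route as the paper: observe that $R$ is $\FP_\infty$ over $RH$ (so $\Ind_{RH}^{RG}R$ is $\FP_\infty$ over $RG$), take a finite type free $RG$-resolution, and push it through the exact, finite-generation-preserving functor $\Ind_{RG}^{\OFG}$ of Lemma \ref{lemma:contravariant induction is exact}. The only difference is that you spell out the base-change justification that $R$ is $\FP_\infty$ over $RH$ for arbitrary commutative $R$, which the paper simply asserts.
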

\begin{proof}
 Since $R$ is $\FP_\infty$ as a $R H$ module, $\Ind_{R H}^{R G}R $ is of type $\FP_\infty$ over $R G$.  Choose a finite type free resolution $F_*$ of $\Ind_{R H}^{R G}R$ by $RG$-modules, by Lemma \ref{lemma:contravariant induction is exact} $\Ind_{R G}^{\OFG}F_*$ is a finite type free resolution of $\Ind_{R G}^{\OFG}\Ind_{R H}^{R G}R$ by $\OFG$-modules.
\end{proof}

\begin{Lemma}\label{lemma:mackey tech lamma for ind}
 If $N$ is a projective $\OF$-module and $H \in \mathcal{F}$, there is an isomorphism:
\[ N \otimes_{\OF} \Res_{\pi \circ \sigma} R[G/H, -]_{\HeckeF} \cong \Hom_{RH}(R, N(G/1)) \]
\end{Lemma}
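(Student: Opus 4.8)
\emph{Plan.} The idea is to reduce to the case where $N$ is a free $\OF$-module by an additivity argument, and to settle that case using Lemma~\ref{lemma:HF struc of Res cov frees} together with the Yoneda-type tensor identities of Lemma~\ref{lemma:C yoneda type isos in tensor product}. First I would reformulate the statement. By Lemma~\ref{lemma:HF struc of Res cov frees} there is a natural isomorphism of covariant $\OF$-modules $\Res_{\pi\circ\sigma} R[G/H,-]_{\HeckeF} \cong \Hom_{RH}(R, B)$, where $B := R[G/1,-]_{\OF}$ is regarded as a functor $\OF \to RH\text{-}\RMod$ via the left $G$-action on each $B(G/K) = R[G/1,G/K]_{\OF} \cong R[G/K]$ (its structure maps are post-composition, hence $G$-equivariant). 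Consequently $N \otimes_{\OF} B$ is an $RH$-module, and by Lemma~\ref{lemma:C yoneda type isos in tensor product} it is isomorphic to $N(G/1)$, compatibly with the $RH$-module structures (both arising from the $\Aut_{\OF}(G/1)$-structure of Remark~\ref{remark:C res gives Aut module structure}). Thus the lemma becomes the assertion that there is a natural isomorphism
\[ N \otimes_{\OF} \Hom_{RH}(R, B) \;\cong\; \Hom_{RH}\big(R, N \otimes_{\OF} B\big). \]

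\emph{The natural map.} I would define $\eta_N$ on a generator $n \otimes \xi$, with $n \in N(G/K)$ and $\xi \in \Hom_{RH}(R, B(G/K))$, by $\eta_N(n\otimes\xi) = n \otimes \xi(1) \in N \otimes_{\OF} B$. This lands in $\Hom_{RH}(R, N\otimes_{\OF}B) = (N\otimes_{\OF}B)^H$ because $\xi(1) \in B(G/K)^H$, and it respects the defining relations of the coend $N\otimes_{\OF}B$ since for a morphism $\alpha$ the push-forward $\alpha_*\xi$ on the $\Hom$-side is $B(\alpha)\circ\xi$, matching the relation $\alpha^*(n)\otimes\xi \sim n\otimes\alpha_*\xi$. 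Hence $\eta_N$ is a well-defined $R$-linear map, evidently natural in $N$.

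\emph{Reduction to free modules and conclusion.} Both functors $N \mapsto N \otimes_{\OF}\Hom_{RH}(R,B)$ and $N \mapsto \Hom_{RH}(R, N\otimes_{\OF}B)$ are additive and commute with arbitrary direct sums: the tensor product over $\OF$ commutes with colimits in each variable, and $\Hom_{RH}(R,-)$ commutes with arbitrary direct sums because $H$ is finite, so that the augmentation ideal of $RH$ is finitely generated and $R$ is finitely presented as an $RH$-module. It therefore suffices to check that $\eta_N$ is an isomorphism when $N = R[-,G/K]_{\OF}$ is free; it then follows for arbitrary direct sums of frees, and, by naturality and additivity, for direct summands of these, i.e.\ for all projective $N$. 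In the free case, Lemma~\ref{lemma:C yoneda type isos in tensor product} identifies $N \otimes_{\OF}(-)$ with evaluation at $G/K$, so both the source and the target of $\eta_N$ become $\Hom_{RH}(R, B(G/K)) = \Hom_{RH}(R, R[G/1,G/K]_{\OF})$, and under these identifications $\eta_N$ is the identity; this is also consistent with the direct computation $R[G/H,G/K]_{\HeckeF} = \Hom_{RG}(R[G/H],R[G/K]) \cong \Hom_{RH}(R, R[G/K])$. Combining this with the first paragraph gives the claimed isomorphism.

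\emph{Main obstacle.} The only delicate point is the claim that $\Hom_{RH}(R,-)$ commutes with arbitrary direct sums, which is what allows one to pass from finitely generated to arbitrary projective $N$; this uses crucially that $H$ is a finite group. The remaining verifications — well-definedness of $\eta_N$ on the coend, and compatibility of the isomorphisms of the first paragraph with the various $RH$-module structures — are routine bookkeeping with the definitions.
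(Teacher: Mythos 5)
Your argument is correct, but it takes a genuinely different route from the paper. Both proofs start the same way, using Lemma \ref{lemma:HF struc of Res cov frees} to replace $\Res_{\pi\circ\sigma}R[G/H,-]_{\HeckeF}$ by $\Hom_{RH}(R,R[G/1,-]_{\OF})$. From there the paper converts $\Hom_{RH}(R,-)$ into $\Mor_{\OF}(\Ind_{RG}^{\OF}\Ind_{RH}^{RG}R,-)$ by two adjunctions, checks via Lemma \ref{lemma:mackey ind RH to OFG of R is OFFPinfty} that $\Ind_{RG}^{\OF}\Ind_{RH}^{RG}R$ is finitely generated, and then invokes the general comparison Lemma \ref{lemma:N proj M fp then nu iso}, i.e.\ that $N(?)\otimes_{\OF}\Mor_{\OF}(M(-),R[-,?]_{\OF})\to\Mor_{\OF}(M,N)$ is an isomorphism for $N$ projective and $M$ finitely generated; a final adjunction returns $\Hom_{RH}(R,N(G/1))$. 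You instead use Lemma \ref{lemma:C yoneda type isos in tensor product} to rewrite the target as $\Hom_{RH}(R,N\otimes_{\OF}R[G/1,-]_{\OF})$, construct the evident natural map $n\otimes\xi\mapsto\bigl(r\mapsto n\otimes\xi(r)\bigr)$ directly on the coend, and reduce to representable $N$ by a direct-sum-and-retract argument; the finiteness input is that $R$ is a finitely generated (indeed cyclic) $RH$-module, so $\Hom_{RH}(R,-)$ commutes with arbitrary direct sums (finite generation already suffices for coproducts; finite presentation is only needed for filtered colimits). This is essentially the same finiteness phenomenon that powers the paper's Lemma \ref{lemma:mackey ind RH to OFG of R is OFFPinfty}, deployed more directly. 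What the paper's route buys is a reusable categorical statement (Lemma \ref{lemma:N proj M fp then nu iso}) and no need to verify well-definedness of an explicit map on the coend; what your route buys is a shorter, self-contained argument that avoids the induction functor $\Ind_{RG}^{\OF}$ and the adjunction bookkeeping. The points you flag as routine (equivariance of the identification $N\otimes_{\OF}R[G/1,-]_{\OF}\cong N(G/1)$ with respect to the $\Aut_{\OF}(G/1)$-structure, well-definedness of $\eta_N$ on the relations, and that $\eta_N$ becomes the identity in the representable case) do check out.
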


\begin{proof}
The adjointness of induction and restriction gives an isomorphism of $\OFG$-modules, for any $\OFG$-module $N$,
\begin{align*}
 \Hom_{RH}(R, N(G/1)) &\cong \Hom_{RG}(\Ind_{R H}^{R G} R, N(G/1)) \\
 &\cong \Mor_{\OFG} ( \Ind_{R G}^{\OFG} \Ind_{R H}^{R G} R, N)
\end{align*}
There is a chain of isomorphisms,
 \begin{align*}
N &\otimes_{\OFG} \Res_{\pi \circ \sigma} R[G/H, -]_{\HeckeFG} \\
 &\cong N \otimes_{\OFG} \Hom_{RH}(R, R[G/1, -]_{\OFG}) \\
&\cong N(-) \otimes_{\OFG} \Mor_{\OFG} ( \Ind_{R G}^{\OFG} \Ind_{R H}^{R G} R(?), R[?, -]_{\OFG}) \\
&\cong \Mor_{\OFG} ( \Ind_{R G}^{\OFG} \Ind_{R H}^{R G} R(?), N(?)) \\
&\cong \Hom_{RH}(R, N(G/1))
 \end{align*}
Where the first isomorphism is Lemma \ref{lemma:HF struc of Res cov frees} and the second and fourth are the adjoint isomorphism mentioned above.  The third isomorphism is from Lemma \ref{lemma:N proj M fp then nu iso} below for which we need that  $\Ind_{R G}^{\OFG}\Ind_{R H}^{R G} R$ is finitely generated, but this is implied by Lemma \ref{lemma:mackey ind RH to OFG of R is OFFPinfty}.
\end{proof}

\begin{Lemma}\label{lemma:N proj M fp then nu iso}
If $N$ is a projective $\OF$-module and $M$ is a finitely generated $\OF$-module then the natural map
\[ \nu : N(?)  \otimes_{\OF} \Mor_{\OF} (M(-), R[-, ?]_{\OF}) \longrightarrow \Mor_{\OF}(M, N) \]
 is an isomorphism.
\end{Lemma}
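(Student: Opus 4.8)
The plan is to prove the statement by a d\'evissage in the variable $N$, keeping $M$ an arbitrary finitely generated $\OF$-module throughout. Both
\[ N \longmapsto N(?) \otimes_{\OF} \Mor_{\OF}(M(-), R[-,?]_{\OF}) \qquad\text{and}\qquad N \longmapsto \Mor_{\OF}(M, N) \]
are additive covariant functors of $N$, and $\nu$ is a natural transformation between them, so it is enough to verify that $\nu$ is an isomorphism on a class of $\OF$-modules from which every projective is obtained by taking direct sums and retracts; that class will be the representable (free on one generator) modules $R[-,y]_{\OF}$.

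First I would dispatch the base case $N = R[-,y]_{\OF}$. Here $N(?) = R[?,y]_{\OF}$, and Lemma \ref{lemma:C yoneda type isos in tensor product} supplies a natural isomorphism
\[ R[-,y]_{\OF} \otimes_{\OF} \Mor_{\OF}(M(-), R[-,?]_{\OF}) \;\cong\; \Mor_{\OF}(M(-), R[-,y]_{\OF}) = \Mor_{\OF}(M, N), \]
while on the other side there is nothing to do. Unwinding the definition of $\nu$ — an element $\alpha \otimes \varphi$, with $\alpha \in N(w)$ and $\varphi \colon M \to R[-,w]_{\OF}$, is sent to the composite of $\varphi$ with post-composition by $\alpha$ — shows $\nu$ is exactly this isomorphism; no hypothesis on $M$ is used at this stage. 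Next, for a general free module $N = \bigoplus_{j \in J} N_j$ with each $N_j = R[-,y_j]_{\OF}$, the left-hand functor commutes with the coproduct because the categorical tensor product $\otimes_{\OF}$ does, and the right-hand functor commutes with it because $M$ is finitely generated: a morphism out of $M$ factors through the epimorphism from a finite free module $\bigoplus_{i=1}^{n} R[-,x_i]_{\OF}$, and by the Yoneda-type Lemma \ref{lemma:C yoneda-type} a morphism out of that finite free module is an element of $\bigoplus_{i=1}^{n}\bigoplus_{j\in J} N_j(x_i)$, whose support meets only finitely many indices $j$. Hence $\Mor_{\OF}(M, \bigoplus_j N_j) \cong \bigoplus_j \Mor_{\OF}(M, N_j)$, and since $\nu$ respects these decompositions the base case gives that $\nu$ is an isomorphism for every free $N$. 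Finally, if $N$ is projective, write $F = N \oplus N'$ with $F$ free; additivity of both functors in $N$ together with naturality of $\nu$ gives $\nu_F = \nu_N \oplus \nu_{N'}$, so $\nu_F$ being an isomorphism forces $\nu_N$ to be one.

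The one genuinely delicate point is the verification in the base case that the abstract natural map $\nu$ agrees with the Yoneda-type isomorphism of Lemma \ref{lemma:C yoneda type isos in tensor product}; this is a direct but slightly fiddly chase through the definitions of $\otimes_{\OF}$ and of $\nu$. The only place the finite generation of $M$ is really used is in commuting $\Mor_{\OF}(M,-)$ past the infinite coproduct defining a general free module, which is precisely why that hypothesis cannot be dropped; the projectivity of $N$ is used only to write it as a retract of a free module in the last step.
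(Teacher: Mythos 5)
Your proposal is correct and takes essentially the same route as the paper's proof: verify $\nu$ on free modules using the tensor-product Yoneda isomorphism of Lemma \ref{lemma:C yoneda type isos in tensor product}, use finite generation of $M$ to commute $\Mor_{\OF}(M,-)$ past the (possibly infinite) coproduct, and deduce the projective case by writing $N$ as a retract of a free module and invoking naturality. The only difference is organizational: you treat a single representable first and also spell out the ``standard argument'' for the coproduct step, which the paper leaves implicit.
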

\begin{proof}
Recall that elements of $N(?)  \otimes_{\OF} \Mor_{\OF} (M(-), R[-, ?]_{\OF})$ are equivalence classes of elements
\[ n_H \otimes \varphi_H  \in \bigoplus_{G/H \in \OF}  N(G/H) \otimes_R \Mor_{\OF} \left(M( -), R[ -,G/H]_{\OF}\right) \]
For any $G/L \in \OF$ and $m \in M(G/L)$, 
\begin{align*}
\nu \left(n_H \otimes_R \varphi_H  \right)(G/L) : M(G/L) &\longrightarrow N(G/L) \\
 m &\longmapsto N\left( \varphi_H(G/L)(m) \right)(n_H)  
\end{align*}
One can check that $\nu$ is well defined and natural in both $N$ and $M$.

Consider first the case $N(-) = \bigoplus_i R[-,G/H_i]_{\OF}$ is some free module, then the map $\nu$ becomes
 \[ \nu : \left( \bigoplus_i R[{?},G/H_i]_{\OF} \right) \otimes_{\OF} \Mor_{\OF} \left(M(-), R[{-},{?}]_{\OF}\right)  \]
 \[\longrightarrow \Mor_{\OF} \left( M, \bigoplus_i R[{-},G/H_i]_{\OF} \right) \]
 There is a chain of isomorphisms 
 \begin{align*}
&\left( \bigoplus_i R[{?},G/H_i]_{\OF} \right)  \otimes_{\OF} \Mor_{\OF} \left(M(-), R[{-},{?}]_{\OF}\right) \\
&\cong \bigoplus_i  \left( R[{?},G/H_i]_{\OF} \right) \otimes_{\OF} \Mor_{\OF} \left(M(-), R[{-},{?}]_{\OF}\right)  \\
&\cong \bigoplus_i \Mor_{\OF} \left(M(-), R[{-},G/H_i]_{\OF}\right) \\
&\cong  \Mor_{\OF} \left(M(-),\bigoplus_i R[{-},G/H_i]_{\OF}\right) 
 \end{align*}
 Where the second isomorphism is Lemma \ref{lemma:C yoneda type isos in tensor product}, and the third isomorphism follows from a standard argument which requires $M$ to be finitely generated.
 One can check that $\nu$ induces this chain of isomorphisms between the left and right hand sides.

The case for projectives follows from naturality of $\nu$.
\end{proof}

Recall the fixed point functors defined in Example \ref{example:mackey FP and FQ}.  The fixed point functor $R^-$ can be described explicitly as $R^H = R$ for all $H \in \mathcal{F}$, and on morphisms,
\begin{align*}
R^-(R^H_K) &= \id_R  \\
R^-(I^H_K) &= \left( r \mapsto \vert H : K \vert r\right)  \\
R^-(c_g) &= \id_R
\end{align*}

\begin{Lemma}\label{lemma:HF ind uROF is R}
$\Ind_{\pi \circ \sigma} \uR_{\OF}  \cong R^-$ 
\end{Lemma}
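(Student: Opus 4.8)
The plan is to compute $\Ind_{\pi\circ\sigma}\uR_{\OF}$ object by object. Fix $L\in\mathcal{F}$. Unwinding the definition of induction and then applying Lemma \ref{lemma:HF struc of Res cov frees},
\[ \Ind_{\pi\circ\sigma}\uR_{\OF}(G/L)=\uR_{\OF}\otimes_{\OF}\Res_{\pi\circ\sigma}R[G/L,-]_{\HeckeF}\cong\uR_{\OF}\otimes_{\OF}\Hom_{RL}\big(R,R[G/1,-]_{\OF}\big). \]
Now let $P_\ast\to\uR_{\OF}$ be the free $\OF$-resolution given by the Bredon chain complex of a model for the classifying space $E_{\mathcal{F}}G$ of the family $\mathcal{F}$ (so $(E_{\mathcal{F}}G)^{K}$ is contractible for $K\in\mathcal{F}$ and empty otherwise, and $P_\ast(G/1)$ is the cellular chain complex $C_\ast(E_{\mathcal{F}}G)$ of $E_{\mathcal{F}}G$ as a $G$-CW complex). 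Since $-\otimes_{\OF}-$ is right exact and the isomorphism of Lemma \ref{lemma:mackey tech lamma for ind} is natural in the projective variable, $\Ind_{\pi\circ\sigma}\uR_{\OF}(G/L)$ is the cokernel of $\Hom_{RL}(R,P_1(G/1))\to\Hom_{RL}(R,P_0(G/1))$, i.e.\ the degree-zero homology of the complex $\Hom_{RL}(R,C_\ast(E_{\mathcal{F}}G))$.

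It remains to identify this with $R$. Because $\mathcal{F}$ is closed under taking subgroups, every subgroup of $L$ lies in $\mathcal{F}$, so $(E_{\mathcal{F}}G)^{K}$ is contractible for every $K\le L$; hence $E_{\mathcal{F}}G$, regarded as an $L$-CW complex, is $L$-equivariantly homotopy equivalent to a point. Consequently the augmented complex $C_\ast(E_{\mathcal{F}}G)|_{L}\to R$ is chain contractible over $RL$, and applying the additive functor $\Hom_{RL}(R,-)=(-)^{L}$ the augmented complex $\Hom_{RL}(R,C_\ast(E_{\mathcal{F}}G)|_{L})\to R$ stays chain contractible over $R$. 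In particular its degree-zero homology is $\Hom_{RL}(R,R)=R$, which is what we wanted; all higher homology vanishes as well, reflecting Proposition \ref{prop:HF sigma ind preserves proj res of R}. The one point requiring care here is checking that the $\Hom_{RL}$'s fit together into a complex whose differentials are induced by those of $P_\ast$, which again comes from the naturality in Lemma \ref{lemma:mackey tech lamma for ind}; this, together with the morphism check below, is the bulk of the work.

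Finally, all the isomorphisms above are natural in $G/L$, so they assemble into an isomorphism of $\HeckeF$-modules once compatibility with morphisms is verified. I would check this using the explicit formulas for $\pi$ established earlier---$\pi(R^{H}_{K})=KH$, $\pi(I^{H}_{K})=HK$, and the displayed formula for $\pi$ on a general basic morphism---together with the explicit description of $R^{-}$ recalled just before the statement, where restrictions and conjugations act as $\id_{R}$ and $I^{H}_{K}$ acts as multiplication by $\vert H:K\vert$: one confirms by direct computation that the candidate isomorphism $\Ind_{\pi\circ\sigma}\uR_{\OF}\to R^{-}$ intertwines the two module structures. As a reassuring consistency check, $\Res_{\pi\circ\sigma}R^{-}$ is itself (isomorphic to) the constant $\OF$-module $\uR_{\OF}$, since on $R^{-}$ every restriction and conjugation acts by the identity.
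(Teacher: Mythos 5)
Your identification of the values $\Ind_{\pi\circ\sigma}\uR_{\OF}(G/L)\cong R$ is correct and proceeds by a genuinely different route from the paper: instead of resolving $\uR_{\OF}$ by the Bredon chain complex of $E_{\mathcal F}G$ and using right-exactness of induction plus Lemma \ref{lemma:mackey tech lamma for ind}, the paper computes $\uR_{\OF}\otimes_{\OF}\Res_{\pi\circ\sigma}R[G/H,-]_{\HeckeF}$ directly, via Lemmas \ref{lemma:HF struc of Res cov frees} and \ref{lemma:mackey yoshida CF struc}, as a quotient of $\bigoplus_{K}R[H\backslash G/K]$ and establishes the relation $[HxK]=\vert H:H\cap K^{x^{-1}}\vert\,[H1H]$, so that the value is $R$ generated by $[H1H]$. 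Your topological argument (all fixed point sets $(E_{\mathcal F}G)^K$ for $K\le L$ contractible, hence $L$-equivariant contractibility and $RL$-chain contractibility of the augmented complex) is a valid substitute at the level of objects; it is essentially the topological counterpart of the $\mathcal{F}$-splitting the paper invokes in Proposition \ref{prop:HF sigma ind preserves proj res of R}.

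The gap is the morphism check, which you explicitly defer. The lemma asserts an isomorphism of $\HeckeF$-modules, and the entire content that distinguishes $R^-$ from other functors taking the value $R$ at every object (compare the Mackey situation, where inducing $\uR$ yields $B^G$, not $R^-$) is that under the identification the transfer $I^H_K$ acts as multiplication by $\vert H:K\vert$ while restrictions and conjugations act as the identity. Your set-up does not make this easier: the $\HeckeF$-action lives on the tensor-product description $\uR_{\OF}\otimes_{\OF}\Res_{\pi\circ\sigma}R[G/H,-]_{\HeckeF}$, whereas Lemma \ref{lemma:mackey tech lamma for ind} is an object-wise isomorphism natural in the $\OF$-variable, so to see how $R^H_K$, $I^H_K$ and $c_x$ act on your cokernel/$H_0$ description you must transport the structure through that whole chain of isomorphisms (in effect proving $\Ind_{\pi\circ\sigma}N\cong (N(G/1))^-$ as $\HeckeF$-modules for the relevant modules), and this comes down to precisely the explicit double-coset computation the paper carries out: following the generator $[H1H]$ through the isomorphisms, using $\psi$ of Remark \ref{remark:mackey yoshida CF struc} and the relation $[HxK]=\vert H:H\cap K^{x^{-1}}\vert\,[H1H]$ to evaluate the images of $R^H_K$, $I^H_K$ and $c_x$. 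Until that computation is actually performed, your argument establishes agreement on objects only, which does not determine the $\HeckeF$-module; the step you label as "the bulk of the work" is indeed the bulk of the paper's proof, and it is missing.
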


\begin{proof}
The proof is split into two parts, first we check that the two functors agree on objects, then we check they agree on morphisms.  Throughout the proof $H$, $K$ and $L$ elements of $\mathcal{F}$.  If $\alpha : G/L \to G/K$ is a $G$-map then we will write $\alpha_*$ for the induced map 
\[ \alpha_* : \Hom_{RG}(R[G/H], R[G/L]) \longrightarrow \Hom_{RG}(R[G/H], R[G/K]) \]
and also for the induced map
\[ \alpha_* : R[H\backslash G / L] \longrightarrow R[H \backslash G / K] \]
where $R[H\backslash G / L])$ is identified with $\Hom_{RG}(R[G/H], R[G/L])$ using the isomorphism $\psi$ of Remark \ref{remark:mackey yoshida CF struc}.  Note that with this notation, $\alpha_* \circ \psi = \psi \circ \alpha_*$.  

\noindent\emph{The functors $\Ind_{\pi \circ \sigma} \uR_{\OF} $ and $R^-$ agree on objects:}

For any subgroup $H \in \mathcal{F}$, 
\begin{align*}
 \Ind_{\pi \circ \sigma} &\uR_{\OF}(G/H) = \uR_{\OF} \otimes_{\OF} \Res_{\pi \circ \sigma} R[G/H, -]_{\HeckeF} \\
 &\cong \uR_{\OF} \otimes_{\OF} \Hom_{RG}(R[G/H], R[G/1, -]_{\OF}) \\
 &\cong \left. \bigoplus_{K \in \mathcal{F}} R \otimes_R \Hom_{RG}(R[G/H], R[G/K]) \right/ \substack{ \text{$\alpha^* r \otimes x_K \sim r \otimes \alpha_* x_L$ for $r \in R$,} \\ \text{$\alpha:G/L \to G/K$ any $G$ map,} \\ x_K \in \Hom_{RG}(R[G/H], R[G/K]) \\ x_L \in \Hom_{RG}(R[G/H], R[G/L]) } \\
 &\cong \left. \bigoplus_{K \in \mathcal{F}} \Hom_{RG}(R[G/H], R[G/K]) \right/ \substack{ x_K \sim \alpha_* x_L } \\
&\cong \left.  \bigoplus_{K \in \mathcal{F}} R[H \backslash G / K] \right/ \substack{ (HxL) \sim \alpha_* (HxL) \\ \text{$\alpha:G/L \to G/K$ any $G$ map.} }
\end{align*}
Where the first isomorphism is Lemma \ref{lemma:HF struc of Res cov frees} and the last is Lemma \ref{lemma:mackey yoshida CF struc}.  Let $HxK \in R[H\backslash G / K]$ be an arbitrary element, and consider the $G$-map 
\begin{align*}
\alpha_x : G/(H\cap K^{x^{-1}}) &\longrightarrow G/K \\
( H \cap K^{x^{-1}} ) &\longmapsto xK
\end{align*}
Then
\begin{align*}
\psi \left( (\alpha_x)_* \big( H 1 (H \cap K^{x^{-1}}) \big) \right) &= (\alpha_x)_*\left( H \longmapsto \sum_{h \in H / (H \cap K^{x^{-1}})} h (H \cap K^{x^{-1}} )  \right) \\
&= \left( H \longmapsto \sum_{h \in H / (H \cap K^{x^{-1}})} h x K \right) \\
&= \psi \left( HxK \right)
\end{align*}
Thus, in $\Ind_{\pi \circ \sigma} \uR_{\OF}(G/H)$, the elements $[HxK]$ and $[H 1 (H \cap K^{x^{-1}})]$ are equal, where $[-]$ denotes an equivalence class of elements under the relation $\sim$.    So we can write 
\[\Ind_{\pi \circ \sigma} \uR_{\OF}(G/H) \cong \left. \bigoplus_{\substack{K \in \mathcal{F} \\ K \le H}} R[H \backslash G / K] \right/ \substack{ [HxL] \sim \alpha_* [HxL] \\ \text{$\alpha:G/L \to G/K$ any $G$ map.} \\ \text{ $L \in \mathcal{F}$, $L \le H$ } }\]
Next, we show that if $K \le H$ then $[H1K] = [\vert H : K \vert H1H]$.  Let $\alpha_1 : G/K \to G/H$ be the projection.  Then
\begin{align*}
 \psi \left( (\alpha_1)_* (H1K) \right) &= (\alpha_1)_* \left( H \longmapsto \sum_{h \in H/K} h K \right) \\
&= \big( H \longmapsto \vert H : K \vert H \big) \\
&= \psi \left(  \vert H : K \vert (H1H) \right) 
\end{align*}

Combining the two facts proved above, 
\begin{align*}[HxK] = \vert H : H \cap K^{x^{-1}} \vert \, [H1H] \tag{$\star$}\end{align*}
In particular, any element $[HxK]$ is equal to some multiple of $[H1H]$, so 
\[ \Ind_{\pi \circ \sigma} \uR_{\OF}(G/H) \cong R \]

\noindent\emph{The functors $\Ind_{\pi \circ \sigma} \uR_{\OF} $ and $R^-$ agree on morphisms:}

Recall from Remark \ref{remark:mackey green description} that we must only check this for the morphisms $R^H_K$, $I^H_K$ and $c_x$. 

Following the generator $[H1H]$ up the chain of isomorphisms at the beginning of the proof shows the element 
\[ 1 \otimes \id_{R[G/H]} \in R \otimes_{\OF} \Res_{\pi \circ \sigma} R[G/H, - ]_{\HeckeF}\]
generates $\Ind_{\pi \circ \sigma} \uR_{\OF}(G/H) \cong R$, where 
\[\id_{R[G/H]} \in \Hom_{RG}(R[G/H], R[G/H]) \cong R[G/H, G/H]_{\HeckeF}\]
For some subgroup $K \in \mathcal{F}$ with $K \le H$, 
\[\Ind_{\pi \circ \sigma} \uR_{\OF}(R_K^H) : 1 \otimes \id_{R[G/H]} \mapsto 1 \otimes \pi\]
Where $\pi : R[G/K] \mapsto R[G/H]$ is the projection map.  Following this back down the chain of isomorphisms at the beginning of the proof, gives the element $[K1H]$.  Using $(\star)$, $[K1H] = [K1K]$, so $\Ind_{\pi \circ \sigma} \uR_{\OF}(R_K^H)$ is the identity on $R$, as required.

Similarly, for some $L \in \mathcal{F}$ with $H \le L$, we calculate 
\[\Ind_{\pi \circ \sigma} \uR_{\OF}(I_H^L): 1 \otimes \id_{R[G/H]} \longmapsto 1 \otimes t_{L/H}\]
Where $t_{L/H} \in \Hom_{RG}(R[G/L], R[G/H])$ denotes the map $L \mapsto \sum_{l \in L/H} lH $.  Following this element back down the chain of isomorphisms we get the element $[L1H]$, which by $(\star)$ is equal to $\vert L : H \vert  [H1H]$.  Thus $\Ind_{\pi \circ \sigma} \uR_{\OF} (I_H^L)$ acts as multiplication by $\vert L : H\vert $ on $R$, as required.

For any element $x \in G$, we calculate
\[\Ind_{\pi \circ \sigma} \uR_{\OF}(c_x): 1 \otimes \id_{R[G/H]} \longmapsto 1 \otimes \gamma_x\]
Where $\gamma_x \in \Hom_{RG}(R[G/H^{x^{-1}}], R[G/H])$ is the map $H^{x^{-1}} \mapsto xH$.  Following this down the chain of isomorphisms we get the element $[H^{x^{-1}}xH]$, which by $(\star)$ is equal to $[ H^{x^{-1}} 1 H^{x^{-1}} ]$.  Thus $\Ind_{\pi \circ \sigma} \uR_{\OF}(c_x)$ acts as the identity on $R$, as required.
\end{proof}

The next proposition should be compared with Proposition \ref{prop:mackey indOF to MF take pr of R to pr of BG}.

\begin{Prop}\label{prop:HF sigma ind preserves proj res of R}
 Induction with $\pi \circ \sigma$ takes projective $\OF$-module resolutions of $\uR_{\OF}$ to projective $\HeckeF$-module resolutions of $R^-$.
\end{Prop}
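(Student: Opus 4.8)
The plan is to use that $\Ind_{\pi\circ\sigma}$ is right exact, preserves projectives, and satisfies $\Ind_{\pi\circ\sigma}\uR_{\OF}\cong R^-$ (Lemma~\ref{lemma:HF ind uROF is R}), so that the whole content of the proposition is the vanishing of the higher left derived functors of $\Ind_{\pi\circ\sigma}$ at $\uR_{\OF}$. Indeed, given any projective $\OF$-resolution $P_*\to\uR_{\OF}$, each $\Ind_{\pi\circ\sigma}P_n$ is a projective $\HeckeF$-module; right exactness gives $H_0(\Ind_{\pi\circ\sigma}P_*)\cong\Ind_{\pi\circ\sigma}\uR_{\OF}\cong R^-$; and since projectives are $\Tor_{>0}^{\OF}$-acyclic hence $\Ind_{\pi\circ\sigma}$-acyclic, $H_i(\Ind_{\pi\circ\sigma}P_*)\cong L_i\Ind_{\pi\circ\sigma}(\uR_{\OF})$ for $i\geq1$. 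So it remains to prove $L_i\Ind_{\pi\circ\sigma}(\uR_{\OF})=0$ for all $i\geq1$, a statement about $\uR_{\OF}$ alone and hence independent of the chosen resolution.

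I would then evaluate this derived module objectwise. Writing $B_H$ for the covariant $\OF$-module $\Res_{\pi\circ\sigma}R[G/H,-]_{\HeckeF}$, so that $\Ind_{\pi\circ\sigma}M(G/H)=M\otimes_{\OF}B_H$, and using that evaluation at an object of $\HeckeF$ is exact, one gets $(L_i\Ind_{\pi\circ\sigma}\uR_{\OF})(G/H)\cong\Tor_i^{\OF}(\uR_{\OF},B_H)=H_i^{\OF}(G,B_H)$. Thus the task reduces to showing $H_i^{\OF}(G,B_H)=0$ for every $i\geq1$ and every $H\in\mathcal{F}$.

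To compute this Bredon homology I would pick a convenient free $\OF$-resolution of $\uR_{\OF}$, namely the cellular chain complex $\underline{C}_*(X)$ of a $G$-CW model $X=\uE_{\mathcal{F}}G$ for the classifying space of the family $\mathcal{F}$. Each $\underline{C}_n(X)$ is free, so Lemma~\ref{lemma:mackey tech lamma for ind} applies termwise and, by naturality of the isomorphism in its statement, yields an isomorphism of complexes $\underline{C}_*(X)\otimes_{\OF}B_H\cong\Hom_{RH}(R,C_*(X;R))$, where $C_*(X;R)=\underline{C}_*(X)(G/1)$ is the ordinary cellular chain complex of $X$ regarded as a complex of $RH$-modules. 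The decisive point is that, since $\mathcal{F}$ is closed under subgroups, every subgroup of $H$ lies in $\mathcal{F}$, so $X^K$ is contractible for all $K\leq H$; by the equivariant Whitehead theorem $X$ is $H$-equivariantly contractible, hence $C_*(X;R)$ is $RH$-chain homotopy equivalent to $R$ concentrated in degree $0$. Applying the additive functor $\Hom_{RH}(R,-)$ preserves this homotopy equivalence, so $H_i^{\OF}(G,B_H)=H_i\bigl(\Hom_{RH}(R,C_*(X;R))\bigr)$ vanishes for $i\geq1$ and equals $R^H=R^-(G/H)$ for $i=0$. This gives $L_i\Ind_{\pi\circ\sigma}(\uR_{\OF})=0$ for $i\geq1$ and finishes the proof.

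I expect the main obstacle to be precisely this last step: that the complex $\Hom_{RH}(R,C_*(X;R))$ is exact in positive degrees. Equivalently, for a general projective resolution $P_*$ the complex $\Ind_{\pi\circ\sigma}(P_*)$ evaluated at $G/H$ is $\bigl(P_*(G/1)\bigr)^H$, and since the fixed-point functor $(-)^-$ is only left exact one cannot simply invoke acyclicity of the individual terms (the modules $P_n(G/1)$ are summands of permutation modules, and $H^{*}(H,R[G/L])$ need not vanish). The genuine input is the $\mathcal{F}$-split nature of the resolution $P_*(G/1)\to R$, which here is supplied geometrically by the $H$-equivariant contractibility of $\uE_{\mathcal{F}}G$ and which is exactly where the closure of $\mathcal{F}$ under subgroups is used. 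Everything else — right exactness, preservation of projectives, and Lemmas~\ref{lemma:mackey tech lamma for ind} and~\ref{lemma:HF ind uROF is R} — is already in place.
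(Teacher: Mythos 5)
Your proposal is correct, but it takes a genuinely different route from the paper at the decisive step. You reduce, via right exactness and the comparison theorem, to the vanishing of $L_i\Ind_{\pi\circ\sigma}(\uR_{\OF})$ for $i\ge 1$, and then compute this with one convenient resolution, namely the cellular Bredon chain complex of a $G$-CW model $X$ for the classifying space of the family $\mathcal{F}$; the exactness in positive degrees then comes from equivariant topology (restricting the $G$-CW structure to $H$, using that $\mathcal{F}$ is subgroup-closed so $X^K$ is contractible for all $K\le H$, and invoking the equivariant Whitehead theorem to get an $RH$-chain homotopy equivalence $C_*(X;R)\simeq R$, which $\Hom_{RH}(R,-)$ preserves). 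The paper instead works directly with the given arbitrary projective resolution $P_*$ and supplies the crucial splitting algebraically: by \cite[Theorem 3.2]{Nucinkis-EasyAlgebraicCharacterisationOfUniversalProperGSpaces}, $P_*(G/1)\to R$ is $RH$-split for every $H\in\mathcal{F}$, and $\Hom_{RH}(R,-)$ preserves exactness of $RH$-split complexes; no classifying space or derived-functor reduction is needed. You correctly identified that the genuine content is exactly this $\mathcal{F}$-splitness, and your geometric argument is a legitimate substitute for it: what it buys is independence from Nucinkis's algebraic splitting theorem, at the cost of importing the existence of a $G$-CW model for $E_{\mathcal{F}}G$ (available for any family closed under conjugation and subgroups), the fact that a $G$-CW complex restricted to a subgroup is an $H$-CW complex, and the equivariant Whitehead theorem, together with the (routine but necessary) naturality of the isomorphism of Lemma \ref{lemma:mackey tech lamma for ind} to promote termwise isomorphisms to an isomorphism of complexes. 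The paper's argument is shorter, purely algebraic, and stays within the module-theoretic framework already set up, which is presumably why it is preferred there.
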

\begin{proof}
Let $P_*$ be a projective resolution of $\uR_{\OF}$ by $\OF$-modules, then by Lemma \ref{lemma:mackey tech lamma for ind},
\begin{align*}
\Ind_{\pi \circ \sigma} P_*(G/H) &=  P_* \otimes_{\OF} \Res_{\pi \circ \sigma} R[G/H, -]_{\HeckeF}  \\
&\cong \Hom_{RH}(R ,P_*(G/1))
\end{align*}
So inducing $P_* \longtwoheadrightarrow \uR_{\OF}$ with ${\pi \circ \sigma} $ and using Lemma \ref{lemma:HF ind uROF is R} gives the chain complex
\[ \Ind_{\pi \circ \sigma} P_* \longtwoheadrightarrow R^- \]
Induction preserves projectives, so we must show only that the above is exact.  Since induction is right exact, it is necessarily exact at degree $-1$ and degree $0$.
Evaluating at $G/H$ gives the resolution
\begin{equation*} 
\Hom_{RH}(R ,P_*(G/1)) \longrightarrow R 
\end{equation*}
By \cite[Theorem 3.2]{Nucinkis-EasyAlgebraicCharacterisationOfUniversalProperGSpaces}, the resolution $P_*(G/1)$ splits when restricted to a complex of $RH$-modules for any finite subgroup $H$ of $G$.  Since $\Hom_{RH}(R, -)$ preserves the exactness of $RH$-split complexes, $\Hom_{RH}(R, P_*(G/1))$ is exact at position $i$ for all $i \ge 1$, completing the proof.
\end{proof}

\begin{Remark}
 The proposition above may not hold with $\uR_{\OF}$ replaced by an arbitrary $\OF$-module $M$, as a resolution of $M$ by projective $\OF$-modules will not in general split when evaluated at $G/1$.
\end{Remark}

\subsection{Homology and Cohomology}
We define, for any $\HeckeF$-module $M$,
\[H^*_{\HeckeF}(G, M) \cong \Ext^*_{\HeckeFG}(R^{-}, M) \]
There is an analog of Corollary \ref{cor:mackey H^n_MF is H^n_OF}:
\begin{Prop}\label{prop:HF HF cohomology is OF cohomology}
 For any $\HeckeF$-module $M$ and any natural number $n$, 
\[ H^n_{\HeckeF} (G, M) = H^n_{\OF} (G, \Res_{\pi \circ \sigma} M) \]
\end{Prop}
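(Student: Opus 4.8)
The plan is to follow the proof of Corollary \ref{cor:mackey H^n_MF is H^n_OF} almost verbatim, using Proposition \ref{prop:HF sigma ind preserves proj res of R} in place of Proposition \ref{prop:mackey indOF to MF take pr of R to pr of BG}, together with the adjunction between induction and restriction with the functor $\pi \circ \sigma$.

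First I would fix a projective resolution $P_* \twoheadrightarrow \uR_{\OF}$ by $\OF$-modules. By Proposition \ref{prop:HF sigma ind preserves proj res of R}, applying $\Ind_{\pi \circ \sigma}$ yields a projective resolution $\Ind_{\pi \circ \sigma} P_* \twoheadrightarrow R^-$ by $\HeckeF$-modules. Hence, directly from the definition,
\[ H^n_{\HeckeF}(G, M) = \Ext^n_{\HeckeF}(R^-, M) = H^n\big( \Mor_{\HeckeF}(\Ind_{\pi \circ \sigma} P_*, M) \big). \]
Next I would invoke the adjunction ``induction is left adjoint to restriction'' from Section \ref{subsubsection:ind coind res}, which supplies a natural isomorphism of cochain complexes
\[ \Mor_{\HeckeF}(\Ind_{\pi \circ \sigma} P_*, M) \cong \Mor_{\OF}(P_*, \Res_{\pi \circ \sigma} M). \]
Since $P_*$ is a projective resolution of $\uR_{\OF}$, taking cohomology of the right-hand side gives $\Ext^n_{\OF}(\uR, \Res_{\pi \circ \sigma} M) = H^n_{\OF}(G, \Res_{\pi \circ \sigma} M)$, which is the desired identity.

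The only point needing a word of care is that the adjunction isomorphism is natural in the $\OF$-module variable, so that it commutes with the differentials coming from $P_*$ and therefore induces an isomorphism on cohomology groups degree by degree; this is immediate from naturality of the unit/counit of the adjunction. Accordingly I expect no real obstacle here: the entire homological content has already been packaged into Proposition \ref{prop:HF sigma ind preserves proj res of R}, and this proposition is a formal consequence of it exactly as in the Mackey-functor case. (One could record the analogous statement for homology in the same way, replacing $\Mor$ by $\otimes$ and using Lemma \ref{lemma:C nat iso ind res and tensor}, but it is not needed for the statement as given.)
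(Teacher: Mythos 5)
Your proposal is correct and is essentially identical to the paper's own proof: both induce a projective $\OF$-resolution of $\uR_{\OF}$ via Proposition \ref{prop:HF sigma ind preserves proj res of R} and then apply the induction--restriction adjunction $\Mor_{\HeckeF}(\Ind_{\pi \circ \sigma} P_*, M) \cong \Mor_{\OF}(P_*, \Res_{\pi \circ \sigma} M)$ to identify the two $\Ext$ groups. No gaps to report.
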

\begin{proof}
 Let $P_*$ be a projective $\OF$-module resolution of $\uR_{\OF}$, then 
\begin{align*}
 H^n_{\OF} (G, \Res_{\pi \circ \sigma} M) &= H^n \Mor_{\OF} \left( P_*, \Res_{\pi \circ \sigma}  M \right) \\
 &\cong H^n \Mor_{\HeckeF} \left( \Ind_{\pi \circ \sigma} P_*, M \right)\\
 &= H^n_{\HeckeF} (G, M)
\end{align*}
Where the isomorphism is adjoint isomorphism between induction and restriction and $\Ind_{\pi \circ \sigma} P_*$ is a projective $\HeckeF$-module resolution of $R^-$ by Proposition \ref{prop:HF sigma ind preserves proj res of R}.
\end{proof}

\section{\texorpdfstring{$\FP_n$}{FPn} Conditions for cohomological Mackey functors}\label{subsection:HF FPn}

The main result of this section is Theorem \ref{theorem:HFFPn iff FFPn} below.  For a detailed construction of $\mathcal{F}$-cohomology and the condition $\FFP_n$ see \cite{Nucinkis-CohomologyRelativeGSet}, for an overview see Section \ref{section:introduction}.

\begin{Theorem}\label{theorem:HFFPn iff FFPn}
 For any ring $R$, if $G$ is $\HFFP_n$ then $G$ is $\FFP_n$.  If $R$ is Noetherian and $G$ is $\FFP_n$ then $G$ is $\HFFP_n$.
\end{Theorem}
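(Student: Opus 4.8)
The plan is to translate both finiteness conditions into statements about resolutions and then exploit the comparison between $\HeckeF$-cohomology, $\OF$-cohomology, and $\mathcal{F}$-cohomology already assembled in the paper. The key bridge is Proposition~\ref{prop:HF HF cohomology is OF cohomology}: for any $\HeckeF$-module $M$ we have $H^*_{\HeckeF}(G,M)\cong H^*_{\OF}(G,\Res_{\pi\circ\sigma}M)$, obtained by inducing a projective $\OF$-resolution $P_*$ of $\uR_{\OF}$ up to the projective $\HeckeF$-resolution $\Ind_{\pi\circ\sigma}P_*$ of $R^-$ (Proposition~\ref{prop:HF sigma ind preserves proj res of R}). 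So I would run the argument entirely through a suitably chosen resolution $P_*$ of $\uR_{\OF}$ and track finite generation of its induction.

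For the first implication ($\HFFP_n \Rightarrow \FFP_n$), I would use the Bieri--Eckmann criterion (Theorem~\ref{theorem:C bieri-eckmann criterion}) in the $\mathcal{F}$-cohomology setting, characterising $\FFP_n$ by commuting of colimits with $\mathcal{F}H^k(G,-)$ for $k\le n$. Given a directed system of $RG$-modules $M_\lambda$ with $\varinjlim M_\lambda=0$, apply the coinduction functor $\CoInd_{RG}^{\HeckeF}$; since $\CoInd_{RG}^{\HeckeF}M = M^-$ by Lemma~\ref{lemma:HF FP and FQ adjunction}, and since $\mathcal{F}$-cohomology of $G$ with coefficients in $M$ agrees with $\HeckeF$-cohomology with coefficients in $M^-$ (this identification, coming from the adjunction $\Hom_{RG}(N(G/1),M)\cong\Hom_{\MF}(N,M^-)$ together with the fact that $R^-(G/1)=R$ and that $P_*(G/1)$ is an $\mathcal{F}$-split $\mathcal{F}$-projective resolution of $R$), the hypothesis $\HFFP_n$ forces $\varinjlim \mathcal{F}H^k(G,M_\lambda)=0$ for $k\le n$, hence $G$ is $\FFP_n$. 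I would spell out the identification ``$\mathcal{F}$-cohomology $=\HeckeF$-cohomology with fixed-point coefficients'' carefully, as it is the load-bearing step.

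For the converse ($\FFP_n\Rightarrow\HFFP_n$, assuming $R$ Noetherian), I would start from an $\mathcal{F}$-projective $\mathcal{F}$-split resolution of $R$ by finitely generated $RG$-modules (finite type being exactly the $\FFP_n$ hypothesis up to degree $n$), and show that applying $\Hom_{RG}(R,-)$ and then the orbit-category/Hecke-category machinery yields a $\HeckeF$-resolution of $R^-$ that is finitely generated up to degree $n$. Concretely: take a finite-type free resolution $F_*$ of $\uR_{\OF}$ by $\OF$-modules up to degree $n$ --- which exists because $\FFP_n$ implies $\OFFP_n$-finiteness of $\uR_{\OF}(G/1)$-data --- no, more carefully, one builds $F_*$ so that $F_i(G/1)$ is a finite-type $RG$-module; then $\Ind_{\pi\circ\sigma}F_i(G/H)\cong\Hom_{RH}(R,F_i(G/1))$ by Lemma~\ref{lemma:mackey tech lamma for ind}, and Noetherianity of $R$ together with finiteness of $\mathcal{F}/G$ (which follows from $\FFP_0$) ensures $\Hom_{RH}(R,F_i(G/1))$ is finitely generated over the relevant endomorphism rings, so $\Ind_{\pi\circ\sigma}F_*$ is finitely generated up to degree $n$. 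Since $\Ind_{\pi\circ\sigma}F_*$ resolves $R^-$ by Proposition~\ref{prop:HF sigma ind preserves proj res of R}, we get $\HFFP_n$.

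The main obstacle I expect is the Noetherian step in the converse: controlling finite generation of $\Hom_{RH}(R,F_i(G/1))$ as an $\HeckeF$-module. Over a non-Noetherian ring $\Hom_{RH}(R,-)$ need not send finitely generated $RG$-modules to finitely generated ones, and the $\HeckeF$-module structure on $\Ind_{\pi\circ\sigma}F_i$ could fail to be finitely generated even if each evaluation is a finitely generated abelian group; the Bieri--Eckmann criterion of Theorem~\ref{theorem:C bieri-eckmann criterion}, applied with test modules of the form $R[x,-]_{\HeckeF}$ and their products, together with the commutation of $\Hom_{RH}(R,-)$ with products of $RG$-modules when $R$ is Noetherian (so that $R$ is $\FP_\infty$ over $RH$ and $\Hom_{RH}(R,\prod M_j)\cong\prod\Hom_{RH}(R,M_j)$), is what I would use to get past it. That is the place where all the hypotheses are genuinely consumed, so I would write that argument out in full rather than sketch it.
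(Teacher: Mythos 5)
Both halves of your proposal have genuine gaps, and in each case the missing step is exactly where the paper has to work hardest. For the implication $\HFFP_n\Rightarrow\FFP_n$, your load-bearing identification $\mathcal{F}H^k(G,M)\cong H^k_{\HeckeF}(G,M^-)$ is justified by citing ``the fact that $P_*(G/1)$ is an $\mathcal{F}$-split $\mathcal{F}$-projective resolution of $R$'' for a free $\HeckeF$-resolution $P_*$ of $R^-$; but that is precisely the nontrivial point. Evaluating an exact complex of $\HeckeF$-modules at $G/1$ only tells you that the resulting complex of permutation modules has exact $H$-fixed points for every $H\in\mathcal{F}$ (it is $\mathcal{F}$-good, Remark \ref{remark:fp res of R is Fgood}); upgrading $\mathcal{F}$-good to $\mathcal{F}$-split is the technical heart of this direction, done in the paper by an induction on kernels using the lifting property $(P_H)$ of permutation modules with stabilisers in $\mathcal{F}$ (Lemma \ref{lemma:HF Fgood is Fsplit}). (The cohomology comparison itself could be obtained more cheaply from the other side, by taking fixed points of a standard $\mathcal{F}$-split resolution of $R$ by permutation modules with stabilisers in $\mathcal{F}$ and using Lemmas \ref{lemma:HF Fsplit is HF exact} and \ref{lemma:HF frees are fp}.) Even granting the comparison, your argument only yields vanishing of $\varinjlim\mathcal{F}H^k(G,M_\lambda)$ on colimit-zero systems; to conclude $\FFP_n$ you need a Bieri--Eckmann criterion for the relative theory, which is not Theorem \ref{theorem:C bieri-eckmann criterion} (that is for modules over a category with frees) and is delicate here: in the relative category even $R$ itself need not be ``finitely generated'' ($\FFP_0$ is a genuine restriction), so the inductive construction of finitely generated $\mathcal{F}$-split covers that such a criterion requires is essentially what you are trying to prove. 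The paper avoids both issues by exhibiting the witness directly: a free $\HeckeF$-resolution of $R^-$, finitely generated up to degree $n$, evaluates at $G/1$ to a resolution of $R$ by finitely generated permutation modules (hence $\mathcal{F}$-projectives), and Lemma \ref{lemma:HF Fgood is Fsplit} makes it $\mathcal{F}$-split, which is the definition of $\FFP_n$.

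For the converse your plan fails earlier. You want an $\OF$-projective resolution $F_*$ of $\uR_{\OF}$ with $F_i(G/1)$ of finite type, but it is unclear how $\FFP_n$ supplies one: each free generator of a free $\OF$-module contributes a nonzero finitely generated summand $R[G/K]$ at $G/1$, so for free resolutions finite generation of $F_i(G/1)$ forces $F_i$ itself to be finitely generated, i.e.\ you would be assuming $\OFFP_n$ --- which fails even for groups of type $\HFFP_\infty$ (hence $\FFP_\infty$) that are not of type $\OFFP_0$, as recalled in Section \ref{subsection:HF FPn}. Moreover, finite generation of $\Ind_{\pi\circ\sigma}F_i$ as an $\HeckeF$-module is never verified: finitely generated evaluations do not make a functor over a category finitely generated, and the asserted finite generation of $\Hom_{RH}(R,F_i(G/1))$ over the Hecke endomorphism rings is exactly what would need proof. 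You also misplace the Noetherian hypothesis: $R$ is $\FP_\infty$ over $RH$ for every finite $H$ over any ring; Noetherianity is needed so that finitely generated $\mathcal{F}$-projective $RG$-modules are $\FP_\infty$ over $RG$, which is what makes $\mathcal{F}H_*(G,-)$ commute with products (Lemma \ref{lemma:FTor commutes if FFPn}). The paper's converse is accordingly homological rather than constructive: it verifies condition (4) of Theorem \ref{theorem:C bieri-eckmann criterion} for $R^-$, identifying $\prod R[G/H,-]_{\HeckeF}$ with the covariant induction of a product of permutation modules (Lemma \ref{lemma:inducing prods of perms}), identifying $H_*^{\HeckeF}$ with $\mathcal{F}H_*$ for these coefficients (Lemma \ref{lemma:FHn with products of perm module coeff}, again via Lemma \ref{lemma:HF Fgood is Fsplit}), and then using $\FFP_n$ together with Noetherianity to pull the product outside.
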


The proof is contained in Sections \ref{subsection:HFFPn implies FFPn} and \ref{subsection:FFPn implies HFFPn}.  

Using the same argument as in the proof of Proposition \ref{prop:mackey OFFPn implies MFFPn} gives:
\begin{Prop}\label{prop:MFFPn implies HFFPn}
 If $G$ is $\MFFP_n$ then $G$ is $\HFFP_n$.
\end{Prop}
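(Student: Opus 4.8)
The plan is to transcribe the proof of Proposition \ref{prop:mackey OFFPn implies MFFPn}, substituting for Corollary \ref{cor:mackey H^n_MF is H^n_OF} the corresponding comparison isomorphism for cohomological Mackey functors and invoking the Bieri--Eckmann criterion (Theorem \ref{theorem:C bieri-eckmann criterion}) in the form of condition (3).

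The first step is to record that comparison. Restriction along the functor $\pi\colon\MF\to\HeckeF$ sends a $\HeckeF$-module $M$ to the cohomological Mackey functor $\Res_\pi M = M\circ\pi$, which is exactly the passage from $\HeckeF$-modules to Mackey functors recalled before Lemma \ref{lemma:mackey yoshida CF struc}. Since restriction is precomposition we have $\Res_\sigma\circ\Res_\pi = \Res_{\pi\circ\sigma}$, so chaining Proposition \ref{prop:HF HF cohomology is OF cohomology} with Corollary \ref{cor:mackey H^n_MF is H^n_OF} gives, for every $\HeckeF$-module $M$ and every $k\ge 0$,
\[ H^k_{\HeckeF}(G, M) \cong H^k_{\OF}(G, \Res_{\pi\circ\sigma} M) = H^k_{\OF}(G, \Res_\sigma\Res_\pi M) \cong H^k_{\MF}(G, \Res_\pi M). \]
This step is purely formal; the only auxiliary facts needed are that $\Res_\pi$ is defined on $\HeckeF$-modules and that it preserves filtered colimits, and the latter holds because restriction preserves all colimits (modules over a small category and their colimits being computed objectwise).

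Now I would run the Bieri--Eckmann argument verbatim. Suppose $G$ is $\MFFP_n$, and let $\{M_\lambda\}_{\lambda\in\Lambda}$ be a filtered system of $\HeckeF$-modules with $\varinjlim_\Lambda M_\lambda = 0$. Because restriction is computed objectwise, $\{\Res_\pi M_\lambda\}_{\lambda\in\Lambda}$ is a filtered system of Mackey functors with $\varinjlim_\Lambda \Res_\pi M_\lambda = 0$; hence part (3) of Theorem \ref{theorem:C bieri-eckmann criterion}, applied to the $\MF$-module $B^G$, gives $\varinjlim_\Lambda H^k_{\MF}(G, \Res_\pi M_\lambda) = 0$ for all $k\le n$. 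Applying the comparison isomorphism above together with exactness of filtered colimits yields $\varinjlim_\Lambda H^k_{\HeckeF}(G, M_\lambda) = 0$ for all $k\le n$, and a last application of part (3) of Theorem \ref{theorem:C bieri-eckmann criterion}, this time to the $\HeckeF$-module $R^-$, shows that $G$ is $\HFFP_n$.

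I do not expect any real obstacle: the content is a direct translation of the Mackey case, and the only genuinely new input, the isomorphism $H^k_{\HeckeF}(G, M)\cong H^k_{\MF}(G, \Res_\pi M)$, drops straight out of the two cited results. (Alternatively one can bypass cohomological Mackey functors: by Corollary \ref{cor:OFFPn iff MFFPn} the hypothesis $\MFFP_n$ is equivalent to $\OFFP_n$, after which Proposition \ref{prop:HF HF cohomology is OF cohomology} and Theorem \ref{theorem:C bieri-eckmann criterion} give $\OFFP_n\Rightarrow\HFFP_n$ in exactly the same way.) The one point that should be stated carefully is the bookkeeping of which module each finiteness condition refers to --- $B^G$ over $\MF$ and $R^-$ over $\HeckeF$ --- so that $\MFFP_n$ and $\HFFP_n$ become the $\CFP_n$ conditions on those modules to which Theorem \ref{theorem:C bieri-eckmann criterion} applies.
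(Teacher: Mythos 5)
Your proposal is correct and is exactly the argument the paper intends: the paper's proof consists of the single remark that one runs the proof of Proposition \ref{prop:mackey OFFPn implies MFFPn} again, with $\sigma$ replaced by $\pi$ and the comparison isomorphism $H^k_{\HeckeF}(G,M)\cong H^k_{\MF}(G,\Res_\pi M)$ obtained, just as you do, by combining Proposition \ref{prop:HF HF cohomology is OF cohomology} with Corollary \ref{cor:mackey H^n_MF is H^n_OF}, followed by the Bieri--Eckmann criterion applied to $B^G$ and $R^-$. Your bookkeeping of which module carries each $\FP_n$ condition, and the observation that restriction preserves (objectwise) filtered colimits, are precisely the points needed, so there is nothing to add.
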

\begin{Prop}
 If $G$ is $\HFFP_n$ then $G$ is $\FP_n$.
\end{Prop}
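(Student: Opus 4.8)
The plan is to push a finite-type projective $\HeckeF$-resolution of $R^-$ down to the category of $RG$-modules by evaluating at the object $G/1$, and then read off ordinary type $\FP_n$ by dimension shifting. This direct route has the advantage of requiring no hypothesis on $R$, so in particular it is not necessary to pass through $\FFP_n$ and Theorem \ref{theorem:HFFPn iff FFPn}.

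Concretely, suppose $G$ is $\HFFP_n$, so that $R^-$ admits a projective $\HeckeF$-resolution $P_\ast \twoheadrightarrow R^-$ with $P_i$ finitely generated for $i \le n$. Evaluation at $G/1$ is restriction along the functor $RG \to \HeckeF$ sending the single object to $G/1$ (the composite of the inclusion $RG \to \OF$ with $\pi\circ\sigma$), and restriction is exact; moreover $R^-$ is the fixed point functor of the trivial module, so $R^-(G/1) = R^1 = R$. Thus we obtain an exact complex $\cdots \to P_1(G/1) \to P_0(G/1) \to R \to 0$ of $RG$-modules. Next I would identify the first $n+1$ terms: by Lemma \ref{lemma:HF frees are fp} the free $\HeckeF$-modules are exactly the fixed point functors $R[G/K]^-$ of permutation modules with stabilisers in $\mathcal{F}$, and since $(-)^-$ is additive and, by the adjunction of Lemma \ref{lemma:HF FP and FQ adjunction}, fully faithful on such modules, it reflects direct summands; hence each finitely generated projective $P_i$ with $i \le n$ is of the form $N_i^-$ with $N_i$ a direct summand of a finite direct sum of permutation modules $R[G/K]$, $K \in \mathcal{F}$, whence $P_i(G/1) = N_i^1 = N_i$. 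Each such $R[G/K] \cong R[G/1,G/K]_{\MF}$ is of type $\FP_\infty$ over $RG$ by Lemma \ref{lemma:restriction of free mackey at 1 is FPinfty}, so $P_i(G/1)$, being a summand of a finite sum of these, is of type $\FP_\infty$ over $RG$ for every $i \le n$.

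Finally I would patch the exact sequence $P_n(G/1) \to \cdots \to P_0(G/1) \to R \to 0$. Splitting it into short exact sequences $0 \to Z_i \to P_i(G/1) \to Z_{i-1} \to 0$ with $Z_{-1} = R$, the kernel $Z_{n-1}$ is a quotient of $P_n(G/1)$, hence finitely generated; then descending induction on $i$, using the standard fact that in a short exact sequence $0 \to A \to B \to C \to 0$ with $A$ of type $\FP_m$ and $B$ of type $\FP_{m+1}$ the module $C$ is of type $\FP_{m+1}$ (\cite{BieriEckmann-FinitenessPropertiesOfDualityGroups}; see also \cite[Theorem 1.3]{Bieri-HomDimOfDiscreteGroups}), shows that $Z_{n-1-j}$ has type $\FP_j$ for $j = 0,\dots,n$. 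In particular $Z_{-1} = R$ has type $\FP_n$ over $RG$, that is, $G$ is $\FP_n$.

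The only step requiring a little care is the identification of the $RG$-modules $P_i(G/1)$ for $i \le n$ as direct summands of finite-rank permutation modules; once Lemma \ref{lemma:HF frees are fp} and the adjunction of Lemma \ref{lemma:HF FP and FQ adjunction} are in hand this is routine, and the rest is simply exactness of restriction together with standard $\FP_n$ bookkeeping, so I do not anticipate a genuine obstacle.
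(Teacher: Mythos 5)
Your proof is correct and follows essentially the same route as the paper's: evaluate a finite-type resolution of $R^-$ at $G/1$, identify the resulting $RG$-modules as (summands of) finitely generated permutation modules with stabilisers in $\mathcal{F}$, hence of type $\FP_\infty$ over $RG$, and finish by standard dimension shifting. The only cosmetic difference is that the paper takes a \emph{free} resolution finitely generated up to degree $n$, so the summand identification you carry out via Lemma \ref{lemma:HF frees are fp} and the adjunction of Lemma \ref{lemma:HF FP and FQ adjunction} is subsumed there by the ``free'' half of that lemma.
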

\begin{proof}
Let $P_* \longtwoheadrightarrow R^-$ be a resolution of $R^-$ by free $\HeckeF$-modules, finitely generated up to degree $n$.  Since the finitely generated free $\HeckeF$-modules are fixed point functors of finitely generated permutation modules with stabilisers in $\mathcal{F}$, evaluating at $G/1$ gives a resolution of $R$ by $RG$-modules of type $\FP_\infty$ and a standard dimension shifting argument completes the proof.
\end{proof}

So there is a chain of implications:
\[ \OFFP_n \Leftrightarrow \MFFP_n \Rightarrow \HFFP_n \Rightarrow \FP_n + \left\{ \substack{ \text{$G$ has finitely many} \\\text{conjugacy classes}\\\text{of finite $p$-subgroups }}\right\} \]
Where the final implication is \cite[Proposition 4.2]{LearyNucinkis-GroupsActingPrimePowerOrder}, where it is proved that $G$ is $\FFP_0$ if and only if $G$ has finitely many conjugacy classes of finite $p$-subgroups in $\mathcal{F}$, for all primes $p$.  It is conjectured in the same paper that a group $G$ of type $\FP_\infty$ with finitely many conjugacy classes of finite $p$-subgroups in $\mathcal{F}$ is $\FFP_\infty$ \cite[Conjecture 4.3]{LearyNucinkis-GroupsActingPrimePowerOrder}.

Since $G$ is $\MFFP_0$ if and only if $G$ has finitely many conjugacy classes in $\mathcal{F}$ (Lemma \ref{lemma:mackey MFFP0}), the implication $\MFFP_n \Rightarrow \HFFP_n$ is not reversible.

There are examples due to Leary and Nucinkis of groups which act properly and cocompactly on contractible $G$-CW-complexes but which are not of type $\OFFP_0$ \cite[Example 3, p.149]{LearyNucinkis-SomeGroupsOfTypeVF}.  By Remark \ref{remark:HF proper cocompact action then HFFPinfty}, these groups are of type $\HFFP_\infty$ showing that $\HFFP_\infty \not\Rightarrow \OFFP_0$.  Leary and Nucinkis also give examples of groups which act properly and cocompactly on contractible $G$-CW-complexes, are of type $\OFFP_0$ but which are not $\OFFP_\infty$ \cite[Example 4, p.150]{LearyNucinkis-SomeGroupsOfTypeVF}.  Hence there can be no implication $\HFFP_n + \OFFP_0 \not\Rightarrow \OFFP_n$.

\subsubsection{\texorpdfstring{$\HFFP_n$ implies $\FFP_n$}{HF FPn implies FFPn}}\label{subsection:HFFPn implies FFPn}
This section comprises a series of lemmas, building to the proof of Proposition \ref{prop:HF HFFPn implies FFPn}, that for any commutative ring $R$ the condition $\HFFP_n$ implies the condition $\FFP_n$.  Throughout, $H$ and $K$ are arbitrary subgroups in $\mathcal{F}$.  Recall that a short exact sequence is $\mathcal{F}$-split if it splits when restricted to $R H$ for all $H \in \mathcal{F}$.

\begin{Lemma}\label{lemma:HF Fsplit is HF exact}
 If
 \[ 0 \longrightarrow A \longrightarrow B \longrightarrow C \longrightarrow 0 \]
 is a $\mathcal{F}$-split short exact sequence of $RG$-modules then
 \[ 0 \longrightarrow A^- \longrightarrow B^- \longrightarrow C^- \longrightarrow 0 \]
 is exact.
\end{Lemma}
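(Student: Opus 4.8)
The plan is to verify exactness of the sequence of $\HeckeF$-modules pointwise, that is, after evaluating at every object $G/H$ with $H \in \mathcal{F}$, since a sequence of $\HeckeF$-modules is exact precisely when it is exact at each object. Evaluating $A^- \to B^- \to C^-$ at $G/H$ produces the sequence $A^H \to B^H \to C^H$ obtained by applying the functor $\Hom_{RH}(R,-)$ to the original short exact sequence $0 \to A \to B \to C \to 0$, since by definition $M^-(G/H) = M^H = \Hom_{RH}(R, M)$.

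First I would note that $\Hom_{RH}(R,-)$ is left exact, so $0 \to A^H \to B^H \to C^H$ is automatically exact and the only point in question is surjectivity of $B^H \to C^H$. This is exactly where the $\mathcal{F}$-split hypothesis enters: by definition the sequence $0 \to A \to B \to C \to 0$ splits when restricted to $RH$ for every $H \in \mathcal{F}$, so $B \to C$ admits an $RH$-linear section. Applying the additive functor $\Hom_{RH}(R,-)$ to a split short exact sequence of $RH$-modules again yields a (split) short exact sequence, so $B^H \to C^H$ is onto and $0 \to A^H \to B^H \to C^H \to 0$ is exact.

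Since this holds for every $H \in \mathcal{F}$, the sequence $0 \to A^- \to B^- \to C^- \to 0$ is exact as a sequence of $\HeckeF$-modules, which is the claim. There is no genuine obstacle here; the only thing requiring a little care is to match the subgroups in the $\mathcal{F}$-split condition with the objects $G/H$, $H \in \mathcal{F}$, at which the fixed-point functors are evaluated, and to observe that mere additivity of $\Hom_{RH}(R,-)$ suffices precisely because the restricted sequence is split rather than merely exact.
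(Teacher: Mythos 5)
Your proof is correct and follows essentially the same route as the paper: evaluate at each $G/H$, identify $M^-(G/H)$ with $\Hom_{RH}(R,M)$, and use that this additive functor preserves the $RH$-split exact sequence (you simply spell out the left-exactness/surjectivity split that the paper leaves implicit).
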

\begin{proof}
 Evaluating the fixed point functor $M^-$ at $H \in \mathcal{F}$ is equivalent to applying the functor $\Hom_{RH}(R, -)$ to $M$, but since the short exact sequence is split as a sequence of $RH$-modules this functor is exact.
\end{proof}

  We say that a short exact sequence of $RG$-modules
\begin{align*}
0 \longrightarrow A \longrightarrow B \longrightarrow C \longrightarrow 0 \tag{$\star$} 
\end{align*}
is \emph{$H$-good} if 
\[ 0 \longrightarrow A^H \longrightarrow B^H \longrightarrow C^H \longrightarrow 0 \]
is exact.  Say it is \emph{$\mathcal{F}$-good} if it is $H$-good for all $H \in \mathcal{F}$.  
\begin{Remark}\label{remark:fp res of R is Fgood}
 If 
 \[ 0 \longrightarrow A^- \longrightarrow B^- \longrightarrow C^- \longrightarrow 0 \]
 is a short exact sequence of fixed point functors then
 \[ 0 \longrightarrow A \longrightarrow B \longrightarrow C \longrightarrow 0 \]
 is $\mathcal{F}$-good.
\end{Remark}
\begin{Remark}
 By Lemma \ref{lemma:HF Fsplit is HF exact}, if $(\star)$ is $R H$-split then it is $H$-good, however in general being $H$-good is a weaker property:  Applying $\Hom_{RH}(R,-)$ to $(\star)$ gives 
\[ 0 \longrightarrow \Hom_{RH}(R, A) \longrightarrow \Hom_{RH}(R, B) \longrightarrow \Hom_{RH}(R, C) \longrightarrow H^1(H, A) \longrightarrow \cdots \]
So to find an example of an $H$-good short exact sequence which is not $RH$-split it is sufficient to find modules $C$ and $A$ such that $H^1(H, A) = 0$ and $\Ext^1_{RH}(C, A) \neq 0$.  For example if $H$ is any finite group we may set $R = \ZZ$, $A = \ZZ H$ and $C = (\ZZ/2\ZZ) H$.
\end{Remark}

Additionally, we say that an $R H$-module $M$ has property $(P_H)$ if for any $\mathcal{F}$-good short exact sequence $(\star)$, $\Hom_{R H}(M, -)$ preserves the exactness of $(\star)$.  Since $\Hom_{R H}(M, -)$ is always left exact, having $(P_H)$ is equivalent to asking that for any $\mathcal{F}$-good short exact sequence $(\star)$ and any $R H$-module homomorphism $f:M \to C$, there is a $R H$-module homomorphism $l:M \to B$ such that the diagram below commutes.
\[ \xymatrix{
 & & M \ar^f[rd] \ar_l@{-->}[d] & & \\
 0 \ar[r] & A \ar[r]&  B \ar^g[r] & C \ar[r] & 0
 } \]

\begin{Lemma}\label{lemma:FH direct summands of PH have PH}
 If $M$ has $(P_H)$ then any direct summand of $M$, as $R H$-modules, has $(P_H)$.
\end{Lemma}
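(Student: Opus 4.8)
The statement is that property $(P_H)$ passes to direct summands. This is a standard "lifting property passes to retracts" argument, and the proof should be short. First I would set up the situation: suppose $M$ has $(P_H)$, and let $N$ be a direct summand of $M$ as $RH$-modules, so there are $RH$-module maps $\iota: N \to M$ and $\rho: M \to N$ with $\rho \circ \iota = \id_N$. I want to check that $N$ has $(P_H)$, using the reformulation given just before the lemma: given any $\mathcal{F}$-good short exact sequence $(\star)$ and any $RH$-map $f: N \to C$, I must produce an $RH$-map $l: N \to B$ with $g \circ l = f$.

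The key step is simply to transport the lifting problem for $N$ to one for $M$ and back. The plan is: given $f: N \to C$, form the composite $f \circ \rho : M \to C$. Since $M$ has $(P_H)$, there is an $RH$-map $\tilde{l}: M \to B$ with $g \circ \tilde{l} = f \circ \rho$. Now define $l = \tilde{l} \circ \iota : N \to B$. Then $g \circ l = g \circ \tilde{l} \circ \iota = f \circ \rho \circ \iota = f \circ \id_N = f$, which is exactly the required lift. This shows $N$ has $(P_H)$.

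I do not anticipate any real obstacle here — the argument is a one-line diagram chase once the splitting maps $\iota, \rho$ are named, and it uses only the reformulation of $(P_H)$ in terms of the lifting diagram, which is stated in the excerpt immediately above the lemma. The only thing to be slightly careful about is that $\iota$ and $\rho$ are genuinely $RH$-module homomorphisms (which they are, by hypothesis that the direct summand decomposition is as $RH$-modules), so that all composites in sight are again $RH$-maps. I would write the proof as follows.

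\begin{proof}
Let $N$ be a direct summand of $M$ as $RH$-modules, with inclusion $\iota : N \to M$ and projection $\rho : M \to N$ satisfying $\rho \circ \iota = \id_N$; both are $RH$-module homomorphisms. We verify that $N$ has $(P_H)$ using the lifting criterion above. Let
\[ 0 \longrightarrow A \longrightarrow B \stackrel{g}{\longrightarrow} C \longrightarrow 0 \]
be an $\mathcal{F}$-good short exact sequence of $RG$-modules and let $f : N \to C$ be an $RH$-module homomorphism. Then $f \circ \rho : M \to C$ is an $RH$-module homomorphism, so since $M$ has $(P_H)$ there is an $RH$-module homomorphism $\tilde{l} : M \to B$ with $g \circ \tilde{l} = f \circ \rho$. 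Setting $l = \tilde{l} \circ \iota : N \to B$, which is an $RH$-module homomorphism, we compute
\[ g \circ l = g \circ \tilde{l} \circ \iota = f \circ \rho \circ \iota = f . \]
Thus $l$ is the desired lift, and $N$ has $(P_H)$.
\end{proof}
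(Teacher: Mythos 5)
Your proof is correct and is essentially identical to the paper's argument: both precompose $f$ with the projection $M \to N$, lift using $(P_H)$ for $M$, and then restrict the lift along the inclusion $N \to M$. No issues.
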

\begin{proof}
 This is, with a minor alteration, the proof of \cite[Theorem 3.5(ii)]{Rotman-HomologicalAlgebra}.  Let $N$ be a direct summand of $M$ and consider the diagram with exact bottom row.  Assume the bottom row is $\mathcal{F}$-good.
 \[ \xymatrix{
 & & M \ar^\pi@/^/[r] \ar_l@{-->}[d] & N \ar^\iota@/^/[l] \ar^f[d] & \\
 0 \ar[r] & A \ar[r]&  B \ar^g[r] & C \ar[r] & 0
 } \]
 Where $f$ is some arbitrary homomorphism, and $\pi$ and $\iota$ are the projection and inclusion maps respectively.  Since $M$ has $P_H$, there is a map $l : M \to B$ such that $g \circ l = f \circ \pi$, the composition $l \circ \iota$ is the required map.
\end{proof}

\begin{Lemma}\label{lemma:HF perm G module as an H module}
As $R H$-modules,
 \[ R[G/K] \cong \bigoplus_{g \in H \backslash G / K} R[H/K_g] \]
 Where $K_g = \{ h \in H \: : \: g^{-1} h g \le K \} $.
\end{Lemma}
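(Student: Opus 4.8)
The plan is to reduce the statement to the elementary observation that restricting a transitive $G$-set to a subgroup $H$ breaks it up into $H$-orbits, together with the fact that the permutation-module functor $R[-]$ carries a disjoint union of $H$-sets to the corresponding direct sum of $RH$-modules.

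First I would regard the transitive $G$-set $G/K$ as an $H$-set by restriction along $H \hookrightarrow G$. Its $H$-orbits are in bijection with the double cosets in $H \backslash G / K$, since $gK$ and $g'K$ lie in the same $H$-orbit exactly when $HgK = Hg'K$. Choosing a set of representatives $g$ for $H \backslash G / K$ then gives a decomposition of $H$-sets
\[ G/K \;=\; \coprod_{g \in H \backslash G / K} H \cdot gK . \]
Next I would identify each orbit $H \cdot gK$. The stabiliser in $H$ of the point $gK$ is $\{\, h \in H : g^{-1}hg \in K \,\}$, which is precisely the subgroup $K_g$ of the statement, so the orbit map $hK_g \mapsto hgK$ is an isomorphism of $H$-sets $H/K_g \cong H \cdot gK$. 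Applying $R[-]$, which is additive on disjoint unions, to the orbit decomposition then yields
\[ R[G/K] \;\cong\; \bigoplus_{g \in H \backslash G / K} R[H/K_g] \]
as $RH$-modules, as claimed. (Equivalently, this is Mackey's double-coset formula applied to $R[G/K] \cong \Ind_{RK}^{RG} R$.)

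I do not expect any genuine obstacle here: the only step requiring attention is checking that the point-stabiliser computation returns exactly the subgroup $K_g$ in the statement, which is immediate from $hgK = gK \iff g^{-1}hg \in K$. The lemma is simply recorded in this form for use in the dimension-shifting arguments that follow.
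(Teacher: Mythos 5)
Your proof is correct; the paper itself gives no argument for this lemma but simply cites \cite[Proof of \S III.9.5(ii), p.83]{Brown}, and the orbit/double-coset (Mackey) decomposition you write out is exactly the standard argument proved there. The stabiliser computation $hgK = gK \iff g^{-1}hg \in K$ does indeed return the subgroup $K_g$ of the statement, so there is nothing missing.
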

\begin{proof}
For a proof see \cite[Proof of \S III.9.5(ii) on p.83]{Brown}
\end{proof}

\begin{Lemma}\label{lemma:HF perm module has PH}~
\begin{enumerate}
 \item $R$ has $(P_H)$.
 \item $R[H/L]$ has $(P_H)$, for $L$ any subgroup of $H$.
 \item $R[G/K]$ has $(P_H)$, for any $K \in \mathcal{F}$.
\end{enumerate}
\end{Lemma}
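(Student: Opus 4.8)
The plan is to reduce all three parts to a single observation: for any subgroup $L \le H$, the functor $\Hom_{RH}(R[H/L], -)$ on $RH$-modules is naturally isomorphic to the $L$-fixed-point functor $M \mapsto M^L = \Hom_{RL}(R, \Res^H_L M)$, whereas a short exact sequence $(\star)$ being $\mathcal{F}$-good means exactly that each fixed-point functor $(-)^J$ with $J \in \mathcal{F}$ carries $(\star)$ to a short exact sequence. Since $\Hom_{RH}(M,-)$ is always left exact, in every case it suffices to check that right-exactness is preserved.

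For (1), I would use $R = R[H/H]$ and $\Hom_{RH}(R,-) = (-)^H$; as $H \in \mathcal{F}$, applying this to an $\mathcal{F}$-good $(\star)$ gives the exact sequence $0 \to A^H \to B^H \to C^H \to 0$, so $R$ has $(P_H)$. For (2), I would write $R[H/L] = RH \otimes_{RL} R$ and invoke the adjunction $\Hom_{RH}(RH \otimes_{RL} R, M) \cong \Hom_{RL}(R, \Res^H_L M) = M^L$, natural in $M$. Since $\mathcal{F}$ is closed under taking subgroups and $H \in \mathcal{F}$, we have $L \in \mathcal{F}$, so $\mathcal{F}$-goodness of $(\star)$ forces $0 \to A^L \to B^L \to C^L \to 0$ to be exact, i.e. $R[H/L]$ has $(P_H)$.

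For (3), I would restrict $R[G/K]$ to an $RH$-module and apply Lemma \ref{lemma:HF perm G module as an H module}, giving an $RH$-isomorphism $R[G/K] \cong \bigoplus_{g \in H \backslash G / K} R[H/K_g]$ with $K_g = \{ h \in H : g^{-1}hg \in K \} = H \cap gKg^{-1}$. Each $K_g$ is a subgroup of $H$ that is subconjugate to $K$, so since $\mathcal{F}$ is closed under conjugation and subgroups and $K \in \mathcal{F}$ we get $K_g \in \mathcal{F}$; hence $R[H/K_g]$ has $(P_H)$ by part (2). Finally I would observe that $(P_H)$ passes to arbitrary direct sums: $\Hom_{RH}(\bigoplus_i M_i, -) \cong \prod_i \Hom_{RH}(M_i, -)$ and products of short exact sequences are short exact, so if each $M_i$ preserves the exactness of an $\mathcal{F}$-good $(\star)$ then so does $\bigoplus_i M_i$. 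This gives $(P_H)$ for $R[G/K]$.

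I do not expect a real obstacle here; the argument is formal once the identifications above are in place. The two points that warrant care are recognising the double-coset stabilisers as the subgroups $K_g = H \cap gKg^{-1}$ belonging to $\mathcal{F}$, and the passage to possibly infinite direct sums in (3): one cannot instead appeal to Lemma \ref{lemma:FH direct summands of PH have PH}, since that lemma produces $(P_H)$ for summands of a module already known to have $(P_H)$, which is the opposite of what is needed.
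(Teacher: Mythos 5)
Your proof is correct and follows essentially the same route as the paper: part (1) is the definition of $\mathcal{F}$-good (hence $H$-good), part (2) uses the adjunction $\Hom_{RH}(RH\otimes_{RL}R,-)\cong\Hom_{RL}(R,-)$ exactly as in the paper, and part (3) decomposes $R[G/K]$ via Lemma \ref{lemma:HF perm G module as an H module} and uses that $\Hom$ out of a direct sum is a product of $\Hom$'s together with exactness of products. Your closing caution about not invoking Lemma \ref{lemma:FH direct summands of PH have PH} is well taken but does not change the argument, which matches the paper's.
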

\begin{proof}
 \begin{enumerate}
  \item The condition that $\Hom_{R H}(R, -)$ preserves an $\mathcal{F}$-good short exact sequence $(\star)$ is exactly the condition that $(\star)$ is $H$-good, and $\mathcal{F}$-good implies $H$-good.
  \item There are natural isomorphism,
  \begin{align*}
         \Hom_{R H}(R[H/L], -) &\cong \Hom_{RH}(RH \otimes_{RL} R, -) \\
         &\cong \Hom_{RL}(R, \Hom_{RH}(RH, -)) \\
         &\cong\Hom_{R L}(R, -) 
        \end{align*}
  Where the second isomorphism is \cite[p.63, (3.3)]{Brown}, now use part (1).
  \item Use Lemma \ref{lemma:HF perm G module as an H module} to rewrite $R[G/K]$ (as an $RH$-module), as
  \[ R[G/K] \cong \bigoplus_{g \in H \backslash G / K} R[H/K_g] \]
  Thus 
  \[ \Hom_{R H}(R[G/K], -) \cong \prod_{g \in H \backslash G / K} \Hom_{RH} (R[H/K_g], -)\]
  Now use part (2) and that direct products of exact sequences are exact.
 \end{enumerate}
\end{proof}

\begin{Lemma}\label{lemma:HF PH then splits}
 If $C$ has $(P_H)$ then $(\star)$ splits as a sequence of $R H$-modules.
\end{Lemma}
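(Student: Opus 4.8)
The plan is to read the splitting straight off the lifting reformulation of property $(P_H)$ recorded immediately before the statement, so essentially no new work is needed. First I would make explicit the standing hypothesis that $(\star)$ is $\mathcal{F}$-good; only then does the defining property of $(P_H)$ apply to the module $C$, namely that $\Hom_{RH}(C,-)$ carries $(\star)$ to a short exact sequence. In particular this makes the map $g_\ast \colon \Hom_{RH}(C,B) \to \Hom_{RH}(C,C)$ induced by $g \colon B \to C$ surjective.

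Next I would feed the identity $\id_C \in \Hom_{RH}(C,C)$ through this surjection: there is an $RH$-homomorphism $s \colon C \to B$ with $g \circ s = \id_C$. Equivalently, applying the lifting diagram for $(P_H)$ with $M = C$ and $f = \id_C$ produces exactly such an $s$. Since $s$ is then a section of $g$, the sequence $(\star)$ splits as a sequence of $RH$-modules, which completes the argument.

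I do not expect any genuine obstacle here: the content has been front-loaded into the equivalent formulation of $(P_H)$, and the one point deserving a word of care is that $(\star)$ must be $\mathcal{F}$-good for that formulation to be available. This is part of the hypothesis, and in the later applications of the lemma it will be guaranteed by Remark \ref{remark:fp res of R is Fgood}.
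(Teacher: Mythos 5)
Your proof is correct and takes exactly the paper's route: the paper's entire proof is ``Apply $\Hom_{RH}(C,-)$ to $(\star)$,'' and your argument simply spells this out by lifting $\id_C$ through the resulting surjection $\Hom_{RH}(C,B)\to\Hom_{RH}(C,C)$ to obtain a section of $g$. Your remark that $(\star)$ must be $\mathcal{F}$-good for $(P_H)$ to apply is the correct (implicit) reading of the hypothesis and matches how the lemma is used later.
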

\begin{proof}
 Apply $\Hom_{RH}(C, -)$ to $(\star)$.
\end{proof}

\begin{Lemma}\label{lemma:HF Fgood is Fsplit}
 If $P_*$ is an $\mathcal{F}$-good resolution of $R$ by permutation $RG$-modules with stabilisers in $\mathcal{F}$, then $P_*$ is $\mathcal{F}$-split.
\end{Lemma}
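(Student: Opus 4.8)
The plan is to slice the resolution into short exact sequences and propagate the property $(P_H)$ upward through the resolution by dimension shifting, using that each $P_i$ is a direct sum of permutation modules $R[G/K]$ with $K \in \mathcal{F}$, all of which have $(P_H)$ by Lemma~\ref{lemma:HF perm module has PH}. Write the augmented resolution as $\cdots \to P_1 \to P_0 \to R \to 0$ with differentials $d_i$ and augmentation $\varepsilon$, set $Z_{-1} = R$, $Z_0 = \ker\varepsilon$, and $Z_i = \ker d_i$ for $i \ge 1$, and for each $i \ge 0$ write $(\dagger_i)$ for the short exact sequence $0 \to Z_i \to P_i \to Z_{i-1} \to 0$. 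A preliminary step is to unwind the hypothesis: evaluating on $H$-fixed points for $H \in \mathcal{F}$, the left-exact functor $(-)^H = \Hom_{RH}(R,-)$ commutes with kernels, so the homology of $P_*^H \to R^H$ at $P_i^H$ is the cokernel of $P_{i+1}^H \to Z_i^H$, and this cokernel vanishes precisely when $(\dagger_{i+1})$ is $H$-good; thus ``$P_*$ is an $\mathcal{F}$-good resolution'' is equivalent to ``$(\dagger_i)$ is $\mathcal{F}$-good for every $i \ge 0$'', which is also the statement that $(P_*)^- \to R^-$ is a resolution of $\HeckeF$-modules.

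I would then prove by induction on $i \ge -1$ that $Z_i$ has $(P_H)$ for every $H \in \mathcal{F}$. The base case $i = -1$ is $Z_{-1} = R$, handled by Lemma~\ref{lemma:HF perm module has PH}(1). For the inductive step, assume $Z_{i-1}$ has $(P_H)$ for all $H \in \mathcal{F}$. Since $(\dagger_i)$ is $\mathcal{F}$-good and its right-hand term $Z_{i-1}$ has $(P_H)$, Lemma~\ref{lemma:HF PH then splits} shows $(\dagger_i)$ splits as a sequence of $RH$-modules, so $Z_i$ is an $RH$-direct summand of $P_i$. Writing $P_i \cong \bigoplus_\alpha R[G/K_\alpha]$ with all $K_\alpha \in \mathcal{F}$, we have $\Hom_{RH}(P_i, -) \cong \prod_\alpha \Hom_{RH}(R[G/K_\alpha], -)$, and each factor preserves the exactness of $\mathcal{F}$-good short exact sequences by Lemma~\ref{lemma:HF perm module has PH}(3); as a product of exact sequences is exact, $P_i$ has $(P_H)$, and then Lemma~\ref{lemma:FH direct summands of PH have PH} gives that its $RH$-summand $Z_i$ has $(P_H)$ as well. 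This closes the induction.

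Finally, with every $Z_{i-1}$ having $(P_H)$, a second use of Lemma~\ref{lemma:HF PH then splits} shows that each $(\dagger_i)$ splits over $RH$, for each $H \in \mathcal{F}$; splicing these $RH$-splittings produces a contracting homotopy, so $P_*$ regarded as a complex of $RH$-modules is split exact over $R$, for every $H \in \mathcal{F}$, which is exactly the assertion that $P_*$ is $\mathcal{F}$-split. I do not expect a real obstacle here: this is a routine dimension shift, and the only points needing a little care are the equivalence ``$\mathcal{F}$-good resolution $\Leftrightarrow$ each $(\dagger_i)$ is $\mathcal{F}$-good'' used at the start, and the fact that $(P_H)$ passes to arbitrary direct sums (both relevant because the $P_i$ may be infinite direct sums), each of which follows immediately from $\Hom_{RH}(\bigoplus_\alpha M_\alpha, -) \cong \prod_\alpha \Hom_{RH}(M_\alpha, -)$ together with exactness of products of exact sequences.
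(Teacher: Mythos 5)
Your proposal is correct and follows essentially the same route as the paper: slice the augmented resolution into short exact sequences, induct on degree showing each kernel has $(P_H)$ via Lemmas \ref{lemma:HF perm module has PH}, \ref{lemma:HF PH then splits} and \ref{lemma:FH direct summands of PH have PH}, and conclude that each slice splits over $RH$ for every $H \in \mathcal{F}$. The only difference is that you spell out two points the paper leaves implicit --- the equivalence between ``$\mathcal{F}$-good resolution'' and each sliced sequence being $\mathcal{F}$-good, and the passage of $(P_H)$ to arbitrary direct sums of the $R[G/K]$ --- which is a welcome clarification rather than a change of method.
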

\begin{proof}
 Fix a subgroup $H \in \mathcal{F}$ and let $\partial_i : P_i \to P_{i-1}$ denote the usual boundary map of the chain complex and $\partial_0:P_0 \to R$ the augmentation map. Consider the short exact sequence 
 \[ 0 \longrightarrow  \Ker \partial_0 \longrightarrow P_0 \longrightarrow R \longrightarrow 0 \]
 This splits as a sequence of $R H$ modules by Lemmas \ref{lemma:HF perm module has PH}(1) and \ref{lemma:HF PH then splits}, and by Lemma \ref{lemma:FH direct summands of PH have PH} $\Ker \partial_0$ has $(P_H)$.  
 
 This is the base case of an induction which continues as follows:  Assume that $P_*$ is shown to split up to degree $i-1$ and $\Ker \partial_{i-1}$ has $(P_H)$, we show it splits in degree $i$ also and $\Ker \partial_i$ has $(P_H)$.  Consider the short exact sequence 
 \[ \Ker \partial_i \longrightarrow P_i \longrightarrow \Ker \partial_{i-1} \]
 Since $\Ker \partial_{i-1} $ has $(P_H)$, Lemma \ref{lemma:HF PH then splits} shows the short exact sequence splits, and Lemmas \ref{lemma:HF perm module has PH}(3) and \ref{lemma:FH direct summands of PH have PH} show that $\Ker \partial_i$ has $(P_H)$.
\end{proof}

\begin{Remark}
 Similarly to Proposition \ref{prop:HF sigma ind preserves proj res of R}, the above lemma may fail for $\mathcal{F}$-good resolutions of arbitrary modules.
\end{Remark}

\begin{Prop}\label{prop:HF HFFPn implies FFPn}
 If $G$ is $\HFFP_n$ then $G$ is $\FFP_n$.
\end{Prop}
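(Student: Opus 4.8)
The plan is to take a free $\HeckeF$-module resolution of $R^-$ that is finitely generated up to degree $n$, evaluate it at the objects $G/H$ with $H \in \mathcal{F}$, and recognise the resulting complex of $RG$-modules as an $\mathcal{F}$-split resolution of the trivial module $R$ by finitely generated $\mathcal{F}$-projective modules.

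Concretely, since $G$ is $\HFFP_n$ the module $R^-$ is $\CFP_n$ over $\HeckeF$, so it admits a \emph{free} resolution $F_\ast \longtwoheadrightarrow R^-$ with $F_i$ finitely generated free for $i \le n$ (the standard passage from projective to free resolutions applies, as a finitely generated projective $\HeckeF$-module is a direct summand of a finitely generated free one). By Lemma \ref{lemma:HF frees are fp}, each $F_i$ is the fixed point functor $Q_i^-$ of a permutation $RG$-module $Q_i$ whose stabilisers lie in $\mathcal{F}$, with $Q_i$ finitely generated as an $RG$-module for $i \le n$.

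The crux is the next step. Exactness of a sequence of $\HeckeF$-modules is tested pointwise, so for every object $G/H$, i.e.\ every $H \in \mathcal{F}$, the evaluated complex $F_\ast(G/H) \longtwoheadrightarrow R^-(G/H)$ is exact. Since $F_i(G/H) = Q_i^{H}$ and $R^-(G/H) = R$, this says that applying $(-)^H$ to $Q_\ast$ yields an exact complex augmenting onto $R$, for \emph{every} $H \in \mathcal{F}$. Taking $H = 1$ shows $Q_\ast \longrightarrow R$ is a resolution of the trivial $RG$-module $R$ by permutation modules with stabilisers in $\mathcal{F}$; and the exactness of $(Q_\ast)^H \to R$ for all $H \in \mathcal{F}$ is, by a routine dimension shift using left exactness of $(-)^H$, exactly the statement that every short exact sequence $0 \to Z_i \to Q_i \to Z_{i-1} \to 0$ obtained by splitting up this resolution is $\mathcal{F}$-good, i.e.\ that $Q_\ast \longrightarrow R$ is an $\mathcal{F}$-good resolution in the sense of Lemma \ref{lemma:HF Fgood is Fsplit}. (Equivalently, one notes the kernels in $F_\ast$ are again fixed point functors and applies Remark \ref{remark:fp res of R is Fgood} to each $0 \to Z_i^- \to F_i \to Z_{i-1}^- \to 0$.) Lemma \ref{lemma:HF Fgood is Fsplit} then gives that $Q_\ast \longrightarrow R$ is $\mathcal{F}$-split.

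Finally, a permutation $RG$-module with stabilisers in $\mathcal{F}$ is $\mathcal{F}$-projective --- it is a direct summand of a direct sum of copies of $R\Delta$ --- and $Q_i$ is finitely generated for $i \le n$; so $Q_\ast \longrightarrow R$ is an $\mathcal{F}$-split resolution of $R$ by $\mathcal{F}$-projective modules, finitely generated up to degree $n$, whence $G$ is $\FFP_n$. The only step with real content is the third one: evaluating at $G/1$ alone merely produces an ordinary $RG$-resolution of $R$, and one must instead evaluate at every finite subgroup at once to see that a projective $\HeckeF$-resolution of $R^-$ bundles precisely the fixed-point data witnessing the $\mathcal{F}$-splitting; the passage from $\mathcal{F}$-good to $\mathcal{F}$-split for permutation resolutions is already packaged in the $(P_H)$-machinery of Lemmas \ref{lemma:FH direct summands of PH have PH}--\ref{lemma:HF Fgood is Fsplit}, and everything else is bookkeeping.
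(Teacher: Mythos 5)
Your proposal is correct and follows essentially the same route as the paper: take a free $\HeckeF$-resolution of $R^-$ finitely generated up to degree $n$, identify the frees as fixed point functors of permutation modules via Lemma \ref{lemma:HF frees are fp}, observe via Remark \ref{remark:fp res of R is Fgood} that the evaluated resolution of $R$ is $\mathcal{F}$-good, and conclude it is $\mathcal{F}$-split by Lemma \ref{lemma:HF Fgood is Fsplit}, with permutation modules with stabilisers in $\mathcal{F}$ being $\mathcal{F}$-projective. Your extra care in checking that the kernels are again fixed point functors (equivalently, the dimension-shifting argument from pointwise exactness) just makes explicit a step the paper leaves implicit.
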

\begin{proof}
 Find a free $\HeckeF$-module resolution $P_*$ of $R^-$, finitely generated up to dimension $n$.  By Remark \ref{remark:fp res of R is Fgood}, $P_*(G/1)$ is an $\mathcal{F}$-good resolution of $R$ by permutation $RG$-modules with stabilisers in $\mathcal{F}$.  By Lemma \ref{lemma:HF Fgood is Fsplit} $P_*$ is $\mathcal{F}$-split, and by \cite[Definition 2.2]{Nucinkis-CohomologyRelativeGSet} permutation $RG$-modules with stabilisers in $\mathcal{F}$ are $\mathcal{F}$-projective.
\end{proof}

\subsubsection{\texorpdfstring{$\FFP_n$ implies $\HFFP_n$}{FFPn implies HF FPn}}\label{subsection:FFPn implies HFFPn}

This section comprises a series of lemmas, building to the proof of Proposition \ref{prop:FFP implies HFFPn}, that if $R$ is commutative Noetherian and $G$ is $\FFP_n$ then $G$ is $\HFFP_n$.

\begin{Lemma}
 For any $H \in \mathcal{F}$, inducing $R[G/H]$ to a \emph{covariant} $\HeckeF$-module gives the free module $R[G/H, -]_{\HeckeF}$.
\end{Lemma}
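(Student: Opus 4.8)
The plan is to unwind the definition of covariant induction as a tensor product and identify the result using Yoshida's description of the morphism sets of $\HeckeF$ (Lemma \ref{lemma:mackey yoshida CF struc}). Recall that the functor $RG \to \HeckeF$ under consideration is the composite of the inclusion $RG \to \OF$ with $\pi \circ \sigma$; it sends the single object to $G/1$, and for a covariant $RG$-module $N$ the covariant induction along it is $\Ind_{RG}^{\HeckeF} N \cong R[G/1,-]_{\HeckeF} \otimes_{RG} N$, where $R[G/1,-]_{\HeckeF}$ is an $RG$-$\HeckeF$-bimodule---a right $RG$-module in its first slot via precomposition with $\End_{\HeckeF}(G/1)$, and a covariant $\HeckeF$-module in its second slot. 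So the task is to prove $\Ind_{RG}^{\HeckeF} R[G/H] \cong R[G/H,-]_{\HeckeF}$.

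First I would evaluate at an object $G/K$, so that $(\Ind_{RG}^{\HeckeF} R[G/H])(G/K) = R[G/1,G/K]_{\HeckeF} \otimes_{RG} R[G/H]$. Since $R[G/1] = RG$ is free of rank one, evaluation at the basepoint identifies $R[G/1,G/K]_{\HeckeF} = \Hom_{RG}(R[G/1],R[G/K]) \cong R[G/K]$, carrying the precomposition module structure to the standard module structure of the permutation module. Writing $R[G/H] = RG \otimes_{RH} R$ then yields
\[ R[G/1,G/K]_{\HeckeF} \otimes_{RG} R[G/H] \;\cong\; R[G/K] \otimes_{RG} \bigl( RG \otimes_{RH} R \bigr) \;\cong\; R[G/K] \otimes_{RH} R \;\cong\; R[H \backslash G / K], \]
and Lemma \ref{lemma:mackey yoshida CF struc} identifies $R[H \backslash G / K]$ with $R[G/H,G/K]_{\HeckeF}$.

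It then remains to check that this chain of isomorphisms is natural in $G/K$: both sides are covariant in $G/K$ by postcomposition with $\HeckeF$-morphisms, and each step above (evaluation at the basepoint, the passage $R[G/K] \otimes_{RG} R[G/H] \cong R[G/K] \otimes_{RH} R$, and the identification of Lemma \ref{lemma:mackey yoshida CF struc}) is compatible with applying a fixed $RG$-homomorphism $R[G/K] \to R[G/K']$, so the asserted isomorphism of covariant $\HeckeF$-modules follows. The main obstacle is not conceptual but bookkeeping: one must keep careful track of left versus right $RG$-module structures (equivalently, of the $RG$ versus $RG^{\mathrm{op}}$ identification hidden in $\End_{\HeckeF}(G/1) \cong RG$ and in Lemma \ref{lemma:mackey yoshida CF struc}) so that the tensor products above are the intended ones, and likewise that naturality---the genuine content beyond the pointwise isomorphism---is respected. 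An alternative would be to combine the $\Ind$--$\Res$ adjunction with the Yoneda-type Lemma \ref{lemma:C yoneda-type}, reducing the statement to a natural isomorphism $A(G/H) \cong \Hom_{RG}(R[G/H], A(G/1))$ for every covariant $\HeckeF$-module $A$; but that reformulation is essentially equivalent to what is claimed and seems to need the same computation, so the direct tensor-product argument above looks like the cleanest route.
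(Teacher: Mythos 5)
Your object-level computation is the same as the paper's: both identify $\Ind_{RG}^{\HeckeF} R[G/H](G/K)$ with $R[H\backslash G/K]$ and then invoke Lemma \ref{lemma:mackey yoshida CF struc} to compare with $R[G/H,G/K]_{\HeckeF}$. The difference is that the paper spends essentially all of its proof on the morphisms, following explicit elements through the chain of isomorphisms for the generating morphisms $R^K_L$, $I^K_L$ and $c_g$, whereas you dispose of this by asserting that ``each step above is compatible with applying a fixed $RG$-homomorphism $R[G/K]\to R[G/K']$.''

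That assertion is the genuine gap, because for the final step it is not a formal naturality statement. The first steps identify the induced module at $G/K$ with the $H$-coinvariants $R[G/K]\otimes_{RH}R \cong R[H\backslash G/K]$, on which a morphism $\phi$ of $\HeckeF$ acts simply by applying $\phi$ and passing to orbits; but $\Hom_{RG}(R[G/H],R[G/K])$ is the $H$-invariants $(R[G/K])^H$ with $\phi$ acting by postcomposition, and the identification $\psi$ of Remark \ref{remark:mackey yoshida CF struc} is the orbit-sum map $HxK \mapsto \sum_{u \in H/(H\cap xKx^{-1})} uxK$, which carries the stabiliser indices $\vert H \cap xKx^{-1}\vert$. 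Whether this map intertwines the two actions is exactly the content of the lemma, and for transfer-type morphisms $\pi(I^K_L)$ the indices on the two sides genuinely change: for instance with $R=\ZZ$, $G=H=K=C_2$ and $L=1$, the transfer $\pi(I^K_L)$ acts on the double-coset generator of $R[G/H]\otimes_{RG}R[G/K]$ with a factor $\vert H \vert = 2$ that is absent on the corresponding generator $\id_{R[G/H]}$ of $R[G/H,G/K]_{\HeckeF}$ (and the projection $\pi(R^K_L)$ behaves oppositely), so the compatibility you claim cannot simply be read off and must be verified morphism by morphism, with the index bookkeeping made explicit, as the paper attempts to do. Your alternative reformulation via the $\Ind$--$\Res$ adjunction and Yoneda does not avoid this: it converts the statement into the assertion $A(G/H)\cong\Hom_{RG}(R[G/H],A(G/1))$ naturally in covariant $A$, which again hinges on the same invariants-versus-coinvariants comparison. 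As written, your proof establishes the pointwise isomorphism but not the isomorphism of $\HeckeF$-modules, which is the actual claim.
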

\begin{proof}
On objects the two functors are equal:
 \begin{align*}
\Ind^{\HeckeF}_{R G} R[G/H] (G/K) &= R[G/H] \otimes_{R G} R[G/1, -]_{\HeckeF} (G/K) \\  
&= R[G/H] \otimes_{RG} \Hom_{RG} (RG, R[G/K]) \\
&= R[G/H] \otimes_{RG} R[G/K] \\
&= R[H \backslash G / K] \\
&= \Hom_{RG} (R[G/H], R[G/K])
 \end{align*}
Here we are forming $\Hom_{RG} (RG, R[G/K])$ using the left actions of $RG$ on itself and on $R[G/K]$ and forming the tensor product using the left action of $RG$ on the Hom set and the right action of $RG$ on $R[G/H]$, given by $g^\prime H \cdot g = g^{-1} g^\prime H$.  This is equivalent to forming the tensor product using the left action on the $\Hom$ set and the right action on the module $R[H \backslash G]$.

If $L \le K$ are in $\mathcal{F}$, and $\sum_I g_i L \in R[G/L]^H$ then
\begin{align*}
R[G/1, -]_{\HeckeF} (R_L^K) : R[G/1, G/L]_{\HeckeF} &\longrightarrow R[G/1, G/K]_{\HeckeF} \\
 \sum_I g_i L &\longmapsto \sum_I g_i K
\end{align*}
Following this down the chain of isomorphisms, then 
\begin{align*}
\Ind_{RG}^{\HeckeF} R[G/H] (R_L^K) : \Hom_{RG}(R[G/H], R[G/L]) &\longrightarrow \Hom_{RG}(R[G/H], R[G/K]) \\
\sum_I g_i L &\longmapsto \sum_I g_i K
\end{align*}
as required.  Similarly, if $\sum_I g_i K \in R[G/K]^H$ then 
\begin{align*}
R[G/1, -]_{\HeckeF} (I_L^K) :  R[G/1, G/K]_{\HeckeF} &\longrightarrow R[G/1, G/L]_{\HeckeF}   \\
  \sum_I g_i K &\longmapsto \sum_{k \in K / L} \sum_I g_i k  L 
\end{align*}
Following this down the chain of isomorphisms, 
\begin{align*}
 \Ind_{RG}^{\HeckeF} R[G/H] ( I_L^K ): \Hom_{RG} (R[G/H], R[G/K]) &\longrightarrow \Hom_{RG}(R[G/H], R[G/L])\\
\sum_I g_i K &\longmapsto \sum_{k \in K / L} \sum_I g_i k  L
\end{align*}
again as required.

The proof for the conjugation morphisms $c_g$ is similar to the above.
 \end{proof}

\begin{Lemma}\label{lemma:inducing prods of perms}
\[ \Ind_{RG}^{\HeckeF} \prod_{H \in \mathcal{F}/G} \prod_{\Lambda_H} R[G / H] = \prod_{H \in \mathcal{F}/G} \prod_{\Lambda_H} R[G/H, -]_{\HeckeF} \]
Where for each $H \in \mathcal{F}/G$, $\Lambda_H$ is any indexing set and we are using covariant induction. 
\end{Lemma}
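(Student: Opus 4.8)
The plan is to verify the claimed isomorphism objectwise. A morphism of covariant $\HeckeF$-modules is an isomorphism exactly when it is one after evaluation at every object $G/K$, and both sides are obtained by applying the induction functor $\Ind_{RG}^{\HeckeF}(-) = (-) \otimes_{RG} R[G/1,-]_{\HeckeF}$. So, functoriality of induction applied to the projections of the product produces a canonical morphism of covariant $\HeckeF$-modules
\[ \Ind_{RG}^{\HeckeF}\Big( \prod_{H \in \mathcal{F}/G} \prod_{\Lambda_H} R[G/H] \Big) \longrightarrow \prod_{H \in \mathcal{F}/G} \prod_{\Lambda_H} \Ind_{RG}^{\HeckeF} R[G/H], \]
and by the preceding lemma the target is exactly $\prod_{H}\prod_{\Lambda_H} R[G/H,-]_{\HeckeF}$. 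It remains to see that, evaluated at each $G/K$, this is the standard comparison map
\[ \Big( \prod_{H} \prod_{\Lambda_H} R[G/H] \Big) \otimes_{RG} R[G/1, G/K]_{\HeckeF} \longrightarrow \prod_{H} \prod_{\Lambda_H} \Big( R[G/H] \otimes_{RG} R[G/1, G/K]_{\HeckeF} \Big), \]
and that it is an isomorphism.

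The key point is that the right $RG$-module $R[G/1, G/K]_{\HeckeF} = \Hom_{RG}(R[G/1], R[G/K]) \cong R[G/K]$ is \emph{finitely presented} over $RG$: writing $R[G/K] = RG \otimes_{RK} R$, the trivial module $R$ is finitely presented over $RK$ since $K$ is finite (the augmentation ideal of $RK$ is generated by the finitely many elements $k-1$, $k \in K$), and finite presentation is preserved under the base change $-\otimes_{RK}RG$. For a finitely presented left module $P$ the natural map $\big(\prod_i M_i\big)\otimes_{RG} P \to \prod_i (M_i \otimes_{RG} P)$ is an isomorphism: choosing a finite free presentation $RG^m \to RG^n \to P \to 0$ and using right-exactness of the tensor product, exactness of direct products, and the five lemma reduces the claim to the free case, where it is trivial. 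Applying this with $P = R[G/1, G/K]_{\HeckeF}$ gives that the displayed comparison map is an isomorphism for every $K$.

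Since these comparison maps are natural in $K$ (being the components of the morphism of $\HeckeF$-modules above), the objectwise isomorphisms assemble to the required isomorphism of covariant $\HeckeF$-modules. The only non-formal ingredient is the fact that tensoring with a finitely presented module commutes with arbitrary products; the rest is bookkeeping with the definitions of induction and of the free $\HeckeF$-modules. I expect the mild subtlety to be keeping the left/right $RG$-module conventions straight when identifying $R[G/1,G/K]_{\HeckeF}$ with $R[G/K]$, but this does not affect finite presentation. Note that this argument uses only that $K$ is finite, so the lemma holds over an arbitrary commutative ring $R$, not merely a Noetherian one.
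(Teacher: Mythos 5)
Your proof is correct and takes essentially the same route as the paper: evaluate at each object $G/K$, identify $R[G/1,-]_{\HeckeF}(G/K)$ with $R[G/K]$, and use that tensoring over $RG$ with this module commutes with arbitrary direct products because $K$ is finite. The only cosmetic differences are that the paper cites the Bieri--Eckmann criterion (using that $R[G/K]$ is $\FP_\infty$) where you prove the product--tensor commutation directly from a finite presentation, and that you handle the compatibility on morphisms by constructing the global comparison map from the projections rather than checking morphisms by hand as in the preceding lemma.
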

\begin{proof}
In this proof, we use $\prod$ as a shorthand for $\prod_{H \in \mathcal{F}/G} \prod_{\Lambda_H}$.  On objects, the two functors are equal:  
 \begin{align*}
\Ind^{\HeckeF}_{R G} \prod R[G/H] (G/K) &= \left(\prod R[G/H]\right) \otimes_{R G} R[G/1, -]_{\HeckeF} (G/K) \\  
&= \left(\prod R[G/H]\right) \otimes_{RG} \Hom_{RG} (RG, R[G/K]) \\
&= \left(\prod R[G/H]\right) \otimes_{RG} R[G/K] \\
&\cong \prod \left( R[G/H] \otimes_{RG} R[G/K] \right) \tag{$\star$} \\
&= \prod R[H \backslash G/K]\\
&= \prod \Hom_{RG}(R[G/H], R[G/K])
 \end{align*}
 Where the isomorphism marked $(\star)$ is the Bieri--Eckmann criterion \cite[Theorem 1.3]{Bieri-HomDimOfDiscreteGroups}, which is valid because $R[G/K]$ is $\FP_\infty$.  That the morphisms are equal can be checked as in the previous lemma.
 \end{proof}

\begin{Lemma}\label{lemma:FHn with products of perm module coeff}
 \[ \mathcal{F}H_*\left(G, \prod_{H \in \mathcal{F}/G} \prod_{\Lambda_H} R[G/H]\right) = H_*^{\HeckeF}\left(G, \prod_{H \in \mathcal{F}/G} \prod_{\Lambda_H} R[G/H, -]_{\HeckeF}\right) \]
Where for each $H \in \mathcal{F}/G$, $\Lambda_H$ is any indexing set.
\end{Lemma}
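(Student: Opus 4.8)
Fix once and for all a projective resolution $P_*$ of $\uR_{\OF}$ by $\OF$-modules. By Lemma \ref{lemma:inducing prods of perms} the coefficient module on the right-hand side is
\[ A := \prod_{H \in \mathcal{F}/G}\prod_{\Lambda_H} R[G/H,-]_{\HeckeF} \;\cong\; \Ind_{RG}^{\HeckeF} B, \qquad B := \prod_{H \in \mathcal{F}/G}\prod_{\Lambda_H} R[G/H], \]
where the induction is the covariant one along $RG \to \OF \to \HeckeF$. So it suffices to produce a natural isomorphism $\mathcal{F}H_*(G, B) \cong H_*^{\HeckeF}(G, \Ind_{RG}^{\HeckeF} B)$; in fact the argument below works verbatim for an arbitrary $RG$-module $B$.

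First I would compute the right-hand side against $P_*$. By Proposition \ref{prop:HF sigma ind preserves proj res of R}, $\Ind_{\pi\circ\sigma} P_*$ is a projective $\HeckeF$-module resolution of $R^-$, so $H_*^{\HeckeF}(G, A) = \Tor^{\HeckeF}_*(R^-, A) = H_*\big(\Ind_{\pi\circ\sigma} P_* \otimes_{\HeckeF} A\big)$. Now apply the first isomorphism of Lemma \ref{lemma:C nat iso ind res and tensor} with $\mathfrak{D} = \HeckeF$, $\C = RG$, taking $M = \Ind_{\pi\circ\sigma} P_*$ (a complex of contravariant $\HeckeF$-modules) and $A = B$: this gives $\Ind_{\pi\circ\sigma} P_* \otimes_{\HeckeF} \Ind_{RG}^{\HeckeF} B \cong \Res_{RG}^{\HeckeF}(\Ind_{\pi\circ\sigma} P_*) \otimes_{RG} B$, naturally. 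Restriction along the functor $RG \to \HeckeF$ is evaluation at $G/1$, and one computes, exactly as in the proof of Proposition \ref{prop:HF sigma ind preserves proj res of R}, that $\Ind_{\pi\circ\sigma} P_*(G/1) = P_* \otimes_{\OF} \Res_{\pi\circ\sigma} R[G/1,-]_{\HeckeF} \cong P_* \otimes_{\OF} R[G/1,-]_{\OF} \cong P_*(G/1)$, using Lemma \ref{lemma:HF struc of Res cov frees} in the case $L = 1$ (where the fixed-point functor is trivial) and then Lemma \ref{lemma:C yoneda type isos in tensor product}. All these isomorphisms respect the $RG$-module structure, so $H_*^{\HeckeF}(G, A) \cong H_*\big(P_*(G/1) \otimes_{RG} B\big)$.

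It then remains to recognize that $P_*(G/1)$ computes $\mathcal{F}$-homology. Evaluating a projective $\OF$-module at $G/1$ gives a direct summand of a direct sum of permutation modules $R[G/H]$ with $H \in \mathcal{F}$ (since $[G/1,G/H]_{\OF}$ is identified with $G/H$), and such modules are $\mathcal{F}$-projective; moreover by \cite[Theorem 3.2]{Nucinkis-EasyAlgebraicCharacterisationOfUniversalProperGSpaces} the augmented complex $P_*(G/1) \to R$ is $\mathcal{F}$-split. Thus $P_*(G/1) \to R$ is an $\mathcal{F}$-split resolution of $R$ by $\mathcal{F}$-projective $RG$-modules, and by the comparison theorem for such resolutions $\mathcal{F}H_*(G, B) = H_*\big(P_*(G/1) \otimes_{RG} B\big)$. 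Combining this with the previous paragraph yields the claimed equality.

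The only real bookkeeping is keeping the left/right $RG$-module conventions straight along the chain of natural isomorphisms --- in particular that the $RG$-structure on $\Ind_{\pi\circ\sigma} P_*(G/1)$ coming from $\Aut_{\HeckeF}(G/1)$ matches the one on $P_*(G/1)$ coming from $\Aut_{\OF}(G/1)$ --- but this is routine because the composite functor $RG \to \OF \to \HeckeF$ is compatible with these automorphism rings. I do not expect any genuinely hard step: the substance of the lemma has been packaged into Proposition \ref{prop:HF sigma ind preserves proj res of R}, Lemma \ref{lemma:C nat iso ind res and tensor}, and Lemma \ref{lemma:inducing prods of perms}.
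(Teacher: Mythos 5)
Your argument is correct, and its skeleton is the one the paper uses: the induction--restriction tensor adjunction (Lemma \ref{lemma:C nat iso ind res and tensor}), Lemma \ref{lemma:inducing prods of perms} to identify the coefficient module with $\Ind_{RG}^{\HeckeF}$ of the product of permutation modules, and the observation that the chosen projective $\HeckeF$-resolution of $R^-$, evaluated at $G/1$, is an $\mathcal{F}$-split resolution of $R$ by $\mathcal{F}$-projectives, so that it computes $\mathcal{F}$-homology. The one genuine difference is the choice of resolution and hence the source of the splitting: the paper takes a \emph{free} $\HeckeF$-module resolution $P_*$ of $R^-$, so that $P_*(G/1)$ consists of permutation modules with stabilisers in $\mathcal{F}$ and the $\mathcal{F}$-splitness comes from Remark \ref{remark:fp res of R is Fgood} together with Lemma \ref{lemma:HF Fgood is Fsplit}; you instead take $\Ind_{\pi\circ\sigma}P_*$ for $P_*$ a projective $\OF$-resolution of $\uR$ (Proposition \ref{prop:HF sigma ind preserves proj res of R}), identify its restriction to $RG$ with $P_*(G/1)$, and get the splitting from \cite[Theorem 3.2]{Nucinkis-EasyAlgebraicCharacterisationOfUniversalProperGSpaces}. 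Both are legitimate: the paper's choice stays entirely inside the $\HeckeF$ formalism and reuses the $(P_H)$ machinery already needed for Proposition \ref{prop:HF HFFPn implies FFPn}, while your route leans on the Section \ref{section:Homology and Cohomology of Cohomological Mackey Functors} induction results and, as a small bonus, isolates the cleaner general statement $\mathcal{F}H_*(G,B)\cong H_*^{\HeckeF}(G,\Ind_{RG}^{\HeckeF}B)$ for an arbitrary $RG$-module $B$, with Lemma \ref{lemma:inducing prods of perms} only needed at the end to recognise the particular coefficients; your remark that it does \emph{not} extend the identification of $\Ind_{RG}^{\HeckeF}B$ with a product of frees to arbitrary $B$ is consistent with this, since that identification is exactly where finiteness (via the Bieri--Eckmann criterion) enters. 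The bookkeeping you defer --- matching the left $RG$-structure on $\Ind_{\pi\circ\sigma}P_*(G/1)$ with that on $P_*(G/1)$ --- is indeed routine and is implicitly used by the paper as well.
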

\begin{proof}
 Again we use $\prod$ to stand for  $\prod_{H \in \mathcal{F}/G} \prod_{\Lambda_H}$.
 Let $P_*$ be a free $\HeckeF$-module resolution of $R^-$, then $P_*(G/1)$ is an $\mathcal{F}$-split resolution of $R$ by $\mathcal{F}$-projective modules by Lemma \ref{lemma:HF Fgood is Fsplit}, so
 \begin{align*}
\mathcal{F}H_*(G, R[G/H]) &\cong H_*\left(P_*(G/1) \otimes_{R G} \prod R[G/H] \right) \\
&\cong H_* \left( P_* \otimes_{\HeckeF} \Ind^{\HeckeF}_{R G} \prod R[G/H] \right) \\
&\cong H_* (P_* \otimes_{\HeckeF} \prod R[G/H, -]_{\HeckeF} )\\
&\cong H_*^{\HeckeF} (G,\prod R[G/H, -]_{\HeckeF})
 \end{align*}
Where the second isomorphism is the adjoint isomorphism between induction and restriction and the third is Lemma \ref{lemma:inducing prods of perms}
\end{proof}

\begin{Lemma}\label{lemma:FTor commutes if FFPn}
 For any group $G$, commutative Noetherian ring $R$, $RG$-module $A$ of type $\FFP_n$, and exact limit, the natural map 
\[
\mathcal{F}\Tor_i^{RG} (A, \varprojlim_{\lambda \in \Lambda} M_\lambda ) \longrightarrow \varprojlim_{\lambda \in \Lambda} \mathcal{F}\Tor_i^{RG} (A,  M_\lambda ) 
\]
is an isomorphism for $i < n$ and an epimorphism for $i = n$.
\end{Lemma}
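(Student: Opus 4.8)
Compute $\mathcal{F}$-Tor from a sufficiently finite $\mathcal{F}$-split $\mathcal{F}$-projective resolution of $A$ and push $\varprojlim_\Lambda$ through it degreewise. Since $A$ is $\FFP_n$, fix an $\mathcal{F}$-split resolution $P_*\longtwoheadrightarrow A$ by $\mathcal{F}$-projective $RG$-modules with $P_0,\dots,P_n$ finitely generated; by the meaning of ``finitely generated $\mathcal{F}$-projective'' (cf. \cite{Nucinkis-CohomologyRelativeGSet}) we may take each such $P_i$ ($i\le n$) to be a direct summand of a finite direct sum of permutation modules $R[G/H]$ with $H\in\mathcal{F}$ — this is all that will be used. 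By the comparison theorem for $\mathcal{F}$-split resolutions, $\mathcal{F}\Tor_i^{RG}(A,M)\cong H_i(P_*\otimes_{RG}M)$ naturally in $M$, and the map of the statement is then $H_i$ of the natural chain map $\phi_\bullet\colon P_*\otimes_{RG}\varprojlim_\Lambda M_\lambda\longrightarrow\varprojlim_\Lambda(P_*\otimes_{RG}M_\lambda)$ followed by the identification $H_i\bigl(\varprojlim_\Lambda(P_*\otimes_{RG}M_\lambda)\bigr)\cong\varprojlim_\Lambda H_i(P_*\otimes_{RG}M_\lambda)$ coming from exactness of $\varprojlim_\Lambda$ (an exact functor commutes with homology).

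\textbf{Step 1: $\phi_i$ is an isomorphism for $i\le n$.} This is where both hypotheses are used. For a single permutation module there is a natural isomorphism $R[G/H]\otimes_{RG}M\cong R\otimes_{RH}M$. Since $R$ is commutative Noetherian and $H$ is finite, $RH$ is Noetherian, so the augmentation ideal of $RH$ is finitely generated and $R$ is a finitely presented $RH$-module; choose a finite presentation $RH^{a}\to RH^{b}\to R\to 0$. Tensoring this over $RH$ with the inverse system $(M_\lambda)$ gives right-exact sequences of inverse systems $M_\lambda^{a}\to M_\lambda^{b}\to R\otimes_{RH}M_\lambda\to 0$; applying the exact functor $\varprojlim_\Lambda$ and using that finite products commute with $\varprojlim_\Lambda$, and comparing with the presentation tensored with $\varprojlim_\Lambda M_\lambda$, yields a natural isomorphism $R\otimes_{RH}\varprojlim_\Lambda M_\lambda\cong\varprojlim_\Lambda(R\otimes_{RH}M_\lambda)$. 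Since a finite direct sum of modules commutes with $\varprojlim_\Lambda$ and these isomorphisms are natural (hence restrict to direct summands), $P\otimes_{RG}-$ commutes with $\varprojlim_\Lambda$ whenever $P$ is a direct summand of a finite direct sum of such permutation modules; in particular $\phi_i$ is an isomorphism for all $i\le n$.

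\textbf{Step 2: assembling.} The chain map $\phi_\bullet$ is an isomorphism in degrees $\le n$ by Step 1. A routine diagram chase finishes it: for $i<n$ the homology $H_i$ of either complex depends only on the terms in degrees $i-1,i,i+1\le n$, on which $\phi_\bullet$ is an isomorphism, so $H_i(\phi_\bullet)$ is an isomorphism; for $i=n$ the maps $\phi_{n-1}$ and $\phi_n$ are isomorphisms while $\phi_{n+1}$ is merely some map, so the $n$-cycles of the two complexes correspond under $\phi_n$ while the $n$-boundaries in the target contain the image under $\phi_n$ of those in the source, which forces $H_n(\phi_\bullet)$ to be an epimorphism. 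Composing with the identification $H_i\bigl(\varprojlim_\Lambda(P_*\otimes_{RG}M_\lambda)\bigr)\cong\varprojlim_\Lambda\mathcal{F}\Tor_i^{RG}(A,M_\lambda)$ gives the claim.

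\textbf{Main obstacle.} The only non-formal point is the assertion in Step 1 that $R\otimes_{RH}-$ commutes with the inverse limit; this is exactly where Noetherianity of $R$ is needed (to make $R$ finitely presented over $RH$) and where exactness of the limit is needed (to transport the finite presentation through $\varprojlim_\Lambda$). Everything else is bookkeeping: the reduction to permutation-module resolutions uses only the definition of $\FFP_n$ and the comparison theorem, and Step 2 is the standard ``$\FP_n$-type'' passage from a degreewise statement to a statement on homology, of the same flavour as the Bieri–Eckmann criterion (Theorem \ref{theorem:C bieri-eckmann criterion}).
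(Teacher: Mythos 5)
There is a genuine gap, and it sits exactly where the real content of the lemma lives. Your reduction asserts that a finitely generated $\mathcal{F}$-projective module may be taken to be a direct summand of a \emph{finite direct sum of permutation modules} $R[G/H]$, $H \in \mathcal{F}$. That is not what $\mathcal{F}$-projective means: $\mathcal{F}$-projectives are direct summands of modules $N \otimes R\Delta \cong \bigoplus_{H \in \mathcal{F}} \Ind_H^G \Res_H N$ with $N$ an \emph{arbitrary} $RG$-module, so a finitely generated $\mathcal{F}$-projective is (by chasing a splitting of $P \otimes R\Delta \twoheadrightarrow P$ through finitely many generators) a direct summand of a finite sum $\bigoplus_{i=1}^m \Ind_{H_i}^G W_i$ with $W_i$ finitely generated $RH_i$-modules --- not of $\bigoplus_i R[G/H_i] = \bigoplus_i \Ind_{H_i}^G R$. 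Concretely, if $G$ is finite and $\mathcal{F}$ is the family of all its subgroups, every finitely generated $RG$-module is $\mathcal{F}$-projective, yet over $R=\ZZ$ any direct summand of a finite sum of permutation modules is $\ZZ$-free, so e.g.\ $\ZZ/2$ with trivial $C_2$-action is a counterexample to your structural claim. The tell-tale symptom is that your Step 1 does not actually use Noetherianity: $R$ is finitely presented over $RH$ for \emph{any} commutative $R$ and finite $H$ (the augmentation ideal is generated by the finite set $\{h-1\}$), so your argument as written would prove the lemma over arbitrary commutative rings --- precisely the statement the paper flags as needing Noetherianity and leaves open (see the remark after Proposition \ref{prop:FFP implies HFFPn}, citing Nucinkis' example of a finitely generated $\mathcal{F}$-projective that is not $\FP_\infty$).

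The repair is short and brings you in line with the paper's intended argument: either use the correct structure result above and note that, since $R$ is Noetherian, each $W_i$ is finitely presented over the Noetherian ring $RH_i$, so $\Ind_{H_i}^G W_i \otimes_{RG} - \cong W_i \otimes_{RH_i} -$ commutes with the exact limit (your finite-presentation argument, run with $W_i$ in place of $R$); or simply invoke \cite[Proposition 6.3]{Nucinkis-CohomologyRelativeGSet}, as the paper does, which says that over a commutative Noetherian ring finitely generated $\mathcal{F}$-projectives are $\FP_\infty$ over $RG$, in particular finitely presented, so tensoring with them commutes with exact limits degreewise. With that substitution your Step 1 and Step 2 (the chain-level commutation, exactness of $\varprojlim$ commuting with homology, and the standard cycle/boundary chase giving an isomorphism for $i<n$ and an epimorphism for $i=n$) are correct and constitute exactly the Bieri-style argument the paper refers to.
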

\begin{proof}

The proof is analogous to \cite[Theorem 1.3]{Bieri-HomDimOfDiscreteGroups} and \cite[Theorem 7.1]{Nucinkis-CohomologyRelativeGSet}, using \cite[Proposition 6.3]{Nucinkis-CohomologyRelativeGSet} which states that for $R$ commutative Noetherian, finitely generated $\mathcal{F}$-projective modules over $RG$ are of type $\FP_\infty$.

\end{proof}

Specialising the previous lemma to $M = R$:
\begin{Cor}\label{cor:FFP then FHn is cts}
 If $R$ is commutative Noetherian and $G$ is $\FFP_n$ over $R$, then for any exact limit, the natural map
\[
\mathcal{F}H_i (G, \varprojlim_{\lambda \in \Lambda} M_\lambda ) \longrightarrow \varprojlim_{\lambda \in \Lambda} \mathcal{F}H_i (G,  M_\lambda ) 
\]
 is an isomorphism for $i < n$ and an epimorphism for $i = n$.
\end{Cor}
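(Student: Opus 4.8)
The plan is to obtain this statement as the special case $A = R$ of Lemma \ref{lemma:FTor commutes if FFPn}. Recall that $\mathcal{F}$-homology is computed from an $\mathcal{F}$-split resolution $P_*$ of the trivial module $R$ by $\mathcal{F}$-projective $RG$-modules, so that $\mathcal{F}H_i(G, M) = H_i(P_* \otimes_{RG} M) = \mathcal{F}\Tor_i^{RG}(R, M)$ for every $RG$-module $M$. Moreover, unwinding the definitions, saying that $G$ is of type $\FFP_n$ over $R$ is the same as saying that the trivial $RG$-module $R$ is of type $\FFP_n$, i.e.\ admits an $\mathcal{F}$-split resolution by $\mathcal{F}$-projectives that is finitely generated up to degree $n$.

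Given this, I would simply invoke Lemma \ref{lemma:FTor commutes if FFPn} with $A = R$: for any inverse system $(M_\lambda)_{\lambda \in \Lambda}$ with exact limit, the natural map
\[ \mathcal{F}\Tor_i^{RG}(R, \varprojlim_{\lambda \in \Lambda} M_\lambda) \longrightarrow \varprojlim_{\lambda \in \Lambda} \mathcal{F}\Tor_i^{RG}(R, M_\lambda) \]
is an isomorphism for $i < n$ and an epimorphism for $i = n$. Rewriting both sides via the identification $\mathcal{F}\Tor_i^{RG}(R, -) = \mathcal{F}H_i(G, -)$ yields exactly the asserted map on $\mathcal{F}$-homology.

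I expect no genuine obstacle here; the only point requiring care is the routine bookkeeping of the previous paragraph, matching $\mathcal{F}H_i(G,-)$ with $\mathcal{F}\Tor_i^{RG}(R,-)$ and the two readings of the condition $\FFP_n$. Alternatively one could give a self-contained proof by repeating the Bieri--Eckmann style argument of \cite[Theorem 1.3]{Bieri-HomDimOfDiscreteGroups}, tracking the finite generation of the chosen resolution of $R$ up to degree $n$ and using that finitely generated $\mathcal{F}$-projective $RG$-modules are of type $\FP_\infty$ when $R$ is commutative Noetherian \cite[Proposition 6.3]{Nucinkis-CohomologyRelativeGSet}; but this would merely duplicate the proof of Lemma \ref{lemma:FTor commutes if FFPn}, so specialising that lemma is the efficient route.
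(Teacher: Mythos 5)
Your proposal is correct and is exactly the paper's argument: the paper deduces the corollary by specialising Lemma \ref{lemma:FTor commutes if FFPn} to the trivial module $R$, using the identification $\mathcal{F}H_i(G,-)=\mathcal{F}\Tor_i^{RG}(R,-)$ and the fact that $G$ being $\FFP_n$ means $R$ is of type $\FFP_n$ over $RG$.
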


\begin{Prop}\label{prop:FFP implies HFFPn}
 If $R$ is commutative Noetherian and $G$ is $\FFP_n$ over $R$ then $G$ is $\HFFP_n$ over $R$.
\end{Prop}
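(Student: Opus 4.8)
The plan is to verify the fourth condition of the Bieri--Eckmann criterion (Theorem \ref{theorem:C bieri-eckmann criterion}) for the contravariant $\HeckeF$-module $R^-$; since $G$ is $\HFFP_n$ precisely when $R^-$ admits a projective $\HeckeF$-resolution finitely generated up to degree $n$ (i.e.\ is $\CFP_n$ for $\C=\HeckeF$), this is exactly what must be shown. Because $\Tor^{\HeckeF}_*(R^-,-)=H_*^{\HeckeF}(G,-)$, and because up to isomorphism the free covariant $\HeckeF$-modules are the modules $R[G/H,-]_{\HeckeF}$ with $H$ running over $\mathcal{F}/G$, condition (4) for $R^-$ amounts to showing that for every family of index sets $\Lambda_H$ ($H\in\mathcal{F}/G$) the natural map
\[
H_k^{\HeckeF}\Big(G,\prod_{H\in\mathcal{F}/G}\prod_{\Lambda_H}R[G/H,-]_{\HeckeF}\Big)\longrightarrow\prod_{H\in\mathcal{F}/G}\prod_{\Lambda_H}H_k^{\HeckeF}\big(G,R[G/H,-]_{\HeckeF}\big)
\]
is an isomorphism for $k<n$ and an epimorphism for $k=n$.

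First I would rewrite both sides in terms of $\mathcal{F}$-homology. Lemma \ref{lemma:FHn with products of perm module coeff} identifies the left-hand side with $\mathcal{F}H_k\big(G,\prod_{H}\prod_{\Lambda_H}R[G/H]\big)$, and the same lemma applied with singleton index sets identifies each factor on the right with $\mathcal{F}H_k(G,R[G/H])$. Here one must check that, under these identifications, the Bieri--Eckmann comparison map becomes the canonical natural map $\mathcal{F}H_k\big(G,\prod_{H}\prod_{\Lambda_H}R[G/H]\big)\to\prod_{H}\prod_{\Lambda_H}\mathcal{F}H_k(G,R[G/H])$; this is routine, since every isomorphism used in the proof of Lemma \ref{lemma:FHn with products of perm module coeff} (Lemma \ref{lemma:inducing prods of perms}, the induction--restriction adjunction, and the $\mathcal{F}$-splitting of Lemma \ref{lemma:HF Fgood is Fsplit}) is natural in the coefficients.

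It then remains to apply Corollary \ref{cor:FFP then FHn is cts}. A direct product of $RG$-modules is the inverse limit, over the directed poset of finite subsets of the index set, of the corresponding finite sub-products, with the (surjective) projection maps as transition maps; since products of short exact sequences of $RG$-modules are exact, this is an exact limit, so Corollary \ref{cor:FFP then FHn is cts}, together with the hypothesis that $G$ is $\FFP_n$ over the commutative Noetherian ring $R$, yields that $\mathcal{F}H_k\big(G,\prod_{H}\prod_{\Lambda_H}R[G/H]\big)\to\prod_{H}\prod_{\Lambda_H}\mathcal{F}H_k(G,R[G/H])$ is an isomorphism for $k<n$ and an epimorphism for $k=n$. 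Tracing this back through the previous paragraph shows that condition (4) of Theorem \ref{theorem:C bieri-eckmann criterion} holds for $R^-$, hence $R^-$ is $\HeckeF$-$\FP_n$, i.e.\ $G$ is $\HFFP_n$.

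I expect the main obstacle to be the bookkeeping in the middle step --- confirming that the natural transformation appearing in the Bieri--Eckmann criterion is genuinely carried to the canonical ``homology of a product versus product of homologies'' map after passing through Lemma \ref{lemma:FHn with products of perm module coeff} --- together with making sure the finite-sub-product presentation of an arbitrary direct product is legitimately recognised as an exact inverse limit, so that Corollary \ref{cor:FFP then FHn is cts} really applies. Both points are conceptually harmless, but they rest on the Noetherian hypothesis, which enters through Lemma \ref{lemma:FTor commutes if FFPn} via the fact that finitely generated $\mathcal{F}$-projective $RG$-modules are then of type $\FP_\infty$.
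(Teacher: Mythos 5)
Your proposal is correct and follows essentially the same route as the paper: verify condition (4) of the Bieri--Eckmann criterion for $R^-$ by using Lemma \ref{lemma:FHn with products of perm module coeff} to translate both sides into $\mathcal{F}$-homology and then Corollary \ref{cor:FFP then FHn is cts} (via Lemma \ref{lemma:FTor commutes if FFPn}, where the Noetherian hypothesis enters) to commute the product with $\mathcal{F}H_i$. The extra care you take about naturality of the identifications and about realising the product as an exact inverse limit is implicit in the paper's argument.
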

\begin{proof}
In this proof, we write $\prod$ for $\prod_{H \in \mathcal{F}/G} \prod_{\Lambda_H} $ where $\Lambda_H$ is any indexing set.  Using Lemmas \ref{lemma:FHn with products of perm module coeff} and \ref{cor:FFP then FHn is cts}, for any collection of indexing sets $\Lambda_H$ and $i < n$:
 \begin{align*}
H_i^{\HeckeF}\left(G, \prod R[G/H, -]_{\HeckeF}\right) 
&= \mathcal{F}H_i\left(G, \prod R[G/H]\right) \\
&= \prod \mathcal{F}H_i\left(G, R[G/H]\right)\\
&= \prod H_i^{\HeckeF}\left(G, R[G/H, -]_{\HeckeF}\right)  \\
 \end{align*}
Thus $G$ is $\HFFP_n$ by the Bieri--Eckmann criterion (Theorem \ref{theorem:C bieri-eckmann criterion}).
 \end{proof}

\begin{Remark}
The Noetherian condition was needed only for Lemma \ref{lemma:FTor commutes if FFPn}, where we need that finitely generated $\mathcal{F}$-projectives are $\FP_\infty$.  Nucinkis has given an example of a finitely generated $\mathcal{F}$-projective module which is not $\FP_\infty$ \cite[Remark on p.167]{Nucinkis-CohomologyRelativeGSet}, but the following question is still open. 
\end{Remark}

\begin{Question}
Does Proposition \ref{prop:FFP implies HFFPn} remain true if $R$ is not Noetherian? 
\end{Question}

\section{Cohomological dimension for cohomological Mackey functors}\label{section:HF cohomological dimension}

The $\HeckeF$ cohomological dimension of a group $G$, denoted $\HFcd G$, is defined to be the length of the shortest projective resolution of $R^-$ by $\HeckeF$-modules, or equivalently
\[
\HFcd G = \sup \{ n \: : \: H^n_{\HeckeF}(G, M) \neq 0) , \: \text{$M$ some $\HeckeF$-module.}\} 
\]

\begin{Remark}
 In  \cite{Degrijse-ProperActionsAndMackeyFunctors} the $\HeckeF$ cohomological dimension is defined as
\[
\HFcd G = \sup \{ n \: : \: H^n_{\OF}(G, \Res_{\pi \circ \sigma} M )\neq 0 , \: \text{$M$ some $\HeckeF$-module.}\} 
\]
 The two definitions are equivalent by Proposition \ref{prop:HF HF cohomology is OF cohomology}.
\end{Remark}

In \cite{Degrijse-ProperActionsAndMackeyFunctors}, Degrijse shows that for all groups $G$ with $\HFcd G < \infty$, 
\[
\Fcd G = \HFcd G 
\]
We can improve this.
\begin{Theorem}\label{theorem:Fcd=HFcd}
For all groups $G$,
 \[
 \Fcd G = \HFcd G
 \]
\end{Theorem}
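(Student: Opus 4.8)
The plan is to establish the two inequalities $\HFcd G \le \Fcd G$ and $\Fcd G \le \HFcd G$ directly; each is vacuous when its right-hand side is $\infty$, so together they give the theorem, and in particular they reprove Degrijse's equality (the case $\HFcd G < \infty$ in \cite{Degrijse-ProperActionsAndMackeyFunctors}) without a finiteness hypothesis.

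The inequality $\Fcd G \le \HFcd G$ is the cohomological–dimension version of Proposition~\ref{prop:HF HFFPn implies FFPn}. Suppose $\HFcd G = m < \infty$ and pick a projective $\HeckeF$-module resolution $P_*$ of $R^-$ of length $m$. Evaluating an $\HeckeF$-module at $G/H$ computes its $H$-fixed points, so $P_*(G/1)$ is exact (evaluation is exact on a functor category), has length $m$, and by Lemma~\ref{lemma:HF frees are fp} its terms are direct summands of permutation $RG$-modules with stabilisers in $\mathcal{F}$, hence $\mathcal{F}$-projective; the same fixed-point description shows $P_*(G/1)$ is $\mathcal{F}$-good, hence $\mathcal{F}$-split by Lemma~\ref{lemma:HF Fgood is Fsplit}. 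Thus $R$ admits an $\mathcal{F}$-split resolution of length $m$ by $\mathcal{F}$-projectives, giving $\Fcd G \le m$.

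For the reverse inequality assume $\Fcd G = n < \infty$. Choose any free $\HeckeF$-module resolution $P_* \to R^-$, write $\Pi_* = P_*(G/1)$, and let $Z_n = \Ker(\Pi_{n-1} \to \Pi_{n-2})$ be the $n$-th syzygy of $R$; as above $\Pi_*$ is an $\mathcal{F}$-split resolution of $R$ by permutation $RG$-modules with stabilisers in $\mathcal{F}$. Left-exactness of $(-)^H = \Hom_{RH}(R,-)$ identifies the $n$-th syzygy of $R^-$ in $P_*$ with the fixed point functor $Z_n^-$ (evaluating either at $G/H$ yields $Z_n^H$), so it suffices to show that $Z_n^-$ is a projective $\HeckeF$-module, since then
\[ 0 \longrightarrow Z_n^- \longrightarrow P_{n-1} \longrightarrow \cdots \longrightarrow P_0 \longrightarrow R^- \longrightarrow 0 \]
is a length-$n$ projective resolution of $R^-$. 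By Lemma~\ref{lemma:HF frees are fp}, $Z_n^-$ is projective exactly when $Z_n$ is a direct summand of a permutation $RG$-module with stabilisers in $\mathcal{F}$.

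The heart of the argument is to prove precisely this. Since $\Pi_*$ is an $\mathcal{F}$-split resolution of $R$ by $\mathcal{F}$-projectives and $\Fcd G = n$, the generalised Schanuel lemma for relative homological algebra (see \cite{Nucinkis-CohomologyRelativeGSet}; equivalently $\mathcal{F}\Ext^1_{RG}(Z_n, -) \cong \mathcal{F}H^{n+1}(G, -) = 0$) shows that $Z_n$ is $\mathcal{F}$-projective, hence a direct summand of $Z_n \otimes R\Delta \cong \bigoplus_{H \in \mathcal{F}} \Ind_H^G \Res_H Z_n$ via the $\mathcal{F}$-split surjection induced by the augmentation $R\Delta \to R$. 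On the other hand the inclusion $Z_n \hookrightarrow \Pi_{n-1}$ is $\mathcal{F}$-split, so for each $H \in \mathcal{F}$ the module $\Res_H Z_n$ is a direct summand of $\Res_H \Pi_{n-1}$, which is a permutation $RH$-module by Lemma~\ref{lemma:HF perm G module as an H module}; as induction commutes with taking direct summands and $\mathcal{F}$ is closed under subgroups, each $\Ind_H^G \Res_H Z_n$, and hence $Z_n \otimes R\Delta$ and its summand $Z_n$, is a direct summand of a permutation $RG$-module with stabilisers in $\mathcal{F}$, as required. I expect this last step — recognising that an $\mathcal{F}$-projective module which sits as an $\mathcal{F}$-split submodule of a permutation module is again a summand of a permutation module — to be the only genuinely non-formal point; a secondary point needing care is the identification of the syzygies of $R^-$ in a free $\HeckeF$-resolution with the fixed point functors of the syzygies of $R$, which rests on left-exactness of $(-)^H$ and on free $\HeckeF$-modules being fixed point functors of permutation modules.
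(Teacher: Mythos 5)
Your proof is correct, and while the easy inequality $\Fcd G \le \HFcd G$ is argued exactly as in the paper (evaluate a projective $\HeckeF$-resolution of $R^-$ at $G/1$, invoke Remark \ref{remark:fp res of R is Fgood} and Lemma \ref{lemma:HF Fgood is Fsplit}; note the same small wrinkle in both arguments, namely that Lemma \ref{lemma:HF Fgood is Fsplit} is stated for permutation modules and must be applied to summands of permutation modules, which is harmless by Lemma \ref{lemma:FH direct summands of PH have PH}), your treatment of the reverse inequality is genuinely different from the paper's. The paper quotes \cite[Lemma 3.4]{Gandini-CohomologicalInvariants}, which supplies a length-$n$ $\mathcal{F}$-split resolution of $R$ by permutation modules with stabilisers in $\mathcal{F}$, and simply takes fixed points, using Lemma \ref{lemma:HF Fsplit is HF exact} for exactness. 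You instead start from an arbitrary free $\HeckeF$-resolution, identify its $n$-th syzygy with $Z_n^-$ (this identification is fine: morphisms between fixed point functors are induced by $RG$-maps via the adjunction of Lemma \ref{lemma:HF FP and FQ adjunction}, and $(-)^H$ is left exact), and then prove by hand that $Z_n$ is a summand of a permutation module with stabilisers in $\mathcal{F}$: relative dimension shifting/Schanuel gives that $Z_n$ is $\mathcal{F}$-projective, the $\mathcal{F}$-splitting of the resolution makes $\Res_H Z_n$ a summand of a permutation $RH$-module for each $H\in\mathcal{F}$, and the decomposition $Z_n\otimes R\Delta\cong\bigoplus_H \Ind_H^G\Res_H Z_n$ assembles these into the global statement, so $Z_n^-$ is projective by Lemma \ref{lemma:HF frees are fp} and truncation gives a length-$n$ projective $\HeckeF$-resolution. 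In effect you reprove the relevant content of Gandini's lemma inside the argument: the paper's route is shorter but leans on that external result, whereas yours is self-contained (modulo standard relative homological algebra from \cite{Nucinkis-CohomologyRelativeGSet}) and makes explicit the structural fact doing the work, that an $\mathcal{F}$-projective module whose restrictions to members of $\mathcal{F}$ are summands of permutation modules is itself a summand of a permutation module with stabilisers in $\mathcal{F}$.
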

\begin{proof}
Remark \ref{remark:fp res of R is Fgood} and Lemma \ref{lemma:HF Fgood is Fsplit} imply $\Fcd G \le \HFcd G$.

For the opposite inequality, we first use \cite[Lemma 3.4]{Gandini-CohomologicalInvariants} which states that for a group $G$ with $\Fcd G \le n$ there is an $\mathcal{F}$-projective resolution $P_*$ of $R$ of length $n$, where each $P_i$ is a permutation module with stabilisers in $\mathcal{F}$.  Given such a $P_*$, we take fixed points of $P_*$ to get the $\HeckeF$ resolution $P_*^-$.  Since $P_*$ is $\mathcal{F}$-split, $P_*^-$ is exact by Lemma \ref{lemma:HF Fsplit is HF exact}.
\end{proof}

Let $\Fin$ denote the family of finite subgroups of $G$, and recall that $n_G$ denotes the minimal dimension of a proper contractible $G$-CW complex.

\begin{Prop}\label{prop:HF proper action on contr complex dim n then HFcd G less n}
For all groups $G$, 
\[
\mathcal{H}_{\Fin}\negthinspace\cd G \le n_G
\]
\end{Prop}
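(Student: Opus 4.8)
The plan is to manufacture a short free resolution of $R^-$ over $\mathcal{H}_{\Fin}$ directly from a minimal-dimensional proper contractible $G$-CW-complex, mimicking the second half of the proof of Theorem \ref{theorem:Fcd=HFcd}. We may assume $n_G < \infty$. First I would fix a proper contractible $G$-CW-complex $X$ with $\dim X = n_G =: n$, and consider its augmented cellular chain complex
\[ 0 \longrightarrow C_n(X) \longrightarrow \cdots \longrightarrow C_0(X) \longrightarrow R \longrightarrow 0, \]
which is a resolution of $R$ by $RG$-modules. Because the $G$-action on $X$ is proper, every cell stabiliser is finite, so each $C_i(X)$ is a permutation $RG$-module with stabilisers in $\Fin$.

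Next I would invoke the theorem of Bouc and Kropholler--Wall \cite{Bouc-LeComplexeDeChainesDunGComplexe}\cite{KrophollerWall-GroupActionsOnAlgebraicCellComplexes} --- the same input behind the bound $\operatorname{\Fin cd} G \le n_G$ mentioned in the introduction --- which says this cellular resolution is $\Fin$-split: it restricts to a split exact complex of $RH$-modules for every finite subgroup $H$. Applying the fixed-point functor $(-)^-$ and Lemma \ref{lemma:HF Fsplit is HF exact} then yields an exact complex of $\mathcal{H}_{\Fin}$-modules
\[ 0 \longrightarrow C_n(X)^- \longrightarrow \cdots \longrightarrow C_0(X)^- \longrightarrow R^- \longrightarrow 0. \]
By Lemma \ref{lemma:HF frees are fp}, the fixed-point functor of a permutation module with stabilisers in $\Fin$ is a free $\mathcal{H}_{\Fin}$-module, so this is a length-$n$ free resolution of $R^-$, whence $\mathcal{H}_{\Fin}\negthinspace\cd G \le n = n_G$. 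Equivalently, one can just combine Theorem \ref{theorem:Fcd=HFcd} with $\operatorname{\Fin cd} G \le n_G$; the construction above merely unwinds this.

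I do not expect a real obstacle, since every ingredient is already in place; the only point that needs a careful sentence is the use of properness, which is exactly what guarantees the cellular chains are permutation modules with stabilisers in $\Fin$, so that both the Bouc--Kropholler--Wall splitting and Lemma \ref{lemma:HF frees are fp} apply. This is also why the statement is restricted to $\Fin$: for a general subfamily $\mathcal{F}$ one would need a contractible $G$-CW-complex with all stabilisers in $\mathcal{F}$, a different and typically larger invariant.
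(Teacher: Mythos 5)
Your proposal is correct and follows essentially the same route as the paper: take the cellular chain complex of a minimal-dimensional proper contractible $G$-CW-complex, note its chain modules are permutation modules with finite stabilisers, invoke Bouc and Kropholler--Wall to get splitting over every finite subgroup, and apply the fixed-point functor (exactness via Lemma \ref{lemma:HF Fsplit is HF exact}, freeness via Lemma \ref{lemma:HF frees are fp}) to obtain a length-$n_G$ free $\mathcal{H}_{\Fin}$-resolution of $R^-$. The only cosmetic difference is that the paper phrases the exactness step through the notion of an $\mathcal{F}$-good complex, while you cite the splitting lemma directly; your closing remark that the statement also follows from Theorem \ref{theorem:Fcd=HFcd} together with $\operatorname{\Fin cd} G \le n_G$ is likewise valid.
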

This fact is well known for $\Fin\negthinspace\cd$ instead of $\mathcal{H}_{\Fin}\negthinspace\cd$, but since a direct proof for $\mathcal{H}_{\Fin}\negthinspace\cd$ is both interesting and short we provide one.
\begin{proof}
 Let $P_*$ denote the cellular chain complex for a contractible $G$-CW-complex $X$ of dimension $n$ and take fixed points to get the complex $P_*^- \longrightarrow R^-$ of $\mathcal{H}_{\Fin}$-modules.  Since the action of $G$ on $X$ is proper the modules comprising $P_*$ are permutation modules with finite stabilisers and so $P_*^-$ is a chain complex of free $\mathcal{H}_{\Fin}$-modules.  By a result of Bouc \cite{Bouc-LeComplexeDeChainesDunGComplexe} and Kropholler--Wall \cite{KrophollerWall-GroupActionsOnAlgebraicCellComplexes} this chain complex splits when restricted to a complex of $RH$-modules for any finite subgroup $H$ of $G$.  In other words, $P_*$ is $\mathcal{F}$-good, thus $P_*^H \longrightarrow R$ is exact for any finite subgroup $H$ by Remark \ref{remark:fp res of R is Fgood}. 
\end{proof}

This leads naturally to the question:

\begin{Question}\label{question:HF HFcd finite iff n_G finite}
 Does $\mathcal{H}_{\Fin}\negthinspace\cd G < \infty$ imply $n_G < \infty$?
\end{Question}
We know of no group for which $n_G$ and $\HFcd G$ differ.
Brown has asked the following:
\begin{Question}\label{question:HF brown vcd = nG}\cite[VIII.11 p.226]{Brown} If $G$ is virtually torsion-free with $\vcd G < \infty$, then is $n_G = \vcd G$?
\end{Question}
If $G$ is virtually torsion free then $\vcd G = \mathcal{H}_{\Fin}\negthinspace\cd G$ \cite{MartinezPerezNucinkis-MackeyFunctorsForInfiniteGroups}, so a constructive answer to Question \ref{question:HF HFcd finite iff n_G finite} would give information about Question \ref{question:HF brown vcd = nG} as well. 

Related to this is the following question, posed using $\Fin\negthinspace\cd$ instead of $\mathcal{H}_{\Fin}\negthinspace\cd$ by Nucinkis.

\begin{Question}\cite[p.337]{Nucinkis-EasyAlgebraicCharacterisationOfUniversalProperGSpaces}
 Does $\mathcal{H}_{\Fin}\negthinspace\cd G < \infty$ imply that $\mathcal{O}_{\Fin}\negthinspace\cd G < \infty$?
\end{Question}

\begin{Remark}\label{remark:HF proper cocompact action then HFFPinfty}
If $G$ acts properly and cocompactly on a finite dimensional contractible $G$-CW-complex then, by a modification of the argument of the proof of Lemma \ref{prop:HF proper action on contr complex dim n then HFcd G less n}, $G$ is $\mathcal{H}_{\Fin}\negthinspace\FP_\infty$ also.  However, if $G$ acts properly on a finite type but infinite dimensional complex $X$, then the theorem of Bouc and Kropholler--Wall doesn't apply, we do not know if the cellular chain complex of $X$ splits when restricted to a finite subgroup and we cannot deduce $G$ is $\HFFP_\infty$.
\end{Remark}

\begin{Question}\label{question:HF action on fin type implies HFFPinfty}
 If $G$ acts properly on a contractible $G$-CW-complex of finite type, but not necessarily finite dimension, then is $G$ of type $\mathcal{H}_{\Fin}\negthinspace\FP_\infty$?
\end{Question}

\subsection{Closure Properties}

The class of groups $G$ with $\HFcd G < \infty$ is closed under subgroups, free products with amalgamation, HNN extensions \cite[Corollary 2.7]{Nucinkis-EasyAlgebraicCharacterisationOfUniversalProperGSpaces}, direct products \cite[Corollary 3.9]{Gandini-CohomologicalInvariants} and extensions of finite groups by groups with $\HFcd$ finite \cite[Lemma 5.1]{Degrijse-ProperActionsAndMackeyFunctors}.  However the situation for arbitrary extensions is still unclear.  Gandini proves the following result, though he phrases it using $\Fcd G$ not $\HFcd G$:

\begin{Prop}\cite[3.8,3.12]{Gandini-CohomologicalInvariants}
Let 
\[ 1 \longrightarrow N \longrightarrow G \longrightarrow Q \longrightarrow 1 \]
be a group extension with $\HFcd G \le n$ and such that for any extension $H$ of $N$ where $H/N$ has prime power order, $\HFcd H \le m$, then $\HFcd G \le n + m$.
\end{Prop}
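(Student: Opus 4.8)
The plan is to obtain this as a restatement of Gandini's Propositions~3.8 and~3.12, which prove exactly this inequality with the $\mathcal{F}$-cohomological dimension $\Fcd$ in place of $\HFcd$ everywhere. Theorem~\ref{theorem:Fcd=HFcd} gives $\Fcd\Gamma=\HFcd\Gamma$ for \emph{every} group $\Gamma$, in particular for $Q$, for $G$, and for every extension $H$ of $N$ by a group of prime power order, so Gandini's bound on $\Fcd G$ translates immediately into the claimed bound on $\HFcd G$. In that sense the proof is a one-liner; the only thing to remark is that Theorem~\ref{theorem:Fcd=HFcd} is valid for arbitrary groups, hence applies to all the groups in the statement.

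For completeness one can also give a self-contained argument in the present language. First reduce to the family $\mathcal{P}$ of finite subgroups of prime power order: by Theorem~\ref{theorem:HF HeckeFcd = HeckePcd and HeckeFFPn = HeckePFPn}, together with the $\mathcal{P}$-version of Theorem~\ref{theorem:Fcd=HFcd} (whose proof is identical for any family), it suffices to bound $\operatorname{\mathcal{P}cd} G$, the $\mathcal{F}$-cohomological dimension computed over $\mathcal{P}$. Using the $\mathcal{P}$-analogue of \cite[Lemma 3.4]{Gandini-CohomologicalInvariants}, pick a $\mathcal{P}$-split resolution $Q_*\twoheadrightarrow R$ of length $n$ over $RQ$ with each $Q_i$ a direct sum of permutation modules $R[Q/\bar F]$, $\bar F$ of prime power order. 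Regarding $Q_*$ as a complex of $RG$-modules via $p\colon G\to Q$ gives an exact complex resolving $R$ whose terms are direct sums of the permutation modules $R[G/p^{-1}(\bar F)]$; each $p^{-1}(\bar F)$ is an extension of $N$ with $p^{-1}(\bar F)/N\cong\bar F$ of prime power order, so the hypothesis gives $\HFcd p^{-1}(\bar F)\le m$, i.e.~$\operatorname{\mathcal{P}cd} p^{-1}(\bar F)\le m$. Hence $R[G/p^{-1}(\bar F)]\cong\Ind_{p^{-1}(\bar F)}^{G}R$ admits a $\mathcal{P}$-split resolution of length $m$ by permutation $RG$-modules with stabilisers in $\mathcal{P}$: inducing along $p^{-1}(\bar F)\hookrightarrow G$ carries a permutation module $R[p^{-1}(\bar F)/F']$ to $R[G/F']$ and, by Mackey's formula, carries $\mathcal{P}$-split complexes to $\mathcal{P}$-split complexes. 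Splicing these vertical resolutions over the horizontal complex $Q_*$ and passing to the total complex yields $T_*\twoheadrightarrow R$ of length $\le n+m$ with each $T_i$ a direct sum of permutation $RG$-modules with stabilisers in $\mathcal{P}$, hence $\mathcal{P}$-projective; therefore $\operatorname{\mathcal{P}cd} G\le n+m$, and so $\HFcd G\le n+m$.

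The substantive content belongs to Gandini; from the vantage point of this paper there is essentially nothing to do beyond invoking Theorem~\ref{theorem:Fcd=HFcd}. If one insists on the self-contained route, the one delicate point is the bookkeeping with $\mathcal{P}$-splitness, which is best handled through $\mathcal{P}$-goodness via Remark~\ref{remark:fp res of R is Fgood} and Lemma~\ref{lemma:HF Fgood is Fsplit}: one must check that being $\mathcal{P}$-split survives pulling back along $p$, inducing along $p^{-1}(\bar F)\hookrightarrow G$, and totalizing the resulting double complex. The Mackey-formula step for induction is the only place where restricting attention to $\mathcal{P}$ rather than to all finite subgroups is genuinely used, and the exactness of $T_*\to R$ is the standard fact that resolving each term of a resolution and totalizing again produces a resolution.
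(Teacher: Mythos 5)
Your proposal is correct and matches the paper's approach exactly: the paper offers no independent argument, simply citing Gandini's Propositions 3.8 and 3.12 (stated for $\Fcd$) and rephrasing them via Theorem \ref{theorem:Fcd=HFcd}, which holds for all groups and so applies to $Q$, $G$ and every prime-power extension $H$ of $N$. Your additional self-contained sketch over the family $\mathcal{P}$ is not needed for agreement with the paper, whose intended proof is precisely your one-line translation.
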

If $Q$ is finite and there is a bound on the lengths of the finite subgroups of $G$ which are not contained in $N$ and $\HFcd G < \infty$ then $\HFcd G = \HFcd N$ \cite[Theorem B]{Degrijse-ProperActionsAndMackeyFunctors}.  

\begin{Lemma}
 Let $N$ be any group and $p$ any prime.  If for any extension
\[ 1 \longrightarrow N \longrightarrow G \longrightarrow Q \longrightarrow 1 \]
we have that $\HFcd G = \HFcd N$ where $Q$ is the cyclic group of order $p$, then $\HFcd G = \HFcd N$, where $Q$ is any finite $p$-group.
\end{Lemma}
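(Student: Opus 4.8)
The plan is to induct on $|Q|$, peeling a central subgroup of order $p$ off the bottom of the extension at each stage. First note that we may assume $\HFcd N<\infty$: since $\Fcd$ is monotone under passage to subgroups and $\Fcd=\HFcd$ by Theorem~\ref{theorem:Fcd=HFcd}, we always have $\HFcd N\le\HFcd G$, so if $\HFcd N=\infty$ there is nothing to prove. The base case $|Q|=p$ is precisely the hypothesis.

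For the inductive step, suppose $|Q|=p^{k}$ with $k\ge 2$. A nontrivial finite $p$-group has nontrivial centre, so choose $Z\le Z(Q)$ with $|Z|=p$ and let $Z'$ be the full preimage of $Z$ under $G\to Q$. Then $Z'\trianglelefteq G$ and there are extensions
\[ 1\longrightarrow N\longrightarrow Z'\longrightarrow Z\longrightarrow 1,\qquad 1\longrightarrow Z'\longrightarrow G\longrightarrow Q/Z\longrightarrow 1, \]
with $Z\cong C_p$ and $Q/Z$ a $p$-group of order $p^{k-1}$. Applying the hypothesis to the first extension gives $\HFcd Z'=\HFcd N$, in particular $\HFcd Z'<\infty$. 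It now remains to show $\HFcd G=\HFcd Z'$ for the second extension.

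This last comparison is the only real obstacle: the hypothesis of the lemma, and hence the inductive hypothesis, is a statement about $C_p$-extensions of a \emph{fixed} group, so we cannot simply apply it with $Z'$ in place of $N$ without first knowing that $Z'$ inherits that property — and the delicacy of extension problems for $\Fcd$ means this does not come for free. The way I would finish is to invoke a mechanism for the behaviour of $\HFcd$ under extensions that does not refer to the $C_p$-hypothesis: since $[G:Z']=|Q/Z|$ is finite and $\HFcd Z'<\infty$, a finite-index argument gives $\HFcd G<\infty$, and then either Degrijse's theorem (\cite[Theorem B]{Degrijse-ProperActionsAndMackeyFunctors}), applied to the extension $1\to Z'\to G\to Q/Z\to1$ whose quotient is finite, or the extension estimate of Gandini quoted above, yields $\HFcd G=\HFcd Z'=\HFcd N$. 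Either tool simultaneously shows that a $C_p$-extension of $Z'$ cannot change $\HFcd$, so the induction does close. Everything else — the chief-series and central-element input from $p$-group theory, and the monotonicity of $\HFcd$ — is routine.
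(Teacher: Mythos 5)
Your setup (induction on $\lvert Q\rvert$, peeling off a copy of the cyclic group of order $p$) and your diagnosis of the obstacle are both accurate, but the way you propose to close the gap is not valid — it in effect assumes the open problem this lemma is aimed at. There is no ``finite-index argument'' giving $\HFcd G<\infty$ from $\HFcd Z'<\infty$ and $[G:Z']<\infty$: the closure property recorded in the paper via \cite[Lemma 5.1]{Degrijse-ProperActionsAndMackeyFunctors} concerns extensions with \emph{finite kernel}, not finite quotient, and whether finiteness of $\HFcd$ passes to finite-index overgroups is precisely the content of the Question that follows the lemma, already for quotient of order $p$. Degrijse's Theorem B cannot be applied either: it presupposes $\HFcd G<\infty$ together with a bound on the lengths of the finite subgroups of $G$ not contained in the kernel, neither of which you have. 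Gandini's extension estimate \cite[3.8, 3.12]{Gandini-CohomologicalInvariants} requires a uniform bound on $\HFcd H$ for \emph{all} extensions $H$ of the kernel $Z'$ of prime-power index, which is exactly the statement you are missing for $Z'$. So the final paragraph of your argument is circular and the induction does not close as written.

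The paper's proof avoids this by peeling the order-$p$ quotient off the \emph{top} of $Q$ rather than the bottom: choose $Q'\trianglelefteq Q$ of index $p$, apply the order-$p$ case to the extension $1\to\pi^{-1}(Q')\to G\to Q/Q'\to 1$ to get $\HFcd G=\HFcd \pi^{-1}(Q')$, and then apply the inductive hypothesis to $1\to N\to\pi^{-1}(Q')\to Q'\to 1$. Note that this uses the order-$p$ hypothesis with kernel $\pi^{-1}(Q')$ rather than $N$, so the lemma is to be read with the $C_p$-statement available for the intermediate kernels (the groups arising as extensions of $N$ by smaller $p$-groups), not tied to the fixed group $N$. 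Your observation that the literal fixed-$N$ reading blocks the naive induction is a fair criticism of the phrasing, but under that literal reading your route does not prove the statement either: the appeal to finite-index and extension theorems does not substitute for the missing hypothesis, whereas under the intended reading the induction closes directly and no such external input is needed.
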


\begin{proof}
We prove by induction on the order of $Q$, the case $\lvert Q \rvert = p$ is by assumption.  Let $Q^\prime$ be a normal subgroup of index $p$ in $Q$ (such a subgroup exists by \cite[Theorem 4.6(ii)]{Rotman-Groups}) and consider the diagram below.
 \[ \xymatrix{
    1 \ar[r] & N \ar_=[d] \ar[r] & \pi^{-1}(Q^\prime) \ar[d] \ar^\pi[r] & Q^\prime \ar^{\trianglelefteq}[d] \ar[r] & 1 \\
    1 \ar[r] & N \ar[r] & G \ar^\pi[r] & Q \ar[r] & 1
   }
 \]
Since $Q^\prime$ is normal in $Q$, the preimage $\pi^{-1}(Q^\prime)$ is normal in $G$, with quotient group $G/\pi^{-1}(Q^\prime)$ of order $p$ so $\HFcd G = \HFcd \pi^{-1}(Q^\prime)$.  Finally by the induction assumption $\HFcd \pi^{-1}(Q^\prime) = \HFcd N$.  
\end{proof}

\begin{Question}
 If $N$ is a group with $\HFcd N < \infty$ then does every extension $G$ of a cyclic group of order $p$ by $N$ satisfy $\HFcd G < \infty$?
\end{Question}

Any counterexample cannot be virtually torsion-free, since $\HFcd G = \vcd G$ for all virtually torsion-free groups \cite{MartinezPerezNucinkis-MackeyFunctorsForInfiniteGroups}, and neither can it be elementary amenable \cite[Proposition 3.13]{Gandini-CohomologicalInvariants}.

\section{The family of \texorpdfstring{$p$}{p}-Subgroups}\label{section:family of p subgroups}

Throughout this section $q$ is an arbitrary fixed prime and $R$ will denote one of the following rings:  the integers $\ZZ$, the finite field $\FF_q$, or the integers localised at $q$ denoted $\ZZ_{(q)}$.  If $R = \FF_q$ or $\ZZ_{(q)}$ then let $\mathcal{P}$ denote the subfamily of $\mathcal{F}$ consisting of all finite $q$-subgroups of groups in $\mathcal{F}$.  If $R = \ZZ$ then let $\mathcal{P}$ denote the subfamily of finite $p$-subgroups of groups in $\mathcal{F}$ for all primes $p$. 

We will always treat the cases $R = \FF_q$ and $R = \ZZ_{(q)}$ together, in fact the only property of these rings that we use is that for any $i$ coprime to $q$, the image of $i$ under the map $\ZZ \to R$ is invertible in $R$.  Hence the arguments in this section generalise to any other rings with this property, for example any ring with characteristic $q$.  The argument used for $R = \ZZ$ will go through for any ring $R$.

For $R = \ZZ$ and $\mathcal{F} = \Fin$, Leary and Nucinkis prove that the conditions $\FFP_n$ and $\mathcal{P}\negthinspace\FP_n$ are equivalent, and $ \mathcal{F}\negthinspace\cd G = \mathcal{P}\negthinspace\cd G $ \cite[Theorem 4.1]{LearyNucinkis-GroupsActingPrimePowerOrder}.  We use an averaging method similar to theirs to show that, for $R= \ZZ$, $\FF_q$, or $\ZZ_{(q)}$:

\begin{Theorem}\label{theorem:HF HeckeFcd = HeckePcd and HeckeFFPn = HeckePFPn}
  For $n \in \NN \cup \{ \infty \}$, the conditions $\HFcd G = n$ and $\HeckeP\negthinspace\cd G = n$ are equivalent, as are the conditions $\HFFP_n$ and $\HeckeP\negthinspace\FP_n$. 
\end{Theorem}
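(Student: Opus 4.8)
The plan is to work directly with $\HeckeF$-modules, using the dictionary established in Sections~\ref{section:Homology and Cohomology of Cohomological Mackey Functors} and~\ref{subsection:HF FPn} between projective $\HeckeF$-resolutions of $R^-$ and $\mathcal{F}$-split resolutions of $R$ by direct summands of permutation $RG$-modules with stabilisers in $\mathcal{F}$. Three facts from that dictionary, each valid verbatim with $\mathcal{F}$ replaced by the (subgroup- and conjugation-closed) family $\mathcal{P}$, are: (a) by Lemma~\ref{lemma:HF frees are fp} the projective $\HeckeF$-modules are exactly the fixed-point functors $Q^-$ of summands $Q$ of permutation modules with stabilisers in $\mathcal{F}$; (b) by Remark~\ref{remark:fp res of R is Fgood} and Lemma~\ref{lemma:HF Fgood is Fsplit}, evaluating a projective $\HeckeF$-resolution of $R^-$ at $G/1$ yields an $\mathcal{F}$-split resolution of $R$ by such summands (this is precisely the argument in the proofs of Proposition~\ref{prop:HF HFFPn implies FFPn} and Theorem~\ref{theorem:Fcd=HFcd}); and (c) by Lemma~\ref{lemma:HF Fsplit is HF exact}, applying $(-)^-$ to an $\mathcal{F}$-split resolution of $R$ by summands of permutation modules with stabilisers in $\mathcal{F}$ returns a projective $\HeckeF$-resolution of $R^-$, of the same length and with the same finite-generation behaviour up to each degree.

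Next I would record two averaging facts, both instances of the transfer argument of~\cite{LearyNucinkis-GroupsActingPrimePowerOrder}. First, for $H\in\mathcal{F}$ the permutation module $R[G/H]\cong\Ind_H^G R$ is an $RG$-direct summand of a permutation module with stabilisers in $\mathcal{P}$: when $R=\FF_q$ or $\ZZ_{(q)}$ this is $R[G/S]\cong\Ind_H^G R[H/S]$ for $S$ a Sylow $q$-subgroup of $H$, since $[H:S]$ is a unit in $R$ and hence $R$ splits off $R[H/S]$ over $RH$; when $R=\ZZ$ one uses that $\ZZ$ splits off $\bigoplus_{p\mid\lvert H\rvert}\ZZ[H/S_p]$ over $\ZZ H$ because the indices $[H:S_p]$ have overall greatest common divisor $1$, whence $\ZZ[G/H]$ splits off $\bigoplus_p\ZZ[G/S_p]$. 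It follows that an $RG$-module is a summand of a permutation module with stabilisers in $\mathcal{F}$ if and only if it is a summand of one with stabilisers in $\mathcal{P}$. Second, a short exact sequence of $RG$-modules is $\mathcal{F}$-split if and only if it is $\mathcal{P}$-split; only the implication $\Leftarrow$ needs an argument, and for $H\in\mathcal{F}$ the composite $\operatorname{cor}^H_{S_p}\circ\operatorname{res}^H_{S_p}$ is multiplication by $[H:S_p]$ on $\Ext^1_{RH}$, so if the extension class restricts to $0$ over every relevant Sylow subgroup $S_p\in\mathcal{P}$ then it is killed by each $[H:S_p]$ and therefore vanishes (those indices being coprime; in the $\FF_q$ and $\ZZ_{(q)}$ cases one needs only $S_q$, and $[H:S_q]$ is already a unit in $R$). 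In particular the $\mathcal{F}$-split and $\mathcal{P}$-split resolutions of $R$ coincide.

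The theorem then splits into two containments. For $\HeckeP\negthinspace\cd G\le\HFcd G$ and $\HFFP_n\Rightarrow\HeckeP\negthinspace\FP_n$, I would restrict along the full inclusion $\HeckeP\hookrightarrow\HeckeF$: restriction is exact, fixes $R^-$, commutes with direct sums and with passing to summands, and by the first averaging fact together with (a) sends the (finitely generated) free $\HeckeF$-module $R[G/H,-]_{\HeckeF}=R[G/H]^-$ to a (finitely generated) projective $\HeckeP$-module; hence it sends a length-$n$, respectively finitely generated up to degree $n$, projective $\HeckeF$-resolution of $R^-$ to one of the same kind over $\HeckeP$. For the reverse containments, I would evaluate a projective $\HeckeP$-resolution of $R^-$ (of length $n$, resp.\ finitely generated up to degree $n$) at $G/1$: by (b) for $\mathcal{P}$ this is a $\mathcal{P}$-split resolution of $R$ by summands of permutation modules with stabilisers in $\mathcal{P}$; by the second averaging fact it is also $\mathcal{F}$-split, and its terms are a fortiori summands of permutation modules with stabilisers in $\mathcal{F}$; applying $(-)^-$ over $\HeckeF$ and using (c) produces a projective $\HeckeF$-resolution of $R^-$ of length $n$, resp.\ finitely generated up to degree $n$. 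Combining the two directions yields $\HFcd G=\HeckeP\negthinspace\cd G$ and $\HFFP_n\Leftrightarrow\HeckeP\negthinspace\FP_n$ for all $n\in\NN\cup\{\infty\}$. (Alternatively, Theorems~\ref{theorem:Fcd=HFcd} and~\ref{theorem:HFFPn iff FFPn} reduce the statement to $\Fcd G=\mathcal{P}\negthinspace\cd G$ and $\FFP_n\Leftrightarrow\mathcal{P}\negthinspace\FP_n$, i.e.\ the Leary--Nucinkis theorem for a general family $\mathcal{F}$, for which the same averaging is the whole point.)

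I do not expect a real obstacle, only careful bookkeeping: applying the permutation-module dictionary in exactly the form proved earlier --- in particular remembering that ``summand of a permutation module'', not merely ``permutation module'', is the correct class appearing on evaluation at $G/1$ --- and tracking finite generation through induction, restriction and the fixed-point functor. The case $R=\ZZ$, where several primes intervene simultaneously, is where the argument needs the most care, though the coprimality of the Sylow indices makes both averaging facts go through with no essential change.
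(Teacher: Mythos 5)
Your argument is correct, but it is not the route the paper takes. The paper stays entirely inside the Hecke categories: it introduces the averaged morphisms $\rho_H=R^H_Q$ and $\iota_H=(1/\lvert H:Q\rvert)I^H_Q$ (resp.\ $\bigoplus_i R^H_{P_i}$ and $\sum_i z_iI^H_{P_i}$ over $\ZZ$), uses them to split the restricted representables (Lemma \ref{lemma:restriction of HeckeF proj is HeckePproj}), proves that exactness of a complex of $\HeckeF$-modules is detected on the objects $G/P$ with $P\in\mathcal{P}$ (Lemma \ref{lemma:HF ch complex HFexact iff HPexact}), extends $\HeckeP$-projectives to $\HeckeF$-projectives (Lemma \ref{lemma:HF proj HeckeP module extension}), and then builds the $\HeckeF$-resolution from a given $\HeckeP$-resolution by conjugating the differentials, $\partial_i(G/H)=P_{i-1}(\iota_H)\circ d_i(G/H)\circ P_i(\rho_H)$. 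You instead pass through $RG$-modules via the Th\'evenaz--Webb/Yoshida dictionary (Lemma \ref{lemma:HF frees are fp}, Remark \ref{remark:fp res of R is Fgood}, Lemmas \ref{lemma:HF Fsplit is HF exact} and \ref{lemma:HF Fgood is Fsplit}): evaluate at $G/1$, do the transfer at the module level (splitting $R[G/H]$ off Sylow-induced permutation modules, and cor-res on $\Ext^1_{RH}$ to see that $\mathcal{P}$-split implies $\mathcal{F}$-split), and reapply $(-)^-$ over $\HeckeF$. Your version buys a more classical argument, makes the comparison with $\Fcd$ and Theorem \ref{theorem:Fcd=HFcd} transparent, and sidesteps having to check that the conjugated differentials are $\HeckeF$-module maps forming a chain complex; the paper's version buys a degreewise construction that never leaves the category and is reused verbatim for the characteristic-zero case (Proposition \ref{prop:HFcd and HFFPn for R a field}). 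Two points of bookkeeping in your write-up: you need Lemma \ref{lemma:HF Fgood is Fsplit} for resolutions by \emph{summands} of permutation modules (for the family $\mathcal{P}$), which follows since property $(P_H)$ passes to summands by Lemma \ref{lemma:FH direct summands of PH have PH} (the paper already uses this implicitly in Theorem \ref{theorem:Fcd=HFcd}); and the contravariant free on $G/H$ is $R[-,G/H]_{\HeckeF}\cong R[G/H]^-$, not $R[G/H,-]_{\HeckeF}$ as written. Neither affects the substance.
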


\begin{Remark}
 If $R = \ZZ_{(q)}$ or $\FF_q$ and all subgroups of $G$ have order coprime to $q$ then $\mathcal{P}$ contains only the trivial subgroup.  Thus $\HFcd_R G = \cd_R G$ and the conditions $\HFFP_n$ and $\FP_n$ are equivalent.
\end{Remark}

At the end of the section we will look at the case that $R = \KK$ is a field of characteristic zero, and prove that in this case $\HFcd G = \cd G$ and that the conditions $\HFFP_n$ and $\FP_n$ are equivalent.

The argument relies on two maps $\iota_H$ and $\rho_H$ defined for any subgroup $H$ in $\mathcal{F} \setminus \mathcal{P}$.  These maps have different definitions depending on the ring $R$.

We treat the case that $R = \FF_q$ or $R = \ZZ_{(q)}$ first.  Let $H \in \mathcal{F} \setminus \mathcal{P}$ and $Q$ a Sylow $q$-subgroup of $H$, define
\[\rho_H = R^H_Q \in R[G/Q, G/H]_{\HeckeF}\]
\[\iota_H = (1 / \lvert H : Q \rvert) I^H_Q \in R[G/H, G/Q]_{\HeckeF} \]
The map $\iota_H$ is well defined since $\lvert H : Q \rvert$ contains no powers of $q$ and hence is invertible in $R$.  

If $R = \ZZ$ and $H \in \mathcal{F} \setminus \mathcal{P}$ then let $\{P_i\}_{i \in I}$ run over the non-trivial Sylow $p$-subgroups of $H$ (choosing one subgroup for each $p$).  We necessarily have that
$ \gcd \{ \lvert H : P_i \rvert \: : \: i \in I \} = 1$ so, by B\'ezout's identity, we may choose integers $z_i$ such that $\sum_{i \in I} z_i\lvert H : P_i \rvert = 1  $.  Define, with a slight abuse of notation,
\[\rho_H = \bigoplus_{i \in I} R^H_{P_i}\]
By which we mean that for any $\HeckeF$-module $M$,
\begin{align*}
  M(\rho_H) : M(G/H) &\longrightarrow \bigoplus_{i \in I} M(G/P_i) \\
  m &\longmapsto \bigoplus_{i \in I} M(R^H_{P_i})(m) \\
\end{align*}
With a similar abuse of notation we define
\[\iota_H = \sum_{i \in I} z_i I^H_{P_i}\]
By which we mean that for any $\HeckeF$-module $M$,
\begin{align*}
  M(\iota_H) : \bigoplus_{i \in I} M(G/P_i) &\longrightarrow M(G/H) \\
   (m_i)_{i \in I} &\longmapsto \sum_{i \in I} z_i M(I^H_{P_i})(m_i) \\
\end{align*} 

The next couple of lemmas catalogue properties of the maps $\iota_H$ and $\rho_H$ which are needed for the proof of Theorem \ref{theorem:HF HeckeFcd = HeckePcd and HeckeFFPn = HeckePFPn}.

\begin{Lemma}\label{lemma:iota circ rho is id}
For any $\HeckeF$-module $M$ and subgroup $H \in \mathcal{F} \setminus \mathcal{P}$ 
 \[M(\iota_H) \circ M(\rho_H) = \id_{M(G/H)} \]
\end{Lemma}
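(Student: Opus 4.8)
The plan is to reduce the asserted identity directly to the defining cohomological axiom \eqref{eq:cohom mackey axiom}, treating the two definitions of $\iota_H$ and $\rho_H$ separately. The only input I would need is that, since $M$ is a $\HeckeF$-module and hence a cohomological Mackey functor, for every pair of subgroups $K \le H$ in $\mathcal{F}$ one has $M(I^H_K)\circ M(R^H_K) = (m \mapsto \lvert H : K \rvert\, m)$, where $I^H_K$ and $R^H_K$ are read as morphisms of $\HeckeF$ via $\pi$, with $M(R^H_K)$ the restriction $M(G/H)\to M(G/K)$ and $M(I^H_K)$ the transfer $M(G/K)\to M(G/H)$.

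For $R = \FF_q$ or $R = \ZZ_{(q)}$: here $\rho_H = R^H_Q$ and $\iota_H = (1/\lvert H : Q \rvert)\, I^H_Q$ for $Q$ the chosen Sylow $q$-subgroup of $H$. I would use $R$-linearity of $M$ on morphism groups to write $M(\iota_H) = (1/\lvert H : Q \rvert)\,M(I^H_Q)$, compose with $M(\rho_H) = M(R^H_Q)$, and apply the cohomological axiom; the two factors $\lvert H : Q \rvert$ then cancel, leaving $\id_{M(G/H)}$.

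For $R = \ZZ$: here $M(\rho_H)$ sends $m$ to the tuple $(M(R^H_{P_i})(m))_{i\in I}$ and $M(\iota_H)$ sends $(m_i)_{i\in I}$ to $\sum_{i\in I} z_i\, M(I^H_{P_i})(m_i)$, where the $P_i$ are the chosen non-trivial Sylow subgroups of $H$ and the $z_i$ satisfy $\sum_{i\in I} z_i \lvert H : P_i \rvert = 1$. Composing and applying the cohomological axiom to each pair $P_i \le H$ yields $\bigl(\sum_{i\in I} z_i \lvert H : P_i \rvert\bigr) m = m$, which is exactly the claim.

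I do not expect a real obstacle: the whole argument is unwinding definitions plus one application of \eqref{eq:cohom mackey axiom}. The single point requiring genuine care is the cancellation in the first case, which is legitimate precisely because $q \nmid \lvert H : Q \rvert$, so that $\lvert H : Q \rvert$ is a unit in $R$ — this is where the running hypothesis on $R$ is used, and it is also why $\iota_H$ is well defined in the first place.
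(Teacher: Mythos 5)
Your proposal is correct and follows essentially the same route as the paper's proof: in both cases the identity is reduced to the cohomological Mackey axiom \eqref{eq:cohom mackey axiom} (equivalently, that $M(R^H_Q\circ I^H_Q)$, resp.\ each $M(R^H_{P_i}\circ I^H_{P_i})$, is multiplication by the index), with the unit $\lvert H:Q\rvert$ cancelling over $\FF_q$ or $\ZZ_{(q)}$ and the B\'ezout relation $\sum_i z_i\lvert H:P_i\rvert=1$ finishing the case $R=\ZZ$. Your remark that invertibility of $\lvert H:Q\rvert$ is exactly what makes $\iota_H$ well defined is also the point the paper relies on.
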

\begin{proof}
 In the case $R = \FF_q$ or $R = \ZZ_{(q)}$, this follows from the fact that $M(R^H_Q \circ I^H_Q)$ is multiplication by $\lvert H : Q \rvert$.  For $R = \ZZ$,
\[M(\iota_H) \circ M(\rho_H) = \sum_i z_i M( R^H_{P_i} \circ I^H_{P_i} ) = \sum_i z_i \lvert H : P_i \rvert = 1\]
\end{proof}

\begin{Lemma}\label{lemma:restriction of HeckeF proj is HeckePproj}
 If $H \in \mathcal{F}$ then $\Res^{\HeckeF}_{\HeckeP} R[-, G/H]_{\HeckeF}$ is a finitely generated projective $\HeckeP$-module.
\end{Lemma}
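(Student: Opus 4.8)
The plan is to realise $\Res^{\HeckeF}_{\HeckeP} R[-,G/H]_{\HeckeF}$ as a direct summand of a finitely generated free $\HeckeP$-module, using the retraction data packaged in $\iota_H$ and $\rho_H$ together with Lemma \ref{lemma:iota circ rho is id}.

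If $H\in\mathcal{P}$ then $\Res^{\HeckeF}_{\HeckeP} R[-,G/H]_{\HeckeF}$ is the free $\HeckeP$-module $R[-,G/H]_{\HeckeP}$ and there is nothing to prove, so suppose $H\in\mathcal{F}\setminus\mathcal{P}$. Let $\{P_i\}_{i\in I}$ be the subgroups occurring in the definition of $\iota_H$ and $\rho_H$: a single Sylow $q$-subgroup $Q$ of $H$ when $R=\FF_q$ or $\ZZ_{(q)}$, and one Sylow $p$-subgroup of $H$ for each prime $p$ dividing $|H|$ when $R=\ZZ$. In each case every $P_i$ lies in $\mathcal{P}$, and since $H$ is a finite group the index set $I$ is finite.

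First I would check that $\iota_H$ and $\rho_H$ exhibit $R[-,G/H]_{\HeckeF}$ as a retract of $\bigoplus_{i\in I} R[-,G/P_i]_{\HeckeF}$. Applying Lemma \ref{lemma:iota circ rho is id} to the $\HeckeF$-module $M=R[-,G/H]_{\HeckeF}$, the endomorphism $M(\iota_H)\circ M(\rho_H)$ of $M(G/H)=R[G/H,G/H]_{\HeckeF}$ is the identity; evaluating it at $\id_{G/H}$ shows, via the Yoneda-type Lemma \ref{lemma:C yoneda-type}, that $\rho_H\circ\iota_H=\id_{G/H}$ as a morphism of the additive hull of $\HeckeF$, equivalently that the composite $RG$-module map $R[G/H]\to\bigoplus_{i\in I}R[G/P_i]\to R[G/H]$ is the identity (the same identity also drops out of a one-line computation from the explicit description of $\pi$ and the choice of the integers $z_i$). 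Since $x\mapsto R[-,x]_{\HeckeF}$ is a covariant functor into $\HeckeF$-modules that sends $\coprod_i G/P_i$ to $\bigoplus_i R[-,G/P_i]_{\HeckeF}$, it carries this retract to a retract of $\HeckeF$-modules, so $R[-,G/H]_{\HeckeF}$ is a direct summand of the finitely generated free module $\bigoplus_{i\in I}R[-,G/P_i]_{\HeckeF}$.

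Finally I would restrict to $\HeckeP$. Because $\HeckeP$ is the full subcategory of $\HeckeF$ on the objects $G/K$ with $K\in\mathcal{P}$ and each $P_i\in\mathcal{P}$, we have $\Res^{\HeckeF}_{\HeckeP} R[-,G/P_i]_{\HeckeF}\cong R[-,G/P_i]_{\HeckeP}$, free of rank one; and restriction is additive, so $\Res^{\HeckeF}_{\HeckeP} R[-,G/H]_{\HeckeF}$ is a direct summand of the finitely generated free $\HeckeP$-module $\bigoplus_{i\in I}R[-,G/P_i]_{\HeckeP}$, hence a finitely generated projective $\HeckeP$-module. The argument is short; the only mild points of care are the passage from the ``for every $M$'' form of Lemma \ref{lemma:iota circ rho is id} to an honest splitting of $\HeckeF$-modules, which is just Yoneda, and the finiteness of $I$, which uses only that $H$ is finite, so I do not expect a genuine obstacle.
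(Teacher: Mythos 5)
Your argument is correct and is essentially the paper's own proof: the paper likewise splits the representable module $R[-,G/H]_{\HeckeF}$ off the finitely generated free module on the Sylow subgroups (one $Q$ for $R=\FF_q,\ZZ_{(q)}$, the sum over the $P_i$ for $R=\ZZ$) via the maps corresponding to $\iota_H$ and $\rho_H$ under the Yoneda-type Lemma \ref{lemma:C yoneda-type}, checking $\rho_H\circ\iota_H=\id_{G/H}$, and then applies $\Res^{\HeckeF}_{\HeckeP}$, using that $\Res^{\HeckeF}_{\HeckeP}R[-,G/P_i]_{\HeckeF}=R[-,G/P_i]_{\HeckeP}$. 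The only difference is cosmetic: you treat the three coefficient rings uniformly where the paper separates the $\FF_q/\ZZ_{(q)}$ and $\ZZ$ cases.
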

\begin{proof}
If $H$ is an element of $\mathcal{P}$ then this is obvious so assume that $H \not\in\mathcal{P}$.
 First, the case $R = \FF_q$ or $R = \ZZ_{(q)}$.  The projection
 \[ s: R[-,G/Q]_{\HeckeF} \longtwoheadrightarrow R[-, G/H]_{\HeckeF} \]
corresponding to $\iota_H$ under the Yoneda-type lemma (\ref{lemma:C yoneda-type}) is split by the map 
 \[ i: R[-,G/H]_{\HeckeF} \longtwoheadrightarrow R[-, G/Q]_{\HeckeF} \]
corresponding to $\rho_H$ under the Yoneda-type lemma:  It is sufficient to calculate 
\[ s \circ i(G/H)(\id_H) = \rho_H \circ \iota_H = \id_H \]
Applying $\Res_{\HeckeP}^{\HeckeF}$ gives a split surjection
\[ \Res_{\HeckeP}^{\HeckeF}s: \Res_{\HeckeP}^{\HeckeF} R[-,G/Q]_{\HeckeF} \longtwoheadrightarrow \Res_{\HeckeP}^{\HeckeF} R[-, G/H]_{\HeckeF} \]
Since $\Res_{\HeckeP}^{\HeckeF}R[-,G/Q]_{\HeckeF} = R[-, G/Q]_{\HeckeP}$ this completes the proof.

Now the case $R = \ZZ$, this time we construct a split surjection
\[ s: \bigoplus_{i \in I} R[-,G/P_i]_{\HeckeF} \longtwoheadrightarrow R[-, G/H]_{\HeckeF} \]
using the maps corresponding to $\iota_H$ and $\rho_H$ under the Yoneda-type lemma.  The rest of the proof is identical to the case $R = \FF_q$ or $R = \ZZ_{(q)}$.
\end{proof}

\begin{Lemma}\label{lemma:HF ch complex HFexact iff HPexact}
 A chain complex $C_*$ of $\HeckeF$-modules is exact if and only if it is exact at $G/P$ for all subgroups $P \in \mathcal{P}$.
\end{Lemma}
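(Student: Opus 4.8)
The plan is to prove the nontrivial (``if'') implication by checking exactness one object at a time, and to transfer exactness from $\mathcal{P}$ to $\mathcal{F}$ using the retractions furnished by $\iota_H$ and $\rho_H$. Recall that a complex of $\HeckeF$-modules is exact if and only if it is exact after evaluation at every object $G/H$, $H \in \mathcal{F}$. The ``only if'' direction is then immediate, since $\mathcal{P} \subseteq \mathcal{F}$; and the converse reduces to showing that $C_*(G/H)$ is exact for each $H \in \mathcal{F} \setminus \mathcal{P}$, the case $H \in \mathcal{P}$ being part of the hypothesis.

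So fix $H \in \mathcal{F}\setminus\mathcal{P}$, and let $Q$ be the Sylow $q$-subgroup chosen in the construction of $\iota_H$ and $\rho_H$ (when $R=\ZZ$, replace $G/Q$ throughout by $\bigoplus_{i\in I}G/P_i$, a \emph{finite} direct sum since $H$ has only finitely many prime divisors). For every $\HeckeF$-module $M$, the maps $M(\rho_H)\colon M(G/H)\to M(G/Q)$ and $M(\iota_H)\colon M(G/Q)\to M(G/H)$ are natural in $M$, being assembled by $R$-linear operations from honest morphisms of $\HeckeF$. Applying this to each chain module $C_n$, and using that the differentials of $C_*$ are natural transformations and hence commute with $M(\rho_H)$ and $M(\iota_H)$, we obtain chain maps
\[ C_*(G/H)\xrightarrow{\ C_*(\rho_H)\ }C_*(G/Q)\xrightarrow{\ C_*(\iota_H)\ }C_*(G/H) \]
whose composite is the identity of $C_*(G/H)$ by Lemma \ref{lemma:iota circ rho is id}.

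Since $Q\in\mathcal{P}$ (and each $P_i\in\mathcal{P}$), the complex $C_*(G/Q)$ is exact by hypothesis, and a direct sum of exact complexes is exact; thus $C_*(G/H)$ is a retract of an exact complex. Passing to homology in the displayed composite, the identity map of $H_n(C_*(G/H))$ factors through $H_n$ of an exact complex, forcing $H_n(C_*(G/H))=0$ for all $n$. Hence $C_*(G/H)$ is exact for every $H\in\mathcal{F}$, so $C_*$ is exact, as required. The only point needing care is that $\rho_H$ and $\iota_H$ be treated as genuine morphisms of $\HeckeF$ — so that they induce chain maps on $C_*$ rather than just degreewise $R$-linear maps — which is precisely how they were set up; beyond this there is no real obstacle.
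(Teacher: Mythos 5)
Your proposal is correct and follows essentially the same route as the paper: evaluate objectwise, use that $\rho_H$ and $\iota_H$ induce chain maps because the differentials are $\HeckeF$-module maps, and conclude from $C_*(\iota_H)\circ C_*(\rho_H)=\id$ (Lemma \ref{lemma:iota circ rho is id}) that $H_*(C_*(G/H))$ embeds in, and hence vanishes with, the homology at the $\mathcal{P}$-objects. Your "retract of an exact complex" phrasing is just a repackaging of the paper's injectivity-of-$\rho_H^*$ argument.
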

\begin{proof}
The ``only if'' direction is obvious so assume $C_*$ is a chain complex of $\HeckeF$-modules, exact at all $P \in \mathcal{P}$ and let $H \in \mathcal{F} \setminus \mathcal{P}$.  

We claim that the maps $C_*(\iota_H) $ and $C_*(\rho_H)$ are chain complex maps, we show this below for $R = \FF_q$ or $R = \ZZ_{(q)}$, the proof for $R= \ZZ$ is analogous.  The only non-obvious part of this claim is that the maps commute with the boundary maps $\partial_i$ of $C_*$, in other words the diagrams below commute:
\[
\xymatrix{
C_i(G/H) \ar^{\partial_i(G/H)}[r] \ar_{C_i(\rho_H)}[d] & C_{i-1}(G/H) \ar^{C_{i-1}(\rho_H)}[d] \\
C_i(G/Q) \ar^{\partial_i(G/Q)}[r] & C_{i-1}(G/Q)  \\
}
\]
\[
\xymatrix{
C_i(G/Q) \ar^{\partial_i(G/Q)}[r] \ar_{C_i(\iota_H)}[d] & C_{i-1}(G/Q) \ar^{C_{i-1}(\iota_H)}[d] \\
C_i(G/H) \ar^{\partial_i(G/H)}[r]  & C_{i-1}(G/H)  \\
}
\]
This follows from the fact that $\partial_i$ is an $\HeckeF$-module map.

Lemma \ref{lemma:iota circ rho is id} gives that $ C_*( \iota_H ) \circ C_*( \rho_H )$ is the identity on the chain complex $C_*(G/H)$.  The induced maps $\iota_H^*$ and $ \rho_H^*$ on homology satisfy 
\[\iota_H^* \circ  \rho_H^* = \id : H_*(C_*(G/H)) \longrightarrow H_*(C_*(G/H)) \]
So $\rho_H^*$ is injective.  The image of $\rho_H^*$ lies in $H_*(C_*(G/Q)) = 0$ if $R = \FF_q$ or $R = \ZZ_{(q)}$, or $\oplus_i H_*(C_*(G/P_i)) = 0$ if $R = \ZZ$, hence $H_*(C_*(G/H))$ is zero.
\end{proof}

\begin{Lemma}\label{lemma:HF proj HeckeP module extension}
If $P$ is a projective (respectively finitely generated projective) $\HeckeP$-module then there exists a $\HeckeF$-module $Q$ such that 
\[ \Res^{\HeckeF}_{\HeckeP} Q = P \]
and $Q$ is projective (resp. finitely generated projective).
\end{Lemma}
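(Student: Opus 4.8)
The plan is to present $P$ as a direct summand of a free $\HeckeP$-module, lift the corresponding idempotent to $\HeckeF$, and take its image. The point that makes this work is that the inclusion functor $\HeckeP \to \HeckeF$ is a \emph{full} embedding: its objects are the $G/S$ with $S \in \mathcal{P}$, and for such objects $[G/S, G/S']_{\HeckeF} = \Hom_{RG}(R[G/S], R[G/S'])$ is independent of the ambient family, so it coincides with $[G/S, G/S']_{\HeckeP}$. (This is exactly where the Hecke category is more convenient than the Mackey category, whose morphism sets genuinely depend on $\mathcal{F}$.) In particular $\Res^{\HeckeF}_{\HeckeP} R[-, x]_{\HeckeF} = R[-, x]_{\HeckeP}$ for every object $x$ of $\HeckeP$, since restriction merely re-evaluates a functor at the objects of $\HeckeP$.

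First I would write $P$ as a direct summand of a free $\HeckeP$-module $F = \bigoplus_{i \in I} R[-, x_i]_{\HeckeP}$, with each $x_i$ an object of $\HeckeP$ and with $I$ finite if $P$ is finitely generated, and fix an idempotent $e \in \End_{\HeckeP}(F)$ with $\Image(e) \cong P$. Then set $\tilde F = \bigoplus_{i \in I} R[-, x_i]_{\HeckeF}$, a free $\HeckeF$-module that is finitely generated when $I$ is; since restriction preserves direct sums, $\Res^{\HeckeF}_{\HeckeP}\tilde F = F$ by the first paragraph. The key claim is that restriction induces a ring isomorphism $\End_{\HeckeF}(\tilde F) \xrightarrow{\cong} \End_{\HeckeP}(F)$. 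Indeed, by the universal property of the coproduct and the Yoneda-type Lemma~\ref{lemma:C yoneda-type},
\[ \End_{\HeckeF}(\tilde F) \cong \prod_{i \in I} \tilde F(x_i) \cong \prod_{i \in I} \bigoplus_{j \in I} R[x_i, x_j]_{\HeckeF}, \]
and identically $\End_{\HeckeP}(F) \cong \prod_{i \in I} \bigoplus_{j \in I} R[x_i, x_j]_{\HeckeP}$; under these identifications the restriction map is the identity, because the Yoneda isomorphism is evaluation at the identity morphism and $R[x_i, x_j]_{\HeckeF} = R[x_i, x_j]_{\HeckeP}$. Hence $e$ is the restriction of a (necessarily idempotent) $\tilde e \in \End_{\HeckeF}(\tilde F)$. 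Setting $Q := \Image(\tilde e)$ gives a direct summand of $\tilde F$, hence a projective $\HeckeF$-module, finitely generated projective when $I$ is finite; and since $\Res^{\HeckeF}_{\HeckeP}$ is exact it carries the summand $Q = \Image(\tilde e)$ of $\tilde F$ to the summand $\Image(e) \cong P$ of $F$, so $\Res^{\HeckeF}_{\HeckeP} Q \cong P$, as required (and literally $=P$ if $P$ is chosen as an actual direct summand of $F$).

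The only step needing care, and the one I would double-check, is that the endomorphism-ring identifications above are valid when the index set $I$ is infinite. They are: $\Mor_{\HeckeF}(\bigoplus_{i \in I} A_i, -) = \prod_{i \in I} \Mor_{\HeckeF}(A_i, -)$ holds for any coproduct, and the Yoneda-type Lemma applies to each single free generator $R[-, x_i]_{\HeckeF}$ with no finiteness hypothesis, so no infinite direct sum is ever confused with a product. Everything else in the argument is formal.
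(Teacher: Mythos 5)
Your proof is correct, but it is genuinely different from the one in the paper. The paper disposes of the lemma in one line via Lemma \ref{lemma:HF frees are fp} (Th\'evenaz--Webb): a projective $\HeckeP$-module is exactly a fixed point functor $V^-$ with $V$ a direct summand of a permutation $RG$-module with stabilisers in $\mathcal{P}$; since such stabilisers also lie in $\mathcal{F}$, the same $V^-$, regarded as an $\HeckeF$-module, is projective by the same lemma and restricts back to $P$ (the finitely generated case being identical with $V$ finitely generated). You instead never invoke the fixed-point description of projectives: you use that the inclusion $\HeckeP \hookrightarrow \HeckeF$ is a \emph{full} embedding, because $[G/S,G/S']_{\HeckeF}=\Hom_{RG}(R[G/S],R[G/S'])$ does not depend on the ambient family, so restriction sends $R[-,x]_{\HeckeF}$ to $R[-,x]_{\HeckeP}$ and induces an isomorphism of endomorphism rings of the corresponding free modules (your Yoneda computation, including the infinite-index case, is fine since $\Mor(\bigoplus_i R[-,x_i],-)$ is a product of the $\Mor(R[-,x_i],-)$); lifting the idempotent and taking its image then gives $Q$, and exactness of restriction identifies $\Res^{\HeckeF}_{\HeckeP}Q$ with $P$. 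What each approach buys: the paper's argument is shorter given that Lemma \ref{lemma:HF frees are fp} has already been quoted, and it produces $Q$ with $\Res^{\HeckeF}_{\HeckeP}Q = P$ on the nose in the form the rest of Section \ref{section:family of p subgroups} uses; your argument is purely formal, needs no structural description of projective Hecke modules, and shows the statement holds for any full inclusion of categories satisfying (A) whose representables restrict to representables --- which also makes transparent why the analogous statement is more delicate for Mackey categories, whose morphism sets do depend on the family. Both yield the finitely generated refinement for the same reason (finite index set, respectively finitely generated $V$).
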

\begin{proof}
Recall from Lemma \ref{lemma:HF frees are fp} that the projective $\HeckeP$-modules are exactly those of the form $V^-$ for $V$ some direct summand of a permutation $RG$-module whose stabilisers lie in $\mathcal{P}$.
The required module is just $ V^-$ regarded as a $\HeckeF$-module.  
\end{proof}

\begin{proof}[Proof of Theorem \ref{theorem:HF HeckeFcd = HeckePcd and HeckeFFPn = HeckePFPn}]
 Assume that $\HFcd G \le n$ and let $P_*$ be a length $n$ projective resolution of $R^-$ by $\HeckeF$-modules, then restricting to the family $\mathcal{P}$ and using Lemma \ref{lemma:restriction of HeckeF proj is HeckePproj} gives a length $n$ projective resolution by $\HeckeP$-modules.  A similar argument shows that $\HFFP_n$ implies $\HeckeP\negthinspace\FP_n$.

For the converse, let $P_*$ be a length $n$ projective resolution of $R^-$ by $\HeckeP$-modules.  Use Lemma \ref{lemma:HF proj HeckeP module extension} to get projective $\HeckeF$-modules $Q_i$ such that $\Res^{\HeckeF}_{\HeckeP}Q_i = P_i$ for each $i$.  Denoting by $d_i$ the boundary maps in $P_*$, define boundary maps of $Q_*$ as
$\partial_i (G/P) = d_i(G/P)$ if $P \in \mathcal{P}$ and if $H \not\in \mathcal{P}$ then
\[ \partial_i(G/H) = P_{i-1}(\iota_H) \circ d_i(G/H) \circ P_i(\rho_H)  \]

One can check that these maps are indeed $\HeckeF$-module maps and that this makes $Q_*$ a chain complex:
\begin{align*}
& \partial_i (G/H) \circ \partial_{i+1} (G/H) \\
&= P_{i-1}(\iota_H) \circ d_i(G/H) \circ P_i(\rho_H) \circ P_{i}(\iota_H) \circ d_{i+1}(G/H) \circ P_{i+1}(\rho_H)  \\
&= P_{i-1}(\iota_H) \circ d_i(G/H) \circ d_{i+1}(G/H) \circ P_{i+1}(\rho_H) \\
&= 0
\end{align*}
Finally $P_*$ is exact by Lemma \ref{lemma:HF ch complex HFexact iff HPexact}.

Since at all stages of the argument above finite generation is preserved, we get that $\HeckeP\negthinspace\FP_n $ implies $\HFFP_n$ too.
\end{proof}

For the remainder of this section $R = \KK$ is a field of characteristic zero, in this case we can reduce to the family $\Triv$ containing only the trivial subgroup.  For any $H \in \mathcal{F}$, let
\[ \rho_H = R^H_1\]
\[ \iota_H = ( 1 / \lvert H \rvert ) I^H_1 \]
All the arguments of the section go through with no alteration, showing:

\begin{Prop}\label{prop:HFcd and HFFPn for R a field} $\HFcd_{\KK} G = \mathcal{H}_{\Triv}\cd_{\KK} G$ 
and the conditions $\HFFP_n$ over $\KK$ and $\mathcal{H}_{\Triv}\FP_n$ over $\KK$ are equivalent for any $n \in \NN \cup \{ \infty\}$. 
\end{Prop}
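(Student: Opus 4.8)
The plan is to reproduce the proof of Theorem~\ref{theorem:HF HeckeFcd = HeckePcd and HeckeFFPn = HeckePFPn} essentially verbatim, with the family $\mathcal{P}$ replaced throughout by $\Triv$ and the maps $\rho_H = R^H_1$, $\iota_H = (1/\lvert H\rvert)I^H_1$ as defined just above the statement; the only thing that must be re-checked is that each of the four ingredient lemmas survives the substitution, and each does purely because $\lvert H\rvert$ is invertible in $\KK$ for every finite subgroup $H$, exactly as the prime-to-$q$ indices $\lvert H:Q\rvert$ were invertible in the argument for $\mathcal{P}$. First I would record the analog of Lemma~\ref{lemma:iota circ rho is id}: for any $\HeckeF$-module $M$ and any $H\in\mathcal{F}$ one has $M(\iota_H)\circ M(\rho_H)=\id_{M(G/H)}$, which is immediate since by the cohomological Mackey axiom~\eqref{eq:cohom mackey axiom} the composite $M(I^H_1)\circ M(R^H_1)$ is multiplication by $\lvert H\rvert$. (Applying this to the representable functors shows $\rho_H\circ\iota_H=\id_{G/H}$ in $\HeckeF$, i.e.\ every object of $\HeckeF$ is a retract of $G/1$ in characteristic zero; this is the structural reason the reduction works.) Then comes the analog of Lemma~\ref{lemma:restriction of HeckeF proj is HeckePproj}: for every $H\in\mathcal{F}$ the module $\Res^{\HeckeF}_{\mathcal{H}_{\Triv}}R[-,G/H]_{\HeckeF}$ is finitely generated projective over $\mathcal{H}_{\Triv}$, proved as before by noting that, via the Yoneda-type Lemma~\ref{lemma:C yoneda-type}, the morphism $R[-,G/1]_{\HeckeF}\to R[-,G/H]_{\HeckeF}$ corresponding to $\iota_H$ is split by the morphism corresponding to $\rho_H$, and that $\Res^{\HeckeF}_{\mathcal{H}_{\Triv}}R[-,G/1]_{\HeckeF}=R[-,G/1]_{\mathcal{H}_{\Triv}}$.

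Next I would establish the analog of Lemma~\ref{lemma:HF ch complex HFexact iff HPexact}: a chain complex $C_*$ of $\HeckeF$-modules is exact if and only if $C_*(G/1)$ is exact. Only the nontrivial direction needs an argument: for each nontrivial $H\in\mathcal{F}$ the maps $C_*(\iota_H)$ and $C_*(\rho_H)$ are chain maps because they commute with the boundary maps of $C_*$, which are $\HeckeF$-module morphisms, so on homology $(\rho_H)_*$ is split injective and lands in $H_*(C_*(G/1))=0$, forcing $H_*(C_*(G/H))=0$. Finally the analog of Lemma~\ref{lemma:HF proj HeckeP module extension}: by Lemma~\ref{lemma:HF frees are fp} the projective $\mathcal{H}_{\Triv}$-modules are precisely the fixed-point functors $V^-$ of direct summands $V$ of free $RG$-modules (the only permutation module with stabiliser in $\Triv$ being $RG$ itself), and such a $V^-$, regarded as a $\HeckeF$-module, is projective, restricts back to the given $\mathcal{H}_{\Triv}$-module, and is finitely generated whenever the original is.

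With these four facts in hand, the proof assembles exactly as for Theorem~\ref{theorem:HF HeckeFcd = HeckePcd and HeckeFFPn = HeckePFPn}: if $\HFcd_{\KK}G\le n$ one restricts a length-$n$ projective $\HeckeF$-resolution of $R^-$ to $\mathcal{H}_{\Triv}$ using the analog of Lemma~\ref{lemma:restriction of HeckeF proj is HeckePproj}; conversely, given a length-$n$ projective $\mathcal{H}_{\Triv}$-resolution $P_*$ of $R^-$ with boundary maps $d_i$, one lifts each $P_i$ to a projective $\HeckeF$-module $Q_i$, sets $\partial_i(G/1)=d_i$ and $\partial_i(G/H)=Q_{i-1}(\iota_H)\circ d_i\circ Q_i(\rho_H)$ for $H$ nontrivial, and checks that $Q_*$ is a chain complex of $\HeckeF$-modules which is exact at $G/1$, hence exact, with finite generation preserved at every stage; the same template handles the $\HFFP_n$ versus $\mathcal{H}_{\Triv}\FP_n$ equivalence. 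Since $\mathcal{H}_{\Triv}$-modules are just $RG$-modules and $R^-$ is then $R$, this simultaneously recovers $\HFcd_{\KK}G=\cd_{\KK}G$ and the equivalence of $\HFFP_n$ with $\FP_n$ over $\KK$. I expect the only point requiring genuine — though still routine — care is verifying that the lifted $\partial_i$ are bona fide $\HeckeF$-module morphisms, natural with respect to \emph{all} morphisms of $\HeckeF$ and not merely $\iota_H$ and $\rho_H$; this uses that $\iota_H\circ\rho_H$ becomes the identity after applying any $\HeckeF$-module together with the fact that every object of $\HeckeF$ is a retract of $G/1$, and it is no harder here than in the proof of Theorem~\ref{theorem:HF HeckeFcd = HeckePcd and HeckeFFPn = HeckePFPn}, the characteristic-zero hypothesis entering solely through the invertibility of $\lvert H\rvert$.
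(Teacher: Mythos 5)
Your proposal is correct and is exactly the paper's argument: the paper proves Proposition \ref{prop:HFcd and HFFPn for R a field} simply by observing that, with $\rho_H = R^H_1$ and $\iota_H = (1/\lvert H\rvert)I^H_1$, all the arguments of the section (the analogs of Lemmas \ref{lemma:iota circ rho is id}, \ref{lemma:restriction of HeckeF proj is HeckePproj}, \ref{lemma:HF ch complex HFexact iff HPexact} and \ref{lemma:HF proj HeckeP module extension}, assembled as in the proof of Theorem \ref{theorem:HF HeckeFcd = HeckePcd and HeckeFFPn = HeckePFPn}) go through unchanged, with the characteristic-zero hypothesis used only for the invertibility of $\lvert H\rvert$. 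You have merely spelled out the details the paper leaves implicit, so there is nothing to correct.
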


\begin{Cor} $\HFcd_{\KK} G = \cd_{\KK} G $ and the conditions $\HFFP_n$ over $\KK$ and $\FP_n$ over $\KK G$ are equivalent for any $n \in \NN \cup \{ \infty\}$. 
\end{Cor}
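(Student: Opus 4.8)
The plan is to deduce the corollary directly from Proposition~\ref{prop:HFcd and HFFPn for R a field}, which has already done the bulk of the work by reducing everything to the family $\Triv$ consisting of the trivial subgroup only: it asserts $\HFcd_{\KK} G = \mathcal{H}_{\Triv}\cd_{\KK} G$ and that $\HFFP_n$ over $\KK$ is equivalent to $\mathcal{H}_{\Triv}\FP_n$ over $\KK$. So all that remains is to identify the $\mathcal{H}_{\Triv}$-theory with ordinary $\KK G$-module theory, i.e.\ to check that $\mathcal{H}_{\Triv}\cd_{\KK} G = \cd_{\KK} G$ and that $\mathcal{H}_{\Triv}\FP_n$ over $\KK$ is the same as $\FP_n$ over $\KK G$.

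First I would observe that when $\mathcal{F} = \Triv$ the Hecke category $\mathcal{H}_{\Triv}$ has exactly one object, namely $G/1$, and by Lemma~\ref{lemma:mackey yoshida CF struc} its endomorphism ring is $\KK[1 \backslash G / 1] \cong \KK G$. Hence evaluation at $G/1$ is an equivalence between the category of contravariant $\mathcal{H}_{\Triv}$-modules and the category of $\KK G$-modules. Under this equivalence the unique free $\mathcal{H}_{\Triv}$-module $\KK[-, G/1]_{\mathcal{H}_{\Triv}}$ corresponds to $\KK G$, and by Lemma~\ref{lemma:HF frees are fp}---the projective $\mathcal{H}_{\Triv}$-modules are the fixed point functors of summands of permutation modules with stabilisers in $\Triv$, that is, of projective $\KK G$-modules---projective (respectively finitely generated projective) $\mathcal{H}_{\Triv}$-modules correspond exactly to projective (respectively finitely generated projective) $\KK G$-modules. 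Finally $R^-(G/1) = R^{1} = \KK$ with trivial $G$-action, so $R^-$ corresponds to the trivial module $\KK$.

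From here the conclusion is immediate: a length-$n$ projective resolution of $R^-$ by $\mathcal{H}_{\Triv}$-modules is, after evaluating at $G/1$, exactly a length-$n$ projective resolution of $\KK$ by $\KK G$-modules, and this correspondence preserves finite generation in each degree; moreover $\Mor_{\mathcal{H}_{\Triv}}(-, M) \cong \Hom_{\KK G}(-(G/1), M(G/1))$ since the category has a single object, so $\Ext^*_{\mathcal{H}_{\Triv}}(R^-, M) \cong H^*(G, M(G/1))$. Thus $\mathcal{H}_{\Triv}\cd_{\KK} G = \cd_{\KK} G$ and $\mathcal{H}_{\Triv}\FP_n$ over $\KK$ holds precisely when $G$ is $\FP_n$ over $\KK G$; combining with Proposition~\ref{prop:HFcd and HFFPn for R a field} gives the corollary. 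I expect no substantial obstacle: the only points requiring any care are the left/right conventions in identifying $\mathcal{H}_{\Triv}$-modules with $\KK G$-modules, and verifying that $R^-$ and the (finitely generated) projectives correspond correctly, both of which are routine unwindings of the definitions together with Lemmas~\ref{lemma:mackey yoshida CF struc} and~\ref{lemma:HF frees are fp}.
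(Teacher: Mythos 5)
Your argument is correct and is essentially the paper's proof: the paper simply states that the category of $\mathcal{H}_{\Triv}$-modules is isomorphic to the category of $\KK G$-modules and combines this with Proposition \ref{prop:HFcd and HFFPn for R a field}, which is exactly the identification you carry out (with the unwinding of the single-object Hecke category, its endomorphism ring $\KK G$, and the matching of $R^-$, frees and projectives made explicit).
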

\begin{proof}
 The category of $\mathcal{H}_{\Triv}$-modules is isomorphic to the category of $\KK G$ modules.
\end{proof}

\subsection{\texorpdfstring{$\FP_n$}{FPn} conditions over \texorpdfstring{$\FF_p$}{Fp}}\label{subsection:FPn over FFp}

Throughout this section, we fix a prime $p$ and work over the ring $\FF_p$ with the family $\mathcal{P}$ of all finite $p$-subgroups of groups in $\mathcal{F}$.

\begin{Lemma}\cite[Lemma 5.3]{HambletonPamukYalcin-EquivariantCWComplexesAndTheOrbitCategory}  For any finite subgroup $H \in \mathcal{P}$ and $\HeckeP$-module $M$, $D_HM$ extends to a cohomological Mackey functor.
\end{Lemma}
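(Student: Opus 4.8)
The plan is to take the Mackey functor extending $D_H M$ constructed by Hambleton--Pamuk--Yal\c{c}in in \cite[Example 4.8]{HambletonPamukYalcin-EquivariantCWComplexesAndTheOrbitCategory} (recalled above) and to check directly that it satisfies the cohomological Mackey axiom \eqref{eq:cohom mackey axiom}. Recall that $D_H M(G/K) = \Hom_{\FF_p[WH]}(\FF_p[G/H, G/K]_{\OP}, M(G/H))$ and that, in that Mackey functor structure, for $K \le L$ in $\mathcal{P}$ the restriction $D_H M(R^L_K)$ is precomposition with the map $q_* \colon \FF_p[G/H, G/K]_{\OP} \to \FF_p[G/H, G/L]_{\OP}$ that post-composes a $G$-map $G/H \to G/K$ with the projection $q \colon G/K \twoheadrightarrow G/L$, while the transfer $D_H M(I^L_K)$ is precomposition with the ``sum over lifts'' map $t \colon \FF_p[G/H, G/L]_{\OP} \to \FF_p[G/H, G/K]_{\OP}$, $\phi \mapsto \sum_{q\psi = \phi} \psi$; one checks that $t$ is $\FF_p[WH]$-linear, since pre-composition by the invertible $\alpha_w$ permutes the set $\{\psi \colon G/H \to G/K : q\psi = \phi\}$. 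Hence $D_H M(I^L_K) \circ D_H M(R^L_K)$ is precomposition with $q_* \circ t \colon \FF_p[G/H, G/L]_{\OP} \to \FF_p[G/H, G/L]_{\OP}$, and by \eqref{eq:cohom mackey axiom} it suffices to prove that $q_* \circ t$ is multiplication by $\lvert L : K \rvert$.

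I would evaluate $q_* \circ t$ on a basis $G$-map $\phi \colon G/H \to G/L$, say $\phi(H) = gL$ with $g^{-1}Hg \le L$. Since $q_*$ collapses every lift of $\phi$ back onto $\phi$, we get $q_* t(\phi) = N(\phi) \cdot \phi$ where $N(\phi) = \#\{\psi \colon G/H \to G/K : q\psi = \phi\}$. A lift has the form $\psi(H) = gxK$ with $x \in L$ (unique modulo $K$), and defines a genuine $G$-map exactly when $x^{-1}(g^{-1}Hg)x \le K$; under the identification of these cosets with the $L$-set $L/K$ this is precisely the condition that $xK$ be fixed by $g^{-1}Hg$ acting by left translation. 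Hence $N(\phi) = \lvert (L/K)^{g^{-1}Hg} \rvert$. (When $H$ is not subconjugate to $L$ there are no such $\phi$, so $\FF_p[G/H, G/L]_{\OP} = 0$ and there is nothing to check.)

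This is the only step that uses the standing hypotheses of the section: $H \in \mathcal{P}$ makes $g^{-1}Hg$ a finite $p$-group, and $K \le L$ in $\mathcal{P}$ makes $\lvert L : K \rvert$ a power of $p$, so the standard congruence for a $p$-group acting on a finite set gives $N(\phi) = \lvert (L/K)^{g^{-1}Hg} \rvert \equiv \lvert L : K \rvert \pmod p$, i.e.\ $N(\phi)$ and $\lvert L : K \rvert$ have the same image in $\FF_p$. Therefore $q_* \circ t$ is multiplication by $\lvert L : K \rvert$, so $D_H M(I^L_K) \circ D_H M(R^L_K) = \bigl( x \mapsto \lvert L : K \rvert x \bigr)$ for all $K \le L$ in $\mathcal{P}$, which is \eqref{eq:cohom mackey axiom}; thus the Mackey functor extending $D_H M$ is cohomological, as claimed. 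The genuinely mathematical content is just this elementary $p$-group fixed-point congruence; I expect the only real care to be in transcribing the Hambleton--Pamuk--Yal\c{c}in transfer maps through the identification $D_H M(G/K) = \Hom_{\FF_p[WH]}(\FF_p[G/H, G/K]_{\OP}, M(G/H))$ and checking $\FF_p[WH]$-equivariance throughout, together with the observation that over $\FF_p$, for the family of $p$-subgroups, \eqref{eq:cohom mackey axiom} amounts for $K \lneq L$ to nothing more than the vanishing of $D_H M(I^L_K) \circ D_H M(R^L_K)$.
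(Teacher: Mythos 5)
Your argument is correct, but note that the paper itself offers no proof of this lemma at all: it is quoted verbatim from \cite[Lemma 5.3]{HambletonPamukYalcin-EquivariantCWComplexesAndTheOrbitCategory}, so what you have produced is a self-contained verification where the paper defers to the literature. Your reduction is sound: the contravariant structure of $D_HM=\CoInd_{\FF_p[WH]}^{\OP}M(G/H)$ is indeed precomposition with post-composition by projections, the transfer in the Mackey extension of \cite[Example 4.8]{HambletonPamukYalcin-EquivariantCWComplexesAndTheOrbitCategory} is precomposition with the trace map $t$ (this is the dual of the Mackey structure on $G/K\mapsto \FF_p[(G/K)^H]\cong\FF_p[G/H,G/K]_{\OP}$, and your $WH$-equivariance check for $t$ is the right thing to verify), and the count of lifts of $\phi$ with $\phi(H)=gL$ as $\lvert (L/K)^{g^{-1}Hg}\rvert$ together with the $p$-group orbit congruence $\lvert (L/K)^{g^{-1}Hg}\rvert\equiv\lvert L:K\rvert \pmod p$ gives exactly \eqref{eq:cohom mackey axiom} over $\FF_p$, with $H\in\mathcal{P}$ entering only there. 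The one point you should make airtight is the identification of the transfer maps: you take the Mackey structure from Hambleton--Pamuk--Yal\c{c}in on faith and only check the cohomological relation, so either quote their description of the transfers explicitly or verify Green's axioms for the structure you wrote down (which is routine, since it is the dual of the fixed-point permutation Mackey functor). Two further remarks in your favour: your proof never uses that $M$ is an $\HeckeP$-module, only the $\FF_p[WH]$-module $M(G/H)$, so it actually establishes the statement for an arbitrary $\OP$-module $M$ --- which is what the subsequent corollary really applies it to (the modules $C^iM_\lambda$ there are only $\OP$-modules); and your closing observation that for $K\lneq L$ in $\mathcal{P}$ the axiom amounts to the vanishing of $D_HM(I^L_K)\circ D_HM(R^L_K)$ over $\FF_p$ is consistent with your congruence, since $\lvert L:K\rvert$ is then a positive power of $p$.
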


\begin{Cor}
 If $G$ is $\HeckeP\negthinspace\FP_n$ if and only if $G$ is $\OP\negthinspace\FP_n$.
\end{Cor}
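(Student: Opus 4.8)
The plan is to rerun the argument that established Corollary~\ref{cor:OFFPn iff MFFPn}, but with the Mackey category $\MF$ replaced throughout by the Hecke category $\HeckeP$ of cohomological Mackey functors over $\FF_p$ for the family $\mathcal{P}$ of finite $p$-subgroups, using the preceding lemma --- that $D_HM$ extends to a cohomological Mackey functor for $H\in\mathcal{P}$ --- in the role played by \cite[Example~4.8]{HambletonPamukYalcin-EquivariantCWComplexesAndTheOrbitCategory} in the proof of Proposition~\ref{prop:MFFPn implies OFFPn}, and Proposition~\ref{prop:HF HF cohomology is OF cohomology} in the role played there by the identification of Mackey cohomology with Bredon cohomology. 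All of the modules-over-a-category formalism and the Bieri--Eckmann criterion (Theorem~\ref{theorem:C bieri-eckmann criterion}) apply to $\OP$ and $\HeckeP$ just as to $\OF$ and $\MF$, since $\mathcal{P}$ is again a subfamily of the family of finite subgroups. It is enough to treat finite $n$, the case $n=\infty$ following since $\FP_\infty$ means $\FP_n$ for all $n$.

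For $\OP\negthinspace\FP_n\Rightarrow\HeckeP\negthinspace\FP_n$ I would argue as in the proof of Proposition~\ref{prop:mackey OFFPn implies MFFPn}: let $M_\lambda$ be a filtered system of $\HeckeP$-modules with $\varinjlim M_\lambda=0$; colimits are computed objectwise so $\varinjlim\Res_{\pi\circ\sigma}M_\lambda=0$, and then Proposition~\ref{prop:HF HF cohomology is OF cohomology} together with the Bieri--Eckmann criterion applied to the $\OP\negthinspace\FP_n$ module $\uR$ gives $\varinjlim H^k_{\HeckeP}(G,M_\lambda)\cong\varinjlim H^k_{\OP}(G,\Res_{\pi\circ\sigma}M_\lambda)=0$ for $k\le n$; a further application of the criterion shows $R^-$ is $\HeckeP\negthinspace\FP_n$, i.e.\ $G$ is $\HeckeP\negthinspace\FP_n$. (Alternatively, quote Corollary~\ref{cor:OFFPn iff MFFPn} and Proposition~\ref{prop:MFFPn implies HFFPn} for the family $\mathcal{P}$.)

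For the converse I would first dispose of the base case. $\HeckeP\negthinspace\FP_n$ implies $\HeckeP\negthinspace\FP_0$, hence $\mathcal{P}\negthinspace\FP_0$ by Proposition~\ref{prop:HF HFFPn implies FFPn}; by \cite[Proposition~4.2]{LearyNucinkis-GroupsActingPrimePowerOrder} this forces $\mathcal{P}$ to have finitely many conjugacy classes (for primes $q\neq p$ the only finite $q$-subgroup in $\mathcal{P}$ is trivial), so $G$ is $\OP\negthinspace\FP_0$ by \cite[Lemma~3.1]{KMN-CohomologicalFinitenessConditionsForElementaryAmenable}. Now I would copy the proof of Proposition~\ref{prop:MFFPn implies OFFPn} line by line: for a filtered system $M_\lambda$ of $\OP$-modules with $\varinjlim M_\lambda=0$, form the $\OP$-modules $D_HM$, $DM=\prod_{H\in\mathcal{P}/G}D_HM$ and the cokernel $CM$ of $\prod j_H\colon M\to DM$; the product is finite because $\mathcal{P}/G$ is, so $DM$ extends to a cohomological Mackey functor by the preceding lemma, and the vanishing statements $\varinjlim M_\lambda=0\Rightarrow\varinjlim DM_\lambda=0\Rightarrow\varinjlim CM_\lambda=0$ (Lemma~\ref{lemma:colim M is 0 then colim DM is 0} and the lemma following it) involve only $\OP$-modules and carry over verbatim. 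Iterating $C$ and using the short exact sequences of filtered systems $0\to C^iM_\lambda\to DC^iM_\lambda\to C^{i+1}M_\lambda\to 0$, all of whose colimits vanish, the Bieri--Eckmann criterion together with Proposition~\ref{prop:HF HF cohomology is OF cohomology} gives $\varinjlim H^m_{\OP}(G,DC^iM_\lambda)=0$ for $m\le n$ (here we use that $DC^iM_\lambda$ extends to a $\HeckeP$-module and $G$ is $\HeckeP\negthinspace\FP_n$); dimension shifting down the long exact cohomology sequences, using exactness of colimits, then yields $\varinjlim H^m_{\OP}(G,M_\lambda)\cong\varinjlim H^0_{\OP}(G,C^mM_\lambda)=0$ for $m\le n$, the last equality by the Bieri--Eckmann criterion since $G$ is $\OP\negthinspace\FP_0$. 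One last application of the criterion shows $G$ is $\OP\negthinspace\FP_n$. The only step that is not a direct transcription of the proof of Proposition~\ref{prop:MFFPn implies OFFPn} is the verification that $DM$ extends to a cohomological Mackey functor; this needs both the preceding lemma and the finiteness of $\mathcal{P}/G$, and it is exactly this finiteness requirement that forces the $\FP_0$ case to be settled separately at the outset --- that is the main obstacle.
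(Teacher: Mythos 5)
Your proposal is correct and follows essentially the same route as the paper: the easy direction via Corollary \ref{cor:OFFPn iff MFFPn} and Proposition \ref{prop:MFFPn implies HFFPn} (your direct Bieri--Eckmann argument through Proposition \ref{prop:HF HF cohomology is OF cohomology} is an equivalent variant), and the converse by rerunning the proof of Proposition \ref{prop:MFFPn implies OFFPn} for the family $\mathcal{P}$, using the preceding lemma that $D_HM$ extends to a cohomological Mackey functor, with the $\OP\negthinspace\FP_0$ base case obtained exactly as in the paper from Proposition \ref{prop:HF HFFPn implies FFPn} (i.e.\ Theorem \ref{theorem:HFFPn iff FFPn}) and \cite[Proposition 4.2]{LearyNucinkis-GroupsActingPrimePowerOrder}. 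Your explicit remark that finiteness of $\mathcal{P}/G$ is what makes the finite product $DM$ work is a point the paper leaves implicit, but it is not a departure from its argument.
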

The proof is basically that of Proposition \ref{prop:MFFPn implies OFFPn} combined with the lemma:
\begin{proof}
We know already from Proposition \ref{prop:MFFPn implies HFFPn} and Corollary \ref{cor:OFFPn iff MFFPn} that $\OP\negthinspace\FP_n$ implies $\HeckeP\negthinspace\FP_n$.

Let $M_\lambda$, for $\lambda \in \Lambda$, be a directed system of $\OP$-modules with colimit zero.  Using the notation of Proposition \ref{prop:MFFPn implies OFFPn} there is an exact sequence of directed systems for each $i \ge 0$
\[ 0 \longrightarrow C^iM_\lambda \longrightarrow DC^iM_\lambda \longrightarrow C^{i+1}M_\lambda \longrightarrow 0 \]
each of which has colimit zero.  Moreover, $DC^iM$ extends to a cohomological Mackey functor so using the Bieri--Eckmann criterion (Theorem \ref{theorem:C bieri-eckmann criterion}), if $m \le n$ then for all $i \ge 0$,
\[ \varinjlim_{\Lambda} H^m_{\OP} (G, DC^iM) = 0 \]
Thus 
\begin{align*}
 \varinjlim H^m_{\OP}(G, M_\lambda) &= \varinjlim  H^m_{\OP}(G, C^0M_\lambda) \\
&= \varinjlim  H^{m-1}_{\OP}(G, C^1M_\lambda) \\
&= \cdots \\
&=  \varinjlim H^0_{\OP}(G, C^mM_\lambda) \\
&= 0
\end{align*}
Where the final zero is because $G$ is $\OP\negthinspace\FP_0$ (by \cite[Proposition 4.2]{LearyNucinkis-GroupsActingPrimePowerOrder} and Theorem \ref{theorem:HFFPn iff FFPn}).
\end{proof}

\begin{Cor}\label{cor:equiv conditions for HFFPn}
 The following conditions are equivalent for an arbitrary group $G$:
\begin{enumerate}
 \item $\HFFP_n$ over $\FF_p$.
 \item $\HeckeP\negthinspace\FP_n$ over $\FF_p$.
 \item $\OP\negthinspace\FP_n$ over $\FF_p$.
 \item $\mathcal{M}_{\mathcal{P}}\negthinspace\FP_n$ over $\FF_p$.
\end{enumerate}
\end{Cor}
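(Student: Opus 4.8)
The statement to prove is Corollary \ref{cor:equiv conditions for HFFPn}: over $\FF_p$, the four conditions $\HFFP_n$, $\HeckeP\negthinspace\FP_n$, $\OP\negthinspace\FP_n$, $\mathcal{M}_{\mathcal{P}}\negthinspace\FP_n$ are equivalent. The plan is to assemble the equivalences from results already established in the excerpt, tracing a cycle of implications rather than proving anything from scratch.

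First I would record that $(1) \Leftrightarrow (2)$ is immediate from Theorem \ref{theorem:HF HeckeFcd = HeckePcd and HeckeFFPn = HeckePFPn}, which gives the equivalence of $\HFFP_n$ and $\HeckeP\negthinspace\FP_n$ over $\FF_p$ (here $\mathcal{P}$ is the family of finite $p$-subgroups, exactly the family appearing in that theorem for $R = \FF_p$). Next, $(2) \Leftrightarrow (3)$ is precisely the Corollary stated just before (``$G$ is $\HeckeP\negthinspace\FP_n$ if and only if $G$ is $\OP\negthinspace\FP_n$''), whose proof has just been given. Finally, $(3) \Leftrightarrow (4)$ follows by applying Corollary \ref{cor:OFFPn iff MFFPn} with the family $\mathcal{P}$ in place of $\mathcal{F}$: that corollary asserts the equivalence of $\OFFP_n$ and $\MFFP_n$ for any subfamily of the finite subgroups, and $\mathcal{P}$ is such a subfamily (it is closed under conjugation and under taking subgroups, since a subgroup of a finite $p$-group is again a finite $p$-group).

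So the skeleton is simply $(1) \Leftrightarrow (2) \Leftrightarrow (3) \Leftrightarrow (4)$, each link citing a named result already in hand. I would spell out the three links in that order, with a sentence each confirming that the hypotheses of the cited result are met—in particular that $\mathcal{P}$ qualifies as an admissible family for Theorem \ref{theorem:HF HeckeFcd = HeckePcd and HeckeFFPn = HeckePFPn} and for Corollary \ref{cor:OFFPn iff MFFPn}, and that the preceding Corollary on $\HeckeP\negthinspace\FP_n \Leftrightarrow \OP\negthinspace\FP_n$ is stated over $\FF_p$ exactly as needed here.

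The only point that requires any care—and it is the closest thing to an obstacle—is making sure the coefficient ring and family match across the three cited results: Theorem \ref{theorem:HF HeckeFcd = HeckePcd and HeckeFFPn = HeckePFPn} is stated for $R \in \{\ZZ, \FF_q, \ZZ_{(q)}\}$ with $\mathcal{P}$ the $q$-subgroups, the preceding Corollary is stated specifically over $\FF_p$, and Corollary \ref{cor:OFFPn iff MFFPn} is stated ``over any commutative ring $R$'' for an arbitrary subfamily of the finite subgroups. Taking $R = \FF_p$ and $q = p$ throughout, all three apply verbatim, so there is no real difficulty; the proof is a short bookkeeping argument and I would present it as such.
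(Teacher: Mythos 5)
Your proposal is correct and matches the paper's own proof exactly: the paper cites Theorem \ref{theorem:HF HeckeFcd = HeckePcd and HeckeFFPn = HeckePFPn} for $(1)\Leftrightarrow(2)$, the immediately preceding corollary for $(2)\Leftrightarrow(3)$, and Corollary \ref{cor:OFFPn iff MFFPn} for $(3)\Leftrightarrow(4)$. Your extra remarks checking that $\mathcal{P}$ is an admissible family and that the coefficient ring matches are sensible bookkeeping but add nothing beyond the paper's argument.
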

\begin{proof}
 $1 \Leftrightarrow 2$ is Theorem \ref{theorem:HF HeckeFcd = HeckePcd and HeckeFFPn = HeckePFPn}, $2 \Leftrightarrow 3$ is the Corollary above, $3 \Leftrightarrow 4$ is Corollary \ref{cor:OFFPn iff MFFPn}.
\end{proof}

Combining this with the lemma below gives a complete description of the condition $\HFFP_n$ over $\FF_p$.

\begin{Lemma}\cite[Lemmas 3.1, 3.2]{KMN-CohomologicalFinitenessConditionsForElementaryAmenable}\label{lemma:OFFPn equiv conditions}
$G$ is $\OF\FP_n$ if and only if $\lvert \mathcal{F}/G \rvert < \infty$, and $WH$ is $\FP_n$ for all $H \in \mathcal{F}$.
\end{Lemma}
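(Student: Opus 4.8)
The plan is to argue by induction on $n$, transferring the problem over $\OF$ to the Weyl groups $WH=N_GH/H$. The base case $n=0$ combines two facts recorded above: $G$ is $\OFFP_0$ precisely when $\mathcal{F}/G$ is finite, and every group is of type $\FP_0$, so the condition ``$WH$ is $\FP_0$ for all $H\in\mathcal{F}$'' is vacuous. So fix $n\ge 1$ and assume the equivalence for $n-1$.

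For the forward implication, let $P_\ast\twoheadrightarrow\uR$ be a free $\OF$-resolution finitely generated in degrees $\le n$; in particular $G$ is $\OFFP_0$, so $\mathcal{F}/G$ is finite. Fix $H\in\mathcal{F}$ and apply the evaluation functor $\operatorname{ev}_H\colon M\mapsto M(G/H)$, which is exact (a sequence of $\OF$-modules is exact iff it is exact at every object) and, by Lemma~\ref{lemma:Z[G/H,G/K] as sum of WH submodules}, carries each finitely generated free $\OF$-module $R[-,G/K]_{\OF}$ to a finitely generated $R[WH]$-permutation module with finite point stabilisers. Hence $P_\ast(G/H)\twoheadrightarrow R$ is a resolution of $R$, finitely generated in degrees $\le n$, by $R[WH]$-modules that are direct summands of finitely generated permutation modules with finite stabilisers. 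When $H$ is maximal in the poset $\mathcal{F}/G$ these modules are in fact finitely generated \emph{free} $R[WH]$-modules --- a $G$-map $G/H\to G/K$ then forces a conjugate of $H$ to equal $K$ --- so $WH$ is immediately $\FP_n$; for a general $H$ one runs a downward induction over the finite poset $\mathcal{F}/G$, restricting $P_\ast$ to the full subcategory of $\OF$ on the conjugacy classes $\succeq(H)$ and using the cases already settled for the classes strictly above $(H)$ to replace the top of the resolution by finitely generated free $R[WH]$-modules. (Over a Noetherian ring one may instead simply observe that a finitely generated $R[WH]$-permutation module with finite stabilisers is of type $\FP_\infty$ and conclude at once.)

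For the converse, assume $\mathcal{F}/G$ is finite and choose, for each $H\in\mathcal{F}$, a free $R[WH]$-resolution $F^H_\ast\twoheadrightarrow R$ finitely generated in degrees $\le n$. Using the extension functors $V\mapsto R[-,G/H]_{\OF}\otimes_{R[WH]}V$ --- left adjoint to $\operatorname{ev}_H$, and sending finitely generated free $R[WH]$-modules to finitely generated free $\OF$-modules --- one assembles these resolutions into a free $\OF$-resolution of $\uR$ by induction up the finite poset $\mathcal{F}/G$: having built a resolution, finitely generated in degrees $\le n$, of the restriction of $\uR$ to the full subcategory on the classes strictly above $(H)$, one forms a mapping cone with the extension of $F^H_\ast$ to push it across $(H)$. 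After finitely many steps this yields a free $\OF$-resolution of $\uR$ finitely generated in degrees $\le n$, so $G$ is $\OFFP_n$.

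The chief difficulty is the poset bookkeeping in both directions --- making precise the restriction of $\uR$ along $\mathcal{F}/G$ and the extension/evaluation functors relating $\OF$-modules to $R[WH]$-modules, and checking that they interact correctly with finite generation and exactness --- which is exactly the content of \cite[Lemmas~3.1 and~3.2]{KMN-CohomologicalFinitenessConditionsForElementaryAmenable}. One could instead bypass the explicit assembly via the Bieri--Eckmann criterion (Theorem~\ref{theorem:C bieri-eckmann criterion}): since $Q_\ast\otimes_{\OF}R[G/H,-]_{\OF}\cong Q_\ast(G/H)$ for any projective resolution $Q_\ast$ of $\uR$, one sees that $\Tor^{\OF}_k(\uR,R[G/H,-]_{\OF})$ vanishes for $k>0$ and equals $R$ for $k=0$, and the commutation of $\Tor^{\OF}_\ast(\uR,-)$ with the relevant products up to degree $n$ can then be analysed stratum by stratum over the finite poset $\mathcal{F}/G$ and reduced to the same commutation over each $R[WH]$, i.e.\ to $WH$ being $\FP_n$.
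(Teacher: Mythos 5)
The paper does not prove this lemma at all: it is imported verbatim from \cite{KMN-CohomologicalFinitenessConditionsForElementaryAmenable}, so there is no internal argument to compare with, and your proposal has to be judged on its own. Its first half can be made to work, and more simply than your main line. By Lemma \ref{lemma:Z[G/H,G/K] as sum of WH submodules} (together with the finiteness of the indexing set there, which the paper uses in the proof of Lemma \ref{lemma:colim M is 0 then colim DM is 0}; the index set is in bijection with the $K$-conjugacy classes of subgroups of the finite group $K$ that are $G$-conjugate to $H$), evaluating a finitely generated free $\OF$-module at $G/H$ gives a \emph{finitely generated} $R[WH]$-permutation module with finite stabilisers, and such a module is of type $\FP_\infty$ over $R[WH]$ for \emph{any} commutative ring $R$: it is induced from the trivial module over a finite subgroup, which admits a finite-rank free (bar) resolution. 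Dimension shifting along the evaluated resolution then gives that $R$ is $\FP_n$ over $R[WH]$. So your Noetherian caveat is unnecessary, and the ``downward induction over $\mathcal{F}/G$ replacing the top of the resolution by free $R[WH]$-modules'' is both vague and not needed.

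The genuine gap is the converse. The assembly of the resolutions $F^H_*$ by ``mapping cones up the finite poset'' is asserted rather than carried out: the extension functor is not exact, the extension of $F^H_*$ is not a resolution of anything concentrated at the stratum $(H)$ (its evaluations at subgroups below $H$ have uncontrolled homology), and constructing the comparison maps needed to form the cones is exactly the bookkeeping you then defer to \cite[Lemmas 3.1, 3.2]{KMN-CohomologicalFinitenessConditionsForElementaryAmenable} --- that is, to the statement being proved, which makes the proposal circular at its hardest point. A standard route avoids the poset entirely: when $\mathcal{F}/G$ is finite, a Bredon module $M$ is finitely generated if and only if $M(G/H)$ is finitely generated over $R[WH]$ for every $H$ (choose finitely many $R[WH]$-generators of each $M(G/H)$ and use Lemma \ref{lemma:C yoneda-type} to map a finitely generated free module onto $M$). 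Granting this, build the resolution degreewise: if $F_k\to\cdots\to F_0\to\uR$ is exact with each $F_i$ finitely generated free and $k<n$, then evaluating at $G/H$ gives a partial resolution of $R$ by finitely generated $\FP_\infty$ modules over $R[WH]$, so the hypothesis that $WH$ is $\FP_n$ forces the kernel's evaluation to be finitely generated for every $H$, hence the kernel is finitely generated and the induction continues. Either make an argument of this sort (or your mapping-cone assembly) precise, or simply cite the result as the paper does; as written, the second half of your proposal is a plan rather than a proof.
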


\begin{Prop}
 If $G$ is virtually torsion-free then the conditions virtually $\FP$ over $\FF_p$ and $\HeckeF\FP$ over $\FF_p$ are equivalent.
\end{Prop}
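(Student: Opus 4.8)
The plan is to unwind both sides into conditions already treated in the paper. Since $\FF_p$ is Noetherian, Theorem~\ref{theorem:HFFPn iff FFPn} gives $\HFFP_n \Leftrightarrow \FFP_n$ over $\FF_p$ for every $n$, and Theorem~\ref{theorem:Fcd=HFcd} gives $\HFcd_{\FF_p}G = \Fcd_{\FF_p}G$; combined with the standard dimension‑shifting fact that a module is of type $\FP$ exactly when it is $\FP_\infty$ and of finite projective dimension (valid also for relative projectives, vanishing of the relevant $\Ext^1$ detecting relative projectivity), this shows that \textquotedblleft$G$ is $\HeckeF\FP$ over $\FF_p$\textquotedblright{} is equivalent to \textquotedblleft$G$ is $\HFFP_\infty$ over $\FF_p$ and $\HFcd_{\FF_p}G<\infty$\textquotedblright, hence to \textquotedblleft$G$ is $\FFP_\infty$ over $\FF_p$ and $\Fcd_{\FF_p}G<\infty$\textquotedblright. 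By Corollary~\ref{cor:equiv conditions for HFFPn} together with Lemma~\ref{lemma:OFFPn equiv conditions} (the characterisation used for \textquotedblleft Corollary E\textquotedblright{} of the introduction), being $\HFFP_\infty$ over $\FF_p$ is equivalent to: $\mathcal{P}$ has finitely many conjugacy classes in $G$ and $W_GP=N_GP/P$ is $\FP_\infty$ over $\FF_p$ for every $P\in\mathcal{P}$ (the case $P=1$ giving in particular that $G$ itself is $\FP_\infty$ over $\FF_p$). Finally, as $G$ is virtually torsion‑free, $\HFcd_{\FF_p}G=\vcd_{\FF_p}G$ by \cite{MartinezPerezNucinkis-MackeyFunctorsForInfiniteGroups} and Theorem~\ref{theorem:Fcd=HFcd}, so $\Fcd_{\FF_p}G<\infty$ says exactly that some (equivalently any) torsion‑free finite‑index subgroup $N$ has $\cd_{\FF_p}N<\infty$.

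The easy direction is then immediate: if $G$ is $\HeckeF\FP$ over $\FF_p$ then $G$ is $\FP_\infty$ over $\FF_p$ (case $P=1$ above) and a torsion‑free finite‑index subgroup $N$ has $\cd_{\FF_p}N<\infty$; since $\FP_\infty$ over $\FF_p$ passes to finite‑index subgroups, $N$ is $\FP_\infty$ over $\FF_p$ and therefore of type $\FP$ over $\FF_p$, so $G$ is virtually $\FP$ over $\FF_p$. For the converse, I would pick (using that finite‑index subgroups of $\FP$‑over‑$\FF_p$ groups are again $\FP$ over $\FF_p$, then passing to a normal core inside a torsion‑free finite‑index subgroup) a torsion‑free finite‑index normal $N\trianglelefteq G$ of type $\FP$ over $\FF_p$; note $N\cap P=1$ for every finite $p$‑subgroup $P$. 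One then has $\cd_{\FF_p}N<\infty$ and hence $\Fcd_{\FF_p}G<\infty$, and $G$, being a finite extension of the $\FP_\infty$‑over‑$\FF_p$ group $N$, is itself $\FP_\infty$ over $\FF_p$; what remains is to verify the two $\mathcal{P}$‑conditions, namely that $\mathcal{P}$ has finitely many conjugacy classes in $G$ and that each $W_GP$ is $\FP_\infty$ over $\FF_p$.

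These two $\mathcal{P}$‑conditions are the heart of the argument and the step I expect to be the main obstacle. They are assertions about the finite $p$‑subgroups of a virtually torsion‑free group whose torsion‑free finite‑index subgroups are of type $\FP$ over $\FF_p$: the finite $p$‑subgroups of $G$ lie, via the projection $\pi\colon G\to G/N$, over the finitely many $p$‑subgroups $\bar P$ of the finite group $G/N$, and for each $\bar P$ the corresponding subgroups of $G$ are the complements to $N$ in $\pi^{-1}(\bar P)$, which one must know fall into finitely many conjugacy classes and have normalisers that are $\FP_\infty$ over $\FF_p$. This is precisely the sort of $p$‑local finiteness controlled by the Leary--Nucinkis theory of groups acting with prime‑power order stabilisers (cf.\ \cite{LearyNucinkis-GroupsActingPrimePowerOrder} and the characterisation of $\FFP_0$ over $\FF_p$ used above), and I would invoke those results to close the gap, noting that it is exactly here that the full strength of \textquotedblleft virtually $\FP$\textquotedblright, rather than merely \textquotedblleft virtually torsion‑free of finite $\vcd$\textquotedblright, is used. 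Once the two $\mathcal{P}$‑conditions are in hand, together with $\Fcd_{\FF_p}G<\infty$ established above, Corollary~\ref{cor:equiv conditions for HFFPn}, Lemma~\ref{lemma:OFFPn equiv conditions} and the reduction of the first paragraph give that $G$ is $\HeckeF\FP$ over $\FF_p$, completing the equivalence; everything else is routine bookkeeping with commensurability invariance of $\FP_\infty$ over $\FF_p$ and exactness of restriction.
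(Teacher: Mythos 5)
Your reduction matches the paper's: via Corollary \ref{cor:equiv conditions for HFFPn} and Lemma \ref{lemma:OFFPn equiv conditions}, being $\HeckeF\FP$ over $\FF_p$ amounts to (a) finitely many conjugacy classes of finite $p$-subgroups, (b) $WP$ of type $\FP_\infty$ over $\FF_p$ for every finite $p$-subgroup $P$, and (c) $\HFcd_{\FF_p}G<\infty$; your treatment of the easy direction is fine, and your route to (c) through $\HFcd G=\vcd G$ for virtually torsion-free groups is a legitimate variant (the paper instead gets (c) from \cite[Proposition 34]{Hamilton-WhenIsGroupCohomologyFinitary}, a proper action on a finite-dimensional $\FF_p$-acyclic space). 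The genuine gap is exactly where you say you expect it: conditions (a) and (b) are never proved. Invoking \cite{LearyNucinkis-GroupsActingPrimePowerOrder} cannot close it: that paper characterises $\FFP_0$ (finitely many conjugacy classes of finite $p$-subgroups) and proves the equivalence of $\FFP_n$ and $\mathcal{P}\negthinspace\FP_n$, but it contains no statement deriving (a) or (b) from ``virtually $\FP$ over $\FF_p$''; indeed the implication from $\FP_\infty$ plus (a) to $\FFP_\infty$ is only a conjecture there, so the $p$-local finiteness you need is not supplied by that theory.

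What actually closes the gap in the paper is outside input: (a) is \cite[IX (13.2)]{Brown} (a group that is virtually $\FP$ over $\FF_p$ has finitely many conjugacy classes of finite $p$-subgroups), and (b) is Hamilton's theorem \cite[Theorem 7]{Hamilton-WhenIsGroupCohomologyFinitary} that for each finite $p$-subgroup $P$ the Weyl group $WP$ is again virtually $\FP$ over $\FF_p$, hence $\FP_\infty$ over $\FF_p$. The latter is a genuinely non-trivial Smith-theoretic/finitary-cohomology result about normalisers: your description of the finite $p$-subgroups as complements to $N$ in $\pi^{-1}(\bar P)$ gives no control over their number of conjugacy classes nor over $N_GP$, and nothing in the relative-homology formalism developed in this paper produces such control. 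So the skeleton of your argument is right and the bookkeeping is correct, but the two statements carrying the whole weight of the hard direction are left unproved and attributed to a source that does not contain them.
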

\begin{proof}
 If $G$ is virtually $\FP$ over $\FF_p$ then $G$ has finitely many conjugacy classes of finite $p$-subgroups \cite[IX (13.2)]{Brown}. A result of Hamilton gives that for any finite $p$-subgroup $H$ of $G$, $WH$ is virtually $\FP$ over $\FF_p$, in particular $WH$ is $\FP_\infty$ over $\FF_p$ \cite[Theorem 7]{Hamilton-WhenIsGroupCohomologyFinitary}.  Finally, \cite[Proposition 34]{Hamilton-WhenIsGroupCohomologyFinitary} gives that $G$ acts properly on a finite dimensional $\FF_p$-acyclic space, thus in particular $\HFcd_{\FF_p} G < \infty$.  The other direction is obvious.
\end{proof}

In \cite{LearyNucinkis-GroupsActingPrimePowerOrder} it is conjectured that, if $\mathcal{F} = \Fin$, $G$ is $\FFP_\infty$ if and only if $G$ is $\FP_\infty$ and has finitely many conjugacy classes of finite $p$-subgroups for all primes $p$.  One could generalise this and ask:

\begin{Question}\label{question:HFFPn iff HFFP0 and FPn}
 Let $\mathcal{F} = \Fin$ and $n \in \NN \cup \{ \infty \}$.  
 \begin{enumerate}
  \item If $G$ is $\FP_n$ over $\ZZ$ with finitely many conjugacy classes of finite $p$-subgroups for all primes $p$, then is $G$ of type $\HFFP_n$ over $\ZZ$?
  \item Fixing a prime $p$, if $G$ is $\FP_n$ over $\FF_p$ with finitely many conjugacy classes of finite $p$-subgroups then is $G$ of type $\HFFP_n$ over $\FF_p$?
 \end{enumerate}
\end{Question}

A problem with finding a counterexample to Question \ref{question:HFFPn iff HFFP0 and FPn}(2) is that if $G$ admits a cocompact action on a finite dimensional $\FF_p$-acyclic space $X$ then Smith theory gives that $X^P$ is $\FF_p$-acyclic for any finite $p$-subgroup $P$ and thus $WP$ is $\FP_n$ over $\FF_p$.  For this reason one cannot use the examples of Leary and Nucinkis in \cite{LearyNucinkis-SomeGroupsOfTypeVF}.

\bibliographystyle{amsalpha}

\begin{thebibliography}{KMPN10}
\bibitem[BE74]{BieriEckmann-FinitenessPropertiesOfDualityGroups}
Robert Bieri and Beno Eckmann, \emph{Finiteness properties of duality groups},
  Commentarii Mathematici Helvetici \textbf{49} (1974), 74--83,
  10.1007/BF02566720.

\bibitem[Bie81]{Bieri-HomDimOfDiscreteGroups}
Robert Bieri, \emph{Homological dimension of discrete groups}, second ed.,
  Queen Mary College Mathematical Notes, Queen Mary College Department of Pure
  Mathematics, London, 1981. \MR{715779 (84h:20047)}

\bibitem[BLN01]{BradyLearyNucinkis-AlgAndGeoDimGroupsWithTorsion}
Noel Brady, Ian~J. Leary, and Brita E.~A. Nucinkis, \emph{On algebraic and
  geometric dimensions for groups with torsion}, Journal of the London
  Mathematical Society \textbf{64} (2001), no.~02, 489--500.

\bibitem[Bou99]{Bouc-LeComplexeDeChainesDunGComplexe}
Serge Bouc, \emph{Le complexe de cha\^\i nes d'un {$G$}-complexe simplicial
  acyclique}, J. Algebra \textbf{220} (1999), no.~2, 415--436. \MR{1717349
  (2000k:18009)}

\bibitem[Bre67]{Bredon-EquivariantCohomologyTheories}
Glen~E. Bredon, \emph{Equivariant cohomology theories}, Lecture Notes in
  Mathematics, No. 34, Springer-Verlag, Berlin, 1967.

\bibitem[Bro94]{Brown}
Kenneth~S. Brown, \emph{Cohomology of groups}, Graduate Texts in Mathematics,
  vol.~87, Springer-Verlag, New York, 1994, Corrected reprint of the 1982
  original. \MR{1324339 (96a:20072)}

\bibitem[Deg13]{Degrijse-ProperActionsAndMackeyFunctors}
Dieter Degrijse, \emph{{B}redon cohomological dimensions for proper actions and
  {M}ackey functors}, ArXiv e-prints (2013).

\bibitem[DP12]{DegrijsePetrosyan-GeometricDimensionForVCYC}
Dieter Degrijse and Nansen Petrosyan, \emph{Geometric dimension of groups for
  the family of virtually cyclic subgroups}, preprint (2012).

\bibitem[Dun79]{Dunwoody-AccessabilityAndGroupsOfCohomologicalDimensionOne}
Martin~J. Dunwoody, \emph{Accessibility and groups of cohomological dimension
  one}, Proc. London Math. Soc. \textbf{s3-38} (1979), no.~2, 193--215.

\bibitem[EG57]{EilenbergGanea-Lusternik-SchnirelmannCategory}
Samuel Eilenberg and Tudor Ganea, \emph{On the {L}usternik-{S}chnirelmann
  category of abstract groups}, Ann. of Math. (2) \textbf{65} (1957), 517--518.

\bibitem[Gan12]{Gandini-CohomologicalInvariants}
Giovanni Gandini, \emph{Cohomological invariants and the classifying space for
  proper actions}, Groups Geom. Dyn. \textbf{6} (2012), no.~4, 659--675.
  \MR{2996406}

\bibitem[Gre71]{Green-AxiomaticRepresentationTheoryForFiniteGroups}
J.~A. Green, \emph{Axiomatic representation theory for finite groups}, J. Pure
  Appl. Algebra \textbf{1} (1971), no.~1, 41--77. \MR{0279208 (43 \#4931)}

\bibitem[Gui08]{GuidoBookOfConjectures}
\emph{Guido's book of conjectures}, Monographies de L'Enseignement
  Math\'ematique [Monographs of L'Enseignement Math\'ematique], vol.~40,
  L'Enseignement Math\'ematique, Geneva, 2008, A gift to Guido Mislin on the
  occasion of his retirement from ETHZ June 2006, Collected by Indira
  Chatterji. \MR{2499538 (2010d:00003)}

\bibitem[Ham11]{Hamilton-WhenIsGroupCohomologyFinitary}
Martin Hamilton, \emph{When is group cohomology finitary?}, J. Algebra
  \textbf{330} (2011), 1--21. \MR{2774614 (2012e:20122)}

\bibitem[HPY13]{HambletonPamukYalcin-EquivariantCWComplexesAndTheOrbitCategory}
Ian Hambleton, Semra Pamuk, and Erg{\"u}n Yal{\c{c}}{\i}n, \emph{Equivariant
  {CW}-complexes and the orbit category}, Comment. Math. Helv. \textbf{88}
  (2013), no.~2, 369--425. \MR{3048191}

\bibitem[KM98]{KrophollerMislin-GroupsOnFinDimSpacesWithFinStab}
Peter~H. Kropholler and Guido Mislin, \emph{Groups acting on finite dimensional
  spaces with finite stabilizers}, Commentarii Mathematici Helvetici
  \textbf{73} (1998), 122--136, 10.1007/s000140050048.

\bibitem[KMPN09]{KMN-CohomologicalFinitenessConditionsForElementaryAmenable}
Peter~H. Kropholler, Conchita Mart{\'{\i}}nez-P{\'e}rez, and Brita E.~A.
  Nucinkis, \emph{Cohomological finiteness conditions for elementary amenable
  groups}, Journal für die reine und angewandte Mathematik (Crelles Journal)
  (2009), no.~637, 49 -- 62.

\bibitem[KMPN10]{KMPN-CohomologicalFinitenessConditionsInBredon}
Dessislava~H. Kochloukova, Conchita Martinez-Perez, and Brita E.~A. Nucinkis,
  \emph{Cohomological finiteness conditions in {B}redon cohomology}, Bull.
  London Math. Soc. \textbf{43} (2010), no.~1, 124--136.

\bibitem[KW11]{KrophollerWall-GroupActionsOnAlgebraicCellComplexes}
Peter~H. Kropholler and Charles T.~C. Wall, \emph{Group actions on algebraic
  cell complexes}, Publ. Mat. \textbf{55} (2011), no.~1, 3--18. \MR{2779573
  (2012e:20121)}

\bibitem[LM00]{LuckMeintrup-UniversalSpaceGrpActionsCompactIsotropy}
Wolfgang L{\"u}ck and David Meintrup, \emph{On the universal space for group
  actions with compact isotropy}, Geometry and topology: {A}arhus (1998),
  Contemp. Math., vol. 258, Amer. Math. Soc., Providence, RI, 2000,
  pp.~293--305. \MR{1778113 (2001e:55023)}

\bibitem[LN03]{LearyNucinkis-SomeGroupsOfTypeVF}
Ian~J. Leary and Brita E.~A. Nucinkis, \emph{Some groups of type {VF}},
  Inventiones Math \textbf{151} (2003), no.~1, 135--165.

\bibitem[LN10]{LearyNucinkis-GroupsActingPrimePowerOrder}
\bysame, \emph{On groups acting on contractible spaces with stabilizers of
  prime-power order}, J. Group Theory \textbf{13} (2010), no.~5, 769--778.
  \MR{2720203 (2012b:20035)}

\bibitem[L{\"u}c89]{Lueck}
Wolfgang L{\"u}ck, \emph{Transformation groups and algebraic {$K$}-theory},
  Lecture Notes in Mathematics, vol. 1408, Springer-Verlag, Berlin, 1989,
  Mathematica Gottingensis. \MR{1027600 (91g:57036)}

\bibitem[L{\"u}c00]{Luck-TypeOfTheClassifyingSpace}
\bysame, \emph{The type of the classifying space for a family of subgroups},
  Journal of Pure and Applied Algebra \textbf{149} (2000), no.~2, 177 -- 203.

\bibitem[ML95]{MacLane-Homology}
Saunders Mac~Lane, \emph{Homology}, Classics in Mathematics, Springer-Verlag,
  Berlin, 1995, Reprint of the 1975 edition. \MR{1344215 (96d:18001)}

\bibitem[MP13]{MartinezPerez-EulerClassesAndBredonForRestrictedFamilies}
Conchita Martinez-P{\'e}rez, \emph{Euler classes and bredon cohomology for
  groups with restricted families of finite subgroups}, Mathematische
  Zeitschrift (2013), 1--20 (English).

\bibitem[MPN06]{MartinezPerezNucinkis-MackeyFunctorsForInfiniteGroups}
Conchita Martinez-P{\'e}rez and Brita E.~A. Nucinkis, \emph{Cohomological
  dimension of {M}ackey functors for infinite groups}, J. London Math. Soc. (2)
  \textbf{74} (2006), no.~2, 379--396. \MR{2269585 (2007h:20053)}

\bibitem[MPN11]{MartinezNucinkis-GeneralizedThompsonGroups}
\bysame, \emph{Bredon cohomological finiteness conditions for generalisations
  of {T}hompson groups}, preprint, to appear Groups. Geom. Dyn. (2011).

\bibitem[MV03]{MislinValette-BaumConnes}
Guido Mislin and Alain Valette, \emph{Proper group actions and the
  {B}aum-{C}onnes conjecture}, Advanced Courses in Mathematics. CRM Barcelona,
  Birkh\"auser Verlag, Basel, 2003. \MR{2027168 (2005d:19007)}

\bibitem[Nuc99]{Nucinkis-CohomologyRelativeGSet}
Brita E.~A. Nucinkis, \emph{Cohomology relative to a {G}-set and finiteness
  conditions}, Topology and its Applications \textbf{92} (1999), no.~2, 153 --
  171.

\bibitem[Nuc00]{Nucinkis-EasyAlgebraicCharacterisationOfUniversalProperGSpaces}
\bysame, \emph{Is there an easy algebraic characterisation of universal proper
  {G}-spaces?}, manuscripta mathematica \textbf{102} (2000), 335--345,
  10.1007/s002291020335.

\bibitem[Rot95]{Rotman-Groups}
Joseph~J. Rotman, \emph{An introduction to the theory of groups}, fourth ed.,
  Graduate Texts in Mathematics, vol. 148, Springer-Verlag, New York, 1995.
  \MR{1307623 (95m:20001)}

\bibitem[Rot09]{Rotman-HomologicalAlgebra}
\bysame, \emph{An introduction to homological algebra}, second ed.,
  Universitext, Springer, New York, 2009. \MR{2455920 (2009i:18011)}

\bibitem[Sta68]{Stallings-OnTorsionFreeGroupsWithInfinitelyManyEnds}
John~R. Stallings, \emph{On torsion-free groups with infinitely many ends},
  Annals of Mathematics \textbf{88} (1968), no.~2, pp. 312--334.

\bibitem[Swa69]{Swan-GroupsOfCohomologicalDimensionOne}
Richard~G. Swan, \emph{Groups of cohomological dimension one}, J. Algebra
  \textbf{12} (1969), 585--610.

\bibitem[Tam89]{Tambara-HomologicalPropertiesOfTheEndomorphismRings}
Daisuke Tambara, \emph{Homological properties of the endomorphism rings of
  certain permutation modules}, Osaka J. Math. \textbf{26} (1989), no.~4,
  807--828. \MR{1040426 (91a:20060)}

\bibitem[TW90]{ThevenazWebb-SimpleMackeyFunctors}
Jacques Th{\'e}venaz and Peter~J. Webb, \emph{Simple {M}ackey functors},
  Proceedings of the {S}econd {I}nternational {G}roup {T}heory {C}onference
  ({B}ressanone, 1989), no.~23, 1990, pp.~299--319. \MR{1068370 (91g:20011)}

\bibitem[TW95]{ThevenazWebb-StructureOfMackeyFunctors}
\bysame, \emph{The structure of {M}ackey functors}, Trans. Amer. Math. Soc.
  \textbf{347} (1995), no.~6, 1865--1961. \MR{1261590 (95i:20018)}

\bibitem[Web00]{Webb-GuideToMackeyFunctors}
Peter Webb, \emph{A guide to {M}ackey functors}, Handbook of algebra, {V}ol. 2,
  North-Holland, Amsterdam, 2000, pp.~805--836. \MR{1759612 (2001f:20031)}

\bibitem[Wei94]{Weibel}
Charles~A. Weibel, \emph{An introduction to homological algebra}, Cambridge
  Studies in Advanced Mathematics, vol.~38, Cambridge University Press,
  Cambridge, 1994. \MR{1269324 (95f:18001)}

\bibitem[Yos83]{Yoshida-GFunctors2}
Tomoyuki Yoshida, \emph{On {$G$}-functors. {II}. {H}ecke operators and
  {$G$}-functors}, J. Math. Soc. Japan \textbf{35} (1983), no.~1, 179--190.
  \MR{679083 (84b:20010)}

\end{thebibliography}
\providecommand{\bysame}{\leavevmode\hbox to3em{\hrulefill}\thinspace}
\providecommand{\MR}{\relax\ifhmode\unskip\space\fi MR }
\providecommand{\MRhref}[2]{%
  \href{http://www.ams.org/mathscinet-getitem?mr=#1}{#2}
}
\providecommand{\href}[2]{#2}

\end{document}